\numberwithin{equation}{section}
\theoremstyle{definition}
\newtheorem{definition}{Definition}[section]
\newtheorem{Theorem}[definition]{Theorem}
\newtheorem{Proposition}[definition]{Proposition}
\newtheorem{Lemma}[definition]{Lemma}
\newtheorem{Corollary}[definition]{Corollary}
\theoremstyle{remark}
\newtheorem{remark}[definition]{Remark}
\newcommand{\R}{\mathbb R}
\newcommand{\Ric}{\mathrm{Ric}}
\newcommand{\enumlabelformat}{\roman}
\newcommand{\enumlabelfont}[1]{#1}
\newlength{\thelabelsep}
\setlist{labelsep=\thelabelsep}
\setlist[enumerate,1]{font=\enumlabelfont,label=(\enumlabelformat*),leftmargin=2.5em}
\setlist[itemize]{leftmargin=2.5em,label=$-$}
\newcounter{inlineenum}
\renewcommand{\theinlineenum}{\enumlabelformat{inlineenum}}
\let\epsilon\varepsilon
\let\phi\varphi
\newcommand{\LpLS}{Lorentzian pre-length space }
\newcommand{\LpLSn}{Lorentzian pre-length space}
\newcommand{\ma}{\measuredangle}
\let\save@mathaccent\mathaccent
\newcommand*\if@single[3]{%
  \setbox0\hbox{${\mathaccent"0362{#1}}^H$}%
  \setbox2\hbox{${\mathaccent"0362{\kern0pt#1}}^H$}%
  \ifdim\ht0=\ht2 #3\else #2\fi
  }
\newcommand*\rel@kern[1]{\kern#1\dimexpr\macc@kerna}
\newcommand*\widebar[1]{\@ifnextchar^{{\wide@bar{#1}{0}}}{\wide@bar{#1}{1}}}
\newcommand*\wide@bar[2]{\if@single{#1}{\wide@bar@{#1}{#2}{1}}{\wide@bar@{#1}{#2}{2}}}
\newcommand*\wide@bar@[3]{%
  \begingroup
  \def\mathaccent##1##2{%
%Enable nesting of accents:
    \let\mathaccent\save@mathaccent
%If there's more than a single symbol, use the first character instead (see below):
    \if#32 \let\macc@nucleus\first@char \fi
%Determine the italic correction:
    \setbox\z@\hbox{$\macc@style{\macc@nucleus}_{}$}%
    \setbox\tw@\hbox{$\macc@style{\macc@nucleus}{}_{}$}%
    \dimen@\wd\tw@
    \advance\dimen@-\wd\z@
%Now \dimen@ is the italic correction of the symbol.
    \divide\dimen@ 3
    \@tempdima\wd\tw@
    \advance\@tempdima-\scriptspace
%Now \@tempdima is the width of the symbol.
    \divide\@tempdima 10
    \advance\dimen@-\@tempdima
%Now \dimen@ = (italic correction / 3) - (Breite / 10)
    \ifdim\dimen@>\z@ \dimen@0pt\fi
%The bar will be shortened in the case \dimen@<0 !
    \rel@kern{0.6}\kern-\dimen@
    \if#31
      \overline{\rel@kern{-0.6}\kern\dimen@\macc@nucleus\rel@kern{0.4}\kern\dimen@}%
      \advance\dimen@0.4\dimexpr\macc@kerna
%Place the combined final kern (-\dimen@) if it is >0 or if a superscript follows:
      \let\final@kern#2%
      \ifdim\dimen@<\z@ \let\final@kern1\fi
      \if\final@kern1 \kern-\dimen@\fi
    \else
      \overline{\rel@kern{-0.6}\kern\dimen@#1}%
    \fi
  }%
  \macc@depth\@ne
  \let\math@bgroup\@empty \let\math@egroup\macc@set@skewchar
  \mathsurround\z@ \frozen@everymath{\mathgroup\macc@group\relax}%
  \macc@set@skewchar\relax
  \let\mathaccentV\macc@nested@a
%The following initialises \macc@kerna and calls \mathaccent:
  \if#31
    \macc@nested@a\relax111{#1}%
  \else
%If the argument consists of more than one symbol, and if the first token is
%a letter, use that letter for the computations:
    \def\gobble@till@marker##1\endmarker{}%
    \futurelet\first@char\gobble@till@marker#1\endmarker
    \ifcat\noexpand\first@char A\else
      \def\first@char{}%
    \fi
    \macc@nested@a\relax111{\first@char}%
  \fi
  \endgroup
}
\newcommand*{\bx}{\bar{x}}
\newcommand*{\by}{\bar{y}}
\newcommand*{\bz}{\bar{z}}
\newcommand*{\bp}{\bar{p}}
\title{The splitting theorem for globally hyperbolic Lorentzian length spaces with non-negative timelike curvature}
\author{Tobias Beran\footnote{Department of Mathematics, University of Vienna, Oskar-Morgenstern-Platz 1, 1090 Wien, Austria, \newline
tobias.beran@univie.ac.at, argam.ohanyan@univie.ac.at, felix.rott@univie.ac.at}, \\
Argam Ohanyan${}^*$,\\ %\footnotemark[\value{footnote}],
Felix Rott${}^*$,\\
Didier A.\ Solis\footnote{Facultad de Mathematicas, Universidad Aut\'onoma de Yucat\'an, Anillo Perif\'erico, Tablaje 13615, M\'erida, Mexico, didier.solis@correo.uady.mx},\\ %\footnotemark[\value{footnote}],
}
\begin{document}

\date{\today}

%\date{Received: date /Accepted: date}

\maketitle

\begin{abstract}
In this work, we prove a synthetic splitting theorem for globally hyperbolic Lorentzian length spaces with global non-negative timelike curvature containing a complete timelike line. Just like in the case of smooth spacetimes \cite{beem1985toponogov}, we construct complete, timelike asymptotes which, via triangle comparison, can be shown to fit together to give timelike lines. To get a control on their behaviour, we introduce the notion of parallelity of timelike lines in the spirit of the splitting theorem for Alexandrov spaces as proven in \cite{burago2001course} and show that asymptotic lines are all parallel. This helps to establish a splitting of a neighbourhood of the given line. We then show that this neighbourhood has the \textit{timelike completeness} property and is hence inextendible by a result in \cite{grant2019inextendibility}, which globalises the local result.
\vspace{1em}

\noindent
\emph{Keywords:} Lorentzian length spaces, synthetic curvature bounds, splitting theorems
\medskip

\noindent
\emph{MSC2020:} 53C50, 53C23, 53B30

\end{abstract}
\newpage
\tableofcontents
\newpage

\section{Introduction}\label{sec:intro}

As originally formulated in the context of Riemannian geometry, splitting theorems are a class of results which establish that a Riemannian manifold $(M,g)$ subject to certain hypotheses on its curvature and its geodesic structure  actually \emph{splits}, i.e.\ it is isometric to a product. In most cases, the curvature assumption involves a weak inequality (usually on sectional or Ricci curvature) while the geodesic assumption is related to the existence of a \emph{line}, that is, a globally distance realising complete geodesic. Since lines are conjugate point free and positive curvature promotes the appearance of conjugate points, both features are expected to hold only under very special circumstances. 

The very first attempt to establish the metric product structure of a Riemannian manifold with non-negative curvature and having a line is attributed to Cohn-Vossen \cite{CohnVossen}, who proved that  a non-compact complete manifold of dimension 2 with sectional curvature $K\ge 0$ is either diffeomorphic to ${\mathbb{R}}^2$ or flat. However, the proof of this result relied on the Gauss-Bonnet Theorem and therefore was not suitable for generalisation to arbitrary dimensions.  The breakthrough that allowed such a generalisation was due to Toponogov, who in the mid 1960s proved his celebrated Triangle Comparison Theorem, which asserts that ``the angles of an arbitrary triangle in a Riemannian space made up of minimising arcs are no greater than the corresponding angles of the plane Euclidean triangle with sides of the same length" \cite{Toponogov03}. Armed with this tool, Toponogov established the archetype of a splitting theorem \cite{toponogov1964riemannian, Toponogov03}:

\begin{Theorem}\label{topothrm}
Let $(M, g)$ be a complete Riemannian manifold with $K \ge 0$ containing a line, then $(M , g)$ is isometric to $( {\mathbb{R}}^k\times N, g_0\oplus h)$ where $({\mathbb{R}}^k,g_0)$ is the standard Euclidean space and $(N, h)$ is complete and does not have any lines.
\end{Theorem}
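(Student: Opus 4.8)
The plan is to follow the classical route via Busemann functions and triangle comparison, peeling off one Euclidean factor at a time. Let $\gamma\colon\R\to M$ be the given line, parametrised by arclength. First I would introduce the two Busemann functions associated with its forward and backward rays,
\[
 b^{+}(x)=\lim_{t\to\infty}\bigl(t-d(x,\gamma(t))\bigr),\qquad
 b^{-}(x)=\lim_{t\to\infty}\bigl(t-d(x,\gamma(-t))\bigr).
\]
For fixed $x$ the map $t\mapsto t-d(x,\gamma(t))$ is nondecreasing (triangle inequality) and bounded above by $d(x,\gamma(0))$, so the limits exist and $b^{\pm}$ are $1$-Lipschitz. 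Since $\gamma$ is distance realising we have $d(\gamma(t),\gamma(-s))=t+s$, and $d(x,\gamma(t))+d(x,\gamma(-s))\ge t+s$ then yields $b^{+}+b^{-}\le 0$ on $M$, with equality on $\gamma$.

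Next I would exploit $K\ge 0$. In its Hessian form for lower curvature bounds, comparison (\cite{Toponogov03}) gives $\mathrm{Hess}\,d(\cdot,\gamma(t))\le \tfrac1{d(\cdot,\gamma(t))}\,(g-dr\otimes dr)$, whose right-hand side tends to $0$ as $t\to\infty$; hence $b^{\pm}$ are geodesically convex. Thus $f:=b^{+}+b^{-}$ is convex and bounded above by $0$ on the complete manifold $M$, and restricting $f$ to any complete geodesic produces a one-dimensional convex function bounded above, which must be constant. Therefore $f\equiv 0$, so $b^{-}=-b^{+}$ and $b^{+}$ is simultaneously convex and concave, i.e.\ affine. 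This forces $\mathrm{Hess}\,b^{+}=0$ and $|\grad b^{+}|\equiv 1$, and (via harmonicity of $b^{+}$ and elliptic regularity) smoothness of $b^{+}$.

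With $b^{+}$ affine in hand, $V:=\grad b^{+}$ is a unit parallel vector field on $M$. By completeness its flow is defined for all time and its integral curves are lines; pairing the flow with a level set $N_{1}:=(b^{+})^{-1}(0)$ produces an isometry $M\cong(\R\times N_{1},g_{0}\oplus h_{1})$ onto a metric product, where the totally geodesic level set $(N_{1},h_{1})$ is again complete and satisfies $K\ge 0$. Finally I would iterate: if $N_{1}$ still contains a line (note that lines of $N_1$ are lines of the product $M$), the previous steps split off another $\R$-factor; since the dimension drops at each stage the process terminates, giving $M\cong(\R^{k}\times N,g_{0}\oplus h)$ with $(N,h)$ complete, nonnegatively curved, and line free.

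I expect the genuine difficulty to lie in the rigidity and regularity packaged into the second paragraph. A priori the Busemann functions are merely Lipschitz, so the crux is to promote ``convex and bounded above'' to the honest vanishing of $\mathrm{Hess}\,b^{+}$ and the smoothness of $b^{+}$; equivalently, one must extract a genuine flat $\R$-factor from equality in the comparison estimate, which is where all the curvature rigidity is consumed. Once a parallel unit field is available, the splitting and the induction are formal.
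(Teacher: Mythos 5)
Your outline is correct, and it is essentially the classical Busemann-function proof, streamlined by the strong hypothesis $K\ge 0$: monotonicity and the bound give existence of $b^{\pm}$, the line gives $b^{+}+b^{-}\le 0$ with equality on $\gamma$, convexity of $b^{\pm}$ plus ``convex and bounded above on a complete geodesic is constant'' forces $b^{+}+b^{-}\equiv 0$, hence $b^{+}$ is affine, and the splitting then follows from the rigidity of affine functions. Note, however, that the paper does not prove Theorem \ref{topothrm}; it cites Toponogov's original argument \cite{toponogov1964riemannian,Toponogov03}, which runs through his triangle comparison theorem rather than Busemann functions, and the route you chose is exactly the alternative the introduction records: the Busemann functions of the line are affine, so Innami's theorem \cite{Innami} yields the splitting. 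The distinction matters for this paper's programme: the comparison/parallel-lines proof (as in \cite{burago2001course} for Alexandrov spaces) is the one that survives in synthetic settings without Hessians or gradients, and the authors explicitly organise their Lorentzian proof around parallelity of lines precisely to \emph{circumvent} Busemann analysis, whereas your route is shorter in the smooth category and is the template that generalises in the curvature hypothesis (it is the skeleton of Cheeger--Gromoll under $\mathrm{Ric}\ge 0$ \cite{cheeger1971splitting,CheegerGromoll02}). Two points you should tighten. First, the Hessian comparison $\mathrm{Hess}\,d(\cdot,\gamma(t))\le \tfrac{1}{d}\left(g-dr\otimes dr\right)$ only holds off the cut locus of $\gamma(t)$, so the convexity of $b^{\pm}$ must be formulated via upper support (barrier) functions at cut points, or derived directly from Toponogov comparison applied to geodesic triangles with vertex $\gamma(t)$; as stated, the inequality does not literally make sense globally for a Lipschitz function. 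Second, you correctly identify that all the rigidity sits in the step ``affine $\Rightarrow$ smooth with $\mathrm{Hess}\,b^{+}=0$ and $|\mathrm{grad}\,b^{+}|\equiv 1$''; a priori $b^{+}$ is merely Lipschitz, so this step needs a genuine proof (Innami's, or the harmonicity/elliptic-regularity argument you sketch, which requires knowing $\Delta b^{+}=0$ distributionally) rather than a one-line assertion. With those two steps supplied, the induction on the dimension, including your observation that lines of the totally geodesic level set $N_1$ are lines of the product, is indeed formal and completes the proof.
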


Shortly after Toponogov's splitting theorem was published, Cheeger and Gromoll  made substantial progress by generalising it under the less stringent condition $\Ric\ge 0$ \cite{cheeger1971splitting}. Although their original motivation was rooted in the investigation of topological obstructions on manifolds of non-negative curvature \cite{CheegerGromoll02}, their result sparked the interest in the study of splitting theorems in many different contexts, like in the theory of orbifolds \cite{BorzellinoZhu}; in the study of curvature inequalities relating other tensors, (like the curvature operator \cite{Noronha} or Bakry-Emery tensor \cite{FLZ,Wang}). Furthermore, some more general versions of the Cheeger-Gromoll splitting theorem have been proven. For example, the curvature condition can be replaced by averaged Ricci inequalities \cite{Galloway05}, or by almost positivity of the Ricci tensor \cite{Cai02, Paeng}. There are also some local versions in which the splitting occurs either on tubular neighbourhoods or the complement of a compact set disjoint from a line \cite{Cai01, CaiGallowayLiu}.

Remarkably, Yau posed in 1982 the problem of establishing a Lorentzian analogue of the Cheeger-Gromoll splitting theorem \cite{Yau02}, thus inaugurating a very active field in which different versions of the Lorentzian splitting theorem were established \cite{beem1985toponogov,BEMG,eschenburg1988splitting,galloway1984splitting,galloway1989,newman1990proof}\footnote{Refer to chapter 14 in \cite{BeemEhrlich} for a detailed account.}, and some important problems have remained unsolved even to this day, like the famous Bartnik Conjecture \cite{bartnik1988remarks,GalBartnik}. In analogy to its Riemannian counterparts, the first Lorentzian splitting theorem was proven for globally hyperbolic spacetimes with non-positive sectional curvature on timelike planes. In \cite{beem1985toponogov,BEMG} Beem, Ehrlich, Markvorsen and Galloway proved what can be thought as the Lorentzian analogue of Toponogov's splitting theorem:
\begin{Theorem}\label{lorsplit2}
Let $(M,g)$ be a spacetime of dimension $n\ge 2$ that satisfies the
following conditions:
\begin{enumerate}
\item  $(M,g)$ is globally hyperbolic.
\item $K(\Pi )\le 0$ for all timelike planes $\Pi$.
\item $M$ has a timelike line.
\end{enumerate}
Then $(M,g)$ splits isometrically as $(M,g)\simeq (\mathbb{R}\times
N, -dt^2\oplus h)$, where $(N,h)$ is a complete Riemannian manifold.
\end{Theorem}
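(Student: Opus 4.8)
The plan is to adapt the Busemann function method to the Lorentzian setting (in the spirit of Eschenburg, Galloway and Newman). The starting observation is that the timelike sectional curvature bound is stronger than one strictly needs: expanding $\Ric(v,v)$ for a unit timelike $v$ over an orthonormal frame adapted to $v$ writes it as a sum of timelike sectional curvatures, each entering with a sign that turns the hypothesis $K(\Pi)\le 0$ into the timelike convergence condition $\Ric(v,v)\ge 0$. Parametrising the line $\gamma\colon\R\to M$ by arclength and writing $\gamma^\pm$ for its two rays, I would define the Busemann functions
$$b^+(p)=\lim_{t\to+\infty}\big(t-\tau(p,\gamma(t))\big),\qquad b^-(p)=\lim_{s\to+\infty}\big(s-\tau(\gamma(-s),p)\big),$$
on the chronological region around $\gamma$. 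The reverse triangle inequality shows that each approximating expression is monotone and bounded below, so the limits exist, and applying it to the chain $\gamma(-s)\le p\le\gamma(t)$ gives the fundamental inequality $b^++b^-\ge 0$, with equality along $\gamma$ because $\tau(\gamma(0),\gamma(t))=t$.

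Next I would construct the asymptotes. For fixed $p$, take maximal timelike geodesics from $p$ to $\gamma(t)$ and extract, via the limit curve theorem --- this is where global hyperbolicity is indispensable --- a future asymptote $\sigma_p^+$, and symmetrically a past one. Using that the line $\gamma$ is maximal (hence free of conjugate points) together with triangle comparison against Minkowski space supplied by $K\le 0$, I expect to show that these asymptotes are themselves complete timelike lines realising $b^+$ in the sense $b^+(\sigma_p^+(r))=b^+(p)+r$. The curvature bound then feeds a support-function comparison argument proving that $b^+$ is $C^{1,1}$, satisfies a one-sided d'Alembertian inequality in the support sense, and has unit timelike gradient $g(\nabla b^+,\nabla b^+)=-1$; symmetrically for $b^-$.

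The rigidity comes from the maximum principle. Combining the one-sided differential inequalities for $b^+$ and $b^-$ with $b^++b^-\ge 0$ and equality on $\gamma$, the strong maximum principle (in the support sense of Calabi and Eschenburg) forces $b^++b^-\equiv 0$ and upgrades regularity, so that $b:=b^+=-b^-$ is a smooth function on $M$ with $g(\nabla b,\nabla b)=-1$ and vanishing d'Alembertian. Saturating the comparison inequality then drives the geometric rigidity: the index-form/second-variation estimate, now an equality, forces the relevant curvature to vanish along the asymptotes and the Hessian $\nabla^2 b$ to vanish identically, i.e.\ $T:=\nabla b$ is a parallel unit timelike vector field.

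Finally, a parallel unit timelike vector field yields the splitting: the flow of $T$ produces the $\R$-factor, the level sets $N:=b^{-1}(0)$ are spacelike hypersurfaces everywhere orthogonal to $T$, and parallelity makes the product metrically orthogonal, giving the isometry $(M,g)\simeq(\R\times N,-dt^2\oplus h)$; completeness of $(N,h)$ follows from global hyperbolicity of $M$. The main obstacle is the second and third steps: proving that the Busemann functions are genuinely smooth with parallel gradient. This rests on the delicate interplay between global hyperbolicity, used to extract asymptotic lines and control maximal geodesics, and the timelike curvature bound, used both to exclude conjugate points and to extract, via the equality case of the comparison, the rigidity that produces the parallel field --- so that both hypotheses are genuinely needed.
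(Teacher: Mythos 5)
Your proposal is correct in outline, but it takes a genuinely different route from the proof this theorem actually received in \cite{beem1985toponogov,BEMG} (which the present paper quotes and then abstracts synthetically). Your reduction is right: for a unit timelike $v$ and an adapted orthonormal frame $e_1,\dots,e_{n-1}$ one has $\Ric(v,v)=-\sum_i K(v\wedge e_i)\geq 0$ whenever $K(\Pi)\leq 0$ on timelike planes, so the hypothesis implies the timelike convergence condition, and your plan then runs the Busemann-function/maximum-principle machinery of Eschenburg, Galloway and Newman \cite{eschenburg1988splitting,galloway1989,newman1990proof} --- indeed proving the stronger Ricci version of the theorem. Two points of care: since only global hyperbolicity (not timelike completeness) is assumed, it is Galloway's completeness-free argument \cite{galloway1989} you must follow, and the hypothesis ``timelike line'' must be read as a \emph{complete} timelike line for your limits $b^{\pm}$ and the asymptote construction to make sense; moreover, the timelikeness and completeness of asymptotes, the timelike co-ray condition, the $C^{1,1}$ regularity of $b^{\pm}$ via support functions, and the Calabi--Eschenburg strong maximum principle are each substantial steps, which you correctly flag as the main obstacle rather than execute. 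By contrast, the original proof is a hybrid: as noted in Section \ref{sec:intro}, Beem--Ehrlich--Markvorsen--Galloway exploit the sectional hypothesis directly through Harris's Toponogov-type triangle comparison \cite{harris1982triangle} alongside Busemann functions and Hessian estimates, and the present paper retains \emph{only} the comparison side --- constructing asymptotes, showing by stacking comparison triangles that future and past asymptotes from a common point concatenate into lines parallel to $\gamma$ (Propositions \ref{Proposition: stackingprinciple} and \ref{Proposition: asymptoticlines}, Lemma \ref{Lemma: asymptotestogammaareparalleltogamma}), and assembling the splitting map from parallel realisations, with the Busemann function reduced to a bookkeeping device (Remark \ref{remark: Busemannfunction}). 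What each approach buys: yours yields the more general conclusion (Ricci bounds suffice) and smooth rigidity ($\nabla^2 b\equiv 0$, a parallel unit timelike field) in one stroke, at the cost of delicate analysis; the comparison route is derivative-free, which is exactly why it survives in the synthetic setting of this paper, where no Hessian, d'Alembertian or elliptic maximum principle is available.
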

 Notice that global hyperbolicity can be seen as a more suitable analogue of Riemannian completeness than timelike geodesic completeness, since the latter fails to guarantee connectability by distance realising geodesic segments. The original proof of this result not only uses techniques related to Toponogov's theorem ---a Lorentzian triangle comparison due to Harris \cite{harris1982triangle}---, but also tools used in Cheeger-Gromoll's theorem, like Busemann functions and Hessian estimates.

Some of the key techniques used in Toponogov's original proof can be abstracted into an axiomatic setting rather than derived, thus expanding its range of applications to non-smooth contexts, particularly, to the realm of metric length spaces. Recall that a length space $(X,d)$ is a metric space whose distance is intrinsic, in other words, the distance $d(p,q)$ can be recovered as the infimum of the lengths of curves joining $p$ to $q$.  This is the setting for the so-called synthetic methods in geometry, that have proven instrumental in the development of recent mathematical landmarks such as the study of geometry in the large \cite{Gromov01,Gromov02}, precompactness theorems \cite{burago2001course,YGP}, geometric flows \cite{GigliPasq,AGS} and optimal transport \cite{Villani,Sturm}. An \emph{Alexandrov space with curvature bounded from below by $k$} ---or $\mathrm{Alex}(k)$ for short--- is a locally compact, complete and path connected (metric) length space $(X,d)$ on which the triangle comparison theorem holds \cite{burago2001course}. For $\mathrm{Alex}(0)$ spaces an analogue of Theorem \ref{topothrm} was first proved by Milka \cite{milka1967metric}. In this result, instead of the existence of a line, the existence of an $m$-affine function, (i.e. a function $g\colon M\to\mathbb{R}$ that when restricted to any unit speed geodesics satisfies the differential equation $g^{\prime\prime}+mg=0$) is  assumed. A warped product $I\times_g F$ with $g$ an $m$-affine function is called a \emph{cone} \cite{YGP}. Cones naturally have $m$-affine functions, and conversely, an $\mathrm{Alex}(k)$ space with a non-constant $m$-affine function splits as a cone, provided that a boundary condition is met \cite{AlexanderBishop, Mashiko}. In the particular case of a complete Riemannian manifold, Innami \cite{Innami} showed that  a $0$-affine function exists if and only if $M$ is isometric to a product $\mathbb{R}\times N$. Moreover, under the hypothesis of Theorem \ref{topothrm} the Busemann functions associated to a line are affine and thus Innami's theorem implies Toponogov's splitting theorem. Finally, let us note that in \cite{burago2001course} there is a proof of Milka's splitting theorem for Alexandrov spaces that resembles more closely the original works of Toponogov. The precise statement is as follows:

\begin{Theorem}\label{thrm:splitalex}
Let $(X,d)$ be an $\mathrm{Alex}(0)$ space containing a line. Then $X$ splits as a metric product $\mathbb{R}\times Y$, where $Y$ is an $\mathrm{Alex}(0)$ space.
\end{Theorem}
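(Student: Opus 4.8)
The plan is to follow the Busemann-function strategy adapted to nonnegative curvature: I construct a foliation of $X$ by lines that are all parallel to the given one, and then upgrade this set-theoretic splitting to a metric product by a rigidity (flat strip) argument.

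First I would fix a line $\gamma\colon\mathbb{R}\to X$, parametrised by arclength, and associate to its two subrays the Busemann functions $b^{\pm}(x)=\lim_{t\to\infty}\bigl(t-d(x,\gamma(\pm t))\bigr)$. These limits exist because $t\mapsto t-d(x,\gamma(\pm t))$ is nondecreasing and bounded above by $d(x,\gamma(0))$, and the resulting functions $b^{\pm}$ are $1$-Lipschitz. The triangle inequality applied to the triple $\gamma(t),x,\gamma(-t)$ yields $b^{+}+b^{-}\le 0$ on $X$, with equality along $\gamma$. The decisive structural input is that in an $\mathrm{Alex}(0)$ space the Busemann functions are \emph{concave}: this follows from Toponogov's triangle comparison (the defining property of an $\mathrm{Alex}(0)$ space), since the functions $x\mapsto t-d(x,\gamma(\pm t))$ are, in the comparison sense, at least as concave as their Euclidean models, and concavity is preserved under the monotone limit.

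Next, for each $x\in X$ I would produce a line through $x$. Using local compactness I extract, as limits of the minimising geodesics $[x,\gamma(t)]$ and $[x,\gamma(-t)]$, a forward asymptotic ray $\sigma^{+}$ and a backward asymptotic ray $\sigma^{-}$ emanating from $x$, along which $b^{+}$, resp.\ $b^{-}$, grows with unit speed. A direct computation of the comparison angle at $x$ in the triangles $\gamma(-t),x,\gamma(t)$, using $d(x,\gamma(\pm t))=t-b^{\pm}(x)+o(1)$, shows that this comparison angle tends to $\pi$; by the curvature bound the genuine angle is no smaller, so $\sigma^{+}$ and $\sigma^{-}$ meet at angle $\pi$ and concatenate to a locally minimising curve $\gamma_x$. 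Concavity of $b^{+}$ along the geodesic $\gamma_x$, together with the fact that its forward slope is exactly $1$ (the maximal value permitted by the $1$-Lipschitz bound), forces $b^{+}(\gamma_x(u))=b^{+}(x)+u$ for all $u\in\mathbb{R}$; hence $d(\gamma_x(u_1),\gamma_x(u_2))\ge|u_2-u_1|$, so $\gamma_x$ is in fact a genuine line. The same reasoning with $b^{-}$ gives $b^{-}(\gamma_x(u))=b^{-}(x)-u$, so each $\gamma_x$ is parallel to $\gamma$ in both directions and $b^{+}$ is strictly increasing with unit speed along it.

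With the foliation in hand I would set $Y:=(b^{+})^{-1}(0)$, which is nonempty and closed, and define $\Phi\colon\mathbb{R}\times Y\to X$ by $\Phi(u,y)=\gamma_y(u)$; since every line crosses $Y$ exactly once, $\Phi$ is a bijection. It remains to show that $\Phi$ is an isometry onto $(\mathbb{R}\times Y,\ \sqrt{du^2+d_Y^2})$ and that $(Y,d_Y)$ is itself $\mathrm{Alex}(0)$. I expect this last step to be the main obstacle: it is the rigidity part of the argument, where one shows that any two of the parallel lines $\gamma_{y_0},\gamma_{y_1}$ bound a totally geodesic flat strip. This is extracted from the equality case of Toponogov's comparison—nonnegative curvature forces the triangles spanned between the two lines to be degenerate, hence Euclidean—and yields the Pythagorean identity $d(\gamma_{y_0}(u_0),\gamma_{y_1}(u_1))^2=(u_0-u_1)^2+d_Y(y_0,y_1)^2$. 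Once the splitting $X\cong\mathbb{R}\times Y$ is metric, the factor $(Y,d_Y)$ inherits the $\mathrm{Alex}(0)$ property, completing the proof.
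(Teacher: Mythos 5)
The paper never proves Theorem \ref{thrm:splitalex}: it is quoted as background, attributed to Milka \cite{milka1967metric}, with the Toponogov-style proof referenced to \cite{burago2001course}. So the meaningful comparison is with that proof and with the paper's own Lorentzian adaptation of it (Sections \ref{sec:lines}--\ref{sec:proof}), which deliberately \emph{circumvents} Busemann functions (see the opening of Section \ref{subsec:lines:parallel} and Remark \ref{remark: Busemannfunction}): flatness between lines is encoded in parallel realisations and the $c$-criterion (Lemma \ref{lem-c-cond}), with the rigidity input supplied by the stacking principle (Proposition \ref{Proposition: stackingprinciple}) and angle rigidity along the line (Proposition \ref{Proposition: angle=comparisonangle}), the analogues of \cite[Lem.\ 10.5.4]{burago2001course}. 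You instead follow the classical Cheeger--Gromoll/Busemann route, which is a legitimate, genuinely different strategy for the metric statement, and much of your outline is sound: existence and $1$-Lipschitz continuity of $b^{\pm}$, $b^{+}+b^{-}\le 0$ with equality on $\gamma$, concavity of $b^{\pm}$ in the limit (note the finite-$t$ functions are not concave, only concave up to an $O(1/t)$ error, so the limit really is needed), unit-speed growth along asymptotic rays, and in particular your argument that a concave $1$-Lipschitz function with slope exactly $1$ on a half-line of a geodesic must be affine along the whole geodesic is correct and does the job once $\gamma_x$ is known to be a local geodesic.

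The step that fails as written is the angle-$\pi$ claim. The comparison bound gives $\ma_x\bigl([x\,\gamma(t)],[x\,\gamma(-t)]\bigr)\ge\tilde{\ma}_x(\gamma(t),\gamma(-t))\to\pi$ for the \emph{approximating} geodesics, but in spaces with curvature bounded below angles are only lower semicontinuous under convergence of geodesics: one has $\ma_x(\sigma^{+},\sigma^{-})\le\liminf_n \ma_x\bigl([x\,\gamma(t_n)],[x\,\gamma(-t_n)]\bigr)$, and the angle of the limit rays may a priori drop below $\pi$. The conclusion is true, but must be obtained by estimating the limit rays directly: from $b^{+}(\sigma^{+}(s))=b^{+}(x)+s$, $b^{-}(\sigma^{-}(s'))=b^{-}(x)+s'$ and $b^{+}\le -b^{-}$ one gets
\begin{equation*}
d\bigl(\sigma^{+}(s),\sigma^{-}(s')\bigr)\;\ge\; b^{+}(\sigma^{+}(s))-b^{+}(\sigma^{-}(s'))\;\ge\; s+s'+(b^{+}+b^{-})(x),
\end{equation*}
and since the defect $-(b^{+}+b^{-})(x)\ge 0$ is a fixed constant, $\tilde{\ma}_x(\sigma^{+}(s),\sigma^{-}(s'))\to\pi$ as $s,s'\to\infty$; monotonicity of comparison angles in $\mathrm{Alex}(0)$ then forces $\ma_x(\sigma^{+},\sigma^{-})=\pi$. (Observe that you never establish $b^{+}+b^{-}\equiv 0$, only $\le 0$; with this fix you do not need it.) Two further points are asserted rather than proven: that angle $\pi$ makes the concatenation locally minimising is itself a lemma requiring the lower curvature bound (nonbranching), and the flat-strip/Pythagorean step must also deliver \emph{uniqueness} of the line through each point with $b^{+}$ affine --- without it your map $\Phi$ is neither well-defined on all of $X$ nor injective (two distinct parallels through a common point off $Y$ would violate injectivity). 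You correctly flag this rigidity step as the main obstacle; it is precisely the content that the paper's stacking principle, uniqueness of parallels (Lemma \ref{Lemma: parallellinesunique}) and $c$-criterion are designed to replace in the synthetic setting.
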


In their seminal work \cite{kunzinger2018lorentzian} Kunzinger and Sämann set the foundations for a synthetic approach to Lorentzian geometry. Their novel notion of Lorentzian (pre-)length space is suited to accommodate several different non-smooth scenarios such as cones \cite{alexander2019generalized,minguzzi2019causality}, spacetimes with $C^0$ metrics \cite{chrusciel2012lorentzian,ling01}, contact structures \cite{hendike} or causal boundaries \cite{ABS}. Moreover, in \cite{kunzinger2018lorentzian} the authors also introduced a notion of timelike curvature bounds in the same spirit of $\mathrm{Alex}(k)$ spaces.  Among the recent developments in this fast growing field we have detailed analyses of the causal structure \cite{hau2020causal,hevelin02}, extendibility \cite{grant2019inextendibility,galloway2018timelike}, convergence \cite{kunzinger2021null}, gluing techniques \cite{beran02, rott2022gluing} and the basis for a comparison theory \cite{barrera2022comparison,beran2022angles}.

In this work, we aim at proving a splitting theorem for Lorentzian length spaces in the spirit of Theorems \ref{topothrm}, \ref{lorsplit2} and \ref{thrm:splitalex}. In precise terms, we establish the following splitting result for Lorentzian length spaces:

\begin{Theorem}[Splitting]
\label{Theorem: Truesplitting}
Let $(X,d,\ll,\leq,\tau)$ be a connected, regularly localisable, globally hyperbolic Lorentzian length space with proper metric $d$ and global non-negative timelike curvature satisfying timelike geodesic prolongation and containing a complete timelike line $\gamma:\R \to X$. Then there is a $\tau-$ and $\leq$-preserving homeomorphism $f:\R\times S \to X$, where $S$ is a proper, strictly intrinsic metric space of Alexandrov curvature $\geq 0$.
\end{Theorem}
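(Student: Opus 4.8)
The plan is to mirror the architecture of the smooth Lorentzian splitting theorem of \cite{beem1985toponogov,BEMG}, replacing every analytic ingredient (Busemann gradients, Hessian estimates, the maximum principle) by its synthetic comparison-geometric counterpart, following the Alexandrov strategy of \cite{burago2001course} that underlies Theorem \ref{thrm:splitalex}. First I would fix the complete timelike line $\gamma$ and, for each point $p$ in a suitable causal diamond around $\gamma$, construct \emph{asymptotes}: a forward asymptote is obtained as a limit curve of maximising segments from $p$ to $\gamma(t)$ as $t\to+\infty$, and a backward asymptote analogously as $t\to-\infty$. Global hyperbolicity together with the proper metric $d$ furnishes the limit curve compactness needed for these limits to exist, while timelike geodesic prolongation guarantees that the limit rays are complete. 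The first substantive claim is that the concatenation of the backward and forward asymptote through $p$ is again a timelike \emph{line}: this is where non-negative timelike curvature enters. Via triangle comparison one shows that the comparison angle at $p$ between the two asymptotes is straight, so that no corner forms and the concatenation is globally maximising.

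Next I would introduce the notion of \emph{parallelity} of timelike lines. Two complete timelike lines ought to be called parallel when the comparison quadrilaterals spanned by pairs of their points degenerate to flat (Minkowski) quadrilaterals; equivalently, the Lorentzian distances between corresponding points behave exactly as in the flat model. The key rigidity step is to show that all asymptotic lines constructed above are parallel to $\gamma$ and to one another. Here one exploits the interplay of the \emph{global} non-negative curvature bound with the existence of the line: curvature $\geq 0$ supplies one inequality for the comparison angles, whereas the line, being bi-asymptotically maximising, forces the reverse inequality, and equality rigidifies the configuration. This is the synthetic substitute for the smooth fact that the sum $b^{+}+b^{-}$ of the two Busemann functions is simultaneously $\leq 0$ and $\geq 0$, hence vanishes, forcing the Busemann functions to be affine with parallel gradients.

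With parallelity in hand, the family of asymptotic lines foliates a neighbourhood of $\gamma$, and I would take the common signed arc-length parameter along these parallel lines as the $\R$-factor and a single level set as the candidate fibre $S$. Triangle comparison applied to the flat quadrilaterals should then show that the map $f\colon\R\times S\to X$ built by flowing along the lines is a $\tau$- and $\leq$-preserving homeomorphism onto its image, that the induced length metric on $S$ is strictly intrinsic and proper, and that the comparison inequalities descend to give $S$ Alexandrov curvature $\geq 0$. This yields the splitting on a neighbourhood of the line. Finally, to remove the locality, I would verify that the product $\R\times S$ so constructed enjoys the \emph{timelike completeness} property and invoke the inextendibility result of \cite{grant2019inextendibility}: since $X$ is connected and the neighbourhood is an inextendible Lorentzian length space of the asserted product form, it must exhaust all of $X$, promoting the local splitting to the claimed global homeomorphism.

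The main obstacle I anticipate is the rigidity/parallelity step. In the absence of any differentiable structure one cannot differentiate Busemann functions or appeal to a PDE maximum principle, so the equality case of triangle comparison must be extracted purely from the one-sided curvature bound and the maximality of the line. Making this rigorous requires careful monotonicity arguments for comparison angles along the asymptote construction, together with a delicate passage to the limit ensuring that the degeneration of the comparison quadrilaterals is uniform enough to assemble a genuine global product structure rather than merely a pointwise flatness statement.
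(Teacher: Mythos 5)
Your proposal is correct and follows essentially the same route as the paper: asymptotes to $\gamma$ via limit curves (global hyperbolicity plus properness), concatenation of future and past asymptotes into timelike lines by the equality case of triangle comparison (the paper's ``stacking'' principle), a notion of parallelity modelled on flat configurations in $\R^{1,1}$ with the rigidity step showing all asymptotic lines are (synchronised) parallel, a splitting map $f:\R\times S\to I(\gamma)$ built from the parallel family, and finally the timelike completeness property of $I(\gamma)$ combined with the inextendibility theorem of \cite{grant2019inextendibility} to conclude $I(\gamma)=X$. The only minor inaccuracy is the attribution of completeness of the asymptotes to timelike geodesic prolongation: in the paper completeness is extracted from a quantitative law-of-cosines comparison argument, with prolongation entering only as a technical hypothesis for the continuity of angles.
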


The paper is organised as follows: In Section \ref{sec:LLS}, we collect some well-known facts about Lorentzian (pre-)length spaces while focusing on (versions of) results best suited for our needs. We review the basic theory of Lorentzian (pre-)length spaces and discuss concepts like timelike curvature bounds, angles and extensions. Section \ref{sec:products} is dedicated to the study of product Lorentzian pre-length spaces of the form $\R \times X$, where $(X,d)$ is a metric space. Section \ref{sec:lines} is the heart of the proof and deals with the construction of co-rays and asymptotes, where we follow \cite{beem1985toponogov}. In order to have a control on the behaviour of lines formed as asymptotes to a given line, we introduce the notion of parallelity of timelike lines which is motivated by the splitting theorem for Alexandrov spaces as treated in \cite{burago2001course} and show that asymptotic lines are always timelike, of infinite $\tau$-length, and parallel to each other. In Section \ref{sec:proof} we first establish a local splitting result by endowing a cross section $S$ of the parallel asymptotes spanning $I(\gamma)$ with a natural metric and then showing that $I(\gamma)$ splits as $\R \times S$. Then we show that $I(\gamma)$ has the timelike completeness (TC) property, from which, via an inextendibility argument, $I(\gamma) = X$ follows. Finally, in Section \ref{sec: applicationsandoutlook} we note some classes of spacetimes which naturally satisfy the technical assumptions in our results, and hence split synthetically. We then give an outlook on open problems in the context of synthetic splitting theorems that may be addressed in future projects.

\subsection*{Notation and conventions}
\label{subsec: notationconventions}

Let us collect some notation and conventions that will be used throughout the paper.

$A \subset B$ means $A$ is a subset of $B$ (not necessarily a proper one). $\R^{1,1}$ is two dimensional Minkowski space with the Lorentzian metric $-dt^2 + dx^2$ and coordinates $(t,x)$. $\overline{\tau}$ denotes the time separation on $\R^{1,1}$. A \emph{proper} metric space $(X,d)$ is a metric space such that all closed balls are compact. A metric space is called \emph{length space} or \emph{intrinsic} if the distance between two points is the infimum of lengths of curves running between them, and \emph{strictly intrinsic} if that infimum is a minimum, i.e.\ between any two points there exists a distance-realising curve.

We denote an open ball with radius $R$ around a point $x$ in a metric space by $B_R(x)$, and the corresponding closed ball by $\bar{B}_R(x)$ (not to be confused with the closure $\overline{B_R(x)}$ of the open ball).

\section{Basic theory of Lorentzian (pre-)length spaces}\label{sec:LLS}
\subsection{Lorentzian (pre-)length spaces}\label{subsec:LLS:LpLS}

We give a brief review of the theory of Lorentzian (pre-)length spaces. We focus on the specific results that we need and give proofs if they cannot be found in the literature, otherwise we give precise references. In particular, we refer to \cite{kunzinger2018lorentzian} for a detailed treatment. We start with a few basic definitions.

\begin{definition}[Lorentzian pre-length spaces]\par
\label{definition: LpLS}
A \emph{Lorentzian pre-length space} $(X,d,\ll,\leq,\tau)$ consists of a metric space $(X,d)$, a reflexive and transitive relation $\leq$ (the \textit{causal relation}), a transitive relation $\ll$ (the \textit{timelike relation}) contained in $\leq$, and a lower semi-continuous map (\textit{time separation}) $\tau:X \times X \to [0,\infty]$ with the following properties: $\tau(x,y)=0$ if $x \not\leq y$, and $\tau(x,y) > 0$ if and only if $x \ll y$. Moreover, if $x \leq y \leq z$, then the following \emph{reverse triangle inequality} holds:
\begin{align*}
    \tau(x,z) \geq \tau(x,y) + \tau(y,z).
\end{align*}
\end{definition}

In the synthetic theory, we employ the usual nomenclature and notation well-known from Lorentzian geometry, such as $I^{\pm}(x)$, $J^{\pm}(x)$ for timelike and causal pasts and futures, as well as $I(x,y):=I^+(x) \cap I^-(y)$ and $J(x,y):=J^+(x) \cap J^-(y)$ for timelike and causal diamonds, respectively.

\begin{Lemma}[Openness of timelike futures and push-up]\par
\label{Lemma: pushupandopennness}
Let $(X,d,\ll,\leq,\tau)$ be a Lorentzian pre-length space and $x \ll y \leq z$ or $x \leq y \ll z$. Then $x \ll z$. In particular, for any $x \in X$, the sets $I^{\pm}(x)$ are open. Moreover, the relation $\ll$ is open in $X \times X$.
\begin{proof}
See \cite[Lem.\ 2.10, Lem.\ 2.12, Prop.\ 2.13]{kunzinger2018lorentzian}.
\end{proof}
\end{Lemma}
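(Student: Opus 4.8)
The plan is to derive everything from the two structural ingredients supplied by Definition \ref{definition: LpLS}: the equivalence $x \ll y \Leftrightarrow \tau(x,y) > 0$ and the reverse triangle inequality, together with the assumed lower semi-continuity of $\tau$.

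I would first handle push-up. In the case $x \ll y \leq z$, since $\ll$ is contained in $\leq$ we have $x \leq y$, so transitivity of $\leq$ gives the chain $x \leq y \leq z$ and the reverse triangle inequality applies:
\[
\tau(x,z) \;\geq\; \tau(x,y) + \tau(y,z) \;\geq\; \tau(x,y) \;>\; 0,
\]
the final strict inequality being precisely the hypothesis $x \ll y$. As $\tau(x,z) > 0$ is equivalent to $x \ll z$, this yields $x \ll z$. The case $x \leq y \ll z$ is entirely symmetric, now using $\tau(y,z) > 0$ in the same chain. Note that this step uses only the reverse triangle inequality and the positivity characterisation of $\ll$, not continuity.

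For the openness statements the relevant fact is that the strict superlevel set $\{f > 0\}$ of any lower semi-continuous function $f$ is open (given a point $p$ with $f(p) > 0$, lower semi-continuity produces a neighbourhood on which $f$ stays positive, and this works verbatim when $f(p) = \infty$). Fixing $x$, the slice $y \mapsto \tau(x,y)$ inherits lower semi-continuity from the jointly lower semi-continuous $\tau$, whence
\[
I^+(x) = \{\, y \in X : \tau(x,y) > 0 \,\}
\]
is open; the same argument applied to $y \mapsto \tau(y,x)$ gives openness of $I^-(x)$. Applying the superlevel-set fact to $\tau$ on the product space $X \times X$ shows that $\{(x,y) : \tau(x,y) > 0\}$ is open, i.e.\ the relation $\ll$ is open as well.

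I do not anticipate a real obstacle: the statement is essentially encoded in the axioms. The only points requiring a little care are recording why strict superlevel sets of lower semi-continuous functions are open (including the $\infty$-valued case) and observing that joint lower semi-continuity descends both to each slice and to the product, so that all three openness conclusions reduce to a single lemma about lower semi-continuous functions. It is also worth emphasising that push-up and openness are logically independent here: the former rests on the reverse triangle inequality, the latter purely on lower semi-continuity.
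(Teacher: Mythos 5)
Your proof is correct and is essentially the standard argument: the paper itself only cites \cite[Lem.\ 2.10, Lem.\ 2.12, Prop.\ 2.13]{kunzinger2018lorentzian}, and the proofs there proceed exactly as you do, deriving push-up from the reverse triangle inequality together with the characterisation $x \ll y \Leftrightarrow \tau(x,y) > 0$, and all three openness claims from the fact that strict superlevel sets of the (jointly) lower semi-continuous $\tau$ are open. Your explicit remarks on the $\infty$-valued case and on the logical independence of the two halves are accurate but do not change the route.
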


\begin{definition}[Causal curves]\par 
\label{Definition: causalcurves}
Let $(X,d,\ll,\leq,\tau)$ be a Lorentzian pre-length space. A non-constant curve $\gamma:I \to X$, $I$ an interval, is called \emph{future directed causal} (resp.\ \emph{future directed timelike}) if it is locally Lipschitz and for all $t_1,t_2 \in I$ with $t_1 < t_2$ we have that $\gamma(t_1) \leq \gamma(t_2)$ (resp.\ $\gamma(t_1) \ll \gamma(t_2)$). It is called \emph{future directed null} if it is future directed causal and no two points on the curve are $\ll$-related. \emph{Past directed causal/timelike/null} curves are defined dually. From now on, unless explicitly stated otherwise, we assume all causal curves to be future directed.
\end{definition}

We will sometimes deal with causal curves which are not locally Lipschitz. For these curves, some important results (e.g.\ the limit curve theorem) will not hold in general, however, many important properties (especially those that are purely causal-theoretic in nature) will continue to be true.

\begin{definition}[Continuous causal curves]\par 
\label{definition: continuouscausalcurves}
Let $(X,d,\ll,\leq,\tau)$ be a Lorentzian pre-length space. A non-constant curve $\gamma:I \to X$, $I$ an interval, is called a \emph{future directed continuous causal curve} (resp.\ \emph{future directed continuous timelike curve}) if it is continuous and for all $t_1,t_2 \in I$ with $t_1 < t_2$ we have that $\gamma(t_1) \leq \gamma(t_2)$ (resp.\ $\gamma(t_1) \ll \gamma(t_2)$). It is called a \emph{future directed continuous null curve} if it is a future directed continuous causal curve and no two points on the curve are $\ll$-related. Past versions of these notions are defined dually. From now on, unless explicitly stated otherwise, we assume all continuous causal curves to be future directed.
\end{definition}

We emphasise that a \emph{causal/timelike/null curve} is always understood to be locally Lipschitz. If \emph{continuous} causal/timelike/null curves are meant, we will state that explicitly. Wherever possible, we will state definitions and results for continuous causal curves instead of (locally Lipschitz) causal curves for greater generality. In most cases, the proofs given in the literature for these results apply word for word to the continuous case.

\begin{definition}[Extensions of causal curves]
\label{definition: extensions of causal curves}
Let $(X,d,\ll,\leq,\tau)$ be a Lorentzian pre-length space and let $\gamma:I \to X$ be a continuous causal curve, where $I$ is any interval. We say that $\gamma$ is \emph{(continuously) extendible} if there exists a (continuous) causal curve $\tilde{\gamma}:J \to X$, $J$ an interval with $J \supsetneqq I$, such that $\tilde{\gamma}|_{I} = \gamma$. Otherwise, we say $\gamma$ is \emph{(continuously) inextendible}. We refer to (in)extendibility to points in the future resp.\ past directions of $\gamma$ as \textit{future/past (in)extendibility}.\footnote{Geodesics defined on intervals $[a,b]$ with $a = -\infty$ or $b=\infty$ should be thought of as properly reparametrised.}
\end{definition}

\begin{definition}[$\tau$-length]\par 
\label{definition: taulength}
Let $\gamma:[a,b] \to X$ be a future directed continuous causal curve. 
Then its \emph{$\tau$-length} is defined by
\begin{align*}
    L_{\tau}(\gamma):=\inf\left\{\sum_{i=0}^{N-1} \tau(\gamma(t_i),\gamma(t_{i+1})) \mid a = t_0 < t_1 < \dots < t_N = b \right\}. 
\end{align*}
If $\gamma$ is past directed causal, then $L_{\tau}(\gamma)$ is defined analogously. For continuous causal curves defined on half-open intervals, e.g. $[a,b)$, one takes the limit of $L_{\tau}(\gamma|_{[a,c]})$ as $c \to b$, and similarly in the other cases.
\end{definition}

The $\tau$-length of a continuous causal curve is invariant under reparametrisations. We will have more to say on these topics later.

\begin{definition}[Maximising causal curves]\par 
\label{definition: maximalcurve}
A future directed (continuous) causal curve $\gamma:[a,b] \to X$ is called \emph{maximising} (or $\tau$-\emph{maximising}) if $\tau(\gamma(a),\gamma(b)) = L_{\tau}(\gamma)$, and analogously for past directed (continuous) causal curves. We also refer to such curves as \emph{(continuous) distance realisers}.
\end{definition}

Lorentzian pre-length spaces where between each pair of causally related points there is a maximising (locally Lipschitz) causal curve are referred to as \emph{strictly intrinsic} or \emph{geodesic}.

Next, we define some of the steps on the causal ladder for Lorentzian pre-length spaces.

\begin{definition}[Causality conditions]\par 
A Lorentzian pre-length space $(X,d,\ll,\leq,\tau)$ is called
\begin{enumerate}
    \item \emph{non-totally imprisoning} if for every compact set $K\subset X$ there is $C > 0$ such that the $d$-lengths of all causal curves in $K$ is bounded by $C$,
    \item \emph{strongly causal} if the Alexandrov topology generated by the subbasis of timelike diamonds $\{I(x,y) \mid x,y \in X\}$ coincides with the metric topology,
    \item \emph{globally hyperbolic} if it is non-totally imprisoning and all \emph{causal diamonds} $J(x,y)$ are compact.
\end{enumerate}
\end{definition}

\begin{Lemma}
\label{lemma: stronglycausalcausallyconvexbase}
Let $(X,d,\ll,\leq,\tau)$ be a strongly causal Lorentzian pre-length space. Then each $x \in X$ has a neighbourhood basis consisting of causally convex\footnote{A set $V \subset X$ is called \emph{causally convex} if for all $p,q \in V$ we have $J(p,q) \subset V$.} neighbourhoods. Even more is true: $X$ has a topological basis of causally convex neighbourhoods.
\begin{proof}
This is trivial since by definition finite intersections of timelike diamonds are a basis for the topology, but these sets are causally convex.
\end{proof}
\end{Lemma}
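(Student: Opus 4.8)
The plan is to exhibit an explicit family of causally convex sets that already forms a basis for the Alexandrov topology, and then to invoke strong causality to transfer this to the metric topology. The natural candidate family is that of the timelike diamonds $I(x,y) = I^+(x) \cap I^-(y)$ together with their finite intersections, since these form a basis for the Alexandrov topology by definition. The work therefore reduces to two observations: that each individual diamond $I(x,y)$ is causally convex, and that causal convexity survives under finite intersections.

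First I would verify that every timelike diamond is causally convex. Fix $p, q \in I(x,y)$ and any $r \in J(p,q)$, so that $x \ll p \leq r \leq q \ll y$. Applying the push-up property (Lemma \ref{Lemma: pushupandopennness}) to the chain $x \ll p \leq r$ yields $x \ll r$, and applying it to $r \leq q \ll y$ yields $r \ll y$; hence $r \in I^+(x) \cap I^-(y) = I(x,y)$. Since $r \in J(p,q)$ was arbitrary, $J(p,q) \subset I(x,y)$, so $I(x,y)$ is causally convex. This push-up step is the only substantive input: it is precisely what converts the causal chain through $r$ into the \emph{timelike} memberships needed to land back inside the open diamond; everything else is formal bookkeeping, so I do not expect a genuine obstacle here.

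Finally I would record that causal convexity is preserved under arbitrary intersections: if $\{V_\alpha\}$ are causally convex and $p, q \in \bigcap_\alpha V_\alpha$, then $J(p,q) \subset V_\alpha$ for every $\alpha$, whence $J(p,q) \subset \bigcap_\alpha V_\alpha$. In particular finite intersections of timelike diamonds are causally convex. As these finite intersections form a basis for the Alexandrov topology by definition, and strong causality identifies the Alexandrov topology with the metric topology, the collection of causally convex finite intersections of diamonds constitutes a topological basis for $X$. Selecting those basis elements that contain a given point $x$ then furnishes the desired causally convex neighbourhood basis at $x$, and the global basis statement follows at once.
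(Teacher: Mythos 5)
Your proposal is correct and follows exactly the paper's route: the paper's proof is the one-line observation that finite intersections of timelike diamonds form a basis (by the definition of strong causality) and are causally convex, and your argument simply makes explicit the push-up step (Lemma \ref{Lemma: pushupandopennness}) and the stability of causal convexity under intersections that the paper leaves implicit. No gap; you have merely written out the details behind the word ``trivial''.
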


\begin{definition}[Causal path-connectedness]\par
\label{definition: causalpathconnect}
A Lorentzian pre-length space $X$ is called \emph{causally path-connected} if for all $x < y$ there is a causal curve from $x$ to $y$ and for all $x \ll y$ there is a timelike curve from $x$ to $y$.
\end{definition}

For the next definition, we use the version given in \cite[Def.\ 2.16]{hau2020causal} rather than \cite[Def.\ 3.4]{kunzinger2018lorentzian} as it more closely resembles the case of smooth spacetimes. Before doing so, note that on causally path-connected Lorentzian pre-length spaces $X$, we can introduce a \emph{local causal relation}: Let $U \subset X$ be open and $x,y \in U$, then we say $x \leq_U y$ if there exists a future causal curve from $x$ to $y$ entirely contained in $U$.

\begin{definition}[Local causal closedness]\par 
\label{definition: loccausalclosed}
Let $X$ be causally path-connected and $U \subset X$ be open. We say that $U$ is \emph{causally closed} if for all sequences $p_n,q_n$ in $U$ converging to $p,q \in U$ and $p_n \leq_U q_n$, we have that $p \leq_U q$. We say that $X$ is \emph{locally causally closed} if every point in $X$ has a causally closed neighbourhood.
\end{definition}

\begin{definition}[Global causal closedness]\par 
\label{definition: globcausalclosed}
A Lorentzian pre-length space $X$ is called \textit{globally causally closed} if $\leq$ is a closed relation.
\end{definition}

\begin{definition}[$d$-compatibility]\par 
A Lorentzian pre-length space $(X,d,\ll,\leq,\tau)$ is called $d$-\emph{compatible} if for every $x \in X$ there is a neighbourhood $U$ of $x$ and a constant $C > 0$ such that the $d$-lengths of all causal curves contained in $U$ are bounded above by $C$.
\end{definition}

\begin{Lemma}
\label{Lemma: Nontotalimprcausalcurvesleavecompsets}
Let $(X,d,\ll,\leq,\tau)$ be a locally causally closed, $d$-compatible, globally hyperbolic Lorentzian pre-length space. Then no compact set in $X$ contains an inextendible causal curve.
\begin{proof}
See \cite[Cor.\ 3.15]{kunzinger2018lorentzian}.
\end{proof}
\end{Lemma}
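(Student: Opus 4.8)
The plan is to argue by contradiction. Suppose some compact set $K \subset X$ contains an inextendible causal curve $\gamma$. Since the past direction is dual, I may assume $\gamma$ is future-inextendible and, after restricting, view it as a future-directed causal curve $\gamma \colon [a,b) \to K$. The first goal is to produce a limit point of $\gamma$ as the parameter tends to $b$. Because $\gamma$ lies in the compact set $K$, non-total imprisonment (which is part of global hyperbolicity) furnishes a constant $C>0$ bounding the $d$-length of every causal curve in $K$; in particular $L_d(\gamma) \le C < \infty$. Using $d$-compatibility to guarantee that $\gamma$ admits a genuine reparametrisation with respect to $d$-arclength, I would replace $\gamma$ by its arclength parametrisation $\gamma \colon [0,L) \to K$, which is then $1$-Lipschitz with $L = L_d(\gamma) < \infty$. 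Since the length is finite, the estimate $d(\gamma(s),\gamma(t)) \le |s-t|$ shows that $\gamma(t)$ is $d$-Cauchy as $t \to L$; as $K$ is compact (hence complete and closed in $X$), there is a point $p \in K$ with $\gamma(t) \to p$.

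The candidate extension is $\tilde\gamma \colon [0,L] \to X$ defined by $\tilde\gamma|_{[0,L)} = \gamma$ and $\tilde\gamma(L) = p$. By construction $\tilde\gamma$ is still $1$-Lipschitz and non-constant, so the only thing left is to verify that it is causal, i.e.\ that $\gamma(s) \le p$ for every $s \in [0,L)$. This is where local causal closedness enters, and it is the main obstacle, since I only have a \emph{local} version of closedness of $\le$ at my disposal. I would fix a causally closed neighbourhood $U$ of $p$ and choose $s_0 < L$ with $\gamma([s_0,L)) \subset U$, which is possible because $\gamma(t) \to p \in U$. For $s_0 \le s < t < L$ the segment $\gamma|_{[s,t]}$ is a causal curve lying entirely in $U$, so $\gamma(s) \le_U \gamma(t)$. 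Holding $s$ fixed and letting $t \to L$ along a sequence, the constant sequence $\gamma(s)$ and the sequence $\gamma(t)$ both lie in $U$ and converge (to $\gamma(s)$ and to $p$, respectively), so causal closedness of $U$ yields $\gamma(s) \le_U p$, hence $\gamma(s) \le p$. For $s < s_0$, transitivity together with $\gamma(s) \le \gamma(s_0) \le p$ gives the same conclusion.

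Therefore $\tilde\gamma$ is a causal curve defined on $[0,L] \supsetneqq [0,L)$ restricting to $\gamma$, contradicting the inextendibility of $\gamma$; hence no compact set can contain an inextendible causal curve. The crux of the argument, and the place where the hypotheses are genuinely used, is the passage to the limit point: non-total imprisonment is what forces the finite $d$-length, and hence convergence to a single point $p$ rather than mere accumulation, while local causal closedness is exactly what is needed to promote the relations $\gamma(s) \le_U \gamma(t)$ to the limiting relation $\gamma(s) \le p$, so that the added endpoint is causally compatible with the rest of the curve. I would expect the bookkeeping around the arclength reparametrisation, and the verification that $\le_U$ indeed implies $\le$, to be routine but in need of care.
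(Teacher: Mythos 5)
Your proposal is correct and is essentially the argument of \cite[Cor.\ 3.15]{kunzinger2018lorentzian}, to which the paper delegates this lemma: non-total imprisonment (contained in global hyperbolicity) bounds $L^d(\gamma)$ in the compact set, the $1$-Lipschitz $d$-arclength parametrisation together with completeness of $K$ yields a limit point $p \in K$, and local causal closedness upgrades the relations $\gamma(s) \leq_U \gamma(t)$ to $\gamma(s) \leq_U p$, hence $\gamma(s) \leq p$ (since $\leq_U$ is witnessed by causal curves, which realise $\leq$), producing a causal extension and the desired contradiction. Two minor points: the arclength reparametrisation of a locally Lipschitz curve exists without invoking $d$-compatibility (cf.\ \cite[Prop.\ 2.5.9]{burago2001course}), and, as you rightly flag, the contradiction is with inextendibility of the \emph{arclength-parametrised} curve, so one must read (in)extendibility as invariant under reparametrisation --- which is the convention the paper operates with (cf.\ Remark \ref{remark: darclengthparametrizations}) and which the cited proof shares.
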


\begin{definition}[Localisability]\par 
\label{definition: localizablity}
A Lorentzian pre-length space $(X,d,\ll,\leq,\tau)$ is called \emph{localisable} if for each $x \in X$ there exists a neighbourhood (called \emph{localising neighbourhood}) $\Omega_x$ containing $x$ with the following properties: 
\begin{enumerate}
    \item Causal curves in $\Omega_x$ have uniformly bounded $d$-length, i.e., $\Omega_x$ is $d$-compatible.
    \item For each $y \in \Omega_x$, $I^{\pm}(y) \cap \Omega_x \neq \emptyset$.
    \item There is a continuous map (\emph{local time separation}) $\omega_x:\Omega_x \times \Omega_x \to [0,\infty)$ such that $\Omega_x$ is a Lorentzian pre-length space upon restricting $d,\ll,\leq$.
    \item For all $p,q \in \Omega_x$ with $p < q$ there exists a causal curve $\gamma_{pq}$ from $p$ to $q$ entirely in $\Omega_x$ with maximal $\tau$-length among all causal curves from $p$ to $q$ contained in $\Omega_x$, as well as $L_{\tau}(\gamma_{pq}) = \omega_x(p,q)$.
\end{enumerate}
Note that this necessarily means that $\omega_x$ is of the form 
\begin{equation}
\omega_x(p,q):=\max \{ L_{\tau}(\gamma) \mid \gamma \text{ is a causal curve from $p$ to $q$ in $\Omega_x$.} \}.  
\end{equation}
If in addition, for $p,q \in \Omega_x$ with $p\ll q$ the curve $\gamma_{pq}$ is timelike and strictly longer than any causal curve from $p$ to $q$ in $\Omega_x$ containing a null segment, then $\Omega_x$ is called a \emph{regular localising neighbourhood}, and $X$ \emph{regularly localisable} if any point has a regular localising neighbourhood. Moreover, $X$ is called \emph{strongly localisable} if each point has a neighbourhood base of localising neighbourhoods and \emph{SR-localisable} if each point has a neighbourhood base of regular localising neighbourhoods.
\end{definition}

\begin{Lemma}
\label{Lemma: locimpliesstronglylocforstronglycausal}
Let $(X,d,\ll,\leq,\tau)$ be a strongly causal Lorentzian pre-length space.
\begin{enumerate}
    \item If $X$ is localisable, then it is strongly localisable.
    \item If $X$ is regularly localisable, then it is SR-localisable.
\end{enumerate}
\begin{proof}
By Lemma \ref{lemma: stronglycausalcausallyconvexbase}, each point in $X$ has a neighbourhood basis of causally convex neighbourhoods. Now suppose $X$ is localisable, take $x \in X$ and let $\Omega_x$ be a localising neighbourhood of $x$ and let $U \subset \Omega_x$ be any neighbourhood of $x$. Then there is a causally convex neighbourhood $V$ of $x$ such that $V \subset U$. We are done if we show that $V$ is a localising neighbourhood. But this is easy to see since any causal curve starting and ending in $V$ is entirely contained in there. If $X$ is regularly localisable, then $V$ is even a regular localising neighbourhood.
\end{proof}

\end{Lemma}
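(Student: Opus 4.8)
The plan is to reduce both statements to a single confinement observation about causally convex sets. By Lemma \ref{lemma: stronglycausalcausallyconvexbase}, every point $x$ has a neighbourhood basis of causally convex sets, so it suffices to prove the following: if $\Omega_x$ is a (regular) localising neighbourhood of $x$ and $V$ is any causally convex open set with $x \in V \subseteq \Omega_x$, then $V$ is itself a (regular) localising neighbourhood. Granting this, for any neighbourhood $U \subseteq \Omega_x$ of $x$ I pick a causally convex $V$ with $x \in V \subseteq U$ from the basis; since $U$ was arbitrarily small, the resulting family of localising (resp.\ regular localising) neighbourhoods is a neighbourhood base, which is exactly strong (resp.\ SR-) localisability.

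The engine of the argument is that causal convexity confines causal curves: if $\sigma$ is a causal curve with endpoints $p,q \in V$, then its image lies in $J(p,q) \subseteq V$, so $\sigma$ runs entirely inside $V$. Hence the causal curves living in $V$ are \emph{exactly} the causal curves in $\Omega_x$ whose endpoints lie in $V$. From this I would read off properties (i), (iii) and (iv) of Definition \ref{definition: localizablity} directly. Property (i) is inherited verbatim, since any causal curve in $V$ is a causal curve in the $d$-compatible $\Omega_x$. For (iii) I set $\omega_V := \omega_x|_{V \times V}$, which is continuous as a restriction of the continuous $\omega_x$, and the restricted $d,\ll,\leq$ together with $\omega_V$ make $V$ a Lorentzian pre-length space because all the defining properties (including the reverse triangle inequality) are inherited from $\Omega_x$. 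For (iv), given $p,q \in V$ with $p < q$, the maximiser $\gamma_{pq}$ supplied by $\Omega_x$ has both endpoints in $V$ and therefore lies in $V$ by confinement; since every causal curve from $p$ to $q$ in $V$ is in particular such a curve in $\Omega_x$, the curve $\gamma_{pq}$ is also maximal among causal curves in $V$, and $L_\tau(\gamma_{pq}) = \omega_x(p,q) = \omega_V(p,q)$.

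It remains to check property (ii) and, for the second statement, regularity. For (ii) I must produce, for each $y \in V$, points of $I^{\pm}(y) \cap V$. Starting from a point $z \in I^+(y) \cap \Omega_x$, which exists because $\Omega_x$ is localising, a future timelike curve from $y$ towards $z$ inside $\Omega_x$ has, by openness of $V$ around $y$, a nondegenerate initial segment contained in $V$; any interior point of that segment lies in $I^+(y) \cap V$, and dually for the past. In the regular case, property (iv) of $\Omega_x$ directly furnishes such a timelike curve whenever $y \ll z$, so this step becomes automatic; moreover the timelike maximiser between $p \ll q$ in $V$ is again confined to $V$ and stays strictly longer than any causal $V$-curve containing a null segment, because all these competitors are themselves causal curves in $\Omega_x$. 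Hence $V$ inherits regularity as well.

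I expect the only genuinely non-formal point to be property (ii): once confinement is in place, properties (i), (iii) and (iv) are essentially immediate restrictions, whereas (ii) requires knowing that timelike-related points of the localising neighbourhood can be joined by a timelike curve, so that a short initial segment lands in the open set $V$. In the regular setting this is handed to us by the definition, while in the merely localisable setting it is the one place where causal path-connectedness of the ambient space is implicitly invoked. Everything else is a combination of the confinement property of causally convex sets and the stability of the relevant structures under restriction.
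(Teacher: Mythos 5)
Your proposal is correct and follows essentially the same route as the paper: strong causality yields a neighbourhood basis of causally convex sets (Lemma \ref{lemma: stronglycausalcausallyconvexbase}), and causal convexity confines causal curves, so any causally convex $V \subseteq \Omega_x$ inherits the (regular) localising properties. You are in fact more careful than the paper's one-line verification: your observation that property (ii) of Definition \ref{definition: localizablity} is the one step requiring a timelike curve emanating from $y$ --- supplied by property (iv) in the regular case, but otherwise only via causal path-connectedness, which is not literally among the lemma's hypotheses --- correctly pinpoints the sole subtlety, one that the paper's own proof silently glosses over.
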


Regularly localisable spaces have the following important property:

\begin{Theorem}[Causal character of maximising causal curves]\par
\label{theorem: causalcharofmaximizers}
In a regularly localisable Lorentzian pre-length space, maximising causal curves are either timelike or null.
\begin{proof}
See \cite[Thm.\ 3.18]{kunzinger2018lorentzian}.
\end{proof}
\end{Theorem}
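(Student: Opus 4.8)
The plan is to reduce the global statement to a local dichotomy inside a single regular localising neighbourhood and then propagate it along the curve by a connectedness argument. Fix a maximising causal curve $\gamma:[a,b]\to X$. First I would record two elementary facts. Every sub-segment $\gamma|_{[c,d]}$ of a maximiser is again maximising: combining additivity of $L_\tau$ with the general bound $L_\tau(\sigma)\leq\tau(\sigma(c),\sigma(d))$ and the reverse triangle inequality forces $L_\tau(\gamma|_{[c,d]})=\tau(\gamma(c),\gamma(d))$. Consequently, writing $h(t):=\tau(\gamma(a),\gamma(t))=L_\tau(\gamma|_{[a,t]})$, additivity gives $h(t)-h(s)=\tau(\gamma(s),\gamma(t))$ for $s<t$; since $\tau(p,q)>0$ exactly when $p\ll q$, the curve is timelike precisely when $h$ is strictly increasing and null precisely when $h$ is constant. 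Also, a null curve has $L_\tau=0$, because $\tau$ vanishes on all of its pairs.

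The heart of the proof is the following local dichotomy, which is where regular localisability is used. Let $\Omega$ be a regular localising neighbourhood and let $\sigma:[c,d]\to\Omega$ be a maximising causal curve with image in $\Omega$; then $\sigma$ is timelike or null. If $\sigma(c)\not\ll\sigma(d)$, then $\sigma$ is null, since a timelike-related pair $\sigma(s)\ll\sigma(s')$ would give $\sigma(c)\leq\sigma(s)\ll\sigma(s')\leq\sigma(d)$ and hence $\sigma(c)\ll\sigma(d)$ by push-up (Lemma \ref{Lemma: pushupandopennness}); the same observation shows that the restriction of $\sigma$ between any non-timelike-related pair is null. If $\sigma(c)\ll\sigma(d)$, I claim $\sigma$ is timelike. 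The key point is that $\sigma$ is simultaneously a local maximiser: the local time separation satisfies $\omega_\Omega\leq\tau$ (as $\omega_\Omega$ is a supremum of $\tau$-lengths of curves in $\Omega$, each bounded by $\tau$), while $\sigma\subset\Omega$ gives $\tau(\sigma(c),\sigma(d))=L_\tau(\sigma)\leq\omega_\Omega(\sigma(c),\sigma(d))$, whence $L_\tau(\sigma)=\omega_\Omega(\sigma(c),\sigma(d))$. If $\sigma$ were not timelike it would contain a null segment (by the push-up observation), and then the defining property of a \emph{regular} localising neighbourhood (Definition \ref{definition: localizablity}) yields the strict inequality $L_\tau(\sigma)<\omega_\Omega(\sigma(c),\sigma(d))$, a contradiction.

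With the local dichotomy in hand I would argue as follows. For each $t\in[a,b]$, Definition \ref{definition: localizablity} provides a regular localising neighbourhood $\Omega\ni\gamma(t)$, and by continuity a $\delta>0$ with $\gamma([t-\delta,t+\delta]\cap[a,b])\subset\Omega$; the corresponding sub-segment is maximising, so $\gamma$ is timelike or null there, that is, $h$ is locally strictly increasing or locally constant at $t$. These two conditions are mutually exclusive and exhaustive, so they split $[a,b]$ into two disjoint relatively open sets, and connectedness of $[a,b]$ forces one of them to be empty. If $h$ is locally constant everywhere it is constant, so $\tau(\gamma(s),\gamma(t))=0$ for all $s<t$ and $\gamma$ is null; if $h$ is locally strictly increasing everywhere it is strictly increasing (a flat spot would make interior points locally constant), so $\tau(\gamma(s),\gamma(t))>0$ for all $s<t$ and $\gamma$ is timelike.

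The main obstacle is the local dichotomy, and specifically the reconciliation $L_\tau(\sigma)=\omega_\Omega(\sigma(c),\sigma(d))$ of the global and local time separations together with the strict length deficit suffered by causal curves containing a null segment: this is exactly the content of the \emph{regular} (as opposed to merely localisable) hypothesis, and the reason the statement would fail without it. The remaining ingredients — that sub-segments of maximisers are maximising, that null curves have vanishing $\tau$-length, and the open/connected bookkeeping — are routine.
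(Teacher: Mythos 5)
Your proof is correct and takes essentially the same route as the proof the paper invokes via \cite[Thm.\ 3.18]{kunzinger2018lorentzian}: restrictions of maximisers are maximisers, inside a regular localising neighbourhood the reconciliation $L_\tau(\sigma)=\omega_\Omega(\sigma(c),\sigma(d))$ combined with the strict length deficit for causal curves containing a null segment yields the local dichotomy (push-up handling the null case), and a connectedness argument propagates it along the curve. The only implicit conventions — finiteness of the $\tau$-values used when writing $h(t)-h(s)=\tau(\gamma(s),\gamma(t))$, and that the extracted null segment is non-constant so the regularity clause applies — are shared with the cited source and harmless in the paper's setting, where $\tau$ is finite and continuous.
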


Note that in localisable Lorentzian pre-length spaces, the sets $I^{\pm}(y)$ are never empty for any $y \in X$.

\begin{definition}[Geodesic]\par 
\label{definition: geodesic}
Let $X$ be a localisable Lorentzian pre-length space and $\gamma:I \to X$ a (continuous) causal curve. Then $\gamma$ is a (future-directed causal) \emph{(continuous) geodesic} if for each $t_0 \in I$ there exists a localising neighbourhood $\Omega$ of $\gamma(t_0)$ and a neighbourhood  $[c,d] \subset I$ of $t_0$ such that $\gamma([c,d]) \subset \Omega$ and $\gamma|_{[c,d]}$ is maximising in $\Omega$ with respect to the local time separation. 
\end{definition}

We will sometimes need the notion of \emph{geodesic (in)extendibility} which is defined in terms of the existence of a geodesic extending a given geodesic. Note that inextendibility implies geodesic inextendibility but the converse need not hold in general.

For many arguments in the proof of the splitting theorem, the property that geodesics can always be extended to open domains will be essential. While this is a consequence of the ODE theory of the geodesic equation in the case of spacetimes, we will need to assume it here:

\begin{definition}[Timelike geodesic prolongation]
\label{definition: geodesicprolongation}
A localisable \LpLS $X$ is said to have the \emph{timelike geodesic prolongation} property if any maximising timelike segment $\gamma:[a,b]\to X$ can be extended to a timelike geodesic with an open domain, i.e.\ there is an $\varepsilon>0$ and a timelike geodesic $\bar{\gamma}:(a-\varepsilon,b+\varepsilon)\to X$ with $\bar{\gamma}|_{[a,b]}=\gamma$.
\end{definition}

It is clear that this is satisfied in smooth strongly causal spacetimes considered as a \LpLSn. One easily sees that spacetimes with timelike boundary do not satisfy this.

\begin{definition}[Locally quasiuniformly maximising]
\label{definition: locuniformmaxspace}
A localisable Lorentzian pre-length space $X$ is called \emph{locally quasiuniformly maximising} if for each $x \in X$ there exists a localisable neighbourhood $U$ containing $x$ such that each causal geodesic entirely contained in $U$ is maximising in $U$ with respect to the local time separation.
\end{definition}

\begin{Proposition}[Upper semicontinuity of Lorentzian arclength]\par
\label{Proposition: uscarclength}
Let $(X,d,\ll,\leq,\tau)$ be a strongly causal and localisable Lorentzian pre-length space. Then $L_{\tau}$ is upper semi-continuous, i.e.\ if $\gamma_n:[a,b] \to X$ are causal curves converging $d$-uniformly to a causal curve $\gamma:[a,b] \to X$, then
\begin{align*}
    L_{\tau}(\gamma) \geq \limsup_{n \to \infty} L_{\tau}(\gamma_n).
\end{align*}
\begin{proof}
See \cite[Prop.\ 3.17]{kunzinger2018lorentzian}.
\end{proof}
\end{Proposition}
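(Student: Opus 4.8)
The plan is to reduce the global statement to local estimates on localising neighbourhoods, where the continuous local time separation $\omega_x$ is available, and then to patch these together along a sufficiently fine partition. First I would dispose of the trivial case $L_{\tau}(\gamma) = \infty$, so from now on assume $L_{\tau}(\gamma) < \infty$ and fix $\epsilon > 0$.

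The first step is to choose a good partition. Since $[a,b]$ is compact and $\gamma$ is continuous, $\gamma([a,b])$ is covered by finitely many localising neighbourhoods; invoking strong causality (via Lemma \ref{Lemma: locimpliesstronglylocforstronglycausal}) I may take these to be causally convex. By a Lebesgue number argument applied to the open cover $\{\gamma^{-1}(\Omega_j)\}$ of $[a,b]$, there is a partition $a = t_0 < \dots < t_N = b$ with each $\gamma([t_{i-1},t_i])$ contained in a single localising neighbourhood $\Omega_i$. Refining the partition if necessary, I would also arrange $\sum_i \tau(\gamma(t_{i-1}),\gamma(t_i)) \leq L_{\tau}(\gamma) + \epsilon$; this is possible because the reverse triangle inequality makes the partition sum decrease monotonically under refinement towards its infimum $L_{\tau}(\gamma)$.

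The second step is the local comparison. For $n$ large the uniform convergence $\gamma_n \to \gamma$ forces $\gamma_n([t_{i-1},t_i]) \subset \Omega_i$ (the compact arc $\gamma([t_{i-1},t_i])$ has positive $d$-distance to $X \setminus \Omega_i$, so uniform closeness pushes the whole sub-arc inside; causal convexity alternatively yields the same conclusion from closeness of the endpoints alone). Hence each $\gamma_n|_{[t_{i-1},t_i]}$ is a causal curve in $\Omega_i$, and the defining formula for $\omega_i$ gives
\[
L_{\tau}(\gamma_n|_{[t_{i-1},t_i]}) \leq \omega_i(\gamma_n(t_{i-1}), \gamma_n(t_i)).
\]
Using additivity of $L_{\tau}$ under concatenation and summing over $i$ yields $L_{\tau}(\gamma_n) \leq \sum_i \omega_i(\gamma_n(t_{i-1}), \gamma_n(t_i))$. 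Passing to the limit and exploiting the continuity of each $\omega_i$ together with $\gamma_n(t_j) \to \gamma(t_j)$ gives $\limsup_n L_{\tau}(\gamma_n) \leq \sum_i \omega_i(\gamma(t_{i-1}), \gamma(t_i))$.

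The third step closes the gap. The crucial observation is that iterating the reverse triangle inequality bounds the $\tau$-length of any causal curve between two points by the time separation of its endpoints, so $\omega_i(\gamma(t_{i-1}),\gamma(t_i)) \leq \tau(\gamma(t_{i-1}),\gamma(t_i))$. Combining this with the partition estimate from the first step,
\[
\limsup_n L_{\tau}(\gamma_n) \leq \sum_i \omega_i(\gamma(t_{i-1}),\gamma(t_i)) \leq \sum_i \tau(\gamma(t_{i-1}),\gamma(t_i)) \leq L_{\tau}(\gamma) + \epsilon,
\]
and letting $\epsilon \to 0$ finishes the argument. The main obstacle I anticipate is exactly this bridging step: the local maximiser $\omega_i$ generically \emph{overestimates} $L_{\tau}(\gamma|_{[t_{i-1},t_i]})$, so one cannot directly identify the limiting sum with $L_{\tau}(\gamma)$. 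The double inequality $\omega_i \leq \tau(\text{endpoints})$ together with the monotone convergence of the $\tau$-partition-sums to $L_{\tau}(\gamma)$ under refinement is precisely what reconciles the two, and it must be arranged carefully when the partition is chosen.
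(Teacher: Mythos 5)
Your proof is correct and coincides essentially step for step with the proof of \cite[Prop.\ 3.17]{kunzinger2018lorentzian}, which is exactly what the paper cites here: an $\epsilon$-good partition refined (via the reverse triangle inequality) so that each sub-arc lies in a causally convex localising neighbourhood, causal convexity resp.\ uniform closeness trapping the $\gamma_n$-pieces inside these neighbourhoods, the bound $L_{\tau}(\gamma_n|_{[t_{i-1},t_i]}) \leq \omega_i(\gamma_n(t_{i-1}),\gamma_n(t_i))$ combined with continuity of $\omega_i$ to pass to the limit, and the bridging inequality $\omega_i(p,q) \leq \tau(p,q)$. Nothing further is needed.
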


\begin{Proposition}
\label{Proposition: extgeovsextcont}
Let $X$ be a strongly causal, localisable, locally quasiuniformly maximising Lorentzian pre-length space and let $\gamma:[a,b) \to X$ be a causal geodesic. Then $\gamma$ is geodesically extendible to $[a,b]$ if and only if it is extendible as a continuous curve to $[a,b]$.
\begin{proof}
See \cite[Prop.\ 4.6]{grant2019inextendibility} and note that the proof really requires the space to be locally quasiuniformly maximising.
\end{proof}

\end{Proposition}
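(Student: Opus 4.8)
The plan is to prove the two implications separately; the forward direction is immediate and the real content lies in the converse. If $\gamma$ is geodesically extendible to $[a,b]$, then the extending geodesic is in particular a continuous causal curve, so $\gamma$ is continuously extendible; no hypotheses beyond the definitions are needed here.

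For the converse, suppose $\gamma$ extends to a continuous causal curve $\bar\gamma:[a,b]\to X$ and set $p:=\bar\gamma(b)=\lim_{t\to b^-}\gamma(t)$. I want to show that $\bar\gamma$ is in fact a geodesic near $b$, which by Definition \ref{definition: geodesic} amounts to producing a localising neighbourhood $U$ of $p$ and a parameter $c<b$ such that $\bar\gamma([c,b])\subset U$ and $\bar\gamma|_{[c,b]}$ is maximising in $U$ with respect to the local time separation $\omega_U$. Using that $X$ is locally quasiuniformly maximising (Definition \ref{definition: locuniformmaxspace}), I would first choose a localisable neighbourhood $U$ of $p$ in which \emph{every} causal geodesic contained in $U$ is maximising in $U$, and then pick $c<b$ with $\bar\gamma([c,b])\subset U$ (possible since $\gamma(t)\to p\in U$ and $U$ is open). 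Since $\gamma|_{[c,s]}$ is a causal geodesic contained in $U$ for every $s\in[c,b)$, the quasiuniform maximising property upgrades its \emph{local} maximality to genuine maximality in $U$: $L_\tau(\gamma|_{[c,s]})=\omega_U(\gamma(c),\gamma(s))$.

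The heart of the argument is then a limiting step. Reparametrising each $\gamma|_{[c,s]}$ onto the fixed interval $[c,b]$, these curves converge $d$-uniformly to $\bar\gamma|_{[c,b]}$ as $s\to b^-$, so upper semicontinuity of $\tau$-length (Proposition \ref{Proposition: uscarclength}), together with reparametrisation invariance of $L_\tau$ and continuity of $\omega_U$ (property (iii) of Definition \ref{definition: localizablity}), gives
\[
L_\tau(\bar\gamma|_{[c,b]}) \;\geq\; \limsup_{s\to b^-} L_\tau(\gamma|_{[c,s]}) \;=\; \lim_{s\to b^-}\omega_U(\gamma(c),\gamma(s)) \;=\; \omega_U(\gamma(c),p).
\]
Since $\omega_U(\gamma(c),p)$ is by definition the maximal $\tau$-length among causal curves from $\gamma(c)$ to $p$ in $U$, the reverse inequality holds as soon as $\bar\gamma|_{[c,b]}$ is admitted as such a competitor, whence $L_\tau(\bar\gamma|_{[c,b]})=\omega_U(\gamma(c),p)$ and $\bar\gamma|_{[c,b]}$ is maximising in $U$. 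By Definition \ref{definition: geodesic} this makes $\bar\gamma$ a geodesic, so $\gamma$ is geodesically extendible to $[a,b]$.

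I expect the main obstacle to be exactly the point at which the converse uses the locally quasiuniformly maximising hypothesis, together with the accompanying regularity issue at the endpoint. A geodesic is a priori only \emph{locally} maximising, so without quasiuniform maximality one could not conclude $L_\tau(\gamma|_{[c,s]})=\omega_U(\gamma(c),\gamma(s))$ and the limiting identity above would break down; this is precisely why the statement genuinely needs the hypothesis. The second delicate point is ensuring that the continuous extension $\bar\gamma$ is a bona fide (locally Lipschitz) causal curve up to and including $b$, so that it qualifies as a competitor realising $\omega_U$ and is a geodesic in the sense of Definition \ref{definition: geodesic} rather than merely a continuous causal curve; this requires using the $d$-compatibility of the localising neighbourhood (property (i) of Definition \ref{definition: localizablity}) to control the $d$-length near $p$ and, if necessary, reparametrising by $d$-arclength and invoking the reparametrisation invariance of $L_\tau$.
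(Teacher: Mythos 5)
Your proof is correct and is essentially the argument of the cited reference \cite[Prop.\ 4.6]{grant2019inextendibility} (the paper itself only points there, adding the remark that local quasiuniform maximisation is genuinely needed): extract a quasiuniformly maximising localising neighbourhood $U$ of the endpoint, upgrade each segment $\gamma|_{[c,s]}$ from locally maximising to maximising in $U$, and pass to the limit via continuity of $\omega_U$ and upper semicontinuity of $L_\tau$, with the endpoint made a bona fide competitor through $d$-arclength reparametrisation using $d$-compatibility. You also correctly identify the exact step --- the identity $L_\tau(\gamma|_{[c,s]})=\omega_U(\gamma(c),\gamma(s))$ --- that fails without the locally quasiuniformly maximising hypothesis, which is precisely the point of the paper's cautionary remark.
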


We now state the limit curve theorem in the case of strongly causal Lorentzian pre-length spaces, which will be sufficient for our needs. Note that in this case, Definition \ref{definition: loccausalclosed} is equivalent to \cite[Def.\ 3.4]{kunzinger2018lorentzian}.

\begin{Theorem}[Limit curve theorem]\par
\label{Theorem: LimitCurveTheorem}
Let $(X,d,\ll,\leq,\tau)$ be a locally causally closed, localisable, strongly causal, $d$-compatible Lorentzian pre-length space with proper metric $d$. Let $\gamma_n:[0,L_n] \to X$ be a sequence of causal curves parametrised by $d$-arclength, and suppose that $L_n = L^d(\gamma_n) \to \infty$. If the sequence $\gamma_n(0)$ has an accumulation point $x \in X$, then some subsequence of the $\gamma_n$ converges locally uniformly to a future inextendible causal curve $\gamma:[0,\infty) \to X$. Moreover, if the $\gamma_n$ are maximising, then so is $\gamma$. In this case, for any $T > 0$, $L_{\tau}(\gamma|_{[0,T]}) = \lim_n L_{\tau}(\gamma_n|_{[0,T]})$.
\begin{proof}
The statement about the existence of the limit curve is shown in \cite[Thm.\ 3.14]{kunzinger2018lorentzian}. Now suppose that $\gamma_n$ are maximising and w.l.o.g.\ let $\gamma_n$ itself converge locally uniformly to $\gamma$. Consider any $T > 0$. Then by upper semicontinuity of $L_{\tau}$ and lower semicontinuity of $\tau$,
\begin{align*}
    \limsup_n L_{\tau}(\gamma_n|_{[0,T]}) \leq L_{\tau}(\gamma|_{[0,T]}) \leq \tau(\gamma(0),\gamma(T)) \leq \liminf_n \tau(\gamma_n(0),\gamma_n(T)) = \liminf_n L_{\tau}(\gamma_n|_{[0,T]}).
\end{align*}
Thus, equality holds everywhere. In particular, $\gamma$ is maximising and $\lim_n L_{\tau}(\gamma_n|_{[0,T]}) = L_{\tau}(\gamma|_{[0,T]})$.
\end{proof}
\end{Theorem}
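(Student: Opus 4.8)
The plan is to produce the limit curve by a compactness argument, verify that it is a causal and future inextendible curve, and then, in the maximising case, pin down its maximality through a semicontinuity sandwich. First I would extract the curve. Since each $\gamma_n$ is parametrised by $d$-arclength it is $1$-Lipschitz, so the family is equicontinuous; after passing to a subsequence I may assume $\gamma_n(0) \to x$. On any fixed interval $[0,T]$ and for all large $n$ the image $\gamma_n([0,T])$ is contained in the closed ball $\bar{B}_{T+1}(x)$, which is compact because $d$ is proper. Arzelà--Ascoli then gives a subsequence converging uniformly on $[0,T]$, and a diagonal argument over $T = 1,2,3,\dots$ yields a single subsequence converging locally uniformly on $[0,\infty)$ to a $1$-Lipschitz curve $\gamma$. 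Causality of $\gamma$ follows by fixing $s < t$, using $\gamma_n(s) \leq \gamma_n(t)$ for large $n$, and passing the relation to the limit inside a causally closed neighbourhood via local causal closedness (partitioning $[s,t]$ finely so that consecutive limit points share such a neighbourhood). This existence-and-inextendibility statement is precisely \cite[Thm.\ 3.14]{kunzinger2018lorentzian}, which I would cite rather than reprove in full.

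I expect the genuinely delicate point to be the future inextendibility of $\gamma$, which is why it is imported from the cited theorem. The naive argument ``$\gamma$ has infinite $d$-length, hence cannot converge to an endpoint'' fails, because a locally uniform limit of unit-speed curves need not itself be unit speed, and a tail of $\gamma$ may well carry only finite $d$-length. The correct mechanism is non-total imprisonment: by Lemma \ref{Lemma: Nontotalimprcausalcurvesleavecompsets} no compact set contains an inextendible causal curve, and combining this with the uniform $d$-length bounds available on $d$-compatible neighbourhoods rules out $\gamma$ accumulating at a single future endpoint (it also excludes the degenerate possibility of $\gamma$ collapsing to a constant). This is the technical heart absorbed into \cite[Thm.\ 3.14]{kunzinger2018lorentzian}.

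For the maximising statement, suppose the $\gamma_n$ are maximising. Every subsegment of a maximiser is again maximising---an immediate consequence of additivity of $L_\tau$ along concatenations together with the reverse triangle inequality---so for $T \leq L_n$ we have $L_\tau(\gamma_n|_{[0,T]}) = \tau(\gamma_n(0),\gamma_n(T))$. Fixing $T > 0$ and $n$ large enough that $L_n \geq T$, I would then chain
\begin{align*}
\limsup_n L_\tau(\gamma_n|_{[0,T]}) \leq L_\tau(\gamma|_{[0,T]}) \leq \tau(\gamma(0),\gamma(T)) \leq \liminf_n \tau(\gamma_n(0),\gamma_n(T)) = \liminf_n L_\tau(\gamma_n|_{[0,T]}),
\end{align*}
where the first inequality is upper semicontinuity of $L_\tau$ (Proposition \ref{Proposition: uscarclength}) applied to the uniform convergence $\gamma_n|_{[0,T]} \to \gamma|_{[0,T]}$, the second is the trivial-partition bound $L_\tau(\gamma|_{[0,T]}) \leq \tau(\gamma(0),\gamma(T))$ built into Definition \ref{definition: taulength}, the third is lower semicontinuity of $\tau$ together with $\gamma_n(0) \to \gamma(0)$ and $\gamma_n(T) \to \gamma(T)$, and the final equality is the subsegment maximality noted above. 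Equality must then hold throughout, giving $L_\tau(\gamma|_{[0,T]}) = \tau(\gamma(0),\gamma(T))$; hence $\gamma$ is maximising and $\lim_n L_\tau(\gamma_n|_{[0,T]}) = L_\tau(\gamma|_{[0,T]})$, as claimed.
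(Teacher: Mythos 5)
Your proposal is correct and takes essentially the same route as the paper: existence and future inextendibility of the limit curve are delegated to \cite[Thm.\ 3.14]{kunzinger2018lorentzian}, and the maximising claim follows from the identical four-step sandwich (upper semicontinuity of $L_\tau$ via Proposition \ref{Proposition: uscarclength}, the trivial-partition bound $L_\tau(\gamma|_{[0,T]}) \leq \tau(\gamma(0),\gamma(T))$, lower semicontinuity of $\tau$, and the fact that subsegments of maximisers are maximising, which the paper uses implicitly in its final equality). One minor caveat in your aside on inextendibility: Lemma \ref{Lemma: Nontotalimprcausalcurvesleavecompsets} as stated requires global hyperbolicity, which is not among the hypotheses here, but since you defer that part of the argument to the cited theorem this does not affect the correctness of your proof.
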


In the context of the splitting theorem, we are interested in Lorentzian pre-length spaces with significantly more structure, which we will discuss now.

\begin{definition}[Lorentzian length spaces]\par
\label{definition: LLS}
A locally causally closed, causally path-connected and localisable Lorentzian pre-length space $(X,d,\ll,\leq,\tau)$ is called \emph{Lorentzian length space} if for all $x \leq y$, $x \neq y$,
\begin{align*}
    \tau(x,y) = \sup\{L_{\tau}(\gamma) \mid \gamma \text{ future directed causal from }x \text{ to }y\}.
\end{align*}
\end{definition}

Note that since by definition $L_{\tau}(\gamma) \leq \tau(p,q)$ for any (even continuous) causal curve from $p$ to $q$, one can equivalently take the supremum over continuous causal curves to obtain the time separation in the case of Lorentzian length spaces.

\begin{Proposition}[Causality of Lorentzian length spaces]\par
\label{Proposition: StrongcausalityGlobhypLLS}
\ \begin{enumerate}
    \item A Lorentzian length space is strongly causal if and only if each point has a neighbourhood base of causally convex neighbourhoods.
    \item Strongly causal Lorentzian length spaces are non-totally imprisoning.
    \item Globally hyperbolic Lorentzian length spaces are strongly causal. Moreover, $\tau$ is continuous and finite and for any two causally related points there exists a maximising causal curve connecting them.
\end{enumerate}
\begin{proof}
See \cite[Thm.\ 3.26, Thm.\ 3.28, Thm.\ 3.30]{kunzinger2018lorentzian}.
\end{proof}
\end{Proposition}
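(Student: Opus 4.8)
The plan is to treat the three assertions in turn, drawing on the topological input of Lemma~\ref{lemma: stronglycausalcausallyconvexbase}, the openness and push-up statements of Lemma~\ref{Lemma: pushupandopennness}, the structure of localisable spaces (Definition~\ref{definition: localizablity}) together with causal path-connectedness (Definition~\ref{definition: causalpathconnect}), and finally the limit-curve machinery of Theorem~\ref{Theorem: LimitCurveTheorem} with the upper semicontinuity of $L_{\tau}$ from Proposition~\ref{Proposition: uscarclength}.

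For assertion (i), the forward implication is immediate, since Lemma~\ref{lemma: stronglycausalcausallyconvexbase} already extracts a neighbourhood base of causally convex sets from strong causality. For the converse I would prove that the Alexandrov and metric topologies coincide by a double inclusion. One inclusion is free: each timelike diamond $I(x,y)=I^{+}(x)\cap I^{-}(y)$ is metric-open by Lemma~\ref{Lemma: pushupandopennness}, so the Alexandrov topology is contained in the metric one. For the reverse inclusion, fix $x$ and a metric neighbourhood $U$, and choose a causally convex neighbourhood $V\subseteq U$ from the hypothesised base. By causal path-connectedness, a timelike curve issuing from $x$ into $I^{+}(x)$ shows that $I^{+}(x)$ accumulates at $x$, and dually for $I^{-}(x)$; hence I may pick $p\in I^{-}(x)\cap V$ and $q\in I^{+}(x)\cap V$. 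Causal convexity then yields $x\in I(p,q)\subseteq J(p,q)\subseteq V\subseteq U$, exhibiting a basic Alexandrov-open set trapping $x$ inside $U$.

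For assertion (ii), I would argue directly. Every point has a causally convex neighbourhood (by strong causality) that is also $d$-compatible, since one may take it inside a localising neighbourhood and restrict; covering a compact set $K$ and passing to a finite subcover gives causally convex, $d$-bounded sets $V_{1},\dots,V_{m}$ with $d$-length bounds $C_{1},\dots,C_{m}$. The key observation is that a causal curve $\gamma$ meets each causally convex $V_{i}$ in a connected subcurve: if $\gamma(t_{1}),\gamma(t_{2})\in V_{i}$ with $t_{1}<t_{2}$, then $\gamma([t_{1},t_{2}])\subseteq J(\gamma(t_{1}),\gamma(t_{2}))\subseteq V_{i}$. Thus each preimage $\gamma^{-1}(V_{i})$ is a single parameter interval, the domain of any causal curve in $K$ is covered by these $m$ intervals, and $L^{d}(\gamma)\le\sum_{i}C_{i}$, a bound depending only on $K$.

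The genuine difficulty is assertion (iii), above all the passage from global hyperbolicity to strong causality; the remaining clauses are then standard limit-curve arguments. For strong causality I would argue by contradiction: if it failed at a point $p$, then by the characterisation in (i) some neighbourhood $U$ admits no causally convex neighbourhood of $p$ inside it, producing causal curves that start near $p$, leave $U$, and return arbitrarily close to $p$; concatenating such returns yields causal curves of $d$-length tending to infinity that remain imprisoned in a fixed compact causal diamond, contradicting non-total imprisonment (and Lemma~\ref{Lemma: Nontotalimprcausalcurvesleavecompsets}). The delicate point I expect to be the main obstacle is ensuring that the limit curve extracted here is non-degenerate and genuinely trapped, and in particular that the extraction rests only on properness, $d$-compatibility, and local causal closedness rather than on strong causality itself, so as to avoid circularity with Theorem~\ref{Theorem: LimitCurveTheorem}. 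Granting strong causality, lower semicontinuity of $\tau$ is built into Definition~\ref{definition: LpLS}, while upper semicontinuity (hence continuity) follows by taking nearly maximising curves between convergent sequences $x_{n}\to x$ and $y_{n}\to y$ inside a fixed compact diamond and passing to a limit via Proposition~\ref{Proposition: uscarclength}; finiteness of $\tau$ follows from local boundedness of the local time separation on localising neighbourhoods together with compactness of $J(x,y)$; and existence of maximisers is the Avez--Seifert-type statement obtained by applying Theorem~\ref{Theorem: LimitCurveTheorem} to a $\tau$-length maximising sequence in the compact diamond $J(x,y)$ and invoking its maximality clause.
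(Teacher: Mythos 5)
The paper offers no argument of its own here: its ``proof'' is the citation to \cite[Thm.\ 3.26, Thm.\ 3.28, Thm.\ 3.30]{kunzinger2018lorentzian}, so what you have written is in effect a reconstruction of those proofs, and for items (i) and (ii) your reconstruction is correct and is essentially the argument of the cited source. In (i) the converse direction correctly uses localisability (non-emptiness of $I^{\pm}(x)$) plus causal path-connectedness to find $p \in I^-(x)\cap V$, $q \in I^+(x) \cap V$ with $x \in I(p,q) \subset V$, and openness of $I^{\pm}$ from Lemma \ref{Lemma: pushupandopennness} gives the other inclusion of topologies. In (ii) the observation that $\gamma^{-1}(V_i)$ is a single interval for causally convex $V_i$ is exactly the right crux, and summing the $d$-length bounds over a finite cover is sound.

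In (iii), however, two steps of your sketch would fail as written. First, negating ``$p$ has a neighbourhood base of causally convex sets inside $U$'' only yields, for each $n$, points $x_n,y_n \in B_{1/n}(p)$ and $z_n \in J(x_n,y_n)\setminus B_{1/n}(p)$; since $1/n \to 0$ this says nothing about curves leaving the \emph{fixed} set $U$. To get genuine excursions out of $U$ you need a convexification argument: if for some $\varepsilon>0$ every $J(x,y)$ with $x,y \in B_\varepsilon(p)$ stayed inside $U$, then $B_\varepsilon(p) \cup \bigcup\{J(x,y) : x,y \in B_\varepsilon(p)\}$ would itself be a causally convex neighbourhood of $p$ inside $U$ (transitivity of $\leq$ makes this union causally convex), contradicting the choice of $U$. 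Second, ``concatenating such returns'' is not possible: the excursion from $x_n$ to $y_n$ and the next one from $x_{n+1}$ to $y_{n+1}$ can only be joined if $y_n \leq x_{n+1}$, which nothing guarantees. The correct repair is a dichotomy. Choose $p^- \ll p \ll p^+$, so that all excursion curves $\gamma_n$ eventually lie in the compact diamond $J(p^-,p^+)$. If $L^d(\gamma_n) \to \infty$, this already contradicts non-total imprisonment. If the $L^d(\gamma_n)$ are bounded (and bounded below by $2\,d(B_\varepsilon(p),X\setminus U)>0$, which gives the non-degeneracy you worried about), a limit curve yields a \emph{closed} causal curve through $p$ and through $z=\lim z_n \notin U$, and the self-concatenation of this loop --- which \emph{is} legitimately concatenable, unlike distinct returns --- is an inextendible causal curve imprisoned in $J(p^-,p^+)$, contradicting Lemma \ref{Lemma: Nontotalimprcausalcurvesleavecompsets}. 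Your circularity concern is resolved exactly as you suspect: the limit curve theorem of \cite{kunzinger2018lorentzian} does not assume strong causality (only the restatement in Theorem \ref{Theorem: LimitCurveTheorem}, which bundles in the maximisation clause via Proposition \ref{Proposition: uscarclength}, does). Two smaller patches: finiteness of $\tau$ needs, beyond compactness of $J(x,y)$ and boundedness of the $\omega_x$ on compacta, a bound on the \emph{number} of localising neighbourhoods a causal curve can traverse, which comes from the non-imprisonment $d$-length bound together with a Lebesgue-number argument; and the Avez--Seifert step applies the bounded-parameter version of the limit curve theorem to \emph{nearly} maximising curves, concluding maximality of the limit from lower semicontinuity of $\tau$ and Proposition \ref{Proposition: uscarclength} rather than from the maximality clause of Theorem \ref{Theorem: LimitCurveTheorem}, which is stated for sequences of exact maximisers with $L_n \to \infty$.
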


\begin{Lemma}
\label{Lem:strongly_strongly_causal}
Let $(X,d,\ll,\leq,\tau)$ be a strongly causal Lorentzian pre-length space such that for each $x \in X$ there exists a timelike curve $\gamma:[-\varepsilon,\varepsilon] \to X$ with $\gamma(0) = x$. Then $\{I(x,y) \mid x,y \in X\}$ is a base for the topology on $X$. Moreover, for each $x \in X$, each timelike curve $\gamma:[-\varepsilon,\varepsilon] \to X$ with $\gamma(0)=x$ and parameters $0<s_n,t_n \to 0$, $\{I(\gamma(-s_n),\gamma(t_n)) \mid n \in \mathbb{N}\}$ is a neighbourhood base at $x$.
\begin{proof}
Let $x \in X$ with $x \in U$, $U \subset X$ open. By the definition of the Alexandrov topology, finite intersections of the sets $I^+(y) \cap I^-(z)$ with $y,z \in X$ are a base. Thus, there exist $x_1,\dots,x_n,y_1,\dots,y_n \in X$ such that 
\begin{align*}
    x \in V:=I^+(x_1) \cap I^-(y_1) \cap \dots \cap I^+(x_n) \cap I^-(y_n) \subset U.
\end{align*}
Now let $\gamma:[-\varepsilon,\varepsilon] \to X$ be a timelike curve with $\gamma(0) = x$. By continuity, maybe after making $\varepsilon$ smaller, we may assume that $\gamma([-\varepsilon,\varepsilon]) \subset V$. Then $W:=I^+(\gamma(-\varepsilon)) \cap I^-(\gamma(\varepsilon))$ is an open set containing $x$ and we claim that $W \subset V$. To see this, let $z \in W$, then for any $i = 1,\dots,n$ we have $x_i \ll \gamma(-\varepsilon) \ll z$, hence $z \in I^+(x_i)$. Similarly, $z \ll \gamma(\varepsilon) \ll y_i$, hence $z \in I^-(y_i)$. Thus $z \in V$. This proves that $\{I(x,y) \mid x,y \in X\}$ is a base for the topology of $X$, and the second claim then easily follows from the arguments above.
\end{proof}
\end{Lemma}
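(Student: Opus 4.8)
The plan is to exploit strong causality to pin down the topology and then use the ambient timelike curve through each point as a \emph{sandwiching} device. First I would recall that, by strong causality, the metric topology agrees with the Alexandrov topology, for which the sets $I^{\pm}(\cdot)$ form a subbasis; hence finite intersections of such sets form a base, and a generic basic open neighbourhood of a point $x$ has the form
\[
V = I^+(x_1) \cap I^-(y_1) \cap \dots \cap I^+(x_n) \cap I^-(y_n),
\]
with $x \in V$. The goal for the first assertion is to produce a single timelike diamond $I(a,b)$ with $x \in I(a,b) \subseteq V$: since each $I(a,b) = I^+(a) \cap I^-(b)$ is open (by openness of $I^{\pm}$, Lemma \ref{Lemma: pushupandopennness}) and contains $x$ precisely when $a \ll x \ll b$, this exhibits $\{I(x,y) \mid x,y \in X\}$ as a base.

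To build such a diamond I would invoke the hypothesis that there is a timelike curve $\gamma:[-\varepsilon_0,\varepsilon_0] \to X$ with $\gamma(0) = x$. By continuity of $\gamma$ and openness of $V$, after shrinking $\varepsilon_0$ I may assume $\gamma([-\varepsilon_0,\varepsilon_0]) \subseteq V$. Setting $a = \gamma(-\varepsilon_0)$ and $b = \gamma(\varepsilon_0)$, we have $a \ll x \ll b$, so $x \in I(a,b)$. The crux is the inclusion $I(a,b) \subseteq V$: for $z \in I(a,b)$ one has $a \ll z \ll b$, and since $a = \gamma(-\varepsilon_0) \in V \subseteq I^+(x_i)$ encodes $x_i \ll a$, push-up/transitivity of $\ll$ (Lemma \ref{Lemma: pushupandopennness}) gives $x_i \ll a \ll z$, i.e.\ $z \in I^+(x_i)$; dually $z \ll b \ll y_i$ yields $z \in I^-(y_i)$. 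Hence $z \in V$, which proves $I(a,b) \subseteq V$ and the first claim.

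For the second assertion I would run the same sandwiching construction for an \emph{arbitrary} prescribed timelike curve $\gamma$ through $x$ and let the inclusion propagate along $\gamma$. Given any open $U \ni x$, the first part supplies an $\varepsilon > 0$ with $I(\gamma(-\varepsilon),\gamma(\varepsilon)) \subseteq U$. Since $s_n,t_n \to 0$, for all large $n$ we have $0 < s_n, t_n < \varepsilon$, whence $\gamma(-\varepsilon) \ll \gamma(-s_n)$ and $\gamma(t_n) \ll \gamma(\varepsilon)$; one more application of transitivity gives the nested inclusion $I(\gamma(-s_n),\gamma(t_n)) \subseteq I(\gamma(-\varepsilon),\gamma(\varepsilon)) \subseteq U$, so the family $\{I(\gamma(-s_n),\gamma(t_n)) \mid n \in \mathbb{N}\}$ is a neighbourhood base at $x$.

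I do not expect a genuine obstacle: the whole argument rests on the interplay of strong causality (to describe the topology through $I^{\pm}$) and push-up/transitivity of $\ll$ (to transport causal relations through the endpoints of the sandwiching curve). The only point requiring care is bookkeeping the direction of each $\ll$-relation when verifying $I(a,b) \subseteq V$, namely ensuring that it is the membership of the diamond's \emph{endpoints} in $V$, rather than membership of $z$ itself, that feeds into the transitivity steps.
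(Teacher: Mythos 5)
Your proposal is correct and follows essentially the same route as the paper's proof: express a basic Alexandrov-open neighbourhood $V$ as a finite intersection of sets $I^+(x_i)\cap I^-(y_i)$, shrink the timelike curve through $x$ until it lies in $V$, and use transitivity of $\ll$ at the endpoints to get $I(\gamma(-\varepsilon),\gamma(\varepsilon))\subseteq V$. Your treatment of the second claim via the nested inclusion $I(\gamma(-s_n),\gamma(t_n))\subseteq I(\gamma(-\varepsilon),\gamma(\varepsilon))$ simply spells out what the paper dismisses as ``easily follows,'' so there is no substantive difference.
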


Lorentzian pre-length spaces where each point lies in the interior of a timelike curve are future and past approximating in the sense of \cite[Def.\ 2.17]{burtscher2021time} (see also \cite[Lem.\ 2.18]{burtscher2021time})

\begin{Lemma}[Geodesics in strongly causal Lorentzian length spaces]\par 
Let $(X,d,\ll,\leq,\tau)$ be a strongly causal Lorentzian length space. A (continuous) causal curve $\gamma:I \to X$ is a (continuous) geodesic if and only if for each $t_0 \in I$ there exists a subinterval $[c,d] \subset I$ with $t_0 \in [c,d]$ such that $\gamma|_{[c,d]}$ is $\tau$-maximising.
\begin{proof}
This follows from \cite[Lem.\ 4.3]{grant2019inextendibility}.
\end{proof}

\end{Lemma}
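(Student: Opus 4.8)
The plan is to reduce everything to one observation: on a sufficiently small neighbourhood of any point, the local time separation supplied by localisability agrees with the global time separation $\tau$. Since $X$ is strongly causal and localisable, the argument in the proof of Lemma \ref{Lemma: locimpliesstronglylocforstronglycausal}, combined with Lemma \ref{lemma: stronglycausalcausallyconvexbase}, shows that every point $\gamma(t_0)$ admits a localising neighbourhood $V$ that is in addition causally convex. The first key claim is that on such a $V$ the local time separation $\omega_V$ coincides with $\tau$ for causally related points: if $p,q \in V$ with $p \leq q$, then by causal convexity every causal curve from $p$ to $q$ has its image in $J(p,q) \subseteq V$, so the causal curves from $p$ to $q$ lying in $V$ are \emph{all} the causal curves from $p$ to $q$. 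Since $V$ is localising, $\omega_V(p,q)$ is the maximum of $L_{\tau}$ over precisely this set, and as $X$ is a Lorentzian length space this maximum equals the supremum defining $\tau(p,q)$; hence $\omega_V(p,q) = \tau(p,q)$.

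The second ingredient is the standard fact that restrictions of maximisers are maximisers: if a segment $\sigma|_{[c,d]}$ satisfies $\tau(\sigma(c),\sigma(d)) = L_{\tau}(\sigma|_{[c,d]})$ (or the analogous identity for some $\omega_V$), then for every $c \le c' \le d' \le d$ one has $\tau(\sigma(c'),\sigma(d')) = L_{\tau}(\sigma|_{[c',d']})$. This follows by writing $\sigma|_{[c,d]}$ as the concatenation of the three subsegments, applying the reverse triangle inequality together with additivity of $L_{\tau}$, and noting that equality in the global identity forces equality on each piece.

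With these two facts, both implications are immediate. For the forward direction, suppose $\gamma$ is a geodesic and fix $t_0$, with localising neighbourhood $\Omega$ and interval $[c,d]$ as in Definition \ref{definition: geodesic}. Choose a causally convex localising neighbourhood $V \subseteq \Omega$ of $\gamma(t_0)$ and shrink to a subinterval $[c',d'] \ni t_0$ with $\gamma([c',d']) \subseteq V$. The restriction lemma applied to $\omega_{\Omega}$ shows $\gamma|_{[c',d']}$ still maximises in $\Omega$, and then the first claim (with $V$ causally convex, so $J(\gamma(c'),\gamma(d')) \subseteq V \subseteq \Omega$ and $\omega_\Omega = \tau$ on these points) upgrades this to $\tau(\gamma(c'),\gamma(d')) = L_{\tau}(\gamma|_{[c',d']})$, so $\gamma$ is locally $\tau$-maximising. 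For the converse, suppose $\gamma$ is locally $\tau$-maximising and fix $t_0$; pick a causally convex localising neighbourhood $V$ of $\gamma(t_0)$ and, using the restriction lemma to shrink, a subinterval $[c',d'] \ni t_0$ with $\gamma([c',d']) \subseteq V$ on which $\gamma$ is $\tau$-maximising. The first claim then yields $\omega_V(\gamma(c'),\gamma(d')) = \tau(\gamma(c'),\gamma(d')) = L_{\tau}(\gamma|_{[c',d']})$, so $\gamma|_{[c',d']}$ maximises in $V$, exhibiting $\gamma$ as a geodesic.

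I expect the only genuine subtlety to lie in the first claim, namely in matching the \emph{maximum} over curves in $V$ defining $\omega_V$ with the \emph{supremum} over \emph{all} causal curves defining $\tau$. This is exactly where causal convexity enters: it confines every competitor curve to $J(p,q) \subseteq V$, so that the two index sets coincide and the localising maximum realises the global supremum. Once this identification is clean, the remainder is bookkeeping with the reverse triangle inequality and with the neighbourhood-base results guaranteeing causally convex localising neighbourhoods.
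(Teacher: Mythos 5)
Your argument is correct for (locally Lipschitz) causal curves, and it is in substance the argument behind the result the paper simply cites, namely \cite[Lem.\ 4.3]{grant2019inextendibility}: the crux there, as in your write-up, is that strong causality yields causally convex localising neighbourhoods $V$ (Lemma \ref{Lemma: locimpliesstronglylocforstronglycausal}), on which causal convexity traps every competitor curve between $p\leq q$ inside $J(p,q)\subset V$, so the maximum defining $\omega_V(p,q)$ ranges over \emph{all} causal curves from $p$ to $q$ and therefore attains and equals the supremum defining $\tau(p,q)$ in a Lorentzian length space. Both implications then reduce, exactly as you organise them, to this identification plus the restriction lemma for maximisers, so for Lipschitz curves your proof is complete and matches the cited route.

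One step is incomplete for the parenthetical \emph{continuous} case, which the statement also covers. Your restriction lemma for $\omega_\Omega$ needs $L_\tau(\sigma)\leq\omega_\Omega(\sigma(c),\sigma(d))$ for each subsegment $\sigma$ of $\gamma$ contained in $\Omega$; for locally Lipschitz $\sigma$ this is immediate, since $\sigma$ is one of the curves over which the maximum in Definition \ref{definition: localizablity}\,(iv) is taken, but a merely continuous causal curve is \emph{not} a competitor in that maximum, and the trivial bound $L_\tau(\sigma)\leq\tau(\sigma(c),\sigma(d))$ points the wrong way, since $\omega_\Omega$ may be strictly smaller than $\tau$ (curves leaving $\Omega$ may be longer). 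The gap closes with your own first claim: cover the compact image of $\sigma$ by causally convex localising neighbourhoods $V_i\subset\Omega$ and choose, via a Lebesgue-number argument, a partition $c=t_0<t_1<\dots<t_N=d$ with $\sigma([t_i,t_{i+1}])\subset V_i$; then all causal curves from $\sigma(t_i)$ to $\sigma(t_{i+1})$ lie in $J(\sigma(t_i),\sigma(t_{i+1}))\subset V_i\subset\Omega$, so $\tau(\sigma(t_i),\sigma(t_{i+1}))=\omega_\Omega(\sigma(t_i),\sigma(t_{i+1}))$, whence
\begin{equation*}
L_\tau(\sigma)\;\leq\;\sum_{i}\tau\bigl(\sigma(t_i),\sigma(t_{i+1})\bigr)\;=\;\sum_i \omega_\Omega\bigl(\sigma(t_i),\sigma(t_{i+1})\bigr)\;\leq\;\omega_\Omega\bigl(\sigma(c),\sigma(d)\bigr)
\end{equation*}
by the reverse triangle inequality for $\omega_\Omega$. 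Your converse direction is unaffected, since there the restriction lemma is applied to $\tau$ itself, where $L_\tau\leq\tau$ holds trivially for continuous causal curves; with the above supplement, the forward direction also goes through in the continuous case.
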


To conclude this subsection, we discuss $d$-arclength and $\tau$-arclength reparametrisations of causal curves. 

\begin{remark}[On $d$-arclength parametrisations]\par
\label{remark: darclengthparametrizations}
Let $(X,d,\ll,\leq,\tau)$ be a globally hyperbolic Lorentzian length space with proper metric $d$ and $\gamma:[a,b) \to X$ a causal curve that is inextendible at $b$. Then $\gamma$ has infinite $d$-length: By Lemma \ref{Lemma: Nontotalimprcausalcurvesleavecompsets}, $\gamma$ has to leave every compact set, in particular it leaves each compact ball $\overline{B}_n(\gamma(a))$, which shows that $L^d(\gamma) = \infty$. If we reparametrise $\gamma$ by $d$-arclength (note that this is in general not possible for continuous causal curves), it is thus defined on $[0,\infty)$. Similarly, if we have a doubly inextendible causal curve $\tilde{\gamma}:(a,b) \to X$, then upon reparametrising it by $d$-arclength, it is defined on $\R$, so that $L^d(\tilde{\gamma}|_{[c,d]}) = |d-c|$ (see also \cite[Rem.\ 2.5]{burtscher2021time}). Note that $d$-arclength parametrisations are globally $1$-Lipschitz.
\end{remark}

\begin{Lemma}[$\tau$-arclength parametrisations for inextendible curves]\par
\label{Lemma: tauarclengthparametrizations}
Let $(X,d,\ll,\leq,\tau)$ be a Lorentzian pre-length space with continuous time separation $\tau$ which satisfies $\tau(x,x) = 0$ for all $x \in X$. Let $\gamma:[a,b) \to X$ be a future inextendible continuous timelike curve defined on a half-open interval with $L_{\tau}(\gamma) \leq \infty$ and $L_{\tau}(\gamma|_{[a,c]}) < \infty$ for each $c\in [a,b)$. Then the map $\phi:[a,b) \to [0,L_{\tau}(\gamma))$, $t \mapsto L_{\tau}(\gamma|_{[a,t]})$ is continuous and strictly increasing. Moreover, $\tilde{\gamma}:=\gamma \circ \phi^{-1}:[0,\infty) \to X$ is a weak reparametrisation of $\gamma$ with respect to $\tau$-arclength. Similarly, a doubly inextendible continuous timelike curve $\gamma:(a,b) \to X$ can be weakly parametrised by $\tau$-arclength and is then defined on $\left(-L_{\tau}(\gamma|_{(a,t_0]}),L_{\tau}(\gamma|_{[t_0,b)})\right)$, where $t_0 \in (a,b)$ is arbitrary. 
\begin{proof}
This can be seen by applying \cite[Lem.\ 3.33, Lem.\ 3.34]{kunzinger2018lorentzian} to each $\gamma|_{[a,\tilde{b}]}$ with $\tilde{b} < \infty$.
\end{proof}
\end{Lemma}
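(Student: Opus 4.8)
The plan is to reduce everything to the compact-interval statements of \cite[Lem.\ 3.33, Lem.\ 3.34]{kunzinger2018lorentzian} and then pass to the limit as the right endpoint approaches $b$. First I would fix an arbitrary $\tilde b \in (a,b)$ and consider the restriction $\gamma|_{[a,\tilde b]}$, which is a continuous timelike curve on a compact interval of finite $\tau$-length by hypothesis. On such curves the cited lemmas already give that $t \mapsto L_{\tau}(\gamma|_{[a,t]})$ is continuous and strictly increasing and that $\gamma|_{[a,\tilde b]}$ admits a weak reparametrisation by $\tau$-arclength. The additivity of $L_{\tau}$ under concatenation then guarantees that these restrictions are mutually compatible: for $\tilde b_1 < \tilde b_2$ one has $\phi|_{[a,\tilde b_1]} = (\phi|_{[a,\tilde b_2]})|_{[a,\tilde b_1]}$, so the maps glue to a single continuous, strictly increasing function $\phi$ on $[a,b)$.

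Next I would identify the image. Since $\phi(a)=0$ and, by the definition of $L_{\tau}$ on half-open intervals, $\phi(t) = L_{\tau}(\gamma|_{[a,t]})$ increases monotonically to $L_{\tau}(\gamma)$ as $t \to b$, the continuous strictly increasing $\phi$ maps $[a,b)$ bijectively onto $[0,L_{\tau}(\gamma))$. As a continuous strictly monotone bijection between intervals it is a homeomorphism onto its image, so $\phi^{-1}$ is continuous and $\tilde{\gamma} := \gamma \circ \phi^{-1}$ is a well-defined continuous timelike curve on $[0,L_{\tau}(\gamma))$ (which is $[0,\infty)$ precisely when $L_{\tau}(\gamma) = \infty$). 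That $\tilde{\gamma}$ is parametrised by $\tau$-arclength, i.e.\ $L_{\tau}(\tilde{\gamma}|_{[0,s]}) = s$, is then immediate from $\phi \circ \phi^{-1} = \mathrm{id}$ together with the compact-interval statement applied on each $[0,s]$. For the doubly inextendible case I would fix $t_0 \in (a,b)$, apply the future-directed result just obtained to $\gamma|_{[t_0,b)}$ and the dual (time-reversed) result to the past-inextendible curve $\gamma|_{(a,t_0]}$, and concatenate the two parametrisations at the common value $0 = \phi(t_0)$; the parameter then ranges over $\left(-L_{\tau}(\gamma|_{(a,t_0]}), L_{\tau}(\gamma|_{[t_0,b)})\right)$, with independence of the basepoint $t_0$ again following from additivity of $L_{\tau}$.

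The step I expect to require the most care is the strict monotonicity of $\phi$, equivalently the positivity $L_{\tau}(\gamma|_{[s,t]}) > 0$ for every timelike subsegment. Continuity of $\phi$ is comparatively cheap: right and left continuity follow from $L_{\tau}(\gamma|_{[t,t+h]}) \leq \tau(\gamma(t),\gamma(t+h)) \to \tau(\gamma(t),\gamma(t)) = 0$, using continuity of $\tau$ and the hypothesis $\tau(x,x)=0$. The lower bound, by contrast, genuinely uses that $\gamma$ is timelike rather than merely causal and is exactly the content imported from \cite{kunzinger2018lorentzian}; the only real work on our side is checking that it survives the passage from compact to half-open intervals, which it does because positivity on a fixed $[s,t]$ is already a statement about a compact subinterval and is therefore unaffected by the limit $\tilde b \to b$.
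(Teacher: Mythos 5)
Your proposal is correct and takes essentially the same route as the paper, whose entire proof consists of citing \cite[Lem.\ 3.33, Lem.\ 3.34]{kunzinger2018lorentzian} applied to the restrictions $\gamma|_{[a,\tilde b]}$; you simply make explicit the routine details the paper leaves implicit (compatibility of the length functions via additivity of $L_{\tau}$, identification of the image $[0,L_{\tau}(\gamma))$, and the concatenation at $t_0$ in the doubly inextendible case). Your observation that the domain of $\tilde{\gamma}$ is $[0,L_{\tau}(\gamma))$ rather than literally $[0,\infty)$ unless $L_{\tau}(\gamma)=\infty$ is a fair reading of the statement, and your continuity argument via $L_{\tau}(\gamma|_{[t,t+h]}) \leq \tau(\gamma(t),\gamma(t+h)) \to 0$ is exactly the mechanism underlying the cited lemmas.
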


Note that, in general, the $\tau$-arclength parametrisation results in a continuous timelike curve, even if the original curve is locally Lipschitz.

In the following, whenever $\tau$-arclength parametrised curves are mentioned, it is always implicitly understood that those curves have finite $\tau$-length on compact subintervals. In localisable spaces this is anyway the case.

\subsection{Timelike curvature bounds}\label{subsec:LLS:tlCurvBds}

Establishing an analogue to sectional curvature bounds in semi-Riemannian manifolds à la \cite{alexander2008triangle} was one of the main goals in the first work on Lorentzian length spaces \cite{kunzinger2018lorentzian}. 
Historically, splitting theorems always used some type of curvature condition, either Ricci or sectional curvature.
By $M_k$ we denote the Lorentzian model space of constant sectional curvature $k$ and, in this subsection only, $\bar{\tau}$ denotes the time separation in $M_k$ (in general $\bar{\tau}$ will only be used as the time separation in $M_0 = \R^{1,1}$). For more details on this, see \cite[Definition 4.5]{kunzinger2018lorentzian}. We will always assume that all mentioned triangles satisfy size bounds for $M_k$, cf. \cite[Lemma 4.6]{kunzinger2018lorentzian}. However, since we are mainly working with $k=0$, we do not have to worry about this.

We call three points $x_1,x_2,x_3 \in X$ that are timelike related together with maximisers between them a \emph{timelike triangle}. We denote such triangles by $\Delta(x_1,x_2,x_3)$. 

\begin{definition}
Let $X$ be a \LpLS and $\Delta(x_1,x_2,x_3)$ be a timelike triangle. Then a timelike triangle $\Delta(\bar{x}_1,\bar{x}_2,\bar{x}_3)$ in $M_k$ such that $\tau(x_i,x_j)=\bar{\tau}(\bar{x}_i,\bar{x}_j)$ is called a \emph{comparison triangle} for $\Delta(x_1,x_2,x_3)$. It always exists (if the size bounds are satisfied) and is unique up to isometries of $M_k$.
\end{definition}

\begin{definition}[Timelike curvature bounds]
\label{Definition: local tl curv bound}
A Lorentzian pre-length space $X$ is said to satisfy a \emph{timelike curvature bound from below by $k$}, if each point in $X$ has a so-called \text{comparison neighbourhood} $U$, satisfying the following:
\begin{enumerate}
    \item $\tau|_{U \times U}$ is finite and continuous.
    
    \item For all $x,y \in U$ with $x \ll y$ there exists a causal maximiser $\gamma$ from $x$ to $y$ which is entirely contained in $U$.
    
    \item Let $\Delta(x,y,z)$ be a triangle in $U$ and let $\Delta(\bx,\by,\bz)$ be its comparison triangle in $M_k$. Let $p,q \in \Delta(x,y,z)$ and let $\bar{p},\bar{q}$ be the corresponding comparison points in $\Delta(\bar{x},\bar{y},\bar{z})$. Then $\tau(p,q) \leq \bar{\tau}(\bar{p},\bar{q})$. 
\end{enumerate}
For curvature bounds from above, the inequality in (iii) is reversed, but we will not need these types of curvature bounds. If $k=0$, we also say that $X$ has \emph{non-negative timelike curvature} resp.\ \emph{non-positive timelike curvature}.
\end{definition}

Evidently, curvature bounds are only formulated locally as of yet. There is, however, a very natural way of globalising this concept. 

\begin{definition}[Global curvature bound]
\label{definition: globalcurvbounds}
A Lorentzian pre-length space $X$ is said to have \emph{global curvature bounded below by $k$} if $X$ itself is a comparison neighbourhood in the sense of Definition \ref{Definition: local tl curv bound}.
\end{definition}

\begin{remark}[Continuous triangles vs.\ Lipschitz triangles]
\label{remark: contvsLipschitztriangles}
If $X$ is a globally hyperbolic Lorentzian length space with global nonnegative timelike curvature, then in fact curvature comparison even holds for timelike triangles where the maximisers are continuous: Indeed, suppose $\Delta:=\Delta_{C^0}(x,y,z)$ is a continuous timelike triangle, and let $p,q \in \Delta$. Due to the properties of $X$, we find (Lipschitz) maximisers from the endpoints of $\Delta$ to $p,q$, respectively. The concatenations at $p$ resp.\ $q$ of those maximisers are again maximisers because the sides on $\Delta$ are (continuous) maximisers. Hence we have realised $p,q$ on a Lipschitz triangle. 
\end{remark}

One of the most commonly used implications of lower (timelike) curvature bounds is the prohibition of branching of distance-realisers. A formulation of this result for Lorentzian pre-length spaces was first given in \cite[Theorem 4.12]{kunzinger2018lorentzian}. However, with the introduction of hyperbolic angles in \cite{beran2022angles} it was possible to generalise this result by omitting some of the additional assumptions:

\begin{Theorem}[Timelike non-branching]\label{thm:non-branching}
Let $X$ be a strongly causal Lorentzian pre-length space with timelike curvature bounded below by $k \in \R$. Then timelike distance realisers cannot branch, i.e., if $\alpha, \beta: [-\varepsilon, \varepsilon] \to X$ are timelike distance realisers such that there exists $t_0 \in \R$ with $\alpha|_{[-\varepsilon,t_0]}=\beta|_{[-\varepsilon,t_0]}$, then $\alpha=\beta$.
\end{Theorem}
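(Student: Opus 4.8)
The plan is a proof by contradiction modelled on the Alexandrov non-branching argument in \cite{burago2001course}, but carried out with the hyperbolic angles of \cite{beran2022angles} in order to circumvent a genuinely Lorentzian obstruction. Suppose $\alpha \neq \beta$. Since they agree on $[-\eps,t_0]$ and are continuous, the coincidence set is closed, so after enlarging $t_0$ we may assume $\alpha = \beta$ on $[-\eps,t_0]$ while $\alpha(t) \neq \beta(t)$ for a sequence $t \downarrow t_0$. Write $p := \alpha(t_0) = \beta(t_0)$, pick $s>0$ small from this sequence, set $r := \alpha(t_0 - s) = \beta(t_0 - s)$ on the common past segment and $a := \alpha(t_0+s)$, $b := \beta(t_0+s)$, which are distinct. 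Because $\alpha,\beta$ are distance realisers we have $r \ll p \ll a$ and $r \ll p \ll b$, and the restrictions are maximisers, so $\tau(r,a) = \tau(r,p) + \tau(p,a)$ and $\tau(r,b) = \tau(r,p) + \tau(p,b)$. All of this takes place in a comparison neighbourhood, where Definition \ref{Definition: local tl curv bound} and the angle constructions of \cite{beran2022angles} apply.

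In the Riemannian setting one would now view $p$ as the midpoint of the realiser $[r,a]$, compare $\tau(p,b)$ inside a comparison triangle built on the three points $r,a,b$, and read off from the lower curvature bound a strict inequality incompatible with $p$ lying on the maximiser $[r,b]$. The obstruction in the Lorentzian case is that the equal-level branch points $a$ and $b$ are in general \emph{spacelike} separated, so $\{r,a,b\}$ need not form a timelike triangle and the comparison of Definition \ref{Definition: local tl curv bound} does not apply directly: the spacelike separation of $a$ and $b$, which is precisely what must be shown to vanish, is not controlled by the time-separation inequality $\tau \le \bar\tau$. This is exactly the gap that the hyperbolic angles of \cite{beran2022angles} are designed to close, since the comparison angle at a vertex is defined through limits of comparison quantities that remain meaningful even when the opposite side is spacelike.

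Accordingly, I would measure everything at the branch point $p$. The configuration $r \ll p \ll a$ with $\tau(r,a) = \tau(r,p) + \tau(p,a)$ has a degenerate (collinear) comparison triangle in $M_k$, so the hyperbolic angle at $p$ between the past direction $\xi^-$ towards $r$ and the future direction $\xi_a^+$ along $\alpha$ equals $\pi$; the same holds for $\beta$, giving $\measuredangle_p(\xi^-,\xi_b^+) = \pi$. Thus both future branches realise the maximal (straight) angle with the common past direction. Using the monotonicity of comparison angles under the lower bound $k$ from \cite{beran2022angles}, together with the rigidity of the straight configuration in $M_k$ (where the past direction admits a \emph{unique} antipodal future direction), one concludes that the hyperbolic angle $\measuredangle_p(\xi_a^+,\xi_b^+)$ between the two future branches vanishes and, by the accompanying first-variation/rigidity statement, that the branches coincide near $p$, contradicting $a \neq b$. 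Equivalently, feeding the vanishing spacelike separation encoded by the two $\pi$-angles back into the model computation reinstates the strict Alexandrov inequality that the midpoint comparison demanded.

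The main obstacle is this final rigidity step. The bare triangle inequality among the three direction-angles at $p$ is inconclusive: with $\measuredangle_p(\xi^-,\xi_a^+) = \measuredangle_p(\xi^-,\xi_b^+) = \pi$ it only yields $\measuredangle_p(\xi_a^+,\xi_b^+) \ge 0$, so the decisive fact — that the past direction admits a \emph{unique} straight future continuation — has to be extracted from the curvature bound itself rather than from angle additivity alone. This is the technical content imported from \cite{beran2022angles}, and it is precisely what allows one to dispense with the extra hypotheses present in the original formulation \cite[Thm.\ 4.12]{kunzinger2018lorentzian}. Throughout, strong causality guarantees that the comparison neighbourhood behaves well and that the monotone-comparison-angle constructions underlying the hyperbolic angle are available.
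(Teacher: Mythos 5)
The paper itself does not prove this theorem; it cites \cite[Thm.\ 4.7]{beran2022angles}, and your plan is in outline a reconstruction of that proof's strategy (angles at the branch point, then a rigidity conclusion). However, your angle bookkeeping contains a genuine conceptual error. Hyperbolic angles take values in $[0,\infty)$: there is no straight angle $\pi$ in this formalism. For $r \ll p \ll a$ with $\tau(r,a)=\tau(r,p)+\tau(p,a)$, the comparison triangle in $M_k$ is \emph{collinear}, and by the conventions of Lemma \ref{Lemma: hyperboliclawofcosines} (with $\sigma=+1$, since $p$ is not a time endpoint; note the correct law of cosines carries a factor $2$, cf.\ \cite[App.\ A]{beran2022angles}) the comparison angle between the past direction $\xi^-$ and each future branch is $0$, not $\pi$. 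Consequently the obstruction you diagnose --- ``the triangle inequality only yields $\ma_p(\xi_a^{+},\xi_b^{+})\geq 0$'' --- is an artifact of importing the Riemannian picture: with both hinge angles equal to $0$, the triangle inequality for upper angles of \cite{beran2022angles} (applicable in exactly this configuration, one past-directed and two future-directed directions) gives $\ma_p(\xi_a^{+},\xi_b^{+}) \leq 0+0=0$ outright, and no ``uniqueness of the antipodal direction'' extracted from the curvature bound is needed at that stage. Relatedly, the spacelike separation of the equal-parameter points $a,b$ is not the difficulty you make of it: one never compares equal-parameter points, but pairs $\alpha(t_0+s),\beta(t_0+t)$ with $s \neq t$, and such pairs are timelike related for $s$ small relative to $t$ simply because $I^-(\beta(t_0+t))$ is open and contains $p$.

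The genuine gap is your final step. The passage from vanishing angle (equivalently, $\tilde{\ma}_p(\alpha(t_0+s),\beta(t_0+t))=0$ whenever defined) to $\alpha=\beta$ is not an off-the-shelf ``first-variation/rigidity statement''; it is a separate argument, and it is precisely where strong causality enters --- which your proposal invokes only as a background assumption, never concretely. What is needed is exactly the mechanism this paper isolates as ``the strong causality trick'': from zero comparison angles, the law of cosines forces $\tau(\alpha(t_0+s),\beta(t_0+t))>0$ precisely when $s<t$ (and symmetrically), so $\alpha(t_0+t)$ lies in every diamond $I(\beta(t_0+s_-),\beta(t_0+s_+))$ with $s_-<t<s_+$; by Lemma \ref{Lem:strongly_strongly_causal} these diamonds form a neighbourhood basis at $\beta(t_0+t)$, and Hausdorffness yields $\alpha(t_0+t)=\beta(t_0+t)$. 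As written, your proposal stops exactly where the proof in \cite{beran2022angles} begins its real work: the decisive coincidence lemma is asserted (``by the accompanying first-variation/rigidity statement'') rather than proved, and the rigidity you propose to extract from the curvature bound is aimed at an obstruction that, under the correct angle conventions, does not exist.
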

\begin{proof}
See \cite[Theorem 4.7]{beran2022angles}.
\end{proof}

The non-branching of timelike distance realisers is a key property of spaces with lower curvature bounds and will appear in various forms in our proof of the splitting theorem.

\subsection{Angles and comparison angles}\label{subsec:LLS:angles}

Hyperbolic angles in Lorentzian pre-length spaces were introduced in \cite{beran2022angles} and \cite{barrera2022comparison}, where the latter puts a bigger focus on comparison results. We will follow the conventions of the former reference.

\begin{Lemma}[The law of cosines ($k=0$)]\par
\label{Lemma: hyperboliclawofcosines}
Let $X=\R^{1,1}$ be Minkowski space and $x_1,x_2,x_3$ be three points which are timelike related (an (unordered) timelike triangle). Let $a_{ij}=\max(\tau(x_i,x_j),\tau(x_j,x_i))$ (note one of these is zero anyway). Let $\omega$ be the hyperbolic angle between the straight lines $x_1x_2$ and $x_2x_3$ at $x_2$. Set $\sigma=1$ if $x_2$ is not a time endpoint of the triangle (i.e., $x_1\ll x_2\ll x_3$ or $x_3\ll x_2\ll x_1$) and $\sigma=-1$ if $x_2$ is a time endpoint of the triangle (i.e., $x_2\ll x_1,x_3$ or $x_1,x_3\ll x_2$). Then we have:
\begin{equation*}
a_{13}^2=a_{12}^2+a_{23}^2 +\sigma a_{12}a_{23}\cosh(\omega).
\end{equation*}
In particular, when only changing one side-length, the angle $\omega$ is a monotonously increasing function of the longest side-length and monotonously decreasing in the other side-lengths.
\end{Lemma}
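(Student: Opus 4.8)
The plan is to set up coordinates in $\R^{1,1}$ and reduce the statement to the standard hyperbolic law of cosines, treating the two cases $\sigma = \pm 1$ separately. First I would recall the relevant definitions: in $\R^{1,1}$ with metric $-dt^2 + dx^2$, a future-directed timelike vector $v$ has $\overline{\tau}(0,v) = \sqrt{-\langle v,v\rangle} = \sqrt{(v^t)^2 - (v^x)^2}$, and the hyperbolic angle $\omega$ between two future-directed (or appropriately oriented) timelike vectors $v,w$ is defined via $\langle v,w\rangle = -\|v\|\,\|w\|\cosh(\omega)$, where $\|v\| = \sqrt{-\langle v,v\rangle}$. The quantities $a_{12}, a_{23}, a_{13}$ are exactly the $\overline{\tau}$-lengths of the three sides of the triangle, i.e.\ the norms of the edge vectors.

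Next I would translate the configuration into vectors. Place the vertex $x_2$ at the origin and let $v = x_1 - x_2$ and $w = x_3 - x_2$ be the two edge vectors emanating from $x_2$, so that $a_{12} = \|v\|$ and $a_{23} = \|w\|$. The third side is $x_1x_3$, whose edge vector is $w - v$, so $a_{13}^2 = -\langle w - v, w - v\rangle = -\langle v,v\rangle - \langle w,w\rangle + 2\langle v,w\rangle = a_{12}^2 + a_{23}^2 + 2\langle v,w\rangle$. The whole statement then reduces to identifying $2\langle v,w\rangle$ with $\sigma\, a_{12} a_{23}\cosh(\omega)$. This is where the sign $\sigma$ enters: when $x_2$ is \emph{not} a time endpoint (say $x_1 \ll x_2 \ll x_3$), one of $v,w$ is past-directed and the other future-directed, so they lie in opposite timecones and the Lorentzian inner product $\langle v,w\rangle$ is \emph{positive}, giving $\langle v,w\rangle = +\|v\|\,\|w\|\cosh(\omega)$ and hence $\sigma = 1$. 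When $x_2$ \emph{is} a time endpoint (say $x_2 \ll x_1, x_3$), both $v$ and $w$ are future-directed, they lie in the same timecone, $\langle v,w\rangle = -\|v\|\,\|w\|\cosh(\omega) < 0$, giving $\sigma = -1$. Plugging in yields the claimed identity in both cases.

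For the final monotonicity claim, I would simply differentiate the relation $\cosh(\omega) = \sigma\,(a_{13}^2 - a_{12}^2 - a_{23}^2)/(a_{12}a_{23})$ with respect to each side-length, or argue directly: since $\cosh$ is strictly increasing on $[0,\infty)$, it suffices to track the monotonicity of the right-hand side. Solving for $\cosh(\omega)$ and fixing two of the three side-lengths, one checks that $\cosh(\omega)$ is increasing in $a_{13}$ (the side opposite $x_2$) and decreasing in $a_{12}$ and $a_{23}$; for the latter, differentiating $\sigma(a_{13}^2 - a_{12}^2 - a_{23}^2)/(a_{12}a_{23})$ in, say, $a_{12}$ shows the derivative has a fixed sign. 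Since $\omega \mapsto \cosh(\omega)$ is monotone, the monotonicity transfers to $\omega$ itself.

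The main obstacle I anticipate is purely bookkeeping rather than conceptual: one must correctly determine the sign of $\langle v,w\rangle$ in each causal configuration and confirm it matches the definition of the hyperbolic angle used in \cite{beran2022angles}. The subtle point is that the hyperbolic angle is defined as a nonnegative quantity via $\cosh$, so the geometric sign of the inner product (which depends on whether the two edge vectors point into the same or opposite timecones) is exactly what the factor $\sigma$ absorbs. I would take care to verify that the convention for the angle between the \emph{lines} $x_1x_2$ and $x_2x_3$ (as opposed to oriented segments) is consistent with these sign choices, and that the enumeration of the four sub-cases ($x_1 \ll x_2 \ll x_3$, its reverse, $x_2 \ll x_1,x_3$, and $x_1,x_3 \ll x_2$) reduces to just the two values of $\sigma$ by symmetry under time reversal.
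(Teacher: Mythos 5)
The paper does not prove this lemma at all---it cites Appendix A of \cite{beran2022angles}---so your self-contained vector computation is the right thing to attempt, and its core is sound: writing $v=x_1-x_2$, $w=x_3-x_2$ and expanding $a_{13}^2=-\langle w-v,w-v\rangle$ is exactly the standard route. But note what your computation actually gives: $a_{13}^2=a_{12}^2+a_{23}^2+2\langle v,w\rangle=a_{12}^2+a_{23}^2+2\sigma\,a_{12}a_{23}\cosh(\omega)$, with a factor $2$ that the lemma as printed lacks. Your claim that ``plugging in yields the claimed identity'' is therefore not literally true; what your argument establishes is the factor-$2$ version, which is the correct one (test $x_1=(0,0)$, $x_2=(1,0)$, $x_3=(2,0)$: the printed formula gives $4=3$), and it is also the version the paper itself uses later (in the proof of Proposition \ref{proposition: TCRCholdsonI(gamma)} one finds $c_n^2=a_n^2+b_n^2+2a_nb_n\cosh(\overline{\theta}_n)$ and a $\cosh$ formula with $2$ in the denominator). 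You should have flagged this discrepancy rather than asserting agreement with the statement.

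There is also a genuine slip in the monotonicity part: you identify the longest side with $a_{13}$, ``the side opposite $x_2$''. That is only correct when $\sigma=+1$ (i.e.\ $x_1\ll x_2\ll x_3$). When $x_2$ is a time endpoint, say $x_2\ll x_1\ll x_3$, the reverse triangle inequality gives $a_{23}\geq a_{12}+a_{13}$, so the longest side is $a_{23}$, adjacent to $x_2$; and from $\cosh(\omega)=(a_{12}^2+a_{23}^2-a_{13}^2)/(2a_{12}a_{23})$ one sees $\cosh(\omega)$ is \emph{decreasing} in $a_{13}$ in that configuration---the opposite of what your sketch asserts. Relatedly, your remark that ``the derivative has a fixed sign'' is not automatic: e.g.\ for $\sigma=+1$, $\partial_{a_{12}}\bigl[(a_{13}^2-a_{12}^2-a_{23}^2)/(2a_{12}a_{23})\bigr]$ has the sign of $a_{23}^2-a_{12}^2-a_{13}^2$, and pinning this down as negative requires invoking the reverse triangle inequality $a_{13}\geq a_{12}+a_{23}$. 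So the monotonicity claim does hold, but only after a case split matched to the causal configuration and with the reverse triangle inequality supplying the sign; as written, your treatment of this part would fail in the $\sigma=-1$ case.
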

\begin{proof}
See \cite[Appendix A]{beran2022angles}.
\end{proof}

Note that for $a_{ij}>0$ satisfying a reverse triangle inequality and choosing an appropriate $\sigma=\pm 1$, we can always solve this equation for $\omega$.

\begin{definition}[Comparison angles]
Let $X$ be a \LpLS and $x_1,x_2,x_3$ three timelike related points. Let $\bar{x}_1,\bar{x}_2,\bar{x}_3\in\R^{1,1}$ be a comparison triangle for $x_1,x_2,x_3$. We define the \emph{comparison angle} $\tilde{\ma}_{x_2}(x_1,x_3)$ as the hyperbolic angle between the straight lines $\bar{x}_1\bar{x}_2$ and $\bar{x}_2\bar{x}_3$ at $\bar{x}_2$. It can be calculated with the law of cosines using $a_{ij}=\max(\tau(x_i,x_j),\tau(x_j,x_i))$ and $\sigma$, where we set $\sigma=1$ if $x_2$ is not a time endpoint of the triangle and $\sigma=-1$ if $x_2$ is a time endpoint of the triangle. $\sigma$ is called the \emph{sign} of the comparison angle (even though we always have $\tilde{\ma}_{x_2}(x_1,x_3)>0$). For a reduction of the number of case distinctions we also define the \emph{signed comparison angle} $\tilde{\ma}_{x_2}^\mathrm{S}(x_1,x_3)=\sigma\tilde{\ma}_{x_2}(x_1,x_3)$.
\end{definition}

\begin{definition}[Angles]
Let $X$ be a \LpLS and $\alpha,\beta:[0,\varepsilon)\to X$ be two timelike curves (future or past directed or one of each) with $x:=\alpha(0)=\beta(0)$. Then we define the \emph{upper angle} 
\begin{equation*}
\ma_x(\alpha,\beta)=\limsup_{(s,t)\in D, s,t\to 0}\tilde{\ma}_x(\alpha(s),\beta(t))\,,
\end{equation*}
where $D=\{(s,t):s,t>0,\,\alpha(s),\beta(t)\text{ timelike related}\}$. If the limes superior is a limit and finite, we say \emph{the angle exists} and call it an \emph{angle}.

Note that the sign of the comparison angle is independent of $(s,t)\in D$. We define the \emph{sign} of the (upper) angle $\sigma$ to be that sign, and define the \emph{signed (upper) angle} to be $\ma_x^{\mathrm{S}}(\alpha,\beta)=\sigma \ma_x(\alpha,\beta)$.

If maximisers between any two timelike related points are unique (as e.g.\ in $\R^{1,1}$), then we simply write $\ma_p(x,y)$ for the angle at $p$ between the maximisers from $p$ to $x$ and $p$ to $y$.
\end{definition}

We give an alternative definition of timelike curvature bounds in the case $k=0$:

\begin{definition}\label{def:monotonicityComp}
Let $X$ be a \LpLSn. We say that $X$ has \emph{timelike curvature bounded below by $k=0$ (bounded above by $k=0$) in the sense of monotonicity comparison} if every point in $X$ has a neighbourhood $U$ such that
\begin{itemize}
\item[(i)] $\tau|_{U\times U}$ is finite and continuous.
\item[(ii)] $U$ is \emph{timelike geodesically connected}, i.e.\ whenever $x,y \in U$ with $x\ll y$, there exists a future-directed timelike distance realiser $\alpha$ in $U$ from $x$ to $y$, and any distance realiser from $x$ to $y$ contained in $U$ is timelike.
\item[(iii)] Whenever $\alpha: [0, a] \to U$, $\beta: [0, b] \to U$ are timelike distance realisers in $U$ with $x := \alpha(0) = \beta(0)$, we define the function $\theta: (0, a] × (0, b] \supset D \to \R$ by
$\theta(s,t) :=\tilde{\ma}_x^{\mathrm{S}}(\alpha(s), \beta(t))$,
where $(s, t) \in D$ precisely when $\alpha(s), \beta(t)$ are timelike related. We require this to be monotonically increasing (decreasing) in $s$ and $t$.

Additionally, in the case of non-positive timelike curvature, we consider $0 < s' \leq s$ and $0 < t' \leq t$ and require that if $\theta(s', t')$ is not defined (i.e.\ $\alpha(s'), \beta(t')$  are not timelike related) but $\theta(s, t)$ is, then also the comparison points for $\alpha(s'), \beta(t')$ in a comparison triangle for $\Delta(x,\alpha(s),\beta(t))$ are not timelike related.
\end{itemize}
We call the curvature bound \textit{global} if $X$ is a comparison neighbourhood in the above sense.
\end{definition}

\begin{Theorem}[Timelike curvature: Equivalence of definitions]
\label{Theorem: equivalencecurvandmonotonicity}
Let $X$ be a locally timelike geodesically connected \LpLSn. Then $X$ has non-negative (non-positive) timelike curvature in the sense of Definition \ref{Definition: local tl curv bound} if and only if it has non-negative (non-positive) timelike curvature in the sense of monotonicity comparison (see Definition \ref{def:monotonicityComp}).
\end{Theorem}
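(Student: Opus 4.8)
The plan is to fix a point and a neighbourhood $U$ of it satisfying the requirements common to both definitions—finiteness and continuity of $\tau|_{U\times U}$, and timelike geodesic connectedness—and to show that on such a $U$ the distance-comparison inequality of Definition~\ref{Definition: local tl curv bound}(iii) holds if and only if the signed comparison angle is monotone as in Definition~\ref{def:monotonicityComp}(iii); the global statement is then the special case $U=X$. The single computational engine behind both implications is the law of cosines (Lemma~\ref{Lemma: hyperboliclawofcosines}) recast as follows: for a timelike triangle in $\R^{1,1}$ with two fixed adjacent side lengths and fixed sign $\sigma$, the signed comparison angle at their common vertex is a strictly increasing function of the opposite side length (equivalently, the opposite side is strictly increasing in the signed angle). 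Indeed, solving $a_{13}^2=a_{12}^2+a_{23}^2+\sigma a_{12}a_{23}\cosh\omega$ for $\omega$ and using $\theta=\sigma\omega$ shows $\theta$ increases with $a_{13}$ both when $\sigma=+1$ and when $\sigma=-1$; passing to the \emph{signed} angle is precisely what makes the monotonicity uniform across the two sign regimes. I would isolate this as a short lemma before starting.

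For the implication distance comparison $\Rightarrow$ monotonicity, I would take two timelike distance realisers $\alpha,\beta$ in $U$ with $\alpha(0)=\beta(0)=x$, fix $t$ and $0<s'\le s$ with $(s',t),(s,t)\in D$, and form the comparison triangle of $\Delta(x,\alpha(s),\beta(t))$ in $\R^{1,1}$. Since $\alpha(s')$ lies on the side from $x$ to $\alpha(s)$, its comparison point $\bar p'$ lies on the corresponding segment, so the sub-hinge at the comparison vertex spanned by $\bar p'$ and the comparison point of $\beta(t)$ shares the rays of the full triangle and hence has the same signed angle $\theta(s,t)$, with adjacent sides $s',t$ and opposite side $\bar\tau(\bar p',\overline{\beta(t)})$. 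Distance comparison gives $\tau(\alpha(s'),\beta(t))\le\bar\tau(\bar p',\overline{\beta(t)})$, while the actual triangle $\Delta(x,\alpha(s'),\beta(t))$ has the same adjacent sides $s',t$ and sign $\sigma$ but opposite side $\tau(\alpha(s'),\beta(t))$; the engine then yields $\theta(s',t)\le\theta(s,t)$. By symmetry the same holds in $t$, so $\theta$ is monotone increasing, as required; for the non-positive bound the inequalities reverse.

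For the converse, monotonicity $\Rightarrow$ distance comparison, I would note that two points $p,q$ on the sides of a timelike triangle $\Delta(x,y,z)$ either lie on the same side—in which case $\bar p,\bar q$ lie at equal sub-distances on a straight comparison segment and $\tau(p,q)=\bar\tau(\bar p,\bar q)$ trivially—or lie on two distinct sides meeting at a common vertex, which I take as the apex of a hinge $\alpha,\beta$. Writing $p=\alpha(s)$, $q=\beta(t)$ and $a,b$ for the full side lengths, monotonicity gives $\theta(s,t)\le\theta(a,b)$; since $\bar p,\bar q$ sit at distances $s,t$ on the sides of the comparison triangle of $\Delta(x,y,z)$, the quantity $\bar\tau(\bar p,\bar q)$ is the opposite side of the hinge with adjacent data $s,t,\sigma$ and signed angle $\theta(a,b)$, whereas $\tau(p,q)$ is the opposite side of the same adjacent data with signed angle $\theta(s,t)$. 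The engine (opposite side increasing in the signed angle) then gives $\tau(p,q)\le\bar\tau(\bar p,\bar q)$. For the non-positive bound one reverses the inequalities and invokes the extra clause in Definition~\ref{def:monotonicityComp}, which guarantees that whenever $\bar p,\bar q$ fail to be timelike related so do $p,q$, keeping the degenerate case consistent.

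The main obstacle I expect is not the core estimate but the bookkeeping: tracking the sign $\sigma$ and which endpoints are past- or future-directed, identifying the longest side in each configuration, and controlling the domain $D$ on which the comparison angles are defined (in particular ensuring that decreasing $s$ does not silently leave $D$, and that the comparison neighbourhoods of the two definitions can be taken to coincide after shrinking). Checking that the monotonicity direction of the law of cosines matches the increasing-signed-angle convention \emph{uniformly}—rather than flipping between the $\sigma=\pm1$ cases—is the delicate point, but it is settled once the reformulated engine lemma is in place.
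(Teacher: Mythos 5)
The paper contains no internal proof of this theorem: it is quoted from the literature, with the proof deferred to \cite[Thm.\ 4.12]{beran2022angles}. Measured against the expected argument, your proposal follows essentially the standard route: the ``engine'' lemma is correct and is exactly the reason the \emph{signed} angle is the right object --- solving the law of cosines with $\sigma$ and the two adjacent sides fixed, the signed angle $\theta=\sigma\omega$ is increasing in the opposite side $a_{13}$ in both regimes $\sigma=\pm1$ --- and both implications are set up correctly (forward: compare the sub-hinge of the big comparison triangle at the common vertex with the comparison triangle of the smaller actual triangle; converse: reduce any pair $p,q$ on a timelike triangle to a hinge at the shared vertex of their two sides, using monotonicity $\theta(s,t)\leq\theta(a,b)$ and the engine).

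One point you file under ``bookkeeping'' genuinely needs an argument, though it is fillable in two lines. In the converse direction, the engine only compares $a_{13}=\max\bigl(\tau(p,q),\tau(q,p)\bigr)$ with $\bar{a}_{13}=\max\bigl(\bar{\tau}(\bar{p},\bar{q}),\bar{\tau}(\bar{q},\bar{p})\bigr)$, whereas Definition \ref{Definition: local tl curv bound}(iii) demands the \emph{ordered} inequality $\tau(p,q)\leq\bar{\tau}(\bar{p},\bar{q})$; you must rule out that the causal direction between $\bar{p},\bar{q}$ is reversed relative to $p,q$. This follows from the reverse triangle inequality: for a hinge at a time endpoint (say $x\ll p=\alpha(s)$, $x\ll q=\beta(t)$), $p\ll q$ forces $t\geq s+\tau(p,q)>s$, while $\bar{q}\ll\bar{p}$ in $\R^{1,1}$ would force $t<s$; for the hinge at the middle vertex $y$ the direction is fixed a priori since $p\ll y\ll q$. (In the forward direction no such issue arises, since distance comparison applied to both ordered pairs directly bounds the maximum, which is all the engine needs.) With that supplied, together with your correct use of the extra clause of Definition \ref{def:monotonicityComp} to handle pairs that are not timelike related in the non-positive case, the argument goes through.
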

\begin{proof}
See \cite[Theorem 4.12]{beran2022angles}.
\end{proof}

\begin{remark}
Note that if $X$ is \textit{globally timelike geodesically connected}, i.e.\ between any two timelike related points there is a timelike distance realiser connecting them, and any distance realiser connecting them is timelike (which is certainly true if $X$ is a regularly localisable and globally hyperbolic Lorentzian length space), then $X$ has global non-negative (non-positive) timelike curvature in the sense of Definition \ref{definition: globalcurvbounds} if and only if it has global non-negative (non-positive) timelike curvature in the sense of monotonicity comparison.
\end{remark}

The timelike geodesic prolongation property plays a technical role in our proof of the splitting theorems mainly because of the following result.

\begin{Proposition}[Continuity of angles in spaces with timelike curvature bounded below]
\label{proposition: continuityofangles}
Let $X$ be a strongly causal, localisable, timelike geodesically connected and locally causally closed \LpLS with global non-negative timelike curvature and which satisfies timelike geodesic prolongation. Let $\alpha_n,\alpha,\beta_n,\beta$ be future or past directed timelike geodesics all starting at $\alpha_n(0)=\alpha(0)=\beta_n(0)=\beta(0)=:x$ and with $\alpha_n\to \alpha$ and $\beta_n\to \beta$ pointwise (in particular, $\alpha_n$ is future directed if and only if $\alpha$ is, and similarly for $\beta_n$ and $\beta$). Then
\begin{equation*}
\ma_x(\alpha,\beta)=\lim_n\ma_x(\alpha_n,\beta_n)
\end{equation*}
\end{Proposition}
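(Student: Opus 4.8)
The plan is to reduce everything to the behaviour of \emph{signed comparison angles} and exploit the monotonicity that non-negative timelike curvature provides. Since $X$ is timelike geodesically connected and has global non-negative timelike curvature, by Theorem \ref{Theorem: equivalencecurvandmonotonicity} (and the subsequent remark) it satisfies the monotonicity comparison of Definition \ref{def:monotonicityComp}. Hence, writing $\theta_n(s,t):=\tilde{\ma}_x^{\mathrm{S}}(\alpha_n(s),\beta_n(t))$ and $\theta(s,t):=\tilde{\ma}_x^{\mathrm{S}}(\alpha(s),\beta(t))$, each of these functions is monotonically increasing in $s$ and in $t$ on its domain $D_n$ resp.\ $D$ (the parameter pairs for which the two points are timelike related). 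In particular every upper angle is a genuine monotone limit, and $\sigma\,\ma_x(\alpha_n,\beta_n)=\inf_{(s,t)}\theta_n(s,t)$, $\sigma\,\ma_x(\alpha,\beta)=\inf_{(s,t)}\theta(s,t)$. Here the sign $\sigma\in\{\pm 1\}$ is the same for all $n$ and for the limit: it equals $-1$ if $\alpha,\beta$ point in the same time direction (so $x$ is a time endpoint) and $+1$ otherwise, and this is fixed by the time orientations, which are preserved under the convergence by hypothesis. It therefore suffices to show $\inf_{(s,t)}\theta_n(s,t)\to\inf_{(s,t)}\theta(s,t)$.

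Next I would establish convergence of comparison angles at fixed parameters. Global non-negative timelike curvature makes $X$ a comparison neighbourhood, so by Definition \ref{Definition: local tl curv bound}(i) the time separation $\tau$ is finite and continuous on all of $X$. Fixing $(s,t)\in D$, the pointwise convergences $\alpha_n(s)\to\alpha(s)$, $\beta_n(t)\to\beta(t)$ together with continuity of $\tau$ show that the three side lengths $\tau(x,\alpha_n(s))$, $\tau(x,\beta_n(t))$ and $\max(\tau(\alpha_n(s),\beta_n(t)),\tau(\beta_n(t),\alpha_n(s)))$ converge to the corresponding side lengths for $\alpha,\beta$. Since timelike relatedness is open (Lemma \ref{Lemma: pushupandopennness}), $(s,t)\in D_n$ for all large $n$, and since the comparison angle depends continuously on the side lengths through the law of cosines (Lemma \ref{Lemma: hyperboliclawofcosines}), we obtain $\theta_n(s,t)\to\theta(s,t)$ for every fixed $(s,t)\in D$.

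These two ingredients immediately give one inequality. For any fixed small $(s_0,t_0)\in D$ we have $\inf_{(s,t)}\theta_n(s,t)\le\theta_n(s_0,t_0)$; letting $n\to\infty$ and then $(s_0,t_0)\to(0,0)$ yields $\limsup_n\sigma\,\ma_x(\alpha_n,\beta_n)\le\sigma\,\ma_x(\alpha,\beta)$. The reverse inequality $\liminf_n\inf_{(s,t)}\theta_n(s,t)\ge\inf_{(s,t)}\theta(s,t)$ is the genuine difficulty, and is where I expect the \textbf{main obstacle}: it is a double-limit interchange of $\lim_n$ with the infimum over $(s,t)\to(0,0)$, for which pointwise convergence alone is insufficient. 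This is also where timelike geodesic prolongation enters. I would use prolongation to extend $\alpha,\beta,\alpha_n,\beta_n$ slightly through $x$ so that all of them are defined on a common open parameter interval and, together with openness of $\ll$, so that the domains $D_n$ contain a fixed neighbourhood of the corner uniformly in $n$; this removes the $n$-dependence of the domain near $(0,0)$ that would otherwise make the infima incomparable.

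Finally, on this common domain I would upgrade pointwise to uniform convergence near the corner. The functions $\theta_n$ are monotone in $(s,t)$ and converge pointwise to $\theta$, which by the fixed-parameter argument above together with the existence of the angle extends continuously to the corner with value $\sigma\,\ma_x(\alpha,\beta)$. A Dini/Pólya-type argument — monotone functions converging pointwise to a continuous limit converge uniformly on compact sets, where by monotonicity one may reduce to a cofinal one-parameter monotone family along which the relevant points remain timelike related — then gives uniform convergence of $\theta_n$ to $\theta$ on a corner neighbourhood, whence $\inf_{(s,t)}\theta_n\to\inf_{(s,t)}\theta$ and the proof closes. The crux is exactly this uniformity: preventing the infimum that defines $\ma_x(\alpha_n,\beta_n)$ from dropping below $\ma_x(\alpha,\beta)$ in the limit. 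Monotonicity in the parameters (from non-negative curvature) and prolongation (to fix a common domain near the corner) are precisely the tools that make the limit interchange legitimate.
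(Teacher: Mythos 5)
Your framework is right, and it matches the opening moves of the proof the paper actually points to (the result is quoted from \cite[Prop.\ 4.14]{beran2022angles}): via Theorem \ref{Theorem: equivalencecurvandmonotonicity}, the signed comparison angle $\theta_n(s,t)=\tilde{\ma}_x^{\mathrm{S}}(\alpha_n(s),\beta_n(t))$ is monotone in $s,t$, the signed angle is the infimum of $\theta_n$ over the domain $D_n$, pointwise convergence $\theta_n(s,t)\to\theta(s,t)$ at fixed $(s,t)\in D$ follows from continuity of $\tau$ and openness of $\ll$ (Lemma \ref{Lemma: pushupandopennness}), and this yields the easy inequality $\limsup_n \ma_x^{\mathrm{S}}(\alpha_n,\beta_n)\leq \ma_x^{\mathrm{S}}(\alpha,\beta)$. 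The gap is in your mechanism for the reverse inequality. First, timelike geodesic prolongation does \emph{not} normalise the domains near the corner: extending $\alpha_n,\beta_n,\alpha,\beta$ to negative parameters has no effect on whether $\alpha_n(s)$ and $\beta_n(t)$ are timelike related for $s,t>0$, and it is exactly these pairs that constitute $D_n$ near $(0^+,0^+)$. The only pairs that become automatically related are mixed ones $s<0<t$, and those compute the \emph{adjacent} angle $\ma_x(\alpha_n^-,\beta_n)$, not $\ma_x(\alpha_n,\beta_n)$; moreover the extensions $\alpha_n^-$ supplied by prolongation are not known to converge to $\alpha^-$, so they carry no limit information by themselves. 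Second, when $\alpha$ and $\beta$ have the same time orientation the sets $D_n$, $D$ are genuinely $n$-dependent regions bounded by ``null'' loci and need not contain any fixed rectangle $(0,\delta]^2$ (think of $\beta_n\to\beta=\alpha$, where $D$ is of the form $\{s<t\}$). So the Dini/P\'olya step has no compact rectangle to work on, the continuous extension of $\theta$ to the corner is only along $D$, and — decisively — the infimum defining $\ma_x(\alpha_n,\beta_n)$ may be approached through points of $D_n\setminus D$ (pairs timelike related for $\alpha_n,\beta_n$ but merely null related in the limit), which uniform convergence on compact subsets of $D$ never controls. As written, the limit interchange is not justified.

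What actually closes the argument — and the real reason prolongation appears in the hypotheses — is different: one extends the \emph{limit} geodesic, say $\beta$, through $x$ to a geodesic $\beta^-$ of opposite time orientation. For this single fixed curve, every pair $(\alpha_n(s),\beta^-(t))$ with $s,t>0$ is automatically timelike related (the points sit on opposite sides of $x$), so the easy semicontinuity applies without any domain issues to the hinges $(\alpha_n,\beta^-)$. One then uses the additivity of signed angles adjacent along the geodesic $\beta^-\cup\beta$: in curvature bounded below, $\ma_x^{\mathrm{S}}(\alpha_n,\beta)+\ma_x^{\mathrm{S}}(\alpha_n,\beta^-)\geq 0$, with equality for the limit hinge (the global analogue of this equality along a line is exactly Proposition \ref{Proposition: angle=comparisonangle}). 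Combining, $\liminf_n \ma_x^{\mathrm{S}}(\alpha_n,\beta)\geq -\limsup_n \ma_x^{\mathrm{S}}(\alpha_n,\beta^-)\geq -\ma_x^{\mathrm{S}}(\alpha,\beta^-) = \ma_x^{\mathrm{S}}(\alpha,\beta)$, and a two-step argument (a triangle-type inequality for upper angles, or repeating the trick with an extension of $\alpha$) decouples the simultaneous variation of $\alpha_n$ and $\beta_n$. In short: prolongation is used to create an adjacent hinge with unconditional timelike relatedness and to transfer the easy inequality across the vertex via signed-angle additivity, not to fix a common domain for a uniform-convergence argument. Your proposal correctly isolates where the difficulty lies but does not supply a working tool for it.
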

\begin{proof}
See \cite[Proposition 4.14]{beran2022angles}.
\end{proof}

The following two propositions will be proven together.

\begin{Proposition}[Alexandrov lemma: across version]
\label{lem: alexlem across}
Let $X$ be a \LpLSn.
Let $\Delta:=\Delta(x, y, z)$ be a timelike triangle (in particular the distance realisers between the endpoints exist). Let $p$ be a point on the side $xz$ with $p\ll y$, such that the distance realiser between $p$ and $y$ exists. 
Then we can consider the smaller triangles $\Delta_1:=\Delta(x,p,y)$ and $\Delta_2:=\Delta(p,y,z)$. We construct a comparison situation consisting of a comparison triangle $\bar{\Delta}_1$ for $\Delta_1$ and $\bar{\Delta}_2$ for $\Delta_2$, with $\bar{x}$ and $\bar{z}$ on different sides of the line through $\bar{p}\bar{y}$ and a comparison triangle $\tilde{\Delta}$ for $\Delta$ with a comparison point $\tilde{p}$ for $p$ on the side $xz$. This contains the subtriangles $\tilde{\Delta}_1:=\Delta(\tilde{x},\tilde{y},\tilde{p})$ and $\tilde{\Delta}_2:=\Delta(\tilde{p},\tilde{y},\tilde{z})$, see Figure \ref{fig: alexlem across concave}.

\begin{figure}
\begin{center}
\begin{tikzpicture}
\draw (-0.5693860319981044,1.9834819014638239)-- (0,0);
\draw (2.8896135929029714,2.2006721639230697)-- (4,0);
\draw (0,0)-- (-0.015287989182225509,1.3000898902049949);
\draw (-0.015287989182225509,1.3000898902049949)-- (-0.5693860319981044,1.9834819014638239);
\draw (-0.5693860319981044,1.9834819014638239)-- (1.0360567397613818,4.954583800321347);
\draw (1.0360567397613818,4.954583800321347)-- (-0.015287989182225509,1.3000898902049949);
\draw (2.8896135929029714,2.2006721639230697)-- (3.647225049614162,4.812946100427443);
\draw (3.647225049614162,4.812946100427443)-- (4,0);
\draw [dashed] (2.8896135929029714,2.2006721639230697)-- (3.904456784270502,1.303506235532433);
\begin{scriptsize}
\coordinate [circle, fill=black, inner sep=0.7pt, label=270: {$\bar{x}$}] (A1) at (0,0);
\coordinate [circle, fill=black, inner sep=0.7pt, label=0: {$\bar{p}$}] (A1) at (-0.015287989182225509,1.3000898902049949);
\coordinate [circle, fill=black, inner sep=0.7pt, label=180: {$\bar{y}$}] (A1) at (-0.5693860319981044,1.9834819014638239);
\coordinate [circle, fill=black, inner sep=0.7pt, label=90: {$\bar{z}$}] (A1) at (1.0360567397613818,4.954583800321347);
\coordinate [circle, fill=black, inner sep=0.7pt, label=270: {$\tilde{x}$}] (A1) at (4,0);
\coordinate [circle, fill=black, inner sep=0.7pt, label=180: {$\tilde{y}$}] (A1) at (2.8896135929029714,2.2006721639230697);
\coordinate [circle, fill=black, inner sep=0.7pt, label=90: {$\tilde{z}$}] (A1) at (3.647225049614162,4.812946100427443);
\coordinate [circle, fill=black, inner sep=0.7pt, label=0: {$\tilde{p}$}] (A1) at (3.904456784270502,1.303506235532433);
\end{scriptsize}
\end{tikzpicture}
\end{center}
\caption{A concave situation in the across version.}
\label{fig: alexlem across concave}
\end{figure}
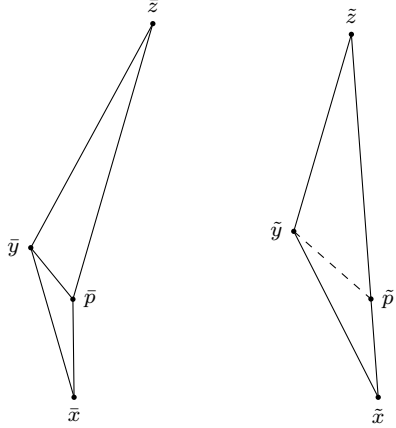

Then the situation $\bar{\Delta}_1$, $\bar{\Delta}_2$ is convex (concave) at $p$ (i.e.\ $\tilde{\ma}_p(x,y)=\ma_{\bar{p}}(\bar{x},\bar{y})\geq\ma_{\bar{p}}(\bar{y},\bar{z})=\tilde{\ma}_p(y,z)$ (or $\leq$)) if and only if $\tau(p,y)\leq\bar{\tau}(\bar{p},\bar{y})$ (or $\geq$).
If this is the case, we have that 
\begin{itemize}
\item each angle in the triangle $\bar{\Delta}_1$ is $\geq$ (or $\leq$) than the corresponding angle in the triangle $\tilde{\Delta}_1$,
\item each angle in the triangle $\bar{\Delta}_2$ is $\geq$ (or $\leq$) than the corresponding angle in the triangle $\tilde{\Delta}_2$.
\end{itemize}
In any case, we have that
\begin{itemize}
\item $\ma_{\bar{y}}(\bar{x},\bar{z}) \geq \ma_{\tilde{x}}(\tilde{x},\tilde{z})=\tilde{\ma}_y(x,z)$.
\end{itemize}
The same is true if $p$ is a point on the side $xz$ such that $y \ll p$.
Note that if $X$ has non-negative (non-positive) timelike curvature and $\Delta$ is within a comparison neighbourhood, the condition is satisfied, i.e.\ $\tau(p,y)\leq\bar{\tau}(\tilde{p},\tilde{y})$ (or $\geq$).
\end{Proposition}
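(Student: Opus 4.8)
The plan is to push everything into $\R^{1,1}$ and let the law of cosines (Lemma \ref{Lemma: hyperboliclawofcosines}) do the work, since both the glued pair $\bar{\Delta}_1\cup\bar{\Delta}_2$ and the single comparison triangle $\tilde{\Delta}$ live there. First I would fix notation for the four outer side lengths, $a=\tau(x,p)$, $b=\tau(p,z)$, $e=\tau(x,y)$, $f=\tau(y,z)$, noting $\tau(x,z)=a+b$ because $p$ lies on the maximiser $xz$; write $c=\tau(p,y)$ for the cevian length realised in the glued picture and $c'=\bar\tau(\tilde p,\tilde y)$ for the one realised in the straight picture. Each of $\bar{\Delta}_1,\bar{\Delta}_2,\tilde{\Delta}_1,\tilde{\Delta}_2$ is then determined by three side lengths, and every angle in sight is a law-of-cosines function of these; $\bar{\Delta}_1,\tilde{\Delta}_1$ differ only in the side $c$ vs.\ $c'$, and likewise $\bar{\Delta}_2,\tilde{\Delta}_2$. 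The one-variable monotonicity of the law of cosines (``the angle increases in the longest side and decreases in the others'') will be the engine throughout.

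The first substantive step is the \emph{straight-line identity}: because $\tilde p$ lies on the timelike geodesic segment $\tilde x\tilde z$, the two sub-angles at $\tilde p$ coincide, $\tilde{\ma}_p(x,y)=\ma_{\tilde p}(\tilde x,\tilde y)=\ma_{\tilde p}(\tilde y,\tilde z)=\tilde{\ma}_p(y,z)$ (the Lorentzian analogue of vertical angles). This can be read off in rapidity coordinates, or from the law of cosines after using that both sub-triangles share the line direction at $\tilde p$. Consequently the straight configuration is exactly the borderline case $\omega_1=\omega_2$, where I abbreviate $\omega_1:=\ma_{\bar p}(\bar x,\bar y)$ and $\omega_2:=\ma_{\bar p}(\bar y,\bar z)$.

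For the central equivalence I would write $\cosh\omega_1$ and $\cosh\omega_2$ explicitly, taking $\sigma=+1$ at the interior vertex $\bar p$ of $\bar{\Delta}_1$ and $\sigma=-1$ at the time-endpoint vertex $\bar p$ of $\bar{\Delta}_2$. Then $\omega_1\geq\omega_2$ is equivalent to $\cosh\omega_1\geq\cosh\omega_2$, and clearing denominators reduces this to a quadratic inequality of the shape $c^2\leq \frac{be^2+af^2}{a+b}-ab$, whose right-hand side is independent of $c$. Since the straight-line identity shows equality $\omega_1=\omega_2$ precisely at $c=c'$, that threshold must equal $c'^2$ (a Lorentzian Stewart identity, whose consistency is guaranteed by uniqueness of the comparison triangle up to isometry); hence convex $\iff c\leq c'$ and concave $\iff c\geq c'$ at one go. The per-triangle claims then follow from monotonicity applied to the single differing side: in the case $c\leq c'$ every angle of $\bar{\Delta}_1$ is $\geq$ the corresponding angle of $\tilde{\Delta}_1$, and similarly for $\bar{\Delta}_2$, after which the apex bound $\ma_{\bar y}(\bar x,\bar z)\geq\tilde{\ma}_y(x,z)$ is obtained by adding the two $y$-angle inequalities, using that $\bar x,\bar z$ lie on opposite sides of $\bar p\bar y$ so the angles at $\bar y$ add. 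The concluding remark is immediate: by Definition \ref{Definition: local tl curv bound}(iii) applied to the pair $p,y$, nonnegative timelike curvature gives $\tau(p,y)\leq\bar\tau(\tilde p,\tilde y)$, i.e.\ $c\leq c'$, placing us in the convex case; the dual case $y\ll p$ is handled verbatim with the roles of future and past interchanged.

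The main obstacle, and where essentially all the care goes, is the sign bookkeeping. The monotonicity statement changes its concrete meaning according to whether the angle vertex is an interior vertex ($\sigma=+1$) or a time-endpoint ($\sigma=-1$): at an interior vertex the opposite side is the longest, so the angle \emph{increases} in the opposite side, whereas at a time-endpoint vertex one adjacent side is longest, so the angle \emph{decreases} in the opposite side. Classifying each of the many vertices correctly — across the admissible causal arrangements of $x,p,y,z$ — is exactly what makes the per-triangle inequalities come out with a uniform sign, and it is the part of the argument most prone to error; the underlying computation is otherwise routine.
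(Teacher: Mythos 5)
Your central computation is sound, and it is actually sharper than what the paper does at the corresponding point: clearing denominators in $\cosh\omega_1\geq\cosh\omega_2$ (with $\sigma=+1$ at $\bar{p}$ in $\bar{\Delta}_1$ and $\sigma=-1$ at $\bar{p}$ in $\bar{\Delta}_2$) does reduce convexity to $c^2\leq\frac{be^2+af^2}{a+b}-ab$, and the straight-line identity at $\tilde{p}$ pins the $c$-independent threshold to $c'^2$, giving the equivalence in both directions at once. The paper instead compares each glued angle separately with the common straight angle at $\tilde{p}$ via one-sided monotonicity in the single changing side $c$; since \emph{both} $\ma_{\bar{p}}(\bar{x},\bar{y})$ and $\ma_{\bar{p}}(\bar{y},\bar{z})$ are decreasing in $c$ in the across version, that only yields that both exceed the straight angle when $c\leq c'$, and your Stewart-type identity is a cleaner way to extract the actual comparison $\omega_1\geq\omega_2$. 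The per-triangle bullets are also fine: $c$ is not the longest side of either sub-triangle, so all angles of $\bar{\Delta}_1,\bar{\Delta}_2$ are monotone in $c$ with the uniform sign you claim.

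The genuine gap is your derivation of the split-angle bound $\ma_{\bar{y}}(\bar{x},\bar{z})\geq\tilde{\ma}_y(x,z)$, which fails for two independent reasons. First, the statement asserts this \emph{in any case}, whereas your argument feeds on the per-triangle angle inequalities, which hold with the direction you need only in the convex case $c\leq c'$ and flip in the concave case; a proof via those inequalities cannot be unconditional. Second, the angles at $\bar{y}$ do \emph{not} add in the across version: the directions from $\bar{y}$ to $\bar{x}$ and to $\bar{p}$ are past-directed while the direction to $\bar{z}$ is future-directed, and if one writes rapidities of the future-pointing representatives, the hypothesis that $\bar{x}$ and $\bar{z}$ lie on opposite sides of the line through $\bar{p}\bar{y}$ forces the $\bar{x}$- and $\bar{z}$-rapidities onto the \emph{same} side of the $\bar{p}$-rapidity, so that $\ma_{\bar{y}}(\bar{x},\bar{z})=\lvert\ma_{\bar{y}}(\bar{p},\bar{z})-\ma_{\bar{y}}(\bar{x},\bar{p})\rvert$ --- a difference, from which bounds of a common direction on the two sub-angles give nothing. (Additivity does hold at the apex $\bar{z}$ of the \emph{future} version, where all three directions are past-directed; that may be what you had in mind.) The correct route is the paper's direct one: in the glued picture $\bar{x}\ll\bar{p}\ll\bar{z}$, so the reverse triangle inequality in $\R^{1,1}$ gives $\bar{\tau}(\bar{x},\bar{z})\geq\bar{\tau}(\bar{x},\bar{p})+\bar{\tau}(\bar{p},\bar{z})=\tau(x,p)+\tau(p,z)=\tau(x,z)=\bar{\tau}(\tilde{x},\tilde{z})$; the triangles $\Delta(\bar{x},\bar{y},\bar{z})$ and $\tilde{\Delta}$ then differ in exactly one side, namely $xz$, which in the across version is the longest and is opposite $y$, so the monotonicity in Lemma \ref{Lemma: hyperboliclawofcosines} yields $\ma_{\bar{y}}(\bar{x},\bar{z})\geq\ma_{\tilde{y}}(\tilde{x},\tilde{z})$ with no convexity assumption. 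With that substitution your argument goes through.
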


\begin{Proposition}[Alexandrov lemma: future version]
\label{lem: alexlem future}
Let $X$ be a \LpLSn. 
Let $\Delta:=\Delta(x,y,z)$ be a timelike triangle (in particular the distance realisers between the endpoints exist). Let $p$ be a point on the side $xy$, such that the distance realiser between $p$ and $z$ exists. 
Then we can consider the smaller triangles $\Delta_1:=\Delta(x,p,z)$ and $\Delta_2:=\Delta(p,y,z)$. We construct a comparison situation consisting of a comparison triangle $\bar{\Delta}_1$ for $\Delta_1$ and $\bar{\Delta}_2$ for $\Delta_2$, with $\bar{x}$ and $\bar{y}$ on different sides of the line through $\bar{p}\bar{z}$ and a comparison triangle $\tilde{\Delta}$ for $\Delta$ with a comparison point $\tilde{p}$ for $p$ on the side $xy$. This contains the subtriangles $\tilde{\Delta}_1:=\Delta(\tilde{x},\tilde{p},\tilde{z})$ and $\tilde{\Delta}_2:=\Delta(\tilde{p},\tilde{y},\tilde{z})$, see Figure \ref{fig: alexlem future convex}.

\begin{figure}
\begin{center}
\begin{tikzpicture}
\draw (7,0)-- (6.236686420076234,1.0356870286413933);
\draw (6.236686420076234,1.0356870286413933)-- (7.548286699820673,4.0374024205174575);
\draw (7.548286699820673,4.0374024205174575)-- (7,0);
\draw (6.15561793604921,1.839784099337744)-- (6.236686420076234,1.0356870286413933);
\draw (6.15561793604921,1.839784099337744)-- (7.548286699820673,4.0374024205174575);
\draw (9.204717676576246,1.6977850199451887)-- (10,0);
\draw (9.204717676576246,1.6977850199451887)-- (10.90284745202584,4.1006259914346685);
\draw (10.90284745202584,4.1006259914346685)-- (10,0);
\draw [dashed] (9.628868249068917,0.7922996759744166)-- (10.90284745202584,4.1006259914346685);
\begin{scriptsize}
\coordinate [circle, fill=black, inner sep=0.7pt, label=270: {$\tilde{x}$}] (A1) at (10,0);
\coordinate [circle, fill=black, inner sep=0.7pt, label=180: {$\tilde{p}$}] (A1) at (9.628868249068917,0.7922996759744166);
\coordinate [circle, fill=black, inner sep=0.7pt, label=270: {$\bar{x}$}] (A1) at (7,0);
\coordinate [circle, fill=black, inner sep=0.7pt, label=180: {$\bar{p}$}] (A1) at (6.236686420076234,1.0356870286413933);
\coordinate [circle, fill=black, inner sep=0.7pt, label=90: {$\bar{z}$}] (A1) at (7.548286699820673,4.0374024205174575);
\coordinate [circle, fill=black, inner sep=0.7pt, label=180: {$\bar{y}$}] (A1) at (6.15561793604921,1.839784099337744);
\coordinate [circle, fill=black, inner sep=0.7pt, label=180: {$\tilde{y}$}] (A1) at (9.204717676576246,1.6977850199451887);
\coordinate [circle, fill=black, inner sep=0.7pt, label=90: {$\tilde{z}$}] (A1) at (10.90284745202584,4.1006259914346685);
\end{scriptsize}
\end{tikzpicture}
\end{center}
\caption{A convex situation in the future version.}
\label{fig: alexlem future convex}
\end{figure}
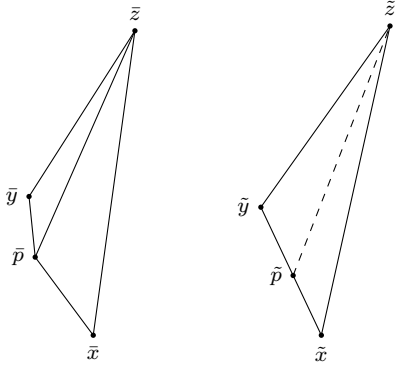

Then the situation $\bar{\Delta}_1$,$\bar{\Delta}_2$ is convex (concave) at $p$ (i.e.\
$\ma_{\bar{p}}(\bar{y},\bar{z})\geq\ma_{\bar{p}}(\bar{x},\bar{z})$ (or $\leq$)) if and only if $\tau(p,z)\leq\bar{\tau}(\bar{p},\bar{z})$ (or $\geq$). If this is the case, we have that
\begin{itemize}
\item each angle in the triangle $\bar{\Delta}_1$ is $\geq$ (or $\leq$) than the corresponding angle in the triangle $\tilde{\Delta}_1$,
\item each angle in the triangle $\bar{\Delta}_2$ is $\leq$ (or $\geq$) than the corresponding angle in the triangle, $\tilde{\Delta}_2$.
\end{itemize}
In any case, we have that
\begin{itemize}
\item $\ma_{\bar{z}}(\bar{x},\bar{y})\leq\ma_{\tilde{z}}(\tilde{x},\tilde{y})=\tilde{\ma}_z(x,y)$.
\end{itemize}
Note that if $X$ has non-negative (non-positive) timelike curvature and $\Delta$ is within a comparison neighbourhood, the condition is satisfied, i.e.\ $\tau(p,z)\leq\bar{\tau}(\tilde{p},\tilde{z})$ (or $\geq$).
\end{Proposition}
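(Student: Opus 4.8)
The plan is to reduce everything to explicit two–dimensional computations in $\R^{1,1}$, since both the hypotheses and the conclusions only involve the comparison configurations. Writing $a=\tau(x,p)$, $b=\tau(p,y)$, $c=\tau(p,z)$, $d=\tau(x,z)$, $e=\tau(y,z)$, the relation $\tau(x,y)=a+b$ holds because $p$ lies on the maximiser $xy$, so the glued figure $\bar\Delta_1\cup\bar\Delta_2$ is determined by $a,b,c,d,e$, while the single comparison triangle $\tilde\Delta$ is determined by $a+b,d,e$; the cevian length $f:=\bar\tau(\tilde p,\tilde z)$ is the single new quantity, and it is $c$ versus $f$ that governs everything. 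I would run the across version in parallel, the only structural difference being the causal role of the apex: in the future version $z$ is a time endpoint of the triangle, whereas in the across version $y$ lies causally between $x$ and $z$, and this is exactly what flips the sign in the two ``in any case'' conclusions. Throughout, the reverse triangle inequality in $\R^{1,1}$ forces relations such as $d\ge a+c$ and $c\ge b+e$ inside the Minkowski subtriangles, and these are precisely what make the monotonicity directions below definite.

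For the equivalence I would start from the observation that, because $\tilde p$ lies on the straight side $\tilde x\tilde y$ of $\tilde\Delta$, the two halves of $\tilde\Delta$ meet in a straight angle at $\tilde p$, i.e.\ $\ma_{\tilde p}(\tilde x,\tilde z)=\ma_{\tilde p}(\tilde y,\tilde z)$ (the signs differ but the magnitudes agree, as one checks from the law of cosines). This is the borderline case $c=f$, at which the glued figure is neither convex nor concave. Treating the shared cevian length as a parameter and invoking the monotonicity contained in Lemma~\ref{Lemma: hyperboliclawofcosines}, the two angles $\ma_{\bar p}(\bar x,\bar z)$ and $\ma_{\bar p}(\bar y,\bar z)$ respond to it in opposite senses (one subtriangle has $p$ in the causal middle, the other has $p$ as a time endpoint), so they coincide exactly when $c=f$ and separate monotonically as $c$ moves away from $f$. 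Reading off the directions — again using the reverse triangle inequalities to fix the signs — yields the stated equivalence between the convex/concave alternative at $p$ and the comparison of $\tau(p,z)$ with $\bar\tau(\tilde p,\tilde z)$.

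Under the convexity hypothesis, i.e.\ once $c$ and $f$ are ordered, the termwise comparisons of the angles of $\bar\Delta_i$ with those of $\tilde\Delta_i$ come from the same source: $\bar\Delta_1$ and $\tilde\Delta_1$ share the two sides $a$ and $d$ and differ only in the third side ($c$ versus $f$), and likewise $\bar\Delta_2,\tilde\Delta_2$ share $b,e$. By Lemma~\ref{Lemma: hyperboliclawofcosines} each of the three angles of such a triangle is monotone in the varying side. The only delicate point is the direction of monotonicity for the two angles adjacent to the varying side, which is not universally signed; here the reverse triangle inequalities ($d\ge a+c$, etc.) make the relevant expressions definite, and one obtains that every angle of $\bar\Delta_1$ compares with its counterpart in $\tilde\Delta_1$ in one direction and every angle of $\bar\Delta_2$ in the opposite direction, as asserted.

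The ``in any case'' apex estimate is the cleanest part and I expect it to be the most robust. Since $\bar x\ll\bar p\ll\bar y$, the bent path $\bar x\,\bar p\,\bar y$ satisfies $\bar\tau(\bar x,\bar y)\ge a+b=\bar\tau(\tilde x,\tilde y)$ by the reverse triangle inequality, with equality only in the straight case $c=f$; thus the base of the apex triangle is never shorter in the glued figure than in $\tilde\Delta$. Viewing the apex angle as that of the Minkowski triangle on the two fixed sides $d,e$ with this base, the law of cosines shows that enlarging the base decreases the apex angle when the apex is a time endpoint ($\sigma=-1$) and increases it when the apex lies in the causal middle ($\sigma=+1$); this gives $\ma_{\bar z}(\bar x,\bar y)\le\ma_{\tilde z}(\tilde x,\tilde y)$ in the future version and $\ma_{\bar y}(\bar x,\bar z)\ge\ma_{\tilde y}(\tilde x,\tilde z)$ in the across version, with no convexity assumption. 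Finally, for a space with non-negative timelike curvature whose triangle lies in a comparison neighbourhood, applying Definition~\ref{Definition: local tl curv bound}(iii) to the pair $(p,z)$ (resp.\ $(p,y)$) gives exactly $\tau(p,z)\le\bar\tau(\tilde p,\tilde z)$, placing us in the relevant case. I expect the genuine difficulty to lie not in any single computation but in the bookkeeping of the signs $\sigma=\pm1$ at each vertex across the two versions and the admissible causal arrangements, organised so that the law-of-cosines monotonicity always points in the asserted direction; the reverse triangle inequalities are the tool that makes this bookkeeping come out consistently.
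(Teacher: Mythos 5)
Your proposal is correct for the future version and is essentially the paper's own proof: the same reduction to one-changing-side triangle comparisons via the monotonicity in Lemma \ref{Lemma: hyperboliclawofcosines}, the same straight-angle identity $\ma_{\tilde p}(\tilde x,\tilde z)=\ma_{\tilde p}(\tilde y,\tilde z)$ coming from $\tilde\Delta_1\cup\tilde\Delta_2=\tilde\Delta$, the same bent-path reverse-triangle-inequality argument $\bar\tau(\bar x,\bar y)\geq\tau(x,p)+\tau(p,y)=\bar\tau(\tilde x,\tilde y)$ for the unconditional apex estimate, and the same strictness/borderline device for extracting the ``only if'' from the ``if''. One caution: your explanation that the two angles at $\bar p$ move in opposite senses ``because one subtriangle has $p$ in the causal middle and the other has $p$ as a time endpoint'' is an accident of the future version — the invariant the paper actually uses is whether the changing cevian is the \emph{longest side} of the subtriangle, and in the future version the cevian $pz$ is not the longest side of $\Delta_1$ (where $\tau(x,z)\geq\tau(x,p)+\tau(p,z)$) but is the longest side of $\Delta_2$ (where $\tau(p,z)\geq\tau(p,y)+\tau(y,z)$), which is what produces the opposite monotonicities. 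In the across version, which you propose to run in parallel, the cevian $py$ is the longest side of neither subtriangle, so both angles at $\bar p$ respond in the \emph{same} direction and the ``middle versus endpoint'' mechanism would mislead you there; your own paragraph-three fallback — fixing each monotonicity direction case by case via the reverse triangle inequalities, exactly as the paper does — is the argument that actually carries through.
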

\begin{proof}[Proof for both Alexandrov lemmas]
It is sufficient to show the ``if'' part of the statement. Indeed, under the assumption that the triangles $\Delta_1$ and $\Delta_2$ are non-degenerate, the ``if'' part holds with strict inequalities, which then also implies the ``only if'' part of the statement. The degenerate cases are trivial.

For the triangles $\bar{\Delta}_1$ vs.\ $\tilde{\Delta}_1$, only one side-length changes, and it is not the longest side of the triangle in both versions. Thus, the monotonicity statement in the law of cosines, cf.\ Lemma \ref{Lemma: hyperboliclawofcosines}, implies that in the across version,
the relation between the desired
angle $\ma_{\bar{p}}(\bar{x},\bar{y})$ in $\bar{\Delta}_1$ to the corresponding angle $\ma_{\tilde{p}}(\tilde{x},\tilde{y})$ in $\tilde{\Delta}_1$ is pointing in the opposite
direction than the relation of the changing side-length $\tau(p,y)=\bar{\tau}(\bp,\by)$ to the side-length $\bar{\tau}(\tilde{p},\tilde{y})$. 
In the future version, we obtain that the relation between the 
desired angle $\ma_{\bar{p}}(\bar{x},\bar{z})$ in $\bar{\Delta}_1$ to the corresponding angle $\ma_{\tilde{p}}(\tilde{x},\tilde{z})$ in $\tilde{\Delta}_1$ is pointing in the other direction than the relation of the changing side-length $\tau(p,z)=\bar{\tau}(\bp,\bz)$ to the side-length $\bar{\tau}(\tilde{p},\tilde{z})$.

Similarly, for the triangles $\bar{\Delta}_2$ vs.\ $\tilde{\Delta}_2$, only one side-length changes. In the across version, it is not the longest side, and for the future version, it is. 
Thus, in the future version, the monotonicity statement in the law of cosines yields that the relation between the angle $\ma_{\bar{p}}(\bar{y},\bar{z})$ in $\bar{\Delta}_1$ to the corresponding angle $\ma_{\tilde{p}}(\tilde{y},\tilde{z})$ in $\tilde{\Delta}_1$ is pointing in the 
same direction as the relation of the changing side-length $\bar{\tau}(\bp,\by)$ to the side-length $\bar{\tau}(\tilde{p},\tilde{y})$.
Similarly, in the across version, we get that the relations between the angles $\ma_{\bar{p}}(\bar{y},\bar{z})$ in $\bar{\Delta}_1$ and $\ma_{\tilde{p}}(\tilde{y},\tilde{z})$ in $\tilde{\Delta}_1$ points in the opposite direction as the relation of the changing side-length $\bar{\tau}(\bp,\by)$ to the side-length $\bar{\tau}(\tilde{p},\tilde{y})$.
Note that $\tilde{\Delta}_1$ and $\tilde{\Delta}_2$ together form the triangle $\tilde{\Delta}$, so we have $\ma_{\tilde{p}}(\tilde{x},\tilde{y})=\ma_{\tilde{p}}(\tilde{y},\tilde{z})$ in the across version and $\ma_{\tilde{p}}(\tilde{x},\tilde{z})=\ma_{\tilde{p}}(\tilde{y},\tilde{z})$ in the future version. From this, the desired inequalities follow.

For the "split" angle, we use the triangle equality along lines and the reverse triangle inequality on the broken side, giving $\bar{\tau}(\bar{x},\bar{z})\geq\bar{\tau}(\bar{x},\bar{p})+\bar{\tau}(\bar{p},\bar{z})=\tau(x,p)+\tau(p,z)= \tau(x,z)=\bar{\tau}(\tilde{x},\tilde{z})$  in the across statement and $\bar{\tau}(\bar{x},\bar{y})\geq\bar{\tau}(\bar{x},\bar{p})+\bar{\tau}(\bar{p},\bar{y})=\tau(x,p)+\tau(p,y)= \tau(x,y)=\bar{\tau}(\tilde{x},\tilde{y})$ in the future statement. For the triangles $\bar{\Delta}=\Delta(\bar{x},\bar{y},\bar{z})$ and $\tilde{\Delta}=\Delta(\tilde{x}, \tilde{y}, \tilde{z})$, only one side-length changes. In the across version it is the longest side of the triangle, and in the future version, it is not. 
Thus, in the future version, the monotonicity statement in the law of cosines implies that the relation between the angle $\ma_{\bar{z}}(\bar{x},\bar{y})$ in $\bar{\Delta}$ to the corresponding angle $\ma_{\tilde{z}}(\tilde{x},\tilde{y})$ in $\tilde{\Delta}$ is pointing in the opposite direction than the relation of the changing side-lengths $\tau(x,y)=\bar{\tau}(\bx,\by)$ and $\bar{\tau}(\tilde{x},\tilde{y})$. 
Similarly, in the across version, we obtain that the relation of the angle $\ma_{\bar{y}}(\bar{x},\bar{z})$ in $\bar{\Delta}$ and $\ma_{\tilde{y}}(\tilde{x},\tilde{z})$ in $\tilde{\Delta}$ points in the same direction as the inequality between $\tau(x,z)=\bar{\tau}(\bx,\bz)$ and $\bar{\tau}(\tilde{x},\tilde{z})$.

Finally, note that all monotonicity arguments work the same if the timelike relation between $y$ and $p$ is reversed in the across version.
\end{proof}

\subsection{Extensions of Lorentzian length spaces}\label{subsec:LLS:extensions}

As in \cite{beem1985toponogov}, in the proof of the splitting theorem we will first show that $I(\gamma):=I^+(\gamma)\cap I^-(\gamma)$ splits, and then we will use an inextendibility argument to show that $I(\gamma)=X$. To this end, let us recall the notion of extensions of Lorentzian (pre-)length spaces as introduced in \cite[Def.\ 3.1]{grant2019inextendibility}.

\begin{definition}[Extensions]\par 
\label{definition: extensions}
Let $(X,d,\ll,\leq,\tau)$ be a Lorentzian pre-length space. A Lorentzian pre-length space $(\tilde{X},\tilde{d},\tilde{\ll},\tilde{\leq},\tilde{\tau})$ is called an \emph{extension} of $(X,d,\ll,\leq,\tau)$ if
\begin{enumerate}
    \item $(\tilde{X},\tilde{d})$ is connected,
    \item there exists an isometric embedding $\iota:(X,d) \to (\tilde{X},\tilde{d})$,
    \item $\iota(X) \subsetneqq\tilde{X}$ is open,
    \item $\iota$ preserves $\leq$ and $\ll$,
    \item $\iota$ preserves $\tau$-lengths of causal curves.
\end{enumerate}
\end{definition}

\begin{remark}[Convex extensions]\label{remark: convex_extensions}
An important class of examples (and the only one we will need) is the case of open, causally convex subsets of a suitable Lorentzian length space. In this case, $\iota$ even preserves $\tau$.
\end{remark}

Next, we recall the definition of \textit{timelike completeness} for (localisable) Lorentzian pre-length spaces, cf.\ \cite[Def.\ 5.1]{grant2019inextendibility}. However, since continuous extendibility, extendibility as a causal curve and geodesic extendibility are in general inequivalent concepts, we will define the timelike completeness property precisely in such a way that it is sufficient for the subsequent inextendibility result.

\begin{definition}[Timelike completeness (TC)]\par 
Let $(X,d,\ll,\leq,\tau)$ be a localisable Lorentzian pre-length space. We say $X$ satisfies the \emph{timelike completeness (TC) property} if each (future or past directed) timelike geodesic $\alpha:[a,b) \to X$ that is inextendible to $[a,b]$ as a continuous curve has infinite $\tau$-length.
\end{definition}

\begin{Theorem}[Inextendibility of (TC) spaces]\par 
\label{Theorem: TCinextendibility}
Let $(X,d,\ll,\leq,\tau)$ be a strongly causal Lorentzian length space satisfying the (TC) property. Then $X$ is inextendible as a regularly localisable Lorentzian length space.
\begin{proof}
See \cite[Thm.\ 5.3]{grant2019inextendibility}.
\end{proof}
\end{Theorem}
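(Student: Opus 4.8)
The plan is to argue by contradiction. Suppose $X$ admits an extension $(\tilde{X},\tilde{d},\tilde{\ll},\tilde{\leq},\tilde{\tau})$ in the sense of Definition \ref{definition: extensions} which is itself a regularly localisable Lorentzian length space. Identifying $X$ with the open proper subset $\iota(X)\subsetneq\tilde{X}$, connectedness of $\tilde{X}$ forces the topological boundary $\partial X$ to be nonempty, since otherwise $X$ would be clopen. The heart of the argument is then to manufacture, starting from a boundary point, a timelike geodesic in $X$ whose image limits onto $\partial X$ in \emph{finite} $\tilde{\tau}$-length; restricting it to $X$ yields a timelike geodesic that is inextendible in $X$ as a continuous curve yet of finite $\tau$-length, in direct contradiction with the (TC) property.

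To build this curve, I would fix $p\in\partial X$ and a localising neighbourhood $\tilde{\Omega}$ of $p$ in $\tilde{X}$; by localisability (Definition \ref{definition: localizablity}) one may choose $z\in I^+(p)\cap\tilde{\Omega}$, and I split into two cases according to whether $I^+(p)$ meets $X$. If some such $z$ lies in $X$, take a maximising causal curve $\mu\colon[0,1]\to\tilde{\Omega}$ from $p$ to $z$; since $p\ll z$ and $\tilde{X}$ is regularly localisable, Theorem \ref{theorem: causalcharofmaximizers} forces $\mu$ to be timelike. As $\mu(0)=p\notin X$ while $\mu(1)=z\in X$ and $X$ is open, the largest parameter $t_1$ with $\mu(t_1)\notin X$ satisfies $t_1<1$, $\mu(t_1)\in\partial X$, and $\mu|_{(t_1,1]}\subset X$; read past-directedly this segment approaches $\mu(t_1)\in\partial X$. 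If instead $I^+(p)\cap X=\emptyset$, then the chosen $z$ lies outside $X$; using that $I^-(z)$ is open (Lemma \ref{Lemma: pushupandopennness}) and contains $p\in\overline{X}$, pick $x\in X\cap I^-(z)$ together with a (again timelike) maximiser from $x$ to $z$ in $\tilde{\Omega}$, which now starts in $X$ and ends outside, so its first exit point lies on $\partial X$ while the segment up to it stays in $X$. In either case I obtain a timelike geodesic segment $\gamma$ contained in $X$ with one open end converging to a point $p'\in\partial X$.

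It remains to extract the contradiction. Since $\gamma$ is a subsegment of a maximiser of $\tilde{\Omega}$ lying in the open set $X$, it is maximising on small subintervals, and because $\iota$ preserves the $\tau$-lengths of causal curves (Definition \ref{definition: extensions}, and even $\tau$ itself on causally convex pieces, cf.\ Remark \ref{remark: convex_extensions}), $\gamma$ is a genuine timelike geodesic of $X$ in the sense of Definition \ref{definition: geodesic} with $L_\tau(\gamma)=L_{\tilde{\tau}}(\iota\circ\gamma)$, which is finite as a portion of a maximiser whose endpoints have finite time separation. Moreover $\gamma$ admits no continuous extension within $X$ to its open endpoint: such an extension would supply a limit point in $X$, but in the Hausdorff space $\tilde{X}$ the curve already converges to $p'\notin X$, and limits are unique. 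Thus $\gamma$ is a timelike geodesic in $X$ (future- or past-directed, both allowed by the definition), inextendible as a continuous curve, of finite $\tau$-length, which contradicts (TC) and completes the proof.

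The main obstacle, which the case distinction is designed to resolve, is ensuring that the maximiser genuinely \emph{crosses} $\partial X$ rather than remaining trapped inside $X$: anchoring it at the boundary point $p$ when $I^+(p)$ already enters $X$, and at an interior point $x$ when it does not, guarantees that exactly one endpoint lies in $X$ so that a first entry/exit on $\partial X$ must occur. A secondary point demanding care is the transfer of the geodesic and maximising properties between $\tilde{X}$ and $X$; this is precisely where the extension axioms (openness of $\iota(X)$ and preservation of $\tau$-lengths) and the regular localisability of $\tilde{X}$ (which keeps the relevant maximisers timelike) enter in an essential way.
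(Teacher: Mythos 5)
Your proposal is correct and follows essentially the same route as the proof the paper points to, namely \cite[Thm.~5.3]{grant2019inextendibility}: a hypothetical extension has nonempty boundary by connectedness, a timelike local maximiser in a localising neighbourhood of a boundary point is made to cross $\partial\iota(X)$ (your two cases handle whether $I^+(p)$ meets $X$ inside that neighbourhood), and its portion in $X$ is a finite-length timelike geodesic that is continuously inextendible in $X$, contradicting (TC). Two citation-level repairs: the timelikeness of the maximiser should be drawn from the defining property of a \emph{regular} localising neighbourhood in Definition~\ref{definition: localizablity} rather than from Theorem~\ref{theorem: causalcharofmaximizers} (which concerns globally $\tau$-maximising curves, whereas your $\mu$ only maximises the local time separation of $\tilde{\Omega}$ --- this is exactly where regular localisability of the extension enters), and the transfer of the geodesic property to $X$ silently uses strong causality of $X$ via Lemma~\ref{Lemma: locimpliesstronglylocforstronglycausal} to place localising neighbourhoods of $X$ inside $\tilde{\Omega}\cap X$, so that competitors for your restricted segment remain within the region where its local maximality applies.
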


\section{Lorentzian products}\label{sec:products}

This section is dedicated to the treatment of Lorentzian product spaces. In the context of Lorentzian pre-length spaces, (warped) products were first introduced in \cite{alexander2019generalized}. On the one hand, this investigation is too general for our purposes, we only need products instead of warped products.
On the other hand, the formulation we use also works for non-intrinsic spaces.

\begin{definition}[Lorentzian product]
\label{def: lor product}
Let $(X,d)$ be a metric space. Define the space $Y:= \R \times X$. Equip it with the product metric 
\begin{equation*}
D: Y \times Y \to \R, \ D((s,x),(t,y)):=\sqrt{|t-s|^2+d(x,y)^2}.    
\end{equation*}
Define the timelike relation as $(s,x) \ll (t,y) :\iff t-s > d(x,y)$ and the causal relation as $(s,x) \leq (t,y) :\iff t-s \geq d(x,y)$. 
Define the product time separation $\tau: Y \times Y \to \R$ via 
\begin{equation*}
\tau((s,x),(t,y)):=\sqrt{(t-s)^2 - d(x,y)^2}    
\end{equation*}
if $(s,x) \leq (t,y)$, and 0 otherwise. 

Then $(Y,D,\ll,\leq,\tau)$ is called the Lorentzian product of $X$ with $\R$. If it is clear that the Lorentzian product is meant, we simply denote it by $\R\times X$.
\end{definition}

The following is more or less immediate from the definition:

\begin{Proposition}[Properties of Lorentzian products]
\label{Proposition: Lorproductscontintaucausclosed}
Let $(X,d)$ be a metric space. Then the Lorentzian product $(Y,D,\ll,\leq,\tau)$ is a Lorentzian pre-length space. Moreover, $\tau$ is even continuous and $\leq$ is closed on $Y \times Y$.
\end{Proposition}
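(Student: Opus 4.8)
The plan is to verify the axioms of a Lorentzian pre-length space (Definition \ref{definition: LpLS}) one by one for $(Y,D,\ll,\leq,\tau)$, and then separately establish continuity of $\tau$ and closedness of $\leq$. First I would check that $D$ is genuinely a metric (the triangle inequality follows from the fact that $D$ is the Euclidean norm of the pair $(|t-s|,d(x,y))$ composed with the metric $d$, so it inherits subadditivity). Next, I would confirm that $\leq$ is reflexive and transitive and that $\ll$ is transitive and contained in $\leq$. Reflexivity of $\leq$ is clear since $t-t=0\geq d(x,x)=0$. For transitivity of $\leq$, given $(s,x)\leq(t,y)\leq(u,z)$ one has $u-s=(u-t)+(t-s)\geq d(y,z)+d(x,y)\geq d(x,z)$ by the triangle inequality for $d$; the argument for $\ll$ is identical with at least one strict inequality, and the inclusion $\ll\,\subseteq\,\leq$ is immediate.

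Then I would verify the defining properties of the time separation $\tau$. By construction $\tau((s,x),(t,y))=0$ whenever $(s,x)\not\leq(t,y)$, and $\tau>0$ exactly when $t-s>d(x,y)$, i.e.\ exactly when $(s,x)\ll(t,y)$, matching the required equivalences. The crucial point is the reverse triangle inequality: for $(s,x)\leq(t,y)\leq(u,z)$ I want
\begin{equation*}
\sqrt{(u-s)^2-d(x,z)^2}\geq\sqrt{(t-s)^2-d(x,y)^2}+\sqrt{(u-t)^2-d(y,z)^2}.
\end{equation*}
I expect this to be the main obstacle, since it is the only genuinely nontrivial inequality. The clean way to handle it is to pass to $\R^{1,1}$: set $a=t-s$, $b=u-t$, $p=d(x,y)$, $q=d(y,z)$, so that the two summands on the right are exactly $\bar\tau$-lengths of the timelike vectors $(a,p)$ and $(b,q)$ in Minkowski space. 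Because $d(x,z)\leq p+q$ and the left-hand side is decreasing in the spatial separation, it suffices to prove the inequality with $d(x,z)$ replaced by $p+q$; that reduced statement is precisely the reverse triangle inequality for the two Minkowski vectors $(a,p)$ and $(b,q)$, which is the standard concavity property of $\bar\tau$ and can be verified by a direct squaring computation (or cited as the wrong-way triangle inequality in $\R^{1,1}$).

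Finally, for the two extra claims: continuity of $\tau$ follows because $(t-s)^2-d(x,y)^2$ is a continuous function of $((s,x),(t,y))$, the square root is continuous on $[0,\infty)$, and $\tau$ is the composition of these with a cutoff at the closed set $\{\leq\}$; one only needs to check that the two descriptions agree on the boundary $\{t-s=d(x,y)\}$, where both give $0$. Closedness of $\leq$ is immediate: $\leq$ is the preimage of the closed set $[0,\infty)$ under the continuous map $((s,x),(t,y))\mapsto(t-s)-d(x,y)$. I would remark that lower semicontinuity of $\tau$ (all that Definition \ref{definition: LpLS} strictly requires) is of course subsumed by the stronger continuity just shown, completing the verification.
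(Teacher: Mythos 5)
Your proposal is correct and follows essentially the same route as the paper: a direct verification of the axioms of Definition \ref{definition: LpLS}, with continuity of $\tau$ read off from the formula and closedness of $\leq$ obtained from the continuity of $((s,x),(t,y))\mapsto (t-s)-d(x,y)$. The only difference is one of detail: where the paper dismisses the reverse triangle inequality as an ``elementary calculation'' following from the triangle inequality for $d$, you actually carry it out, correctly, via monotonicity in the spatial separation plus the reverse triangle inequality for causal vectors in $\R^{1,1}$.
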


\begin{proof}
$\tau$ is continuous by definition, so in particular, lower semi-continuous. 
The inclusion of $\ll$ in $\leq$ is clear from the definition as well. The transitivity of both relations as well as the reverse triangle inequality of $\tau$ for $\leq$-related triples follow (via an elementary calculation) from the triangle inequality for $d$. 
The reflexivity of $\leq$ is clear from the definition.
Finally, the definition of $\ll$ is a reformulation of $\tau >0$. 

Note that for converging real sequences $n_i \to n, m_i \to m$ such that $n_i \leq m_i$ for all $i$, we have $n \leq m$. The closedness of $\leq$ on $Y \times Y$ follows immediately from this fact.
\end{proof}

It is easily seen that generalised cones in the sense of \cite{alexander2019generalized} with warping function $f \equiv 1$ are Lorentzian products in the sense of Definition \ref{def: lor product}. 
Indeed, in this case we have, in the notation of \cite[Def.\ 3.9]{alexander2019generalized}, that $m_{s,t}:= \min_{r \in [s,t]}f(r)=1$ for all $s,t \in \R$. Then \cite[Lem.\ 3.10]{alexander2019generalized} shows that a causal curve in the sense of \cite[Def.\ 3.2]{alexander2019generalized} is a causal curve in the usual sense in our setting, with timelikeness being inherited as well. Moreover, the variational length of \cite[Def.\ 3.9]{alexander2019generalized} is precisely the $\tau$-length of a causal curve in the sense of Definition \ref{definition: taulength}. It is easily seen that if $X$ is strictly intrinsic, then the product $\R \times X$ in our sense can be canonically identified with the warped product $\R \times_{f} X$ with $f \equiv 1$.

We now show that Lorentzian products are always strongly causal and non-totally imprisoning.

\begin{Proposition}[Diamonds form basis]
\label{prop: diamondbasis}
Let $Y:=\R \times X$ be a Lorentzian product.\ Then $\{I(p,q) \mid p,q \in Y\}$ forms a basis for the topology. In particular, $Y$ is strongly causal.
\end{Proposition}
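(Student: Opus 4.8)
The plan is to show that the timelike diamonds $I(p,q)$ in $Y = \R \times X$ are open and form a neighbourhood basis at each point, which then gives the strong causality as an immediate consequence (since by definition strong causality means the Alexandrov topology coincides with the metric topology, and the diamonds are the subbasis generating the Alexandrov topology). Since the diamonds are automatically open by Lemma \ref{Lemma: pushupandopennness}, the real content is to produce, for every point $(t_0,x_0) \in Y$ and every metric ball $B_R((t_0,x_0))$, a diamond $I(p,q)$ with $(t_0,x_0) \in I(p,q) \subset B_R((t_0,x_0))$.

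First I would write down explicitly what a diamond looks like in the product. For $p = (s_1,a)$ and $q = (s_2,b)$, a point $(t,x)$ lies in $I(p,q)$ precisely when $t - s_1 > d(a,x)$ and $s_2 - t > d(x,b)$, by the definition of $\ll$ in Definition \ref{def: lor product}. The natural choice is to take $p$ and $q$ on the ``vertical line'' through $(t_0,x_0)$, i.e.\ $a = b = x_0$, and set $p = (t_0 - \eps, x_0)$, $q = (t_0 + \eps, x_0)$ for small $\eps > 0$. Then $(t,x) \in I(p,q)$ iff $t - (t_0 - \eps) > d(x_0,x)$ and $(t_0+\eps) - t > d(x,x_0)$, i.e.\ $|t - t_0| + d(x_0,x) < \eps$. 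This is the crucial computation: membership in the diamond is governed by the $\ell^1$-type quantity $|t - t_0| + d(x_0,x) < \eps$.

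The remaining step is a routine comparison of norms. Clearly $(t_0,x_0)$ itself satisfies $|t_0 - t_0| + d(x_0,x_0) = 0 < \eps$, so it lies in the diamond. For the containment in the ball, if $(t,x) \in I(p,q)$ then the product metric satisfies
\begin{equation*}
D((t,x),(t_0,x_0)) = \sqrt{|t-t_0|^2 + d(x,x_0)^2} \leq |t - t_0| + d(x,x_0) < \eps,
\end{equation*}
so choosing $\eps = R$ (or any $\eps \leq R$) yields $I(p,q) \subset B_R((t_0,x_0))$. Thus every metric ball contains a diamond around its centre, and since diamonds are open, the Alexandrov topology and the metric topology coincide, giving strong causality.

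I do not expect a serious obstacle here; the statement is essentially the observation recorded after Proposition \ref{Proposition: Lorproductscontintaucausclosed} that the relevant inequality is an $\ell^1$ estimate dominating the $\ell^2$ metric. The only point requiring a small amount of care is to confirm that $p \ll (t_0,x_0) \ll q$ actually holds for the chosen $p,q$ (so that the diamond genuinely contains the centre and is nonempty), which is immediate since $t_0 - (t_0 - \eps) = \eps > 0 = d(x_0,x_0)$ and likewise for $q$; this also implicitly uses that the vertical direction is always timelike in a Lorentzian product. I would phrase the argument so that the explicit diamonds exhibited double as a neighbourhood basis, making the strong causality conclusion transparent rather than merely citing the definition.
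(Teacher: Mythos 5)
Your proof is correct and follows essentially the same route as the paper's: both place the tips of the diamond on the vertical line through the given point, $p=(t_0-\eps,x_0)$ and $q=(t_0+\eps,x_0)$, and deduce the containment from the triangle inequality in $(X,d)$. The only cosmetic difference is that you compare the diamond directly with $D$-balls via the exact characterisation $(t,x)\in I(p,q)\iff |t-t_0|+d(x,x_0)<\eps$ and the estimate $\sqrt{|t-t_0|^2+d(x,x_0)^2}\leq |t-t_0|+d(x,x_0)$, whereas the paper compares with product boxes $(a,c)\times B_R(x)$, which generate the same topology.
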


\begin{proof}
Consider the open set $O:=(a,c) \times B_R(x)$ for $a,c \in \R, R >0, x \in X$ and let $(b,y)\in O$. Choose $\varepsilon=\min(b-a,c-b,R-d(x,y))>0$, then $p:=(b-\varepsilon,y),q:=(b+\varepsilon,y) \in \bar{O}$. Then we have that $I(p,q)\subseteq O$: If $(s,z) \in I(p,q)$, then $s \in (b-\varepsilon,b+\varepsilon) \subseteq (a,c)$. In particular, at least one of $|s-(b-\varepsilon)|\leq\varepsilon$ and $|s-(b+\varepsilon)|\leq\varepsilon$ holds. By definition of $\ll$, we then also have $d(x,z)\leq d(x,y)+d(y,z)<R$, so $y \in B_R(x)$. Thus, $(s,z) \in O$.
\end{proof}

\begin{Proposition}[Non-total imprisonment of Lorentzian products]
\label{proposition: nontotalimprisonmentproducts}
Any Lorentzian product $Y = \R \times X$, where $(X,d)$ is a metric space, is non-totally imprisoning.
\begin{proof}
For any two points $(s,x)\leq(t,y)\in Y$ we have $d(x,y)\leq t-s$, so  $D((s,x),(t,y))=\sqrt{(t-s)^2+d(x,y)^2}\leq\sqrt{2}(t-s)$. In particular, for any future directed causal curve $\gamma:[a,b]\to Y$ starting at $\gamma(a)=(s,x)$ and ending at $\gamma(b)=(t,y)$ we have $L_D(\gamma)\leq\sqrt{2}(t-s)$: In any partition of $[a,b]$, the above bound in any of the subintervals forms a telescopic sum, making $\sqrt{2}(t-s)$ an upper bound on the length of the polygon approximation of $\gamma$ corresponding to the partition. Let now $K$ be compact, then we can enclose it in a bounded set: $K\subseteq [s,t]\times \bar{B}_R(x)$ for some $s<t$, $R>0$ and $x\in X$. We know that the $D$-length of future directed causal curves contained in $[s,t]\times \bar{B}_R(x)$ is bounded by $\sqrt{2}(t-s)$, so we have that $Y$ is non-totally imprisoning.
\end{proof}
\end{Proposition}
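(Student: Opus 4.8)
The plan is to verify the definition of non-total imprisonment directly: given an arbitrary compact set $K\subseteq Y=\R\times X$, I must produce a single constant $C>0$ bounding the $D$-length of every causal curve lying in $K$. The whole argument rests on comparing the product metric $D$ to the elapsed coordinate time along causal directions, so the first thing I would record is the pointwise estimate that for causally related points $(s,x)\leq(t,y)$ one has, by definition of the causal relation, $d(x,y)\leq t-s$, whence
\begin{equation*}
D((s,x),(t,y))=\sqrt{(t-s)^2+d(x,y)^2}\leq\sqrt{2}\,(t-s).
\end{equation*}
In words: along any causal step the product distance is controlled by $\sqrt 2$ times the time increment.

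Next I would promote this to a bound on the length of an arbitrary causal curve $\gamma:[a,b]\to Y$ in terms of its total elapsed time. Writing $\gamma(r)=(\gamma_0(r),\gamma_1(r))$, causality forces the time component $\gamma_0$ to be non-decreasing. For any partition $a=r_0<\dots<r_N=b$, applying the pointwise estimate on each subinterval and telescoping the time increments gives
\begin{equation*}
\sum_{i=0}^{N-1}D(\gamma(r_i),\gamma(r_{i+1}))\leq\sqrt{2}\sum_{i=0}^{N-1}\bigl(\gamma_0(r_{i+1})-\gamma_0(r_i)\bigr)=\sqrt{2}\,\bigl(\gamma_0(b)-\gamma_0(a)\bigr).
\end{equation*}
Since $L_D(\gamma)$ is the supremum of such polygonal sums over all partitions, I conclude $L_D(\gamma)\leq\sqrt{2}\,(\gamma_0(b)-\gamma_0(a))$.

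Finally I would close the argument by enclosing the compact set in a coordinate box. As $K$ is compact it is bounded, so $K\subseteq[s,t]\times\bar{B}_R(x)$ for suitable $s<t$, $R>0$ and $x\in X$; in particular the time coordinate of every point of $K$ lies in $[s,t]$. Any causal curve contained in $K$ therefore has $\gamma_0(b)-\gamma_0(a)\leq t-s$, and the previous step yields $L_D(\gamma)\leq\sqrt2\,(t-s)=:C$, a bound independent of the curve, which is exactly non-total imprisonment.

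I expect the only genuine subtlety to be the length step: one must use that $L_D$ is defined as the supremum over partitions of the polygonal sums, so that the termwise estimate followed by telescoping transfers cleanly to the supremum (and, implicitly, that monotonicity of $\gamma_0$ makes the telescoping collapse to the total time span). Everything else — the pointwise $\sqrt 2$ estimate, the monotonicity of the time component, and the boundedness of the time coordinate on a compact set — is entirely routine.
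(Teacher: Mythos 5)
Your proposal is correct and follows essentially the same route as the paper: the pointwise estimate $D((s,x),(t,y))\leq\sqrt{2}\,(t-s)$ for causally related points, the telescoping of time increments over partitions to bound $L_D(\gamma)$, and the enclosure of the compact set in a box $[s,t]\times\bar{B}_R(x)$. Your write-up is if anything slightly more careful in making explicit the monotonicity of the time component and the supremum definition of $L_D$.
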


The next result characterises distance realisers in $Y = \R \times X$. Note that we make use of the following: A distance minimiser in a metric space can always be parametrised by unit speed, which is evidently a Lipschitz parametrisation, cf.\ \cite[Proposition 2.5.9]{burago2001course}. 

\begin{Proposition}[Characterisation of distance realisers]
\label{Proposition: distancerealizersinproducts}
Let $(X,d)$ be a metric space and let $Y:=\R \times X$ the Lorentzian product. Then a continuous causal curve $\gamma=(\alpha,\beta): [a,b] \to Y$ is a distance realiser if and only if either $\beta = const$ or $\beta:[a,b] \to X$ is a (metric) distance realiser and when reparametrising $\gamma$ such that $\beta$ is unit speed parametrised, $\alpha:[a,b]\to\R$ is affine, i.e. of the form $\alpha(t)=ct+d$ with $c=1$ (which implies that $\gamma$ future directed null) or $c>1$ (which implies that $\gamma$ is future directed timelike). In particular, $\gamma$ has causal character and a reparametrisation as a (Lipschitz) causal curve. If in addition $Y$ is localisable, then the same is true for continuous geodesics.
\end{Proposition}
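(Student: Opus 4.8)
The plan is to characterise when a continuous causal curve $\gamma = (\alpha,\beta):[a,b] \to Y$ is $\tau$-maximising by reducing everything to the geometry of $\R^{1,1}$ via the metric structure of $X$. First I would establish the \emph{only if} direction, which carries the analytic content. Suppose $\gamma$ is a distance realiser with $\beta$ non-constant. The key inequality is that for any $\leq$-related pair, $\tau((s,x),(t,y)) = \sqrt{(t-s)^2 - d(x,y)^2}$, and by the triangle inequality for $d$ one gets, for any partition $a = t_0 < \dots < t_N = b$,
\begin{equation*}
\sum_i \tau(\gamma(t_i),\gamma(t_{i+1})) = \sum_i \sqrt{(\alpha(t_{i+1})-\alpha(t_i))^2 - d(\beta(t_i),\beta(t_{i+1}))^2} \leq \sqrt{(\alpha(b)-\alpha(a))^2 - \ell^2},
\end{equation*}
where $\ell = L^d(\beta)$ is the $d$-length of $\beta$ and the last step is the reverse Minkowski (reverse triangle) inequality in $\R^{1,1}$ applied to the ``vectors'' $(\Delta\alpha_i, L^d(\beta|_{[t_i,t_{i+1}]}))$. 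Here $\tau(\gamma(t_i),\gamma(t_{i+1})) \leq \sqrt{(\Delta\alpha_i)^2 - (L^d(\beta|_{[t_i,t_{i+1}]}))^2}$ uses $d(\beta(t_i),\beta(t_{i+1})) \leq L^d(\beta|_{[t_i,t_{i+1}]})$. Taking the infimum over partitions, $L_\tau(\gamma) \leq \sqrt{(\alpha(b)-\alpha(a))^2 - \ell^2} = \overline{\tau}\big((0,0),(\alpha(b)-\alpha(a),\ell)\big)$. Equality in the reverse Minkowski inequality forces, step by step, that $d(\beta(t_i),\beta(t_{i+1})) = L^d(\beta|_{[t_i,t_{i+1}]})$ (so $\beta$ is a metric distance realiser) and that the ratios $\Delta\alpha_i / L^d(\beta|_{[t_i,t_{i+1}]})$ are all equal, which after reparametrising $\beta$ by unit speed forces $\alpha$ to be affine, $\alpha(t) = ct+d$.

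Next I would nail down the value of the slope $c$ and the resulting causal character. Since $\gamma$ is causal we need $c \geq 1$; the case $c = 1$ gives $L_\tau(\gamma) = 0$, i.e.\ $\gamma$ is null (no two points timelike related along it), while $c > 1$ gives strictly positive $\tau$ between distinct points, so $\gamma$ is timelike. This also yields the stated claim that $\gamma$ has a causal character and admits a reparametrisation as a genuine Lipschitz causal curve: with $\beta$ unit-speed (hence $1$-Lipschitz since it is a minimiser, cf.\ \cite[Proposition 2.5.9]{burago2001course}) and $\alpha$ affine, the map $\gamma$ is Lipschitz.

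For the \emph{if} direction I would simply run the computation in reverse: if $\beta$ is a metric distance realiser, unit-speed parametrised, and $\alpha$ is affine with slope $c \geq 1$, then in each subinterval $d(\beta(t_i),\beta(t_{i+1})) = |t_{i+1}-t_i|$ equals the $d$-length, so the reverse Minkowski inequality becomes an equality on every partition, giving $L_\tau(\gamma) = \sqrt{(\alpha(b)-\alpha(a))^2 - (b-a)^2} = \tau(\gamma(a),\gamma(b))$; the constant-$\beta$ case is immediate since then $\gamma$ is just a vertical timelike segment which is trivially maximising. I expect the main obstacle to be the careful handling of the equality case in the reverse Minkowski inequality across an arbitrary partition while only assuming $\gamma$ continuous (not a priori Lipschitz): one must argue that equality in the telescoped sum propagates to each subinterval and pins down both that $\beta$ realises its own $d$-length and that the slope is constant, without circularly assuming a nice parametrisation. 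The final sentence about continuous geodesics in the localisable case then follows by applying the curve characterisation locally: a continuous geodesic is locally maximising, so on small intervals it has the stated form, and since affineness of $\alpha$ and the distance-realiser property of $\beta$ are local-to-global for these rigid structures, the same description holds globally.
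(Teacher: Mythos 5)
Your ``only if'' direction breaks at the first step of the displayed chain. From $d(\beta(t_i),\beta(t_{i+1}))\leq L^d(\beta|_{[t_i,t_{i+1}]})$ you infer $\tau(\gamma(t_i),\gamma(t_{i+1}))\leq\sqrt{(\Delta\alpha_i)^2-L^d(\beta|_{[t_i,t_{i+1}]})^2}$, but enlarging the subtracted term \emph{decreases} the square root, so the inequality actually points the other way: $\tau(\gamma(t_i),\gamma(t_{i+1}))=\sqrt{(\Delta\alpha_i)^2-d(\beta(t_i),\beta(t_{i+1}))^2}\geq\sqrt{(\Delta\alpha_i)^2-L^d(\beta|_{[t_i,t_{i+1}]})^2}$. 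Concretely, take $X=\R$, $\alpha(t)=2t$ and $\beta$ a unit-slope tent on $[t_i,t_{i+1}]=[0,1]$ returning to its starting point: then $\tau(\gamma(0),\gamma(1))=2$ while your claimed bound is $\sqrt{3}$. The step fails exactly when $\beta$ is far from minimising on a subinterval, which is precisely the situation this direction of the proof must exclude, so it cannot be waved through. The repair is to apply the reverse Minkowski inequality directly to the chord vectors $(\Delta\alpha_j,d_j)$ with $d_j:=d(\beta(s_j),\beta(s_{j+1}))$ (these are future causal since $\gamma$ is causal; summing the causality inequality over sub-partitions also shows $\beta$ is rectifiable with $L^d(\beta)\leq\alpha(b)-\alpha(a)$, which you use tacitly). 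This gives $\sum_j\tau(\gamma(s_j),\gamma(s_{j+1}))\leq\sqrt{(\alpha(b)-\alpha(a))^2-\bigl(\sum_j d_j\bigr)^2}$ for every partition; since $L_\tau$ is the infimum over partitions while $\sum_j d_j\to L^d(\beta)$ along refinements, you recover your key bound $L_\tau(\gamma)\leq\sqrt{(\alpha(b)-\alpha(a))^2-L^d(\beta)^2}$ legitimately, and the equality analysis (for a maximiser every partition sum equals $\tau$ of the endpoints, forcing all chord vectors to be proportional and $d$ to be additive along $\beta$) then delivers both conclusions as you intended. With this fix your argument becomes a partition-based rendering of the same superadditivity the paper exploits; the paper instead tests maximality only on triples $r<s<t$ via the identity $\tau(\gamma(r),\gamma(t))=\tau(\gamma(r),\gamma(s))+\tau(\gamma(s),\gamma(t))$, reducing to strict concavity and strict monotonicity of $f(x)=\sqrt{1-x^2}$, which sidesteps $L^d$ and partition refinements altogether.

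A second, smaller error: in the geodesic case, the distance-realiser property of $\beta$ is \emph{not} local-to-global. In the flat cylinder $\R\times S^1$, a timelike helix winding once around is a continuous geodesic whose projection $\beta$ is a local but not a global minimiser. What globalises is only the slope $c$, since it must agree on overlapping maximising subintervals (this is how the paper argues); the correct conclusion for continuous geodesics is that $\beta$ is a constant-speed metric geodesic (local minimiser) and $\alpha$ is affine, not that $\beta$ realises $d$ globally.
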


\begin{proof}
$\gamma$ is a distance realiser if and only if for all $r<s<t$ we have 
\begin{equation*}
\tau(\gamma(r),\gamma(t))=\tau(\gamma(r),\gamma(s))+\tau(\gamma(s),\gamma(t)).\quad (\star)
\end{equation*}
We denote $\gamma(r)=(t_1,x_1)$, $\gamma(s)=(t_2,x_2)$ and $\gamma(t)=(t_3,x_3)$, then $(\star)$ reads:
\begin{equation*}
\sqrt{(t_3-t_1)^2-d(x_1,x_3)^2}=\sqrt{(t_2-t_1)^2-d(x_1,x_2)^2}+\sqrt{(t_3-t_2)^2-d(x_2,x_3)^2}
\end{equation*}
We now define $\lambda=\frac{t_2-t_1}{t_3-t_1}$ and the function $f(x)=\sqrt{1-x^2}$. Then $(\star)$ simplifies to
\begin{equation*}
f\left(\frac{d(x_1,x_3)}{t_3-t_1}\right)=\lambda f\left(\frac{d(x_1,x_2)}{\lambda(t_3-t_1)}\right)+(1-\lambda)f\left(\frac{d(x_2,x_3)}{(1-\lambda)(t_3-t_1)}\right).\quad (\star\star)
\end{equation*}
We use that $f$ is concave and monotonously decreasing on $[0,1]$ and $d(x_1,x_2)+d(x_2,x_3)\geq d(x_1,x_3)$ on the right hand side of $(\star\star)$:
\begin{align*}
&\lambda f\left(\frac{d(x_1,x_2)}{\lambda(t_3-t_1)}\right)+(1-\lambda)f\left(\frac{d(x_2,x_3)}{(1-\lambda)(t_3-t_1)}\right)\\
&\leq f\left(\frac{d(x_1,x_2) + d(x_2,x_3)}{(t_3-t_1)}\right)\leq f\left(\frac{d(x_1,x_3)}{(t_3-t_1)}\right)\,.
\end{align*}
So we get that $(\star\star)$ is equivalent to having equality here. As $f$ is strictly concave, the first inequality has equality if and only if
\begin{equation}\label{popDistRealizInProd:eqAffine}
\frac{d(x_1,x_2)}{t_2-t_1}=\frac{d(x_2,x_3)}{t_3-t_2}\,,
\end{equation}
where we got rid of $\lambda$ again for generality. As $f$ is strictly monotonously decreasing, the second inequality has equality if and only if
\begin{equation}\label{popDistRealizInProd:eqSpaceDistRealiz}
d(x_1,x_2)+d(x_2,x_3)=d(x_1,x_3)\,.
\end{equation}

We translate this back to what happens for general $r<s<t$: Equation \ref{popDistRealizInProd:eqSpaceDistRealiz} reads $d(\beta(r),\beta(s))+d(\beta(s),\beta(t))= d(\beta(r),\beta(t))$, i.e., it holds if and only if $\beta$ is a distance realiser. Equation \ref{popDistRealizInProd:eqAffine} reads $\frac{d(\beta(r),\beta(s))}{\alpha(s)-\alpha(r)}=\frac{d(\beta(s),\beta(t))}{\alpha(t)-\alpha(s)}$, i.e.\ that quantity is a constant $\frac{1}{c}$ (if $\frac{1}{c}$ is $0$ we get that $\beta$ is constant, in this case the result is trivial). In particular, if we reparametrise $\gamma$ such that $\beta$ is unit speed (i.e.\ $d(\beta(s),\beta(t))=t-s$), $\alpha(t)-\alpha(s)=c(t-s)$, i.e.\ $\alpha$ is affine. 

Now we look at what happens for different parameters $c$: If $c<1$, we easily see that $f(c)>0$, so $\tau(\gamma(r),\gamma(t))=(\alpha(t)-\alpha(r))f(c)>0$ and the curve is timelike. For $c=1$, $f(c)=f(1)=0$, so $\alpha(t)-\alpha(r)=d(\beta(r),\beta(t))$ and the curve is null.

Now if $Y$ is localisable and $\gamma$ is only a geodesic, we can cover the domain with intervals $J_i$ that are open in $[a,b]$ where it is distance realising. Then on each of these $J_i$ we apply the above. As the $J_i$ overlap, the constant $c$ agrees, and we get that $\beta|_{J_i}$ is distance realising, thus $\beta$ is a geodesic. The converse follows equivalently. We even get that the subintervals of the domain where the curves are distance realising agree. 
\end{proof}

\begin{Corollary}[$\tau$-arclength parametrisations of distance realisers]
\label{Corollary: tauarclengthofmaxinproducts}
Any continuous timelike maximiser $\gamma:[a,b] \to \R \times X$ has a Lipschitz $\tau$-arclength parametrisation, which is of the form $(\alpha,\beta)$, with $\alpha:[0,L_\tau(\gamma)]\to\R$, $\alpha(t)=ct+d$, and $\beta:[0,L_\tau(\gamma)]\to X$ a constant speed minimiser of speed $\sqrt{c^2-1}$. If $\R \times X$ is localisable, then the same is true for timelike geodesics (with $\beta$ constant speed geodesic).
\end{Corollary}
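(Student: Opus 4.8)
The plan is to deduce the corollary directly from Proposition \ref{Proposition: distancerealizersinproducts}, which already characterises continuous timelike maximisers as curves $(\alpha,\beta)$ where, in the parametrisation making $\beta$ unit speed, $\alpha$ is affine of the form $\alpha(t)=ct+d$ with $c>1$ and $\beta$ a metric distance realiser. The only genuinely new content here is to pass from the \emph{$d$-arclength} normalisation of $\beta$ to the \emph{$\tau$-arclength} normalisation of $\gamma$, and to record the resulting speeds. So first I would fix the parametrisation from the proposition in which $\beta$ has unit speed, so that $d(\beta(s),\beta(t))=t-s$ and $\alpha(t)=ct+d$ with $c>1$. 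A direct computation of the product time separation then gives, for $r<t$,
\begin{equation*}
\tau(\gamma(r),\gamma(t))=\sqrt{(\alpha(t)-\alpha(r))^2-d(\beta(r),\beta(t))^2}=\sqrt{c^2-1}\,(t-r),
\end{equation*}
so in this parametrisation $\gamma$ has constant $\tau$-speed $\sqrt{c^2-1}>0$.

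Next I would perform the affine reparametrisation to $\tau$-arclength. Since the $\tau$-speed is the constant $\sqrt{c^2-1}$, the reparametrisation $t\mapsto \sqrt{c^2-1}\,t$ (up to an additive constant fixing the left endpoint at $0$) sends the domain to $[0,L_\tau(\gamma)]$ and is affine, hence Lipschitz with Lipschitz inverse. Because the reparametrisation is affine, $\alpha$ remains affine — it is still of the form $\alpha(u)=c'u+d'$ for new constants — and I would compute the new slope explicitly: differentiating the old relations, $\alpha$ advances at rate $c$ per unit $d$-arclength of $\beta$, i.e.\ per $1/\sqrt{c^2-1}$ units of $\tau$-arclength, so $c'=c/\sqrt{c^2-1}$. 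One checks $c'>1$ as required, consistent with the curve being timelike. Meanwhile $\beta$, which had unit speed in the old parameter, now has constant speed $1/\sqrt{c^2-1}$; but I would prefer to state the speed as in the corollary, so I would instead rescale so that $\alpha$ has unit slope is \emph{not} what is wanted — rather, re-examining the claimed form $\alpha(t)=ct+d$ with $\beta$ of speed $\sqrt{c^2-1}$, the corollary is using the convention in which the \emph{time coordinate} $\alpha$ advances at unit rate in $\tau$-arclength only up to the factor $c$; concretely, in $\tau$-arclength parametrisation the identity $\alpha'^2-|\beta'|^2=1$ must hold pointwise, which forces exactly $\alpha(u)=cu+d$ and $|\beta'|=\sqrt{c^2-1}$ for the corollary's constant $c$. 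I would verify this normalisation by imposing $\tau$-unit-speed, i.e.\ $\alpha'(u)^2-d\text{-speed}(\beta)^2=1$, and solving.

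Finally I would address the Lipschitz and geodesic claims. The $\tau$-arclength parametrisation is a priori only guaranteed continuous by Lemma \ref{Lemma: tauarclengthparametrizations}; here, however, it is manifestly Lipschitz because it is an affine reparametrisation of the already-Lipschitz unit-speed form from Proposition \ref{Proposition: distancerealizersinproducts}, and $\beta$ is a constant-speed minimiser. For the localisable case and timelike geodesics, I would simply invoke the last part of Proposition \ref{Proposition: distancerealizersinproducts}, which asserts that the same characterisation holds for continuous geodesics (with $\beta$ a constant-speed geodesic rather than a global minimiser), and rerun the identical affine reparametrisation argument.

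The main obstacle is purely bookkeeping rather than conceptual: getting the normalisation constants right so that the stated form $\alpha(u)=cu+d$ with $|\beta'|=\sqrt{c^2-1}$ is internally consistent with $\tau$-unit-speed, and confirming that the affine change of variable both lands the domain on $[0,L_\tau(\gamma)]$ and preserves affinity of $\alpha$. Everything else is an immediate consequence of the already-proven structure theorem for distance realisers.
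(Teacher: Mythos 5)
Your proposal is correct and matches the paper's (implicit) argument exactly: the paper states this as an unproved corollary of Proposition \ref{Proposition: distancerealizersinproducts}, to be obtained precisely by the affine change of parameter you describe, and your normalisation check via $\alpha'^2 - |\beta'|^2 = 1$ correctly identifies the corollary's constant as $c_{\mathrm{new}} = c_{\mathrm{old}}/\sqrt{c_{\mathrm{old}}^2-1}$, with Lipschitzness and the geodesic case following just as you say. The only blemish is a phrasing slip (``per $1/\sqrt{c^2-1}$ units of $\tau$-arclength'' inverts the conversion rate), but your final slope and speed are right, so nothing is missing.
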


Note that the results above continue to hold for maximisers resp.\ geodesics defined on any interval $I$, not just a closed one.

To finish this section, we show that global hyperbolicity of the Lorentzian product $Y=\R \times X$ is equivalent to the metric properness of $X$.

\begin{Proposition}[Global hyperbolicity and properness]
\label{Proposition: productglobhypiffmetricproper}
A Lorentzian product $Y:=\R \times X$ is globally hyperbolic if and only if $X$ is a proper metric space.
\end{Proposition}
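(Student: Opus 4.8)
The plan is to reduce global hyperbolicity to the compactness of causal diamonds, since by Proposition \ref{proposition: nontotalimprisonmentproducts} the product $Y=\R\times X$ is \emph{always} non-totally imprisoning. Thus the statement I actually need to establish is: every causal diamond $J(p,q)\subset Y$ is compact if and only if $X$ is proper.

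For the ``if'' direction, suppose $X$ is proper and take $p=(s,x)$, $q=(t,y)$. I may assume $J(p,q)\neq\emptyset$, so that by transitivity of $\leq$ applied to any point of the diamond we have $p\leq q$, giving $t-s\geq d(x,y)\geq 0$. First, $J(p,q)=J^+(p)\cap J^-(q)$ is closed because $\leq$ is a closed relation on $Y\times Y$ (Proposition \ref{Proposition: Lorproductscontintaucausclosed}). Next I would trap the diamond in a compact box: for $(r,z)\in J(p,q)$ the relations $r-s\geq d(x,z)\geq 0$ and $t-r\geq d(z,y)\geq 0$ force $r\in[s,t]$ and $d(x,z)\leq r-s\leq t-s$, whence $J(p,q)\subseteq [s,t]\times \bar{B}_{t-s}(x)$. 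Since $X$ is proper, $\bar{B}_{t-s}(x)$ is compact, so this box is compact, and $J(p,q)$ is a closed subset of a compact set, hence compact.

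For the ``only if'' direction, I would recover an arbitrary closed ball of $X$ as a time-slice of a suitable diamond. Fix $x\in X$ and $R>0$, and put $p:=(-R,x)$, $q:=(R,x)$, so that $p\ll q$. A point $(0,z)$ lies in $J(p,q)$ precisely when $R\geq d(x,z)$ (both defining inequalities $R\geq d(x,z)$ and $R\geq d(z,x)$ reduce to this), i.e.\ $J(p,q)\cap(\{0\}\times X)=\{0\}\times\bar{B}_R(x)$. As $\{0\}\times X$ is closed in $Y$, this slice is a closed subset of the compact diamond $J(p,q)$, hence compact, and the map $z\mapsto(0,z)$ identifies it isometrically with $\bar{B}_R(x)$. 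Therefore $\bar{B}_R(x)$ is compact; since $x$ and $R$ were arbitrary, $X$ is proper.

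I do not expect a serious obstacle here: both the closedness of diamonds (from closedness of $\leq$) and the boundedness estimate are elementary. The one genuinely clever point is the slicing argument in the ``only if'' direction, where choosing the symmetric pair $(-R,x),(R,x)$ makes the time-$0$ slice of the diamond coincide \emph{exactly} with the closed ball $\bar{B}_R(x)$; picking the points so that the slice is neither larger nor smaller than the ball is the step I would be most careful about.
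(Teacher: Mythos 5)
Your proposal is correct and follows essentially the same route as the paper: reduce global hyperbolicity to compactness of causal diamonds via non-total imprisonment (Proposition \ref{proposition: nontotalimprisonmentproducts}), recover $\bar{B}_R(x)$ as a time-slice of the diamond $J((-R,x),(R,x))$ for the ``only if'' direction, and trap $J(p,q)$ in a compact box $[s,t]\times\bar{B}_R(x)$ closed by the closedness of $\leq$ for the ``if'' direction. Your radius bound $R=t-s$ is in fact a slightly cleaner estimate than the paper's $R=2|r|+2|t|$, but the argument is the same.
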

\begin{proof}
Since products are always non-totally imprisoning by Proposition \ref{proposition: nontotalimprisonmentproducts}, we only need to check that causal diamonds in $Y$ are compact if and only if $(X,d)$ is a proper metric space. First, suppose that $Y$ is globally hyperbolic. Let $(t,x) \in Y$ and fix $R>0$. Consider the points $p:=(t-R,x), q:=(t-R,x)$. Then $(t,x) \in J(p,q)$. 
By definition of $\leq$, any point $(t,y)$ with the same $\R$-coordinate as $(t,x)$ is in $J(p,q)$ if and only if $d(x,y)\leq R$. 
Thus, the set $J(p,q) \cap \{(t,x) \mid x \in X\}$, which is compact by assumption, can be identified with the closed ball $\bar{B}_R(x)$ in $X$ of radius $R$ around $x$, establishing the fact that closed balls in $X$ are compact.

Conversely, suppose that $X$ is proper and let $p=(r,x), q=(t,z) \in Y$ with $p \leq q$. By definition of $\leq$, if $(s,y) \in J(p,q)$, then $r \leq s \leq t$. 
Moreover, we have $|s| \leq |r| + |t|$. 
Set $R:=2|r|+2|t|$. 
Then for $(s,y) \in J(p,q)$, we also have $d(x,y) \leq |s-r| \leq |s|+|r| \leq R$. Thus, $y \in \bar{B}_R(x)$, which is compact by assumption. In total, we conclude that $J(p,q) \subseteq [r,t] \times \bar{B}_R(x)$, which is a compact set as the Cartesian product of two compact sets. The fact that $J(p,q)$ is closed follows immediately from the closedness of $\leq$, so $J(p,q)$ is compact as well. 
\end{proof}

\section{Rays, lines, co-rays and asymptotes}\label{sec:lines}

\subsection{Rays, lines, co-rays, asymptotes, timelike co-ray condition}\label{subsec:lines:lines}

In this subsection, we study causal rays and lines and show how to obtain them as limits of causal maximisers. Moreover, we analyse triangles where one side is a segment on a timelike line and show that the angles adjacent to the line are equal to their comparison angles. This in fact follows from the more general principle that one can stack triangle comparisons of nested triangles with two endpoints on a timelike line. The latter situation arises in the construction of asymptotes, and via the stacking principle, one can show any future directed and any past directed asymptote from a common point to a given timelike line fit together to give a (timelike) asymptotic line. In constructing co-rays and asymptotes we follow \cite{beem1985toponogov}, whereas the stacking principle and equality of angles can be viewed as Lorentzian analogues of \cite[Lem.\ 10.5.4]{burago2001course}.

\begin{definition}[Rays, Lines]\par
Let $(X,d,\ll,\leq,\tau)$ be a Lorentzian pre-length space. A \emph{future directed causal ray} is a future inextendible, future directed causal curve $c:I \to X$ that maximises the time separation between any of its points, where $I$ is either a closed interval $[a,b]$ or a half-open interval $[a,b)$. A \emph{future directed causal line} is a (doubly) inextendible, future directed causal curve $\gamma:I \to X$ that maximises the time separation between any of its points. Here $I$ can in general be open, closed, or half-open. More generally, for $S \subset X$, a \emph{future directed causal $S$-ray} is a future directed, future inextendible causal curve starting in $S$ and satisfying $\tau(S,c(t)):=\sup_{p \in S} \tau(p,c(t)) = L_{\tau}(c|_{[0,t]})$. Past directed rays and lines are defined analogously. A future directed causal ray $c: I \to X$ is called \emph{complete} if $L_{\tau}(c)=\infty$. Similarly, a future directed causal line $\gamma$ is called \emph{complete} if there is $t_0 \in I$ such that the past and future directed rays obtained from $\gamma$ at $\gamma(t_0)$ are complete.
Unless explicitly stated otherwise, all causal rays and lines are understood to be future directed. 
\end{definition}

\begin{remark}
\begin{enumerate}
\item[]
\item Clearly, any ray $c$ is a $\{c(0)\}$-ray. Conversely, if $c:[a,b)\to X$ is a future directed $S$-ray for some $S \subset X$, then $\tau(c(a),c(t)) \leq \tau(S,c(t)) = L_{\tau}(c|_{[a,t]})$, so $c$ is a ray. A similar statement is true for past directed rays.

\item If $X$ is regularly localisable, then any causal ray/line $c$ is either timelike or null by Theorem \ref{theorem: causalcharofmaximizers}.

\item If $X$ is localisable and causally path-connected, then any ray $c$ has to be defined on a half-open interval and any line $\gamma$ has to be defined on an open interval.
\end{enumerate}
\end{remark}

In the following, unless we specify the assumptions on $X$, we always take $X$ to be as in the splitting theorem.

\begin{Proposition}\par 
Let $z_n \to z$ in $X$. Let $p_n \in I^+(z_n)$ and let $c_n:[0,a_n] \to X$ be a sequence of future directed, maximising causal curves from $z_n$ to $p_n$ in $d$-arclength parametrisation. If $\tau(z_n,p_n) \to \infty$, then there is a limit causal ray $c:[0,\infty) \to X$ from $z$.
\begin{proof}
We only have to show $a_n \to \infty$, the rest follows from the limit curve theorem. Suppose $a_n \to a < \infty$. This implies that all $p_n$ are contained in a common compact set $K$: Indeed, if we assume this to be false, then for each $n > 0$ there would be a $p_k$, $k=k(n)$, such that $p_k \notin \overline{B}_n(z)=\{x \in X: d(x,z) \leq n\}$ (recall that $(X,d)$ is proper), which in particular means that
\begin{align*}
    a_{k(n)} = L^d(c_{k(n)}) \geq d(z_{k(n)},p_{k(n)}) \geq d(p_{k(n)},z) - d(z_{k(n)},z) \geq n - d(z_{k(n)},z),
\end{align*}
which is a contradiction since the right hand side tends to $\infty$.
So, up to a choice of subsequence, we may assume that $\gamma_n(a_n) = p_n \to p$. But then, by continuity of $\tau$,
\begin{align*}
    \tau(z,p) = \lim_n \tau(z_n,p_n) = \infty,
\end{align*}
a contradiction to the finiteness of $\tau$ on all of $X \times X$.
\end{proof}
\end{Proposition}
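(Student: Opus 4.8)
The plan is to obtain the ray $c$ as a limit of the $c_n$ via the Limit Curve Theorem (Theorem \ref{Theorem: LimitCurveTheorem}). The $c_n$ are maximising causal curves parametrised by $d$-arclength with $c_n(0)=z_n\to z$, so $z$ is an accumulation point of the initial points and every hypothesis of the limit curve theorem is already in place \emph{except} possibly the requirement $a_n=L^d(c_n)\to\infty$. Once this length divergence is established, the theorem produces a subsequence converging locally uniformly to a future inextendible, maximising causal curve $c:[0,\infty)\to X$ starting at $z$, which is precisely a future directed causal ray from $z$ by definition. Thus the whole proof reduces to verifying $a_n\to\infty$.

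To establish $a_n\to\infty$, I would argue by contradiction. Suppose that along some subsequence (which I will not relabel) the lengths stay bounded, say $a_n\leq A$ for some $A>0$. Since $d$-arclength parametrisations are globally $1$-Lipschitz (cf.\ Remark \ref{remark: darclengthparametrizations}), every point of $c_n$ satisfies $d(z_n,c_n(t))\leq t\leq a_n\leq A$, and because $z_n\to z$ this gives $d(z,c_n(t))\leq A+1$ for all large $n$. As $(X,d)$ is proper, the closed ball $\bar{B}_{A+1}(z)$ is compact, so in particular the endpoints $p_n=c_n(a_n)$ lie in a fixed compact set and admit a convergent subsequence $p_n\to p$.

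The contradiction then comes from the time separation. Since $X$ is a globally hyperbolic Lorentzian length space, $\tau$ is finite and continuous (Proposition \ref{Proposition: StrongcausalityGlobhypLLS}), hence $\tau(z_n,p_n)\to\tau(z,p)<\infty$ along this subsequence. As $\tau(z_n,p_n)\to\infty$ holds for the full sequence and therefore along every subsequence, this is impossible. Consequently no bounded subsequence of the $a_n$ can exist, i.e.\ $a_n\to\infty$, and the limit curve theorem applies to yield the desired ray.

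The main obstacle is exactly this conversion of the divergence of the Lorentzian quantity $\tau(z_n,p_n)$ into divergence of the metric length $a_n$; everything else is either a direct appeal to Theorem \ref{Theorem: LimitCurveTheorem} or routine bookkeeping. The two structural inputs that make the conversion work are properness of $(X,d)$, which turns boundedness of the $a_n$ into a convergent subsequence of endpoints, and the finiteness together with continuity of $\tau$ in the globally hyperbolic setting; dropping either ingredient would break the argument.
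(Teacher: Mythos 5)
Your proposal is correct and takes essentially the same route as the paper: both reduce everything to showing $a_n\to\infty$ and then invoke Theorem \ref{Theorem: LimitCurveTheorem}, obtaining the divergence by contradiction from properness of $(X,d)$ (a bounded subsequence of lengths traps the endpoints $p_n$ in a compact ball, via $a_n=L^d(c_n)\geq d(z_n,p_n)$) together with the continuity and finiteness of $\tau$ on a globally hyperbolic Lorentzian length space. The only cosmetic difference is that you extract the compact set directly from the $1$-Lipschitz property of the $d$-arclength parametrisation, while the paper runs the same metric estimate inside a small auxiliary contradiction; the substance is identical.
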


Let now $c:[0,\infty) \to X$ be a complete timelike $S$-ray, and fix $z \in I^-(c) \cap I^+(S)$. Let $z_n \to z$ and $p_n:=c(r_n)$ for some sequence of parameters $r_n \to \infty$. Then it is easily seen that $\tau(z_n,p_n) \to \infty$: Indeed, let $r > 0$ be large and assume w.l.o.g. that all $z_n$ are in $I^-(c(r))$, then
\begin{align*}
    \tau(z_n,p_n) \geq \tau(z_n,c(r)) + \tau(c(r),c(r_n)) \to \infty.
\end{align*}
Hence, by the above result, we may construct causal rays as follows: Let $\mu_n$ be maximising timelike curves from $z_n$ to $p_n$ and let $\mu$ be a limit causal ray from $z$ whose existence we just proved.

\begin{definition}[Co-rays, asymptotes]
\label{definition: corays,asymptotes}
Any causal ray $\mu$ constructed by the above method is called a \emph{co-ray} to the ray $c$ at $z$. If $z_n = z$ for all $n$, then $\mu$ is called an \emph{asymptote} to $c$ at $z$.
\end{definition}

Since maximising causal curves have causal character, a co-ray is either timelike or null.

\begin{definition}[Timelike co-ray condition]
\label{definition: TCRC}
Let $c:[0,\infty) \to X$ be a complete timelike $S$-ray and let $p \in I^-(c) \cap I^+(S)$. We say the \emph{timelike co-ray condition (TCRC)} holds at $p$ if every co-ray to $c$ at $p$ is timelike.
\end{definition}

We turn to the treatment of triangles adjacent to lines and begin with the aforementioned stacking principle. It will be an essential technical tool for controlling the behaviour of asymptotes to a line. The following two results are true for rather general Lorentzian pre-length spaces, the exact conditions are specified.

\begin{Proposition}[Comparison situations stack along a geodesic]
\label{Proposition: stackingprinciple}
Let $X$ be a timelike geodesically connected \LpLS with global timelike curvature bounded below by $0$ and $\gamma:\R\to X$ be a complete timelike line. Let $p\in X$ be a point not on $\gamma$. Let $t_1<t_2<t_3$ such that all $y_i:=\gamma(t_i)$ are timelike related to $p$, see Figure \ref{fig: stacking_domain}. Let $\bar{\Delta}_{12}:=\Delta(\bar{p},\bar{y}_1,\bar{y}_2)$ be a comparison triangle for $\Delta_{12}:=\Delta(p,y_1,y_2)$ and extend the side $\bar{y}_1,\bar{y}_2$ to a comparison triangle $\bar{\Delta}_{23}:=\Delta(\bar{p},\bar{y}_2,\bar{y}_3)$ for $\Delta_{23}:=\Delta(p,y_2,y_3)$. We choose it in such a way that $\bar{y}_1$ and $\bar{y}_3$ lie on opposite sides of the line through $\bar{y}_1,\bar{y}_2$. Then $\bar{y}_1,\bar{y}_2,\bar{y}_3$ are collinear. That makes $\bar{\Delta}_{13}:=\Delta(\bar{p},\bar{y}_1,\bar{y}_3)$ a comparison triangle for $\Delta_{13}:=\Delta(p,y_1,y_3)$.
\end{Proposition}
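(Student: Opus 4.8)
The plan is to translate the collinearity assertion into a single scalar identity about the two glued comparison triangles, and then to obtain that identity by trapping it between two opposite inequalities: one supplied by the curvature bound, the other by the bi-infinite maximality of $\gamma$. First I would record the $\R^{1,1}$-reformulation. Since $y_1,y_2,y_3$ lie on the maximising line $\gamma$, we have $\tau(y_1,y_3)=\tau(y_1,y_2)+\tau(y_2,y_3)$, while in $\R^{1,1}$ the reverse triangle inequality applied to the (timelike related) developed points gives $\bar{\tau}(\bar{y}_1,\bar{y}_3)\ge \bar{\tau}(\bar{y}_1,\bar{y}_2)+\bar{\tau}(\bar{y}_2,\bar{y}_3)=\tau(y_1,y_3)$, with equality precisely when $\bar{y}_1,\bar{y}_2,\bar{y}_3$ are collinear. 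So collinearity is equivalent to $\bar{\tau}(\bar{y}_1,\bar{y}_3)=\tau(y_1,y_3)$, i.e.\ to the two triangles fitting \emph{flush} along $\bar{p}\bar{y}_2$ with no corner at $\bar{y}_2$; by Lemma \ref{Lemma: hyperboliclawofcosines} this is the statement that the two comparison angles $\omega_1:=\tilde{\ma}_{y_2}(y_1,p)$ in $\bar{\Delta}_{12}$ and $\omega_2:=\tilde{\ma}_{y_2}(p,y_3)$ in $\bar{\Delta}_{23}$ coincide. Thus the proposition reduces to proving $\omega_1=\omega_2$.

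Next I would obtain one inequality directly. Applying the across Alexandrov lemma (Proposition \ref{lem: alexlem across}) to the triangle $\Delta(p,y_1,y_3)$ with the split point $y_2$ on the side $y_1y_3$ (which is a segment of $\gamma$), one compares the stacked picture $\bar{\Delta}_{12},\bar{\Delta}_{23}$ with the unified comparison triangle $\tilde{\Delta}_{13}$: the glued edge has stacked length $\bar{\tau}(\bar{p},\bar{y}_2)=\tau(p,y_2)$, whereas the corresponding diagonal of $\tilde{\Delta}_{13}$ has length $\bar{\tau}(\tilde{y}_2,\tilde{p})$. Global non-negative timelike curvature forces $\tau(y_2,p)\le\bar{\tau}(\tilde{y}_2,\tilde{p})$, which is exactly the convex case of the lemma and yields $\omega_1\ge\omega_2$ (the development bends toward $\bar{p}$).

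The hard part is the reverse inequality $\omega_1\le\omega_2$, equivalently $\tau(y_2,p)\ge\bar{\tau}(\tilde{y}_2,\tilde{p})$. A one-sided curvature bound cannot produce this, so the completeness of $\gamma$ as a line defined on all of $\R$ (and not merely the maximality of $\gamma|_{[t_1,t_3]}$) must enter here. The plan is to develop the comparison triangles of $\Delta(p,\gamma(s),\gamma(s'))$ for a fine partition of a large parameter interval straddling $[t_1,t_3]$, gluing them around the single developed apex $\bar{p}$; by the convexity just established, applied at every interior vertex via monotonicity of comparison angles (Theorem \ref{Theorem: equivalencecurvandmonotonicity}), the images of the partition points form a timelike broken curve whose consecutive corners all bend toward $\bar{p}$. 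Because $\gamma$ is maximising on all of $\R$, both the chords $\tau(\gamma(s),\gamma(s'))$ and the radii $\bar{\tau}(\bar{p},\overline{\gamma(s)})=\tau(p,\gamma(s))$ grow linearly in the parameter; a convex timelike development carrying even one strictly positive corner cannot sustain this linear radial growth as the parameter runs to $\pm\infty$, which forces the total turning, hence every individual corner and in particular the one at $\bar{y}_2$, to vanish. I expect this rigidity step to be the main obstacle and the technical heart of the argument. To make it precise I anticipate needing non-branching of timelike maximisers (Theorem \ref{thm:non-branching}) together with continuity of angles (Proposition \ref{proposition: continuityofangles}) in order to pass between the real angles at $y_2$ and their comparison values and to control the corners consistently along the refining partitions.

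Once $\omega_1=\omega_2$ is established, the reformulation of the first paragraph gives collinearity of $\bar{y}_1,\bar{y}_2,\bar{y}_3$; then $\bar{\tau}(\bar{y}_1,\bar{y}_3)=\tau(y_1,y_3)$ together with $\bar{\tau}(\bar{p},\bar{y}_1)=\tau(p,y_1)$ and $\bar{\tau}(\bar{p},\bar{y}_3)=\tau(p,y_3)$ shows that $\bar{\Delta}_{13}=\Delta(\bar{p},\bar{y}_1,\bar{y}_3)$ is a genuine comparison triangle for $\Delta_{13}$, as claimed.
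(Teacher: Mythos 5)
Your overall skeleton is the paper's: reduce collinearity to equality of the two comparison angles at $\bar{y}_2$, get one inequality from an Alexandrov lemma plus the curvature bound, and use the completeness of $\gamma$ to exclude strictness in the other. But your direction bookkeeping is wrong, and in a way that matters. For $p \ll y_2$ (the paper reduces to this case by time reversal; you never fix a case, and the direction flips when $y_2 \ll p$), the lower curvature bound gives $\tau(p,y_2)\le\bar{\tau}(\tilde{p},\tilde{y}_2)$ and hence, via Propositions \ref{lem: alexlem across} and \ref{lem: alexlem future}, the \emph{convex} inequality $\tilde{\ma}_{y_2}(p,y_1)\le\tilde{\ma}_{y_2}(p,y_3)$, i.e.\ $\omega_1\le\omega_2$ --- the opposite of what you assert. (You can check this with the law of cosines: shrinking the glued diagonal $\bar{\tau}(\bar{p},\bar{y}_2)$ below its flat value increases both angles at $\bar{y}_2$, but the one adjacent to the future side faster.) Consequently the inequality you defer to the completeness argument, $\omega_1\le\omega_2$, is the one curvature already supplies, while your third paragraph in fact sketches the exclusion of strict bending toward $\bar{p}$, i.e.\ of \emph{strict} convexity, which is $\omega_1\ge\omega_2$. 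As written, your two halves aim at the same inequality and never produce the other.

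More seriously, the declared obstruction in your rigidity step --- ``a convex timelike development carrying even one strictly positive corner cannot sustain linear radial growth'' --- does not exist. The radii $\bar{\tau}(\bar{p},\overline{\gamma(s)})=\tau(p,\gamma(s))$ grow linearly by construction no matter what the corners do, and by the reverse triangle inequality in $\R^{1,1}$ a corner only makes the developed chord \emph{longer}: $\bar{\tau}(\bar{y}_1,\bar{y}_3)\ge\bar{\tau}(\bar{y}_1,\bar{y}_2)+\bar{\tau}(\bar{y}_2,\bar{y}_3)=t_3-t_1$, and nothing in the development pins $\bar{\tau}(\bar{y}_1,\bar{y}_3)$ to any quantity in $X$, so no growth mismatch can be extracted. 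The paper's contradiction is positional, not rate-based: assuming a strict corner at $\bar{y}_2$, the monotonicity of $\tilde{\ma}_{y_2}(p,\gamma(t_3))$ in $t_3$ and of $\tilde{\ma}_{y_2}(p,\gamma(t_1))$ in $t_1$ (this is where Theorem \ref{Theorem: equivalencecurvandmonotonicity} enters, which you do anticipate) forces the developed endpoints to rotate across the vertical axis through $\bar{p}$ as $t_1\to-\infty$, $t_3\to+\infty$; completeness of $\gamma$ guarantees parameters $t_1^*,t_3^*$ at which $\bar{y}_1^*,\bar{p},\bar{y}_3^*$ are collinear, and then $\tau(y_1^*,p)+\tau(p,y_3^*)=\bar{\tau}(\bar{y}_1^*,\bar{p})+\bar{\tau}(\bar{p},\bar{y}_3^*)>\bar{\tau}(\bar{y}_1^*,\bar{y}_2)+\bar{\tau}(\bar{y}_2,\bar{y}_3^*)=\tau(y_1^*,y_3^*)$ contradicts the reverse triangle inequality, i.e.\ the maximality of $\gamma$, since $\bar{y}_2$ lies off the axis. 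Note also that no fine partition, no timelike non-branching (Theorem \ref{thm:non-branching}) and no continuity of angles (Proposition \ref{proposition: continuityofangles}) are needed: the two glued triangles plus angle monotonicity suffice. Without the axis-crossing mechanism your outline cannot close.
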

\begin{proof}
We set $s_-=\sup(\gamma^{-1}(I^-(p)))$ and $s_+=\inf(\gamma^{-1}(I^+(p)))$. Then the set of $s$ where $\gamma(s)$ is timelike related to $p$ is $(-\infty,s_-)\cup(s_+,+\infty)$. We assume $p\ll y_2$, the other case $y_2\ll p$ can be reduced to this one by flipping the time orientation of the space.

%Setup Alexandrov
We create comparison situations for an Alexandrov situation: We take the triangles $\bar{\Delta}_{12}=\Delta(\bar{p},\bar{y}_1,\bar{y}_2)$ and $\bar{\Delta}_{23}=\Delta(\bar{p},\bar{y}_2,\bar{y}_3)$ as in the statement. 
We also create a comparison triangle $\tilde{\Delta}_{13}=\Delta(\tilde{p},\tilde{y}_1,\tilde{y}_3)$ for $(p,y_1,y_3)$ and get a comparison point $\tilde{y}_2$ for $y_2$ on the side $y_1y_3$. Then we can apply curvature comparison to get $\bar{\tau}(\tilde{p},\tilde{y}_2) \geq \tau(p,y_2)=\bar{\tau}(\bar{p},\bar{y}_2)$. By Lemmas \ref{lem: alexlem across} and \ref{lem: alexlem future}\footnote{If $p\ll y_1\ll y_2\ll y_3$, we use Lemma \ref{lem: alexlem future} and if $y_1\ll p\ll y_2\ll y_3$ we use Lemma \ref{lem: alexlem across}.}, this means the situation is convex, i.e.\ 
\begin{equation}\label{pop:compStack:eq:Convex}
\tilde{\ma}_{y_2}(p,y_1)\leq \tilde{\ma}_{y_2}(p,y_3)\,.
\end{equation} 
If $p\ll y_1$ we also get
\begin{equation}\label{pop:compStack:eq:Monotonous}
\tilde{\ma}_{y_1}(p,y_2)\leq\tilde{\ma}_{y_1}(p,y_3)
\end{equation}
and if $y_1\ll p$, this inequality flips.

Now we apply equation (\ref{pop:compStack:eq:Monotonous}) to the situation where we fix $y_2$ and move $y_3$ and $y_1$: We get $\tilde{\ma}_{y_2}(p,\gamma(t_3))$ is monotonously increasing in $t_3$. Similarly, the time-reversed situation gives that if $p\ll \gamma(t_1)$, $\tilde{\ma}_{y_2}(p,\gamma(t_1))$ is monotonously increasing in $t_1$ and if $\gamma(t_1)\ll p$ it is also monotonously increasing in $t_1$ (note it is also increasing when switching from one case to the other). Then remember $\tilde{\ma}_{y_2}(p,\gamma(t_1))\leq \tilde{\ma}_{y_2}(p,\gamma(t_3))$ by the convexity of the Alexandrov situation (equation (\ref{pop:compStack:eq:Convex})). Note that the left hand side is decreasing with decreasing $t_1$ and the right hand side is increasing with increasing $t_3$.
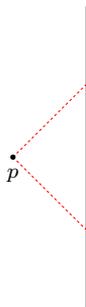
\begin{figure}
\begin{center}
\begin{tikzpicture}
\draw (-1pt,-2) -- (-1pt,2);
\draw[color=red, dash pattern=on 1pt off 1pt] (-1.,0.)-- (0.,1.);
\draw[color=red, dash pattern=on 1pt off 1pt] (-1.,0.)-- (0.,-1.);
\draw[color=green] (0.,1.) -- (0.,2);
\draw[color=green] (0.,-1.) -- (0.,-2);
\begin{scriptsize}
\coordinate [circle, fill=black, inner sep=0.7pt, label=270: {$p$}] (p) at (-1.,0.);
\end{scriptsize}
\end{tikzpicture}
\end{center}
\caption{The domain where the $y_i$ can lie in is in green.}
\label{fig: stacking_domain}
\end{figure}

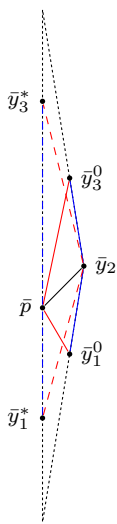
\begin{figure}
\begin{center}
\begin{tikzpicture}[x=0.3cm,y=0.3cm]
\begin{scriptsize}
\coordinate [circle, fill=black, inner sep=0.7pt, label=360: {$\bar{y}_1^0$}] (y10) at (0.7055951711559928,-2.174402745312005);
\coordinate [circle, fill=black, inner sep=0.7pt, label=360: {$\bar{y}_3^0$}] (y30) at (0.7055951711559928,5.606824634988012);
\coordinate [circle, fill=black, inner sep=0.7pt, label=180: {$\bar{p}$}] (p) at (-0.4838237980000031,-0.1268467139999999);
\coordinate [circle, fill=black, inner sep=0.7pt, label=360: {$\bar{y}_2$}] (y2) at (1.3280933615799932,1.7162109448380034);
\coordinate [circle, fill=black, inner sep=0.7pt, label=180: {$\bar{y}_3^*$}] (y3) at (-0.4838237980000031,9);%(-0.4838237980000031,13.040693192212995);
\coordinate [circle, fill=black, inner sep=0.7pt, label=180: {$\bar{y}_1^*$}] (y1) at (-0.4838237980000031,-5);%(-0.4838237980000031,-9.60827130253699);
\end{scriptsize}
\draw[dash pattern=on 1pt off 1pt] (-0.4838237980000031,13.040693192212995) -- (-0.4838237980000031,-9.60827130253699);
\draw[dash pattern=on 1pt off 1pt] (1.3280933615799932,1.7162109448380034)-- (-0.4838237980000031,13.040693192212995);
\draw[dash pattern=on 1pt off 1pt] (1.3280933615799932,1.7162109448380034)-- (-0.4838237980000031,-9.60827130253699);
\draw (p)--(y2);
\draw[color=red] (y10)--(p)--(y30);
\draw[color=blue] (y10)--(y2)--(y30);
\draw[color=blue, dash pattern=on 3pt off 3pt] (y1)--(p)--(y3);
\draw[color=red, dash pattern=on 3pt off 3pt] (y1)--(y2)--(y3);
\end{tikzpicture}
\end{center}
\caption{We assume that the path $\bar{y}_1^0\bar{y}_2\bar{y}_3^0$ is longer than the path $\bar{y}_1^0\bar{p}\bar{y}_3^0$. If the angle at $\bar{y}_2$ is not straight, we extend some lines and as points further from $\bar{p}$ are more on the left, we find some $t_1^*$ and $t_3^*$ such that $\bar{y}_1^*\bar{p}\bar{y}_3^*$ are collinear. But then the path $\bar{y}_1^0\bar{y}_2\bar{y}_3^0$ is shorter than $\bar{y}_1^0\bar{p}\bar{y}_3^0$, which then yields a contradiction to $\gamma$ being maximising.}
\label{fig: stacking_mainargument}
\end{figure}

\underline{Claim}: For each $t_1$ and $t_3$, equation (\ref{pop:compStack:eq:Convex}) has equality.

\underline{Then}: The comparison situation is straight, i.e.\ $\bar{y}_1,\bar{y}_2,\bar{y}_3$ are collinear, and by triangle equality along straight lines we have $\tau(y_1,y_3)=\tau(y_1,y_2)+\tau(y_2,y_3)=\bar{\tau}(\bar{y}_1,\bar{y}_2)+\bar{\tau}(\bar{y}_2,\bar{y}_3)=\bar{\tau}(\bar{y}_1,\bar{y}_3)$, i.e., $\bar{p},\bar{y}_1,\bar{y}_3$ is a comparison triangle for $p,y_1,y_3$.

\underline{Proof}: We indirectly assume there is a $t_1^0,t_3^0$ such that the comparison situation $\bar{\Delta}_{12},\bar{\Delta}_{23}$ is strictly convex. We draw this situation such that $\bar{p}=0$, $\bar{y}_2$ is to the right of the $t$-axis, such that the side $\bar{y}_2\bar{y}_3^0$ slopes to the left and $\bar{y}_1^0\bar{y}_2$ slopes to the right (i.e., $\frac{x(\bar{y}_3^0-\bar{y}_2)}{t(\bar{y}_3^0-\bar{y}_2)}<0$, $\frac{x(\bar{y}_2-\bar{y}_1^0)}{t(\bar{y}_2-\bar{y}_1^0)}>0$). When we vary $t_1$ and $t_3$, the comparison situation is chosen such that $\bar{p}$ and $\bar{y}_2$ stay fixed. As the comparison angle $\tilde{\ma}_{y_2}(p,\gamma(t_3))$ is monotonously increasing in $t_3$, we get that for $t_3\geq t_3^0$ the slope $\frac{x(\bar{y}_3-\bar{y}_2)}{t(\bar{y}_3-\bar{y}_2)}$ is smaller than the slope of $\bar{y}_2\bar{y}_3$. In particular, $\bar{y}_3$ lies to the left of the extension of the side $\bar{y}_2\bar{y}_3^0$. Thus, for large enough $t_3$, $\bar{y}_3$ lies to the left of the $t$-axis, and for a certain $t_3^*$ $\bar{y}_3^*$ lies on it. Similarly, we find a $t_1^*$ such that $\bar{y}_1^*$ lies on the $t$-axis. See Figure \ref{fig: stacking_mainargument} for a visualisation of the construction. But then 
\begin{equation*}
\tau(y_1^*,p)+\tau(p,y_3^*)=\bar{\tau}(\bar{y}_1^*,\bar{p})+\bar{\tau}(\bar{p},\bar{y}_3^*)> \bar{\tau}(\bar{y}_1^*,\bar{y}_2)+\bar{\tau}(\bar{y}_2,\bar{y}_3^*)=\tau(y_1^*,y_2)+\tau(y_2,y_3^*)=\tau(y_1^*,y_3^*)
\end{equation*}
in contradiction to the reverse triangle inequality. Thus we get the claim.
\end{proof}

\begin{remark}
The proof of the statement can also be used in more general situations: Let $X$ be a locally timelike geodesically connected Lorentzian pre-length space with local timelike curvature bounded below by $0$, and let $\gamma$ be a (possibly extendible) timelike maximiser. Let $p\ll x=\gamma(t_2)$ be points in a comparison neighbourhood. Assume the statement is not true (with certain $t_1<t_2<t_3$). Then we get $\omega_1=\tilde{\ma}_{y_2}(p,y_1)$, $\omega_3=\tilde{\ma}_{y_2}(p,y_3)$. Let $d_1+d_2=\omega_3-\omega_1$ and $e=\tau(p,x)\sinh(d_1+\omega_1)$. Then for the parameters $t_1= -\frac{e}{\sinh(d_1)}$ and $t_3= \frac{e}{\sinh(d_3)}$ we have one of the following:
\begin{itemize}
\item $\gamma$ is not defined at/inextendible to one of them,
\item $\gamma$ stops being distance realising between $t_1^*$ and $t_3^*$ (and we have found a longer curve),
\item there is no curvature comparison neighbourhood containing both $\gamma(t_i^*)$ ($i=1,3$).
\end{itemize}

For this, realise the above with $\ma_{\bar{y}_2}(\partial_t,\bar{y}_3)=d_3$ and $\ma_{\bar{y}_2}(\partial_t,\bar{y}_1)=d_1$. Then $e$ is the difference of the $x$-coordinates of $p$ and $x$.

Note the similarity of this result and classical arguments for conjugate points along geodesics in Riemannian/Lorentzian geometry.
\end{remark}

\begin{Proposition}[Angle = comparison angle]
\label{Proposition: angle=comparisonangle}
Let $X$ be a timelike geodesically connected \LpLS with global timelike curvature bounded below by $0$ and $\gamma:\R\to X$ be a complete timelike line and $x:=\gamma(t_0)$ a point on it. We split $\gamma$ into the future part $\gamma_+=\gamma|_{[t_0,+\infty)}$ and the past part $\gamma_-=\gamma|_{(-\infty,t_0]}$. Let $p\in X$ be a point not on $\gamma$ with $x$ and $p$ timelike related and $\alpha:x\leadsto p$ a connecting distance realiser. Then for all $s\neq t$ such that $p$ and $\gamma(s)$ are timelike related, we have:
\begin{equation*}
\tilde{\ma}_x(p,\gamma(s))=\ma_x(\alpha,\gamma_+)=\ma_x(\alpha,\gamma_-)\,,
\end{equation*}
i.e.\ the comparison angle is equal to the angle, and the same in both directions.
\begin{figure}
\begin{center}
\begin{tikzpicture}
\begin{scriptsize}
\coordinate [circle, fill=black, inner sep=0.7pt, label=360: {$x$}] (x) at (0,0);
\coordinate [circle, fill=black, inner sep=0.7pt, label=360: {$p$}] (p) at (-0.5,-1);
\coordinate (gplus) at (0,2);
\coordinate (gminus) at (0,-2);
\coordinate [inner sep=0.7pt, label=360: {$\gamma_+$}] (gp) at (0,1.5);
\coordinate [inner sep=0.7pt, label=360: {$\gamma_-$}] (gm) at (0,-1.5);
\end{scriptsize}
\draw (gminus)--(gplus);
\draw (p)--(x);
\draw pic[draw,angle radius={1.5*0.4cm},color=orange]{angle = p--x--gminus} pic[draw,angle radius={0.4cm},color=orange]{angle = gplus--x--p};
\end{tikzpicture}
\end{center}
\caption{Illustration: These angles are the same, and have the same value as if they are considered as comparison angles.}
\label{fig: stacking_angles}
\end{figure}
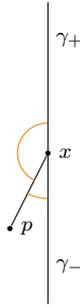
\end{Proposition}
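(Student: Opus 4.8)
The plan is to reduce everything to two facts about the comparison angles $\tilde{\ma}_x(q,\gamma(s))$ seen from an auxiliary off-line point $q$: that they are independent of $s$ on each side of $t_0$ (via the stacking principle), and that their future and past values coincide (via a symmetry in the model space). Throughout I write $\alpha:[0,\ell]\to X$ for the given maximiser with $\alpha(0)=x$, $\alpha(\ell)=p$, and I assume $x\ll p$; the case $p\ll x$ follows by reversing the time orientation of $X$.

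First I would record the consequence of Proposition \ref{Proposition: stackingprinciple}. Fix a point $q$ off $\gamma$ with $x\ll q$. Applying the stacking principle with apex $q$ to three line points $x=\gamma(t_0)$, $\gamma(s_1)$, $\gamma(s_2)$ all to the future of $t_0$, the collinearity of the stacked comparison configuration forces $\bar{x}$ to see $\overline{\gamma(s_1)}$ and $\overline{\gamma(s_2)}$ along the same ray, whence $\tilde{\ma}_x(q,\gamma(s_1))=\tilde{\ma}_x(q,\gamma(s_2))$. Thus $\tilde{\ma}_x(q,\gamma(s))$ is independent of $s$ for $s>t_0$; call it $B^q_+$, and define $B^q_-$ analogously for $s<t_0$. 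Taking $q=\alpha(u)$ (which lies off $\gamma$ for $0<u\le\ell$ by timelike non-branching, Theorem \ref{thm:non-branching}) yields functions $B_\pm(u):=B^{\alpha(u)}_\pm$, and taking $q=p=\alpha(\ell)$ yields $A_\pm:=B_\pm(\ell)=\tilde{\ma}_x(p,\gamma(s))$, the left-hand side of the claim.

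Next I would show $B_+(u)=B_-(u)$ for every $u$ (so in particular $A_+=A_-$). For this, apply the stacking principle once more, with apex $q=\alpha(u)$ but now to the line points $\gamma(s_-)$, $x$, $\gamma(s_+)$ with $s_-<t_0<s_+$; this produces a single comparison picture in which $\bar{x}$ lies on the straight segment $\overline{\gamma(s_-)}\,\overline{\gamma(s_+)}$. In that picture $B_+(u)$ and $B_-(u)$ are precisely the hyperbolic angles that $\bar{x}\,\overline{\alpha(u)}$ makes with the future and the past ray of this straight line, and a direct computation with the law of cosines in $\R^{1,1}$ (Lemma \ref{Lemma: hyperboliclawofcosines}) shows the two agree, since the hyperbolic angle between a timelike direction and a timelike line is independent of the chosen orientation of the line. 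Hence $B_+(u)=B_-(u)=:B(u)$.

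Finally I would invoke the monotonicity characterisation of non-negative timelike curvature, where the key bookkeeping is the sign of the comparison angle. Along the future comparison the vertex $x$ is the past-most point ($x\ll\alpha(u),\gamma_+(v)$), so the signed comparison angle is $-B_+(u)$, whereas along the past comparison $x$ is the interior point ($\gamma_-(v)\ll x\ll\alpha(u)$), so the signed angle is $+B_-(u)$. By Definition \ref{def:monotonicityComp} together with Theorem \ref{Theorem: equivalencecurvandmonotonicity}, both signed angles are non-decreasing in $u$; therefore $B_+$ is non-increasing and $B_-$ is non-decreasing, and since $B_+=B_-=B$ the function $B$ must be constant, equal to $A_+=A_-$. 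To match this with the genuine angles, note that $\tilde{\ma}_x(\alpha(u),\gamma_+(v))$ equals $B_+(u)=A_+$ for all admissible $(u,v)$ (independence in $v$ from the first step, constancy in $u$ just established), so the upper angle $\ma_x(\alpha,\gamma_+)$ is an honest limit equal to $A_+$, and symmetrically $\ma_x(\alpha,\gamma_-)=A_-$; combined with $A_+=A_-$ this gives the asserted chain of equalities. I expect this last sign bookkeeping to be the main obstacle: it is exactly the fact that reversing the orientation of the line flips $\sigma$ (turning a time endpoint into an interior vertex) that makes the two monotonicities oppose each other and thereby pins $B$ to a constant; the remaining ingredients are a straightforward use of the stacking principle and an elementary Minkowski computation.
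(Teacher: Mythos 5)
Your proposal is correct and takes essentially the same route as the paper's proof: constancy of the comparison angle in the line parameter (across both sides of $t_0$ simultaneously) via the stacking principle, followed by the two opposing monotonicities of the signed comparison angle --- $\sigma=-1$ on one side of the line, $\sigma=+1$ on the other --- which force the unsigned angle to be constant along $\alpha$ and make the upper angles honest limits equal to the comparison angle. The differences are cosmetic: you normalise to $x\ll p$ where the paper takes $p\ll x$, and you spell out two details the paper leaves implicit, namely the Minkowski law-of-cosines symmetry underlying the cross-side equality and the non-branching argument ensuring $\alpha(u)\notin\gamma$ so that the stacking principle may be applied with apex $\alpha(u)$.
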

\begin{proof}
First, we check that the comparison angle $\tilde{\ma}_x(p,\gamma(s))$ is constant in $s$: For $s_1$ and $s_2$ for which this is defined, we have three parameters on $\gamma$ involved: $s_1,s_2,t_0$. The previous result (Proposition \ref{Proposition: stackingprinciple}) tells us we can construct a comparison situation for all three triangles at once. As the comparison situations have the comparison angle $\tilde{\ma}_x(p,\gamma(s_1))$ resp.\ $\tilde{\ma}_x(p,\gamma(s_2))$ as the angle in $\bar{x}$, they are equal.

Now we look at the angle $\ma_x(\alpha,\gamma_\pm)$: We assume $p\ll x$. We already know $\tilde{\ma}_x(\alpha(s),\gamma(t))$ is constant in $t$. We now have to look at its dependence on $s$: By Theorem \ref{Theorem: equivalencecurvandmonotonicity}, $\tilde{\ma}_x^\mathrm{S}(\alpha(s),\gamma(t))$ is monotonously increasing in $s$. Note now that for $t<t_0$, the sign of this angle is $\sigma=-1$, and for $t>t_0$, the  sign of this angle is $\sigma=+1$. So choose some $t_-<t_0$ and $t_+>t_0$ for which all the necessary angles exist (i.e.\ $\gamma(t_-)\ll p$ and $t_+>t_0$) we have that $\tilde{\ma}_x(\alpha(s),\gamma(t_-))=\tilde{\ma}_x(\alpha(s),\gamma(t_+))$ is both a monotonously decreasing and increasing function in $s$. Thus it is constant, and $\tilde{\ma}_x(\alpha(s),\gamma(t))=\ma_x(\alpha,\gamma_-)=\ma_x(\alpha,\gamma_+)$, which includes the desired equalities.
\end{proof}

\begin{Corollary}\label{Corollary: sides_equal}
Let $X$ be a timelike geodesically connected, globally causally closed Lorentzian pre-length space with global timelike curvature bounded below by $0$ and let $\gamma:\R \to X$ be a complete timelike line. Then for any point $p\in X$ and two points $x_1=\gamma(t_1)$ and $x_2=\gamma(t_2)$ which are timelike related to $p$, we get a timelike triangle $\Delta=\Delta(x_1,x_2,p)$. Let $q_1,q_2$ be any points on $\Delta$, one of them lying on the side $x_1x_2$. We form a comparison triangle $\bar{\Delta}$ for $\Delta$ and find comparison points $\bar{q}_1,\bar{q}_2$ for $q_1,q_2$. Then $q_1\leq q_2$ if and only if $\bar{q}_1\leq\bar{q}_2$ and $\tau(q_1,q_2)=\bar{\tau}(\bar{q}_1,\bar{q}_2)$.
\end{Corollary}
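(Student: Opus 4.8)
The plan is to reduce the statement to a single configuration and then transport the information supplied by Proposition~\ref{Proposition: angle=comparisonangle} through the law of cosines. Assume without loss of generality that $q_1$ lies on the side $x_1x_2\subseteq\gamma$. If $q_2$ also lies on $x_1x_2$, there is nothing to prove: $\gamma$ is maximising, so the side $x_1x_2$ is isometric to its comparison segment $\bar x_1\bar x_2$, and the causal and temporal relations are inherited directly from the straight timelike segment in $\R^{1,1}$. Hence the interesting case is $q_2$ on one of the remaining sides, say on the realiser $\alpha$ of $x_1p$ (the case $q_2\in x_2p$ being symmetric after interchanging $x_1$ and $x_2$).

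In this case I would first record what the two earlier results give. By the stacking principle (Proposition~\ref{Proposition: stackingprinciple}, applied to the collinear points $x_1,q_1,x_2$ on $\gamma$ and the off-line point $p$) the comparison point $\bar q_1$ lies on $\bar x_1\bar x_2$ and $\Delta(\bar x_1,\bar q_1,\bar p)$ is a genuine comparison triangle for $\Delta(x_1,q_1,p)$; in particular $\bar\tau(\bar x_1,\bar q_1)=\tau(x_1,q_1)$, and since $q_2\in x_1p$ also $\bar\tau(\bar x_1,\bar q_2)=\tau(x_1,q_2)$ by the choice of comparison points on a side. By Proposition~\ref{Proposition: angle=comparisonangle} the angle at $x_1$ between $\gamma$ and $\alpha$ equals the comparison angle, and, as established in its proof, the comparison angle $\tilde\ma_{x_1}(\gamma(s),\alpha(r))$ is constant in both arguments, equal to this common value $\omega$; this $\omega$ is precisely the angle at $\bar x_1$ spanned by the rays $\bar x_1\bar q_1$ and $\bar x_1\bar q_2$ inside $\bar\Delta$.

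Now suppose $q_1$ and $q_2$ are timelike related, so that $\Delta(x_1,q_1,q_2)$ is a bona fide timelike triangle. Its comparison angle at $x_1$ is $\tilde\ma_{x_1}(q_1,q_2)=\omega$, and its sign $\sigma$ — determined solely by the causal positions of $q_1,q_2$ relative to $x_1$, which coincide with those of $\bar q_1,\bar q_2$ relative to $\bar x_1$ since each comparison point sits on the corresponding timelike side — agrees with the sign at $\bar x_1$. Thus the comparison triangle of $\Delta(x_1,q_1,q_2)$ and the sub-configuration $\bar x_1\bar q_1\bar q_2$ share two side-lengths and the included signed angle, so the law of cosines (Lemma~\ref{Lemma: hyperboliclawofcosines}) forces the remaining sides to coincide, i.e.\ $\tau(q_1,q_2)=\bar\tau(\bar q_1,\bar q_2)$. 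As this number is positive, $q_1\ll q_2\iff\bar q_1\ll\bar q_2$, which settles both assertions in the timelike case.

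What remains — and what I expect to be the crux — is the case where $q_1,q_2$ are merely null related or spacelike separated, where $\tau(q_1,q_2)=0$ no longer determines $\leq$ and the triangle $\Delta(x_1,q_1,q_2)$ degenerates. Here I would pass to the limit. Since the global curvature bound makes $\tau$ finite and continuous on all of $X$, and since the comparison point $\bar q_1=\bar q_1(s)$ depends continuously on the parameter $s$ of $q_1=\gamma(s)$, the identity $\tau(\gamma(s),q_2)=\bar\tau(\bar q_1(s),\bar q_2)$ proven on the open timelike-related set extends by continuity to its closure, yielding $\tau(q_1,q_2)=\bar\tau(\bar q_1,\bar q_2)=0$ on the null boundary; and because $\gamma$ is a complete line with $q_2$ timelike related to $x_1=\gamma(t_1)$, an entire ray of parameters is timelike related to $q_2$ (e.g.\ $\gamma(s)\ll\gamma(t_1)\ll q_2$ for $s<t_1$ when $x_1\ll q_2$, and dually), so the limit is always attainable. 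For the causal relation I would combine push-up (Lemma~\ref{Lemma: pushupandopennness}) with global causal closedness: if $q_1\leq q_2$, then for $s<s_0$ near $s_0$ one has $\gamma(s)\ll\gamma(s_0)=q_1\leq q_2$, hence $\gamma(s)\ll q_2$, so $\bar q_1(s)\ll\bar q_2$ by the timelike case, and letting $s\uparrow s_0$ gives $\bar q_1\leq\bar q_2$ since $\leq$ is closed in $\R^{1,1}$; the converse runs identically in $X$ using that $\leq$ is globally closed there, while the light-cone geometry of $\R^{1,1}$ matches the spacelike regions on both sides. The delicate point throughout is exactly this reconciliation at the null and spacelike boundary, where the quantitative rigidity of the law of cosines is unavailable and one must genuinely invoke the continuity of $\tau$ and the global closedness of $\leq$.
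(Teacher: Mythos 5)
Your proposal is correct, but it reaches the central identity $\tau(q_1,q_2)=\bar{\tau}(\bar{q}_1,\bar{q}_2)$ by a genuinely different mechanism than the paper. The paper sandwiches: the global curvature bound gives $\tau(q_1,q_2)\leq\bar{\tau}(\bar{q}_1,\bar{q}_2)$, and for the reverse inequality it views the two sides emanating from $x_1$ as a hinge with angle $\omega=\ma_{x_1}(\alpha,\beta)=\tilde{\ma}_{x_1}(p,x_2)$ (Proposition \ref{Proposition: angle=comparisonangle}) and invokes the hinge comparison theorem \cite[Cor.\ 4.11]{beran2022angles} to get $\tau(q_1,q_2)\geq\bar{\tau}(\tilde{q}_1,\tilde{q}_2)=\bar{\tau}(\bar{q}_1,\bar{q}_2)$. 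You instead use the constancy of the comparison angle in \emph{both} arguments to see that the comparison angle of the small triangle $\Delta(x_1,q_1,q_2)$ equals $\omega$, and then conclude in one stroke by law-of-cosines rigidity: two configurations in $\R^{1,1}$ with the same two side lengths and the same included signed angle (and matching causal types, which you correctly check transfer to the comparison picture) have equal third sides. This trades the external hinge-comparison citation for a self-contained rigidity argument; note that instead of appealing to what is "established in the proof" of Proposition \ref{Proposition: angle=comparisonangle}, you can simply apply its statement a second time with $q_2$ in place of $p$, since $\alpha|_{[0,r]}$ is a realiser from $x_1$ to $q_2$ and angles depend only on initial segments. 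On the degenerate configurations (null or spacelike related $q_1,q_2$), where the paper offers only a one-line "continuity argument", your treatment is more explicit and sound: continuity of $\tau$ identifies the boundary parameter $s_-$ on both sides, and push-up together with closedness of $\leq$ in $X$ and in $\R^{1,1}$ yields the causal equivalence; this also gives the $\tau$-equality in the open spacelike range (both sides vanish there, since $\bar{q}_1(s)\not\ll\bar{q}_2$ for $s\geq s_-$ by the Minkowski cone geometry), a point your text states only for the null boundary but which follows from your own causal argument.

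One cosmetic flaw: your opening invocation of the stacking principle (Proposition \ref{Proposition: stackingprinciple}) for the triple $x_1,q_1,x_2$ is both superfluous and potentially inapplicable --- if $x_1\ll p\ll x_2$, the point $q_1$ on the side $x_1x_2$ need not be timelike related to $p$, so $\Delta(x_1,q_1,p)$ may fail to be a timelike triangle. However, the facts you actually extract from it, namely $\bar{\tau}(\bar{x}_1,\bar{q}_1)=\tau(x_1,q_1)$ and the analogous identity for $\bar{q}_2$, are just the definition of comparison points on a side, so deleting that sentence costs nothing.
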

\begin{proof}
As global curvature is bounded below by $0$, we get that $\tau(q_1,q_2)\leq\bar{\tau}(\bar{q}_1,\bar{q}_2)$, and a continuity argument then shows that $q_1\leq q_2$ implies $\bar{q}_1\leq\bar{q}_2$.

For the other direction, we distinguish which sides the $q_i$ lie on: Note we assumed one of them is on the side $x_1x_2$. We only prove the case where $q_1$ is on the side $\alpha$ connecting $x_1x_2$ (say $q_1=\alpha(s)$) and $q_2$ is on the side $\beta$ connecting $x_1,p$ (say $q_2=\beta(t)$), the proof of the other cases is easily adapted.

We can now consider the hinge $(\alpha|_{[0,s]},\beta|_{[0,t]})$, which has base-point $x_1$ and two tips at $q_1,q_2$. By the previous Proposition \ref{Proposition: angle=comparisonangle}, we get that $\omega:=\ma_{x_1}(\alpha,\beta)=\tilde{\omega}:=\tilde{\ma}_{x_1}(p,x_2)$. We now have two comparison situations: $\bar{\Delta}$ has angle $\tilde{\ma}_{x_1}(p,x_2)$ at $x_1$, and a comparison hinge $(\tilde{\alpha},\tilde{\beta})$ with tips $\tilde{q}_1,\tilde{q}_2$. But note that the angles and distances at $\bar{x}_1$ resp.\ $\tilde{x}_1$ are the same, so we have that $\bar{\tau}(\bar{q}_1,\bar{q}_2)=\bar{\tau}(\tilde{q}_1,\tilde{q}_2)$. But for the comparison hinge, \cite[Cor.\ 4.11]{beran2022angles} gives that $\tau(q_1,q_2)\geq\bar{\tau}(\tilde{q}_1,\tilde{q}_2)=\bar{\tau}(\bar{q}_1,\bar{q}_2)$. In total, this gives that $\tau(q_1,q_2)=\bar{\tau}(\bar{q}_1,\bar{q}_2)$, as desired. A continuity argument then gives that $q_1\leq q_2$ if and only if $\bar{q}_1\leq\bar{q}_2$.
\end{proof}

We return to the situation of the splitting theorem. Let $\gamma:\R \to X$ be the complete timelike line whose existence we assume, and let it be in any locally Lipschitz parametrisation (e.g.\ parametrisation by $d$-arclength) defined on $\R$. We call any co-ray to any of its past or future directed subrays a co-ray to the line $\gamma$, so in particular, we can construct past and future directed co-rays from all points on $I(\gamma):=I^+(\gamma) \cap I^-(\gamma)$. The next result shows that the timelike co-ray condition holds on $I(\gamma)$, i.e.\ all co-rays from all points in $I(\gamma)$ are timelike. For our purposes it would be sufficient to show this for asymptotes, since we only work with those in the proof.

\begin{Proposition}
\label{proposition: TCRCholdsonI(gamma)}
The timelike co-ray condition holds on $I(\gamma)$.
\begin{proof}
Suppose there is a point $x \in I(\gamma)$ such that the TCRC does not hold at $x$, so w.l.o.g.\ there is a sequence $x_n \to x$, $r_n \to \infty$ and maximal timelike curves $\sigma_n$ from $x_n$ to $\gamma(r_n)$ such that $\sigma_n$ converge to a future directed null ray $\sigma$. Choose some $q \in \gamma \cap I^-(x)$ and let $\mu_n$ be maximal timelike curves from $q$ to $x_n$ (assuming $n$ to be large enough, $q\ll x_n$). Suitably pre- and post-composing $\mu_n$ and then applying the limit curve theorem, it is easily seen that (up to a choice of subsequence) the $\mu_n$ converge locally uniformly to a maximising limit causal curve $\mu$ from $q$ to $x$. $\mu$ is timelike since $q \ll x$. Denote by $\gamma_n$ the piece of $\gamma$ that runs between $q$ and $\gamma(r_n)$. Set $a_n:=L_{\tau}(\mu_n)$, $b_n:=L_{\tau}(\sigma_n)$ and $c_n:=L_{\tau}(\gamma_n)$. Let $\beta_n:=\ma_q(\mu_n,\gamma_n)$ and $\theta_n:=\ma_{x_n}(\mu_n,\sigma_n)$. Then $a_n \to a:=\tau(q,x)$ and by the continuity of angles (cf.\ Proposition \ref{proposition: continuityofangles}), $\beta_n \to \beta$, where $\beta$ is the angle between $\mu$ and $\gamma$. Consider the comparison triangle $(\overline{\mu}_n,\overline{\sigma}_n,\overline{\gamma}_n)$ in $\R^{1,1}$, then the angle $\overline{\beta}_n$ between $\overline{\mu}_n$ and $\overline{\gamma}_n$ satisfies $\overline{\beta}_n \leq \beta_n$ and similarly $\overline{\theta}_n \geq \theta_n$. Since $\beta_n \to \beta$, there is some $C > 0$ such that $\overline{\beta}_n \leq C$ for all $n$. The hyperbolic law of cosines in $\R^{1,1}$ (see Lemma \ref{Lemma: hyperboliclawofcosines}) gives
\begin{align*}
    &b_n^2 = a_n^2 + c_n^2 - 2a_n c_n \cosh(\overline{\beta}_n),\\
    &c_n^2 = a_n^2 + b_n^2 + 2a_n b_n \cosh(\overline{\theta}_n).
\end{align*}
Using these two equations and solving for $\cosh(\overline{\theta}_n)$, we get 
\begin{align*}
    \cosh(\overline{\theta}_n) = - \frac{a_n}{b_n} + \frac{c_n}{b_n} \cosh(\overline{\beta}_n).
\end{align*}

By the initial equation for $b_n$, it is easy to see that $b_n/c_n \to 1$ and $b_n \to \infty$ (using that $a_n \to a$ and $\overline{\beta}_n \leq C$), hence there is some constant $\tilde{C} > 0$ such that $\cosh(\overline{\theta}_n) \leq \tilde{C}$ and thus also $\theta_n \leq C'$ for some constant $C'$. Using the monotonicity condition, we get that $\theta_n=\ma_{x_n}(\sigma_n,\mu_n)\geq \overline{\measuredangle}_{x_n}(\sigma_n(s),\mu_n(t)))$ for each $s$ and $t$, so this is bounded too. We will see that this is incompatible with $\sigma_n$ "getting more and more null": Note that we get the following estimate for some constant $C''$:
\begin{align*}
    C'' \geq \cosh( \overline{\measuredangle}_{x_n}(\sigma_n(s),\mu_n(t))) = \frac{\tau(\mu_n(t),\sigma_n(s))^2 - \tau(\mu_n(t),x_n)^2 - \tau(x_n,\sigma_n(s))^2}{2\tau(x_n,\sigma_n(s))\tau(\mu_n(t),x_n)}.
\end{align*}
Since the denominator goes to $0$ for $n \to \infty$ (as $\tau(x_n,\sigma_n(s)) \to \tau(x,\sigma(s)) = 0$ and $\tau(\mu_n(t),x_n) \to \tau(\mu(t),x) > 0$), the numerator has to go to $0$ as well, in particular this means
\begin{align*}
    \tau(\mu(t),\sigma(s)) = \tau(\mu(t),x)
\end{align*}
for all $s,t$. This implies that running along $\mu$ from $\mu(t)$ to $x$ and then along $\sigma$ to $\sigma(s)$ gives a maximiser, but this curve has a timelike and a null piece, a contradiction to Theorem \ref{theorem: causalcharofmaximizers}.
\end{proof}
\end{Proposition}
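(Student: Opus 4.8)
The plan is to argue by contradiction using triangle comparison in $\R^{1,1}$. Suppose the TCRC fails at some $x \in I(\gamma)$; since co-rays are maximising and hence timelike or null (Theorem \ref{theorem: causalcharofmaximizers}), after possibly reversing the time orientation we may assume there is a sequence $x_n \to x$, parameters $r_n \to \infty$, and maximising timelike curves $\sigma_n$ from $x_n$ to $\gamma(r_n)$ whose limit $\sigma$ (a co-ray obtained via the limit curve theorem, Theorem \ref{Theorem: LimitCurveTheorem}) is a future directed \emph{null} ray based at $x$. Because $x \in I^+(\gamma)$, I can pick $q \in \gamma \cap I^-(x)$ and, for $n$ large, maximising timelike curves $\mu_n$ from $q$ to $x_n$. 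Applying the limit curve theorem together with upper semicontinuity of $\tau$-length (Proposition \ref{Proposition: uscarclength}), a subsequence of the $\mu_n$ converges locally uniformly to a maximiser $\mu$ from $q$ to $x$, which is timelike since $q \ll x$.

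This produces timelike triangles $\Delta(q, x_n, \gamma(r_n))$ whose third side is the segment $\gamma_n$ of $\gamma$ from $q$ to $\gamma(r_n)$ (a maximiser, as $\gamma$ is a line). Writing $a_n := L_{\tau}(\mu_n)$, $b_n := L_{\tau}(\sigma_n)$, $c_n := L_{\tau}(\gamma_n)$ for the side lengths, and $\beta_n := \ma_q(\mu_n, \gamma_n)$, $\theta_n := \ma_{x_n}(\mu_n, \sigma_n)$ for the two relevant vertex angles, continuity of angles (Proposition \ref{proposition: continuityofangles}) yields $\beta_n \to \beta < \infty$, so the corresponding comparison angles $\bar\beta_n$ are uniformly bounded. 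The first key step is to write the hyperbolic law of cosines (Lemma \ref{Lemma: hyperboliclawofcosines}) at the vertices $\bar q$ and $\bar x_n$ of a comparison triangle in $\R^{1,1}$: solving the resulting pair of identities for $\cosh(\bar\theta_n)$, and using that $a_n \to \tau(q,x)$ while $b_n, c_n \to \infty$ with $b_n/c_n \to 1$, I obtain a uniform upper bound on $\cosh(\bar\theta_n)$, hence on the actual angle $\theta_n$.

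The decisive step is to turn this bounded angle into a contradiction with $\sigma$ being null. Using the monotonicity formulation of the curvature bound (Theorem \ref{Theorem: equivalencecurvandmonotonicity}), the bound on $\theta_n$ transfers to a uniform bound on $\cosh(\tilde\ma_{x_n}(\sigma_n(s), \mu_n(t)))$ for all fixed $s,t$. Expanding this comparison angle via the law of cosines, the denominator $2\,\tau(x_n,\sigma_n(s))\,\tau(\mu_n(t),x_n)$ tends to $0$, because $\tau(x_n,\sigma_n(s)) \to \tau(x,\sigma(s)) = 0$ (no two points on the null curve $\sigma$ are timelike related) while $\tau(\mu_n(t),x_n) \to \tau(\mu(t),x) > 0$. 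Boundedness of the quotient then forces the numerator to vanish in the limit, giving $\tau(\mu(t),\sigma(s)) = \tau(\mu(t),x)$ for all $s,t$. This says precisely that the concatenation of $\mu$ (up to $x$) with $\sigma$ (from $x$ onward) is a maximiser, yet it has a timelike piece followed by a null piece, contradicting the causal-character theorem (Theorem \ref{theorem: causalcharofmaximizers}).

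I expect the main obstacle to be the bookkeeping in the comparison argument rather than any conceptual difficulty: establishing the uniform bound on $\theta_n$ requires the correct signs in the law of cosines (the vertex $x_n$ is an interior time vertex of the triangle, so $\sigma = +1$ there) and the right direction of the comparison inequalities $\bar\beta_n \le \beta_n$ and $\theta_n \le \bar\theta_n$ supplied by the lower curvature bound. The relation $b_n/c_n \to 1$ must also be extracted carefully from the first law-of-cosines identity, exploiting that the fixed-length side $\mu$ becomes negligible relative to the two diverging sides as $r_n \to \infty$.
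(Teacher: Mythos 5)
Your proposal is correct and takes essentially the same route as the paper's own proof: the same triangles $\Delta(q,x_n,\gamma(r_n))$ with sides $\mu_n,\sigma_n,\gamma_n$, the same pair of law-of-cosines identities solved for $\cosh(\bar{\theta}_n)$ using $a_n \to \tau(q,x)$ and $b_n/c_n \to 1$, the same transfer of the resulting angle bound via the monotonicity formulation to the comparison angles $\tilde{\ma}_{x_n}(\sigma_n(s),\mu_n(t))$, and the same degenerating-denominator argument yielding $\tau(\mu(t),\sigma(s))=\tau(\mu(t),x)$ and a maximiser with a timelike and a null piece, contradicting Theorem \ref{theorem: causalcharofmaximizers}. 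Even the sign bookkeeping you flag as the main obstacle ($\sigma=+1$ at the interior time vertex $x_n$, giving $\bar{\beta}_n \leq \beta_n$ and $\theta_n \leq \bar{\theta}_n$) matches the paper's handling exactly.
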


Next, we show that any asymptote to $\gamma$ (which we now know to be timelike) is complete.

\begin{Proposition} Let $x \in I(\gamma)$ and let $\eta$ be a timelike asymptote to $\gamma$ at $x$. Then $L_{\tau}(\eta) = \infty$.
\begin{proof}
W.l.o.g. let $\eta$ be future-directed. Suppose $L:=L_{\tau}(\eta) < \infty$. By construction, $\eta:[0,\infty) \to X$ arises as a locally uniform limit of timelike maximisers $\eta_n:[0,a_n] \to X$ from $x$ to $\gamma(t_n)$, where $t_n \to \infty$. By continuity of angles (see Proposition \ref{proposition: continuityofangles}), $\measuredangle_x(\eta,\eta_n) \to 0$ for $n \to \infty$. Let $\varepsilon > 0$ and let $N \in \mathbb{N}$ be such that for $n \geq N$, $\measuredangle_x(\eta,\eta_n) < \varepsilon$ (in fact, any finite bound would suffice here, we do not necessarily need an arbitrarily small one). Since $\tau(x,\gamma(t_n)) \to \infty$, we may assume that $\tau(x,\gamma(t_n)) \geq 3L\cosh(\varepsilon)$ (upon possibly choosing a larger $N$). Now note that for any $t_n$ with $n \geq N$, $\partial J^-(\gamma(t_n)) \cap \eta = (J^-(\gamma(t_n))\setminus I^-(\gamma(t_n))) \cap \eta$ is non-empty: Certainly, $x \in I^-(\gamma(t_n))$, so if this intersection were empty, then $\eta$ would be imprisoned in the compact set $J^+(x) \cap J^-(\gamma(t_n))$, which cannot happen. So we find a point $y_0 \in \eta$ that is null-related to $\gamma(t_n)$, i.e. $y_0 < \gamma(t_n)$ and $\tau(y_0,\gamma(t_n)) = 0$. By continuity, there is $y$ on $\eta$ near $y_0$ such that $0 < \tau(y,\gamma(t_n)) < 3L\cosh(\varepsilon)/2$. Let now $\nu$ be the part of $\eta$ from $x$ to $y$ with length $L_{\tau}(\nu) =: a$, $\sigma$ a timelike maximiser from $y$ to $\gamma(t_n)$ with length $b:=L_{\tau}(\sigma) = \tau(y,\gamma(t_n))$. Moreover, we write $c:=L_{\tau}(\eta_n)$. Then $(\nu,\sigma,\eta_n)$ forms a timelike triangle with angle $\beta:=\measuredangle_x(\eta,\eta_n) < \varepsilon$ at $x$, consider a corresponding comparison triangle $(\overline{\nu},\overline{\sigma},\overline{\eta}_n)$ with angle $\overline{\beta}$ at $\overline{x}$. By the law of cosines, we get
\begin{align}
    b^2 = a^2 + c^2 - 2ac \cosh(\overline{\beta}).
\end{align}
Our curvature assumption gives that $\overline{\beta} \leq \beta < \varepsilon$. Moreover, as we have argued, $c \geq 3L\cosh(\varepsilon)$ and $b < c/2$ and $a < L$. Inserting all of this, we get
\begin{align*}
    b^2 &\geq a^2 + c^2 - 2ac\cosh(\varepsilon) \geq a^2 + c^2 - 2Lc \cosh(\varepsilon) \geq c^2 - 2Lc \cosh(\varepsilon) \\
    &= c^2 \left(1 - \frac{2L\cosh(\varepsilon)}{c}\right) \geq c^2/3,
\end{align*}
contradicting $b < c/2$.
\end{proof}
\end{Proposition}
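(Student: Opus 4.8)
The plan is to argue by contradiction, assuming $L := L_\tau(\eta) < \infty$; by time-duality we may take $\eta$ to be future directed. Recall that by construction $\eta$ is a locally uniform limit of timelike maximisers $\eta_n$ from $x$ to $\gamma(t_n)$, where $t_n \to \infty$ and hence $c_n := \tau(x,\gamma(t_n)) \to \infty$. The first ingredient is that the angle at $x$ between $\eta$ and the approximating maximisers becomes arbitrarily small: by the continuity of angles (Proposition \ref{proposition: continuityofangles}), $\ma_x(\eta,\eta_n) \to 0$. Thus, fixing any $\varepsilon > 0$, I can choose $N$ so large that for $n \geq N$ both $\ma_x(\eta,\eta_n) < \varepsilon$ and $c_n \geq 3L\cosh(\varepsilon)$ hold.

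Next I would exploit the geometric tension created by $\eta$ having finite $\tau$-length while $\gamma(t_n)$ recedes to infinity. Since $x \in I^-(\gamma(t_n))$ but $\eta$ is future inextendible, $\eta$ cannot stay inside the compact causal diamond $J^+(x) \cap J^-(\gamma(t_n))$ (otherwise it would be imprisoned there, contradicting Lemma \ref{Lemma: Nontotalimprcausalcurvesleavecompsets}). Therefore $\eta$ must reach the boundary $\partial J^-(\gamma(t_n))$ at some point $y_0$ with $y_0 < \gamma(t_n)$ and $\tau(y_0,\gamma(t_n)) = 0$. By continuity of $\tau$ along $\eta$ I can then slide slightly back to a point $y$ on $\eta$ with $0 < \tau(y,\gamma(t_n)) < \tfrac{3}{2}L\cosh(\varepsilon)$, so that in particular $b := \tau(y,\gamma(t_n)) < c_n/2$.

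The heart of the proof is then one application of the hyperbolic law of cosines. Writing $a := L_\tau(\nu) \leq L$ for the length of the portion $\nu$ of $\eta$ between $x$ and $y$, the points $x, y, \gamma(t_n)$ form a timelike triangle with sides $a, b, c_n$ and angle $\beta := \ma_x(\eta,\eta_n) < \varepsilon$ at $x$, the apex $x$ being a (past) time-endpoint of the triangle. Passing to a comparison triangle in $\R^{1,1}$, the non-negative timelike curvature bound yields $\bar\beta \leq \beta < \varepsilon$ for the comparison angle at $\bar x$, and Lemma \ref{Lemma: hyperboliclawofcosines} gives
\begin{equation*}
b^2 = a^2 + c_n^2 - 2 a c_n \cosh(\bar\beta) \geq c_n^2 - 2 L c_n \cosh(\varepsilon) \geq c_n^2\Big(1 - \tfrac{2}{3}\Big) = \tfrac{1}{3}c_n^2,
\end{equation*}
using $a \leq L$, $\cosh(\bar\beta) \leq \cosh(\varepsilon)$ and $c_n \geq 3L\cosh(\varepsilon)$. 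Hence $b \geq c_n/\sqrt{3} > c_n/2$, contradicting $b < c_n/2$, and we conclude $L = \infty$.

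The step I expect to be the main obstacle is reconciling the two competing bounds on $b$ while keeping the direction of the angle comparison correct. On the one hand the geometry forces $b$ to be tiny, since $y$ sits just inside the null cone of $\gamma(t_n)$; on the other, the law of cosines applied to a long triangle whose angle at $x$ is nearly vanishing forces $b \approx c_n - a$, which is large. Making these compatible requires ordering the quantifiers carefully — first $\varepsilon$, then $N$ guaranteeing both the angle bound and $c_n \geq 3L\cosh(\varepsilon)$, and only afterwards the point $y$ with $b < c_n/2$. Equally delicate is the sign bookkeeping in the comparison inequality: because $x$ is a time-endpoint of the triangle, the relevant signed comparison angle is monotone in the direction that yields $\bar\beta \leq \beta$ rather than the reverse, and it is precisely this inequality that makes the estimate close.
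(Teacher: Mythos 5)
Your proposal is correct and follows essentially the same route as the paper's proof: the same angle-continuity and non-imprisonment arguments produce the almost-null point $y$ with $b<c_n/2$, and the same law-of-cosines estimate with $\bar{\beta}\leq\beta<\varepsilon$, $a\leq L$ and $c_n\geq 3L\cosh(\varepsilon)$ yields $b^2\geq c_n^2/3$, the desired contradiction. The only cosmetic difference is that you work with $c_n=\tau(x,\gamma(t_n))$ where the paper uses $L_{\tau}(\eta_n)$, which coincide since $\eta_n$ is maximising, and your sign bookkeeping for the comparison angle at the past time-endpoint $x$ is exactly the paper's.
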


To conclude this subsection, we show that any future directed and any past directed (timelike) asymptote to $\gamma$ from a common point fit together to give a timelike line.

\begin{Proposition}[Asymptotic lines]
\label{Proposition: asymptoticlines}
Let $p \in I(\gamma)$ and consider any future and past rays $\sigma^+:[0,\infty) \to X$ and $\sigma^-:(-\infty,0] \to X$ from $p$ to $\gamma$. Then $\sigma :=\sigma^- \sigma^+: \R \to X$ is a complete timelike line.
\begin{proof}
Let $\sigma_n^+$ and $\sigma_n^-$ be two sequences of timelike maximisers from $p$ to $\gamma(r_n)$ and $\gamma(-r_n)$, respectively, such that $\sigma^+$ and $\sigma^-$ arise as limits of these sequences as $r_n \to \infty$. To show that $\sigma$ is a line, it is sufficient to show that for any $t > 0$, $\tau(\sigma(-t),\sigma(t)) = L_{\tau}(\sigma|_{[-t,t]})$. To see this, let $q_+:=\sigma(t)$ and $q_-:=\sigma(-t)$, and $q_+^n:=\sigma_n^+(t)$, $q_-^n:=\sigma_n^-(-t)$. Then $q_{\pm} = \lim_n q_{\pm}^n$. Consider the triangle going from $x$ via $\sigma_n^+$ to the endpoint of $\sigma_n^+$ , then following $\gamma$ down to the endpoint of $\sigma_n^-$, and finally running the latter up to $x$ again. Consider a comparison triangle with points $\overline{q_{\pm}^n}$ corresponding to $q_{\pm}^n$. Sending $n \to \infty$, we see that the stacked comparison triangles in $\R^{1,1}$ converge to a vertical line (here we use Proposition \ref{Proposition: stackingprinciple} and the completeness of $\gamma$), hence our curvature bound gives
\begin{align*}
    \tau(q_-,q_+) = \lim_{n \to \infty} \tau(q_-^n,q_+^n) \leq \lim_{n \to \infty} \overline{\tau}(\overline{q_-^n},\overline{q_+^n}) = \lim_n L_{\tau}(\sigma_n^-\sigma_n^+|_{[-t,t]})= L_{\tau}(\sigma|_{[-t,t]}),
\end{align*}
which is what we wanted to show, as the other inequality is trivial.
\end{proof}
\end{Proposition}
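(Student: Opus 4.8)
The plan is to establish separately the three defining features of a complete timelike line for $\sigma = \sigma^-\sigma^+$: that it is timelike, doubly inextendible, and globally maximising, the last being the only non-trivial point. Timelikeness is immediate, since by Proposition \ref{proposition: TCRCholdsonI(gamma)} the timelike co-ray condition holds on $I(\gamma)$, so both $\sigma^+$ and $\sigma^-$ are timelike, and for $s_1 < 0 < s_2$ the push-up Lemma \ref{Lemma: pushupandopennness} applied to $\sigma(s_1) \ll p \ll \sigma(s_2)$ yields $\sigma(s_1) \ll \sigma(s_2)$. Double inextendibility holds because $\sigma^+$ is future inextendible and $\sigma^-$ past inextendible as rays, and completeness follows from the preceding proposition, which shows that every timelike asymptote to $\gamma$ has infinite $\tau$-length; hence both halves, and therefore $\sigma$, are complete.

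It remains to show that $\sigma$ maximises between any two of its points. As each $\sigma^\pm$ is already a ray, the only case to check is the straddling one $s_1 < 0 < s_2$, and here I would first reduce to the symmetric case: if $\tau(\sigma(-t),\sigma(t)) = L_\tau(\sigma|_{[-t,t]})$ holds for every $t>0$, then taking $t := \max(-s_1,s_2)$, the segment $\sigma|_{[s_1,s_2]}$ is a subsegment of the maximiser $\sigma|_{[-t,t]}$ and is therefore itself maximising. Thus everything reduces to proving $\tau(\sigma(-t),\sigma(t)) = L_\tau(\sigma|_{[-t,t]})$ for a fixed $t>0$. For this I would invoke the approximating maximisers: write $\sigma^\pm$ as locally uniform limits of timelike maximisers $\sigma_n^+$ and $\sigma_n^-$ from $p$ to $\gamma(r_n)$ and to $\gamma(-r_n)$ with $r_n\to\infty$, put $q_\pm := \sigma(\pm t)$ and $q_\pm^n := \sigma_n^\pm(\pm t)$, so $q_\pm^n \to q_\pm$, and run the triangle from $\gamma(-r_n)$ up $\sigma_n^-$ to $p$, up $\sigma_n^+$ to $\gamma(r_n)$, and back down $\gamma$.

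Passing to a comparison triangle in $\R^{1,1}$ with comparison points $\overline{q_\pm^n}$, the global non-negative timelike curvature gives $\tau(q_-^n,q_+^n) \leq \overline{\tau}(\overline{q_-^n},\overline{q_+^n})$, whose left side tends to $\tau(q_-,q_+)$ by continuity of $\tau$ (Proposition \ref{Proposition: StrongcausalityGlobhypLLS}). Since the $\sigma_n^\pm$ are maximisers, $\overline{\tau}(\overline{q_-^n},\bar{p}) + \overline{\tau}(\bar{p},\overline{q_+^n}) = L_\tau(\sigma_n^-\sigma_n^+|_{[-t,t]})$, which converges to $L_\tau(\sigma|_{[-t,t]})$ by the convergence-of-lengths part of the limit curve theorem (Theorem \ref{Theorem: LimitCurveTheorem}). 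Hence the argument succeeds precisely if $\overline{\tau}(\overline{q_-^n},\overline{q_+^n}) - \bigl(\overline{\tau}(\overline{q_-^n},\bar{p}) + \overline{\tau}(\bar{p},\overline{q_+^n})\bigr) \to 0$, i.e.\ if $\overline{q_-^n},\bar{p},\overline{q_+^n}$ become collinear in the limit.

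I expect this degeneration to be the main obstacle, and its resolution is the key use of Proposition \ref{Proposition: stackingprinciple}: stacking the comparison triangles for $(p,\gamma(s),\gamma(s'))$ along $\gamma$ produces a single consistent picture in $\R^{1,1}$ in which the comparison images of $\gamma$ all lie on one timelike line $L$ while $\bar{p}$ is a fixed point at fixed spacelike distance from $L$. By completeness of $\gamma$, the points $\overline{\gamma(\pm r_n)}$ recede to timelike infinity along $L$ in both directions, so the unit directions from $\bar{p}$ towards them converge to the timelike direction of $L$; since $\overline{q_\pm^n}$ sit at fixed $\tau$-distances $\tau(p,q_+)$ resp.\ $\tau(q_-,p)$ from $\bar{p}$ along these directions, they converge to two points on the vertical line through $\bar{p}$, which are collinear with $\bar{p}$. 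This gives $\overline{\tau}(\overline{q_-^n},\overline{q_+^n}) \to \tau(q_-,p)+\tau(p,q_+) = L_\tau(\sigma|_{[-t,t]})$, and combining the three limits yields $\tau(q_-,q_+) \leq L_\tau(\sigma|_{[-t,t]})$; the reverse inequality is automatic since $L_\tau$ never exceeds $\tau$ between endpoints, completing the proof.
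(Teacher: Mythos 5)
Your proposal is correct and takes essentially the same route as the paper's proof: the same reduction to showing $\tau(\sigma(-t),\sigma(t))=L_{\tau}(\sigma|_{[-t,t]})$, the same approximating triangles $\Delta(\gamma(-r_n),p,\gamma(r_n))$ with comparison points $\overline{q_{\pm}^n}$, and the same use of Proposition \ref{Proposition: stackingprinciple} together with the completeness of $\gamma$ to force the stacked comparison configuration to degenerate to a vertical line through $\bar{p}$. Your extra verifications (timelikeness via Proposition \ref{proposition: TCRCholdsonI(gamma)} and push-up, completeness of the halves, and the explicit collinearity computation in $\R^{1,1}$) simply spell out steps the paper leaves implicit.
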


\subsection{Parallel lines}
\label{subsec:lines:parallel}

This subsection introduces the notion of parallelity for complete timelike lines. This approach is better suited for the synthetic case as it circumvents the analysis of Busemann functions. In the following, we call a map $f:Y_1 \to Y_2$ between Lorentzian pre-length spaces $(Y_1,d_1,\ll_1,\leq_1,\tau_1)$ and $(Y_2,d_2,\ll_2,\leq_2,\tau_2)$ $\tau$-\emph{preserving} if for all $p,q \in Y_1$, $\tau_1(p,q) = \tau_2(f(p),f(q))$, and we call $f$ $\leq$-\emph{preserving} if $p \leq_1 q$ if and only if $f(p) \leq_2 f(q)$.

\begin{definition}[Parallel lines]
\label{definition: parallellines}
Let $\alpha,\beta$ be two complete timelike lines in a Lorentzian pre-length space $X$ defined on open intervals, w.l.o.g.\ on $\R$. They are called \emph{parallel} if there exists a $\tau$- and $\leq$-preserving map $f:(\alpha(\R)\cup \beta(\R))\to \R^{1,1}$ such that $f(\alpha(\R))$ and $f(\beta(\R))$ are parallel timelike lines in Minkowski (in the sense of parallel lines in affine spaces). We call such a map $f$ a \emph{parallel realisation} of $\alpha$ and $\beta$.
\end{definition}

\begin{remark}
Note that by post-composing this by an isometry of Minkowski space, we can always achieve that $f(\alpha(\R))=\{(t,0):t\in\R\}\subseteq\R^{1,1}$ and $f(\beta(\R))=\{(t,c):t\in\R\}$ for some $c\geq0$.
In this form, if $\alpha$ and $\beta$ are parallel and both parametrized by $\tau$-arclength (this is possible if $\tau$ is continuous and $\tau(x,x) = 0$ for all $x \in X$, cf.\ Lemma \ref{Lemma: tauarclengthparametrizations}), we get that $f(\alpha(t))=(t+a,0)$ and $f(\beta(t))=(t+b,c)$. By doing a shift in Minkowski, we can make $a$ to be $0$ (changing $b$ to $b-a$).
\end{remark}

\begin{Lemma}[Properties of parallel realisations]
\label{Lemma: propertiesofparallelrealization}
Let $X$ be a strongly causal Lorentzian pre-length space with continuous time separation, let $\alpha,\beta:\R \to X$ be two complete timelike lines and let $f: \alpha(\R) \cup \beta(\R) \to \R^{1,1}$ be a parallel realisation. Then $f$ is a homeomorphism onto its image.
\begin{proof}
We first show that $f$ is injective. Certainly, due to the fact that $f$ preserves time separations, $f|_{\alpha(\R)}$ and $f|_{\beta(\R)}$ are injective: Indeed suppose that e.g.\ $f(\alpha(t)) = f(\alpha(s)) = (r,x)$ for some $s < t$, then $0<\tau(\alpha(s),\alpha(t)) = \overline{\tau}((r,x),(r,x)) = 0$, a contradiction. Now suppose that the lines $f(\alpha(\R))$ and $f(\beta(\R))$ in $\R^{1,1}$ are different, then we are done. Otherwise, by nature of lines in Minkowski space, they have to be equal if they intersect. In that case, each point on that line is reached by one point on $\alpha$ and one point on $\beta$ via $f$. So suppose that $(r,x) = f(\alpha(t)) = f(\beta(s))$. Since $f$ is $\tau$-preserving and $\tau$ is continuous, for each $\varepsilon > 0$ there are $\delta,\tilde{\delta} > 0$ such that $f(\beta(s + \delta)) = (r+\varepsilon,x)$ and $f(\beta(s-\tilde{\delta})) = (r-\varepsilon,x)$. But this means that $\alpha(t) \in \bigcap_{\delta,\tilde{\delta} \downarrow 0} I(\beta(s-\tilde{\delta}),\beta(s+\delta))$. By strong causality, these sets are a neighbourhood basis at $\beta(s)$, hence $\alpha(t) = \beta(s)$. This concludes the proof that $f$ is injective.

Clearly, $\alpha(\R) \cup \beta(\R)$ is a strongly causal Lorentzian pre-length space with the restriction of the structure of $X$, and similarly for their images in Minkowski space. Moreover, $f$ maps the timelike diamonds in $\alpha(\R) \cup \beta(\R)$ to the timelike diamonds in $f(\alpha(\R)) \cup f(\beta(\R))$. As these form topological bases due to strong causality, we conclude that $f$ is a homeomorphism onto its image.
\end{proof}
\end{Lemma}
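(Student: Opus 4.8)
The plan is to establish the two halves of ``homeomorphism onto its image'' in turn: first injectivity of $f$, and then that $f$ and $f^{-1}$ are both continuous. Throughout write $A:=\alpha(\R)\cup\beta(\R)$, equipped with the structures restricted from $X$, and $I_A(p,q):=I(p,q)\cap A$ for the timelike diamonds of $A$. The single observation that drives everything is that a parallel realisation preserves the timelike relation in both directions: since $f$ is $\tau$-preserving and $p\ll q$ is equivalent to $\tau(p,q)>0$, we have $p\ll q \iff f(p)\ll f(q)$ for all $p,q\in A$. Injectivity on each line is then immediate. If $f(\alpha(s))=f(\alpha(t))$ with $s<t$, then $\alpha(s)\ll\alpha(t)$ (distinct points of a timelike line are timelike related) forces $0<\tau(\alpha(s),\alpha(t))=\overline{\tau}(f(\alpha(s)),f(\alpha(t)))=0$, a contradiction; the same argument applies to $\beta$.

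It remains to rule out a collision $f(\alpha(t))=f(\beta(s))=:w$ between the two lines. If the image lines $f(\alpha(\R))$ and $f(\beta(\R))$ are \emph{distinct} parallel lines in $\R^{1,1}$ they are disjoint, so no such $w$ exists and we are done. The genuinely delicate case --- and the step I expect to be the main obstacle --- is when the two image lines \emph{coincide}, so that a single Minkowski line is covered by both $f\circ\alpha$ and $f\circ\beta$. Here I would squeeze $\alpha(t)$ into ever smaller timelike diamonds around $\beta(s)$: for each $\delta>0$ we have $\beta(s-\delta)\ll\beta(s)\ll\beta(s+\delta)$, hence, applying $f$ and using $f(\beta(s))=w=f(\alpha(t))$ together with $\ll$-preservation, $\beta(s-\delta)\ll\alpha(t)\ll\beta(s+\delta)$, i.e.\ $\alpha(t)\in I(\beta(s-\delta),\beta(s+\delta))$. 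By Lemma~\ref{Lem:strongly_strongly_causal}, applied to the timelike curve $\beta$ through $\beta(s)$, these diamonds form a neighbourhood basis of $\beta(s)$ as $\delta\downarrow 0$, so $\alpha(t)$ lies in every neighbourhood of $\beta(s)$ and hence $\alpha(t)=\beta(s)$ by the Hausdorff property of the metric topology. This proves $f$ injective, so $f\colon A\to f(A)$ is a bijection.

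For the topological statement I would argue purely via timelike diamonds, avoiding any explicit parametrisation estimates. Because $f$ is a $\ll$-preserving bijection, it carries diamonds to diamonds exactly: for $p,q\in A$ one checks $f\bigl(I_A(p,q)\bigr)=I_{f(A)}(f(p),f(q))$, and symmetrically for $f^{-1}$, so $f$ induces a bijection between the families of timelike diamonds of $A$ and of $f(A)$. It therefore suffices to know that each of these families is a topological basis. For $A$: every point of $A$ lies in the interior of a timelike curve contained in $A$ (a subarc of $\alpha$ or $\beta$), so the traces on $A$ of the neighbourhood bases furnished by Lemma~\ref{Lem:strongly_strongly_causal} are precisely diamonds $I_A(\cdot,\cdot)$, which shows that these diamonds form a basis of the subspace topology of $A$. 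The same reasoning applies to $f(A)\subset\R^{1,1}$, each of whose points lies on one of the two Minkowski lines, using strong causality of Minkowski space. Since $f$ and $f^{-1}$ then both map basic open sets to basic open sets, both are continuous, and $f$ is a homeomorphism onto $f(A)$.
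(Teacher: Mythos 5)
Your proposal is correct and follows essentially the same route as the paper's proof: injectivity separately on each line via $\tau$-preservation, disjointness of distinct parallel Minkowski lines, the shrinking-diamond argument with Lemma~\ref{Lem:strongly_strongly_causal} for the coincident case, and the diamonds-map-to-diamonds argument for the homeomorphism. Your only deviation is cosmetic but arguably cleaner: where the paper invokes continuity of $\tau$ to locate $\beta(s\pm\delta)$ on the image line, you obtain $\alpha(t)\in I(\beta(s-\delta),\beta(s+\delta))$ directly from two-sided $\ll$-preservation (which follows from $\tau$-preservation since $p\ll q\iff\tau(p,q)>0$), and you spell out the basis property for the subspaces that the paper leaves implicit.
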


Note that in the above setting, any parallel realisation $f$ gives us a $\tau$-arclength parametrisation of the parallel lines $\alpha,\beta$: Let $\tilde{\alpha}$ be a $\tau$-arclength parametrisation of the line $f(\alpha(\R))$ in Minkowski, then $f^{-1}\circ\tilde{\alpha}$ is $\tau$-arclength parametrised, similarly for $\beta$.

\begin{definition}[Synchronised parallel lines]
\label{definition: syncparallellines}
Let $X$ be a Lorentzian pre-length space with continuous time separation $\tau$ satisfying $\tau(x,x) = 0$ for all $x\in X$. Let $\alpha:\R\to X$, $\beta:\R\to X$ be two $\tau$-arclength parametrised, parallel and complete timelike lines. They are called \emph{synchronised parallel}\footnote{This agrees with the usual notion of synchronising clocks in the sense that this is the parametrisation coming from synchronising the clocks on the particles $\alpha$ and $\beta$ and parametrising $\alpha$ and $\beta$ by their clock values.} if the parallel realisation $f:(\alpha(\R)\cup \beta(\R))\to \R^{1,1}$ can be chosen to be of the form $f(\alpha(t))=(t,0)$ and $f(\beta(t))=(t,c)$ for some $c\geq 0$. The constant $c$ is a well-defined property of $(\alpha,\beta)$ and is called the \emph{distance} of the parallel lines. For any two parallel lines $\alpha$ and $\beta$ parametrised by $\tau$-arclength, one can shift $\beta$ such that $(\alpha,\beta\circ(t\mapsto t-a))$ is synchronised parallel.
\end{definition}

\begin{Lemma}[The $c$-criterion for parallel lines]\label{lem-c-cond}
Let $X$ be a \LpLS with continuous time separation $\tau$ satisfying $\tau(x,x) = 0$ for all $x \in X$. Let $\alpha:\R\to X$ and $\beta:\R\to X$ be two $\tau$-arclength parametrised complete timelike lines. We define the following partial functions:
\begin{itemize}
\item $c_{\alpha\beta}(s,t)=\sqrt{(t-s)^2-\tau(\alpha(s),\beta(t))^2} (\in \mathbb{C})$ if $\alpha(s)\leq\beta(t)$ (otherwise undefined),
\item $c_{\beta\alpha}(s,t)=\sqrt{(s-t)^2-\tau(\beta(t),\alpha(s))^2} (\in \mathbb{C})$ if $\beta(t)\leq\alpha(s)$ (otherwise undefined), 
\item $c_{\alpha+}^N(s)=\inf\{t-s:\alpha(s)\leq\beta(t)\}$,
\item $c_{\beta+}^N(s)=\inf\{t-s:\beta(s)\leq\alpha(t)\}$.  
\end{itemize}
These are all constant where defined, have the same value and the infima are minima if and only if $\alpha$ and $\beta$ are synchronised parallel. This constant $c$ is the distance between $\alpha$ and $\beta$.

If $X$ is additionally globally causally closed and $\alpha \cap I^+(\beta) \neq \emptyset$ and $\beta \cap I^+(\alpha) \neq \emptyset$, then the infima in $c_{\alpha+}^N$ and $c_{\beta+}^N$ are automatically minima and the conditions that $c_{\alpha+}^N$ and $c_{\beta+}^N$ are constant and have the same value are automatically satisfied if $c_{\alpha
\beta}$ and $c_{\beta\alpha}$ are constant and have the same value. In that case, all of the constants agree.
\end{Lemma}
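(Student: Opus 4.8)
The plan is to prove the biconditional by producing a parallel realisation explicitly out of the four constants in one direction, and reading the constants off a given realisation in the other. First I would dispose of the direction ``synchronised parallel $\Rightarrow$ the four functions are constant with common value and the infima are minima''. If $f(\alpha(t))=(t,0)$, $f(\beta(t))=(t,c)$ is a parallel realisation, then $\tau(\alpha(s),\beta(t))=\overline{\tau}((s,0),(t,c))$, and substituting the explicit Minkowski formula $\overline{\tau}((s,0),(t,c))=\sqrt{(t-s)^2-c^2}$ (valid precisely when $t-s\geq c$) into the definitions makes $c_{\alpha\beta}$ and $c_{\beta\alpha}$ collapse to the constant $c$. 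Moreover $\alpha(s)\leq\beta(t)\iff t-s\geq c$ shows the defining sets of $c_{\alpha+}^N$ and $c_{\beta+}^N$ are $[c,\infty)$, so the infima are minima with value $c$, and $c$ is the distance by definition.

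Conversely, assume all four functions are constant, share a common value (necessarily real and $\geq 0$, as it coincides with the real-valued $c_{\alpha+}^N$ while being a value of $c_{\alpha\beta}$), which I call $c$, and that the infima are attained. I would define $f$ on $\alpha(\R)\cup\beta(\R)$ by $f(\alpha(t)):=(t,0)$ and $f(\beta(t)):=(t,c)$, and verify it is $\tau$- and $\leq$-preserving by a case distinction on which line the two arguments lie. The two same-line cases are immediate from the $\tau$-arclength parametrisation together with the fact that a line maximises $\tau$ between its points. For a mixed pair $\alpha(s),\beta(t)$: constancy of $c_{\alpha\beta}$ yields $\tau(\alpha(s),\beta(t))=\sqrt{(t-s)^2-c^2}=\overline{\tau}((s,0),(t,c))$ whenever $\alpha(s)\leq\beta(t)$, while the minimum property supplies both implications of $\alpha(s)\leq\beta(t)\iff t-s\geq c$ --- the forward one because every element of the defining set is $\geq c$, the backward one because the minimum is attained at $t=s+c$, so $\alpha(s)\leq\beta(s+c)\leq\beta(t)$ for $t\geq s+c$ by transitivity along the line. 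The pair $\beta(s),\alpha(t)$ is handled symmetrically via $c_{\beta\alpha}$ and $c_{\beta+}^N$. Well-definedness, in case the lines meet as sets, follows because the two minimum conditions become incompatible unless $c=0$, the degenerate case in which $\alpha$ and $\beta$ coincide. As the images are parallel vertical lines, $f$ is a parallel realisation of the prescribed form, so $\alpha,\beta$ are synchronised parallel with distance $c$.

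For the addendum I would work with the causal set $A_s:=\{t:\alpha(s)\leq\beta(t)\}$ and its $\beta$-counterpart. Monotonicity along the lines shows that $A_s$ is an upward ray which shrinks as $s$ grows; global causal closedness (closedness of $\leq$) together with continuity of $\beta$ shows that whenever $A_s\neq\emptyset$ its infimum is attained, so the infima are genuine minima, say at $t^\ast=t^\ast(s)$. At such a minimiser one has $\alpha(s)\leq\beta(t^\ast)$ but $\alpha(s)\not\ll\beta(t^\ast)$ --- otherwise openness of $\ll$ would permit a slightly smaller $t$ --- hence $\tau(\alpha(s),\beta(t^\ast))=0$ and $c_{\alpha\beta}(s,t^\ast)=|t^\ast-s|=c$. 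Evaluating $c_{\alpha\beta}$ at a value of $t$ just above $t^\ast$ then forces $(t-s)^2\geq c^2$ there, which excludes $t^\ast-s=-c$ and pins the sign to $t^\ast-s=+c$, giving $c_{\alpha+}^N(s)=c$ wherever it is defined. The chronological hypotheses $\alpha\cap I^+(\beta)\neq\emptyset$ and $\beta\cap I^+(\alpha)\neq\emptyset$ anchor $A_s\neq\emptyset$ (and its $\beta$-analogue) at one parameter each.

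The main obstacle is exactly the step I have deferred: upgrading this to $A_s\neq\emptyset$ for \emph{every} $s$, i.e.\ showing that $c_{\alpha+}^N$ is a finite constant and not $+\infty$ somewhere. The difficulty is that constancy of $c_{\alpha\beta}$ only constrains the set where $\alpha(s)\leq\beta(t)$ already holds, so a priori it does not forbid $\alpha$ from drifting ahead of $\beta$ for large $s$. I would attack this by proving that $\{s:A_s\neq\emptyset\}$ is a closed down-set --- closedness coming from global causal closedness applied to sequences $\alpha(s_n)\leq\beta(s_n+c)$ with $s_n\uparrow M$ --- and then excluding a finite supremum $M$ by combining the completeness of both lines with the two chronological anchors: the reverse triangle inequality along the complete future ray of $\beta$ above $\beta(t_0)$ and the complete past ray below $\alpha(s_1)$ should force the causal relationship established on $(-\infty,M]$ to persist past $M$, contradicting maximality. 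Once non-emptiness holds throughout, the boundary analysis above yields $c_{\alpha+}^N\equiv c\equiv c_{\beta+}^N$ with the same $c$ carried by $c_{\alpha\beta}$ and $c_{\beta\alpha}$, so all four constants agree.
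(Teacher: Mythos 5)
Your proof of the main biconditional is correct and is essentially the paper's own argument: both you and the paper define $f(\alpha(s))=(s,0)$, $f(\beta(t))=(t,c)$, note that preservation along each single line is automatic from the $\tau$-arclength parametrisation, and observe that for mixed pairs $\leq$-preservation is literally the statement that $c_{\alpha+}^N$ and $c_{\beta+}^N$ are constantly $c$ with attained infima, while (given that) $\tau$-preservation is literally constancy of $c_{\alpha\beta}$ and $c_{\beta\alpha}$ at the value $c$. Your extra remark on well-definedness of $f$ when the two lines meet as point sets is something the paper silently omits, and your resolution (the constancy conditions force $c=0$ in that case) is sound. Your treatment of the addendum up to the boundary analysis also coincides with the paper's: attainment of the minimum from closedness of $\leq$ and continuity of $\beta$, vanishing of $\tau(\alpha(s),\beta(t^\ast))$ at the boundary point (you argue via openness of $\ll$, the paper via $\tau(\alpha(s),\beta(t))\to 0$ as $t\searrow t_s$ together with continuity of $\tau$), and then constancy of $c_{\alpha\beta}$ pinning $t^\ast-s=+c$.

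The one divergence is the step you defer, and two things should be said about it. First, the paper does not prove it either: its addendum fixes $s$, tacitly treats $\{t:\alpha(s)\leq\beta(t)\}$ as non-empty, and only establishes the ``where defined'' version; definedness for \emph{every} $s$ is needed when the lemma is applied in Lemma~\ref{Lemma: twoAsySyncParal}, where it is automatic because both asymptotes are causally sandwiched by $\gamma$ at every height ($\alpha(s)\ll\gamma(s+c_1+\varepsilon)\ll\beta(s+c_1+c_2+2\varepsilon)$), so under the paper's reading your worry lies outside the lemma. Second, and more importantly, the attack you sketch for it would fail: the reverse triangle inequality and the completeness of the rays can never \emph{create} causal relations that are not already present (the reverse triangle inequality only constrains $\tau$ along chains of relations that already hold), so they cannot force the relation established on $(-\infty,M]$ to ``persist past $M$''. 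The hypothesis that does close the gap, which you never invoke at this point, is continuity of $\tau$: as you note, $\{s:A_s\neq\emptyset\}$ is a closed down-set; if it were $(-\infty,M]$, then $\tau(\alpha(s),\beta(t))=0$ for all $s>M$ and all $t$, and letting $s\searrow M$ and using continuity of $\tau$ gives $\tau(\alpha(M),\beta(t))=0$ for all $t$, contradicting $\tau(\alpha(M),\beta(t))=\sqrt{(t-M)^2-c^2}>0$ for $t>M+c$, which you have already established on $A_M=[M+c,\infty)$. With that one-line substitution your argument is complete, and in fact slightly more scrupulous than the paper's own proof on exactly this point.
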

\begin{proof}
We define $f$ as in the definition of synchronised parallel with distance $c$: $f(\alpha(s)):=(s,0)$, $f(\beta(s)):=(s,c)$. We will prove that $f$ is $\leq$-preserving if and only if $c_{\alpha+}^N$ and  $c_{\beta+}^N$ are constantly $c$, and under the assumption that this is the case $f$ is $\tau$-preserving if and only if $c_{\alpha\beta}$ and $c_{\beta\alpha}$ are constantly $c$ wherever defined.

As $\alpha$ and $\beta$ are future directed $\tau$-arclength parametrised timelike lines, it is clear that $f$ is $\leq$- and $\tau$-preserving along $\alpha$ and along $\beta$, i.e.\ we only have to check the conditions on $f(\alpha(s))\leq f(\beta(t))$, their $\tau$-distance and the same for $\alpha,\beta$ reversed.

So first, for $\leq$-preserving: We know that $f(\alpha(s))\leq f(\beta(t))\Leftrightarrow (s,0)\leq (t,c)\Leftrightarrow (t-s)\geq c$. On the other hand, $\alpha(s)\leq \beta(t) \Leftrightarrow t-s\geq c_{\alpha+}^N(s)$ if this is a minimum and $\alpha(s)\leq \beta(t) \Leftrightarrow t-s> c_{\alpha+}^N(s)$ if it is only an infimum. In particular, these conditions are the same precisely when $c_{\alpha+}^N(s)=c$ and it is a minimum. Thus, $\alpha(s)\leq\beta(t)\Leftrightarrow f(\alpha(s))\leq f(\beta(t))$ if and only if $c_{\alpha+}^N$ is constantly $c$ and is always a minimum. Analogously, we get that $\beta(s)\leq\alpha(t)\Leftrightarrow f(\beta(s))\leq f(\alpha(t))$ if and only if $c_{\beta+}^N$ is constantly $c$ and is always a minimum.

For $\tau$-preserving, we assume that $f$ is already $\leq$-preserving. On the Minkowski side, we have: 
$\bar{\tau}(f(\alpha(s)),f(\beta(t)))=\sqrt{(t-s)^2-c^2}$ (if $f(\alpha(s))\leq f(\beta(t))$). On the $X$ side, we solve the defining equation for $c_{\alpha\beta}$ for $\tau(\alpha(s),\beta(t))$ to get: $\tau(\alpha(s),\beta(t))=\sqrt{(t-s)^2-c_{\alpha\beta}(s,t)^2}$ (if $\alpha(s)\leq\beta(t)$). Now note that as $f$ is $\leq$-preserving, the side conditions when these equations can be applied match up (if they are not applied, $\tau(\alpha(s),\beta(t))=0=\bar{\tau}(f(\alpha(s)),f(\beta(t)))$ anyway). If the side conditions are satisfied, we note that the equation differ only in one place: $\tau(\alpha(s),\beta(t))$ uses $c_{\alpha\beta}(s,t)$ and $\bar{\tau}(f(\alpha(s)),f(\beta(t)))$ uses $c$. Thus, we easily see that $\tau(\alpha(s),\beta(t))=\bar{\tau}(f(\alpha(s)),f(\beta(t)))$ if and only if $c_{\alpha\beta}(s,t)=c$.

Analogously, we get that $\tau(\beta(t),\alpha(s))=\bar{\tau}(f(\beta(t)),f(\alpha(s)))$ if and only if $c_{\beta\alpha}(s,t)=c$.

For the note, fix $s$. If $X$ satisfies the additional assumptions, $\{t:\alpha(s)\leq\beta(t)\}$ is closed and bounded (as is easily seen from the fact that $c_{\alpha\beta} \in \mathbb{R}$ when defined due to reverse triangle inequality), thus it has a minimum automatically. We look at $t$ such that $\alpha(s)\ll\beta(t)$, the set of them is $(t_s,+\infty)=\beta^{-1}(I^+(\alpha(s)))$. We get that $\tau(\alpha(s),\beta(t))\to0$ as $t\searrow t_s$. We transform the equation $c=\sqrt{(t-s)^2-\tau(\alpha(s),\beta(t))^2}$ to $\tau(\alpha(s),\beta(t))=\sqrt{(t-s)^2-c^2}$. By continuity of $\tau$, this still holds in the limit $t\searrow t_s$, i.e.\ $\sqrt{(t_s-s)^2-c^2}=\tau(\alpha(s),\beta(t_s))=0$, so $t_s-s=c$ and $\{t:\alpha(s)\leq\beta(t)\}=[t_s,+\infty)$ as claimed.
\end{proof}

\begin{Lemma}[The strong causality trick]
Let $X$ be a strongly causal \LpLS with continuous $\tau$ and $\tau(x,x)=0$ for all $x \in X$. Let $\alpha,\beta:[0,b)\to\R$ be two $\tau$-arclength parametrised timelike distance realisers with $x:=\alpha(0)=\beta(0)$. Assume that for all $s,t$ such that $\alpha(s)$ and $\beta(t)$ are timelike related, the comparison angle $\tilde{\ma}_x(\alpha(s),\beta(t))=0$. Then $\alpha=\beta$.
\end{Lemma}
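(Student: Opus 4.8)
The plan is to pin down the exact causal and separation relations between the two curves and then invoke strong causality, in the spirit of the injectivity argument in Lemma \ref{Lemma: propertiesofparallelrealization}. Throughout write $x=\alpha(0)=\beta(0)$ and note that, since $\alpha,\beta$ are $\tau$-arclength parametrised distance realisers, $\tau(x,\alpha(s))=s$ and $\tau(x,\beta(t))=t$, and both curves run into the future of $x$.

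First I would translate the hypothesis into metric data. Whenever $\alpha(s),\beta(t)$ are timelike related, the comparison triangle $\Delta(\bar x,\overline{\alpha(s)},\overline{\beta(t)})$ in $\R^{1,1}$ has legs of $\bar\tau$-length $s$ and $t$ at $\bar x$ and angle $\tilde{\ma}_x(\alpha(s),\beta(t))=0$ there; hence it is degenerate, i.e.\ $\overline{\alpha(s)}$ and $\overline{\beta(t)}$ lie on a common future timelike ray from $\bar x$ at parameters $s$ and $t$. Consequently $\max(\tau(\alpha(s),\beta(t)),\tau(\beta(t),\alpha(s)))=|s-t|$ (equivalently via Lemma \ref{Lemma: hyperboliclawofcosines} with $\cosh 0=1$ and the common-past-endpoint sign). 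In particular, for $s=t$ this maximum is $0$, so $\alpha(s)$ and $\beta(s)$ can never be timelike related.

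The heart of the argument is the claim that $\alpha(u)\ll\beta(t)$ if and only if $u<t$, and, symmetrically, $\beta(u)\ll\alpha(t)$ iff $u<t$. I would prove this by analysing $G_t:=\alpha^{-1}(I^-(\beta(t)))$ for fixed $t$. This set is open in the domain (openness of $I^-$, Lemma \ref{Lemma: pushupandopennness}), contains $0$ (as $x\ll\beta(t)$), and is downward closed by transitivity of $\ll$ together with $\alpha(u)\ll\alpha(u_0)$ for $u<u_0$; hence $G_t=[0,\tilde T)$. The degeneracy relation forces the correct direction: if some $u\ge t$ had $\alpha(u)\ll\beta(t)$, then $\alpha(t)\ll\beta(t)$ would follow (using $\alpha(t)\ll\alpha(u)$ and push-up, Lemma \ref{Lemma: pushupandopennness}), contradicting the $s=t$ observation; thus $\tilde T\le t$. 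A boundary-continuity argument then shows $\tilde T=t$: if $\tilde T<t$, then $\tau(\alpha(u),\beta(t))=t-u\to t-\tilde T>0$ as $u\uparrow\tilde T$, so by continuity of $\tau$ one has $\tau(\alpha(\tilde T),\beta(t))>0$, i.e.\ $\alpha(\tilde T)\ll\beta(t)$, contradicting openness of $G_t$. This yields $G_t=[0,t)$, and the symmetric statement follows by exchanging the roles of $\alpha$ and $\beta$.

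Finally I would assemble the two equivalences into a sandwich. For any $s_0\in(0,b)$ and all sufficiently small $\delta>0$ we obtain $\beta(s_0-\delta)\ll\alpha(s_0)$ (from $\beta(u)\ll\alpha(s)\iff u<s$) and $\alpha(s_0)\ll\beta(s_0+\delta)$ (from $\alpha(s)\ll\beta(t)\iff s<t$), so $\alpha(s_0)\in I(\beta(s_0-\delta),\beta(s_0+\delta))$. Since $X$ is strongly causal and $\beta$ is a timelike curve through $\beta(s_0)$, these diamonds form a neighbourhood basis at $\beta(s_0)$ (Lemma \ref{Lem:strongly_strongly_causal}), whence $\alpha(s_0)=\beta(s_0)$; together with $\alpha(0)=\beta(0)$ this gives $\alpha=\beta$. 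I expect the main obstacle to be the direction-forcing and boundary-continuity step of the middle paragraph: one must use the hypothesis precisely at the diagonal $s=t$ to exclude the wrong causal direction, and then exploit continuity of $\tau$ to show the threshold $\tilde T$ is attained exactly at $t$, so that the shrinking diamonds genuinely contain $\alpha(s_0)$ for every $\delta$ and not merely in the limit.
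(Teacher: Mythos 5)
Your proof is correct and follows essentially the same strategy as the paper's: the zero comparison angle plus the law of cosines forces $\tau(\alpha(s),\beta(t))=|s-t|$ on the timelike-related region, continuity of $\tau$ pins the threshold of timelike relatedness at exactly $s=t$ (the paper phrases this as computing $\lim_{t\searrow t_s^+}\cosh(\tilde{\ma}_x(\alpha(s),\beta(t)))=1$ to get $t_s^+=s$, where you instead run an open/down-set argument on $G_t=\alpha^{-1}(I^-(\beta(t)))$ with a diagonal exclusion), and the conclusion via the nested diamonds $I(\beta(s_0-\delta),\beta(s_0+\delta))$, Lemma \ref{Lem:strongly_strongly_causal} and Hausdorffness is identical. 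The only cosmetic difference is your more explicit causal-theoretic bookkeeping in the threshold step, which does not change the substance of the argument.
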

\begin{proof}
We set $f_+(s,t)=\tau(\alpha(s),\beta(t))$ and $f_-(s,t)=\tau(\beta(s),\alpha(t))$. They are both monotonically increasing in $t$ and continuous. We want to describe the set where $f_+>0$ resp.\ $f_->0$. We set $t_s^+=\inf \{t:f_+(s,t)>0\}$, then $\lim_{t\searrow t_s^+} f_+(s,t)=0$. Thus in the law of cosines (Lemma \ref{Lemma: hyperboliclawofcosines}), we get $\lim_{t\searrow t_s^+}\cosh(\tilde{\ma}_x(\alpha(s),\beta(t)))=1=\lim_{t\searrow t_s^+} \frac{s^2+t^2-f_+(s,t)}{2st}=\frac{s^2+(t_s^+)^2}{2st_s^+}$, so $s=t_s^+$. Analogously, we get $s=\inf \{t:f_-(s,t)>0\}$, in total $f_\pm(s,t)>0$ for $s<t$.

That means that whenever $s_-<t<s_+$ we have that $\alpha(t)\in I(\beta(s_-),\beta(s_+))$. By Lemma \ref{Lem:strongly_strongly_causal}, the $I(\beta(s_-),\beta(s_+))$ form a neighbourhood basis of the point $\beta(t)$, and $\alpha(t)$ is inside of all of these neighbourhoods. As $X$ is Hausdorff, we get that $\alpha(t)=\beta(t)$ for all $t\in(0,b)$, so $\alpha=\beta$.
\end{proof}

\begin{Lemma}[Parallel lines are unique]
\label{Lemma: parallellinesunique}
Let $X$ be a strongly causal, timelike geodesically connected \LpLS with continuous $\tau$, $\tau(x,x)=0$ for all $x \in X$. Suppose that $X$ has global timelike curvature bounded below by $0$, let $\alpha:\R\to X$ be a complete timelike line and $p\in X$ a point. Then there is (up to reparametrisation) at most one parallel line to $\alpha$ through $p$.
\end{Lemma}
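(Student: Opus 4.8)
The plan is to reduce everything to the two future rays and to pin down the synchronisation data from $p$ alone. Parametrise $\alpha$ by $\tau$-arclength once and for all, and parametrise the two candidate lines $\beta_1,\beta_2$ by $\tau$-arclength with $\beta_1(0)=\beta_2(0)=p$ (possible by Lemma \ref{Lemma: tauarclengthparametrizations}), writing $\beta_i^{+},\beta_i^{-}$ for the future/past halves. After an isometry of $\R^{1,1}$ any parallel realisation may be taken in the form $f_i(\alpha(s))=(s,0)$, $f_i(\beta_i(u))=(u+a_i,c_i)$ with $c_i\ge 0$ (cf.\ Definition \ref{definition: syncparallellines}), so that $f_i(p)=(a_i,c_i)$. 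Since $f_i$ is $\leq$-preserving, $\alpha(s)\leq p\iff s\leq a_i-c_i$ and $p\leq\alpha(s)\iff s\geq a_i+c_i$, whence $a_i-c_i=\sup\{s:\alpha(s)\leq p\}$ and $a_i+c_i=\inf\{s:p\leq\alpha(s)\}$. These depend only on $\alpha$ and $p$, so $a_1=a_2=:a$, $c_1=c_2=:c$, and $f_1,f_2$ agree on $\alpha(\R)\cup\{p\}$.

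The heart of the argument is that these two realisations already determine the cross time separations $\tau(\beta_1(\cdot),\beta_2(\cdot))$, even though $f_1,f_2$ a priori record nothing about how $\beta_1$ and $\beta_2$ relate to one another. First I would prove the upper bound: fix $s\leq t$; if $\beta_1(s)\not\leq\beta_2(t)$ then $\tau(\beta_1(s),\beta_2(t))=0\leq t-s$, and otherwise, using that each $f_i$ preserves $\leq$ and $\tau$, for all large $r$ we have $\beta_1(s)\leq\beta_2(t)\leq\alpha(r)$ and $\tau(\beta_i(u),\alpha(r))=\bar{\tau}((u+a,c),(r,0))=\sqrt{(r-u-a)^2-c^2}$, so the reverse triangle inequality gives
\[
\tau(\beta_1(s),\beta_2(t))\leq\sqrt{(r-s-a)^2-c^2}-\sqrt{(r-t-a)^2-c^2},
\]
whose right-hand side decreases to $t-s$ as $r\to\infty$. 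In the mixed case $s,t\geq 0$ the chain $\beta_1(-s)\leq p\leq\beta_2(t)$ gives the complementary bound $\tau(\beta_1(-s),\beta_2(t))\geq\tau(\beta_1(-s),p)+\tau(p,\beta_2(t))=s+t$, while the upper bound gives $\leq s+t$. Hence $\tau(\beta_1(-s),\beta_2(t))=s+t$, and this value is attained by routing through $p$.

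It follows that for all $s,t\geq 0$ the broken curve $c$ that runs along $\beta_1$ from $\beta_1(-s)$ to $p$ and then along $\beta_2$ from $p$ to $\beta_2(t)$ has $\tau$-length $s+t=\tau(\beta_1(-s),\beta_2(t))$, hence is a timelike distance realiser. Comparing $c$ with the distance realiser $\beta_1|_{[-s,t]}$, with which it shares the initial segment $\beta_1^{-}|_{[-s,0]}$, timelike non-branching (Theorem \ref{thm:non-branching}) forces $c=\beta_1$ on $[-s,t]$, so $\beta_2^{+}=\beta_1^{+}$ on $[0,t]$. Comparing instead with $\beta_2|_{[-s,t]}$, with which $c$ shares the final segment $\beta_2^{+}|_{[0,t]}$, the time-dual of non-branching gives $\beta_1^{-}=\beta_2^{-}$ on $[-s,0]$. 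Letting $s,t\to\infty$ yields $\beta_1=\beta_2$.

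The hard part is exactly the cross-distance rigidity of the second paragraph: since a parallel realisation only encodes the geometry of a single line against $\alpha$, the separations between $\beta_1$ and $\beta_2$ must be produced indirectly, and the crucial trick is that the upper bound can be squeezed out of the reverse triangle inequality alone by pushing the auxiliary vertex $\alpha(r)$ to infinity along the complete line, where it matches the lower bound obtained by routing through $p$. Curvature then enters only through the non-branching theorem, which is what converts the single maximising concatenation into pointwise coincidence. Finally I would dispose of the degenerate possibilities (for instance $\alpha\cap I^{\pm}(p)=\emptyset$), where the constants $a,c$ are undefined; in those cases no parallel line through $p$ exists and the statement is vacuous.
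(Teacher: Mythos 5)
Your proof is correct, and it takes a genuinely different route from the paper's. The normalisation step is shared: the paper likewise pins down the shift and the distance of any parallel line through $p$ from $s_-=\sup\alpha^{-1}(I^-(p))$ and $s_+=\inf\alpha^{-1}(I^+(p))$, so that both candidates are synchronised parallel to $\alpha$ at the same distance $c$ (and your closing remark about the degenerate case is consistent: if a parallel line through $p$ exists at all, the realisation forces $\alpha\cap I^\pm(p)\neq\emptyset$). From there the paper argues via angles: it realises $\beta$ and $\tilde{\beta}$ on opposite sides of $\alpha$, uses Proposition \ref{Proposition: angle=comparisonangle} (hence the stacking principle, Proposition \ref{Proposition: stackingprinciple}, which is where timelike geodesic connectedness enters) together with the chain $\beta(s)\ll\alpha(s+c+\eps)\ll\tilde{\beta}(s+2c+2\eps)$ and a law-of-cosines estimate as $s\to+\infty$ to show that the angle at $p$ between the two future halves vanishes, and then concludes with the strong causality trick. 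You instead extract $\tau$-rigidity directly: the upper bound $\tau(\beta_1(s),\beta_2(t))\le t-s$ follows from the reverse triangle inequality alone by pushing the auxiliary vertex $\alpha(r)$ to infinity (note only $a_1=a_2$ is actually needed for the limit; the $c$-terms wash out), it matches the lower bound obtained by routing through $p$, so the broken curve is maximising, and timelike non-branching (Theorem \ref{thm:non-branching}) finishes. Your route is more elementary in that the comparison machinery enters only through the cited non-branching theorem, and you never use timelike geodesic connectedness; the paper's angle computation, on the other hand, is a template it reuses (e.g.\ in the weak transitivity lemma). Two small points deserve a line of justification: the concatenated curve and the $\tau$-arclength parametrised lines are in general only \emph{continuous} causal curves, whereas Theorem \ref{thm:non-branching} is stated for (locally Lipschitz) distance realisers --- this is harmless either by the paper's blanket remark that such results carry over verbatim to continuous causal curves, or by running the comparison in the lines' original Lipschitz parametrisations, since you only need the conclusion up to reparametrisation; and your appeal to the ``time-dual'' of non-branching is legitimate because strong causality and the lower timelike curvature bound are invariant under reversing the time orientation, but this should be said explicitly.
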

\begin{proof}
We indirectly assume there are two parallel lines to $\alpha$ through $p$, namely $\beta:\R\to X$ and $\tilde{\beta}:\R\to X$. We assume all three curves are parametrised by $\tau$-arclength, such that $\beta(0)=p$ and in such a way that $\alpha,\beta$ are synchronised parallel. Then also $\alpha,\tilde{\beta}$ are parametrised in a way that they are synchronised parallel, and both $\alpha,\beta$ and $\alpha,\tilde{\beta}$ have the same distance: set $s_+=\inf \alpha^{-1}(I^+(p))$ and $s_-=\sup \alpha^{-1}(I^-(p))$ the boundary of the points on $\alpha$ timelike related to $p$. Then the distance of $\alpha,\beta$ and that of $\alpha,\tilde{\beta}$ can be calculated as $\frac{s_+-s_-}{2}$, so they are the same, and the necessary shift to make $\beta$ and $\tilde{\beta}$ synchronised parallel to $\alpha$ can be calculated by $\frac{s_++s_-}{2}$, so they are the same too. As $\alpha,\beta$ are assumed to be synchronised already, this shift is $0$.

We construct a comparison situation for all three lines at once: We choose the parallel realisation $f$ for $\alpha$ and $\beta$ given by $f(\alpha(s))=(s,0)$ and $f(\beta(s))=(s,c)$, and similarly we choose $\tilde{f}$ for $\alpha$ and $\tilde{\beta}$ given by $\tilde{f}(\tilde{\beta}(s))=(s,-c)$ and $\tilde{f}(\alpha(s)) = (s,0)$, i.e.\ we realise $\beta$ and $\tilde{\beta}$ on opposite sides of $\alpha$. 

We calculate the angle $\omega:=\ma_p(\beta|_{[0,+\infty)},\tilde{\beta}|_{[0,+\infty)})$: By Proposition \ref{Proposition: angle=comparisonangle}, this is equal to any comparison angle $\tilde{\ma}_p(\beta(s),\tilde{\beta}(t))$ as long as $\beta(s)\ll\tilde{\beta}(t)$ or conversely. We know that $\beta(s)\ll \alpha(s + c +\varepsilon)\ll \tilde{\beta}(s+2c+2\varepsilon)$ and set $t=s+2c+2\varepsilon$. We now look at the law of cosines to estimate the comparison angle $\omega=\tilde{\ma}_p(\beta(s),\tilde{\beta}(t))$: $\cosh(\omega)=\frac{s^2+t^2-\tau(\beta(s),\tilde{\beta}(t))^2}{2st} \leq \frac{2t^2}{2s^2}$ which converges to $1$ as $s\to +\infty$. In particular, $\omega=0$. We can now apply the strong causality trick to get $\beta=\tilde{\beta}$ (so they are even equal in this parametrisation), so there is only one line parallel to $\alpha$ through $p$.
\end{proof}

Let us now show for $X$ as in the splitting theorem that asymptotic lines to $\gamma$ constructed in Proposition \ref{Proposition: asymptoticlines} are parallel to $\gamma$.

\begin{Lemma}
\label{Lemma: asymptotestogammaareparalleltogamma}
Let $\alpha: \R \to X$ be a complete timelike asymptotic line to $\gamma$. Then $\alpha,\gamma$ are parallel.
\begin{proof}
By construction, $\alpha$ arises as the limit of timelike maximising segments $\alpha^+_n$ from $p:=\alpha(0)$ to $\gamma(r_n)$ and $\alpha^-_n$ from $p$ to $\gamma(-r_n)$, where $r_n \to \infty$. We will use the stacking principle (cf.\ Proposition \ref{Proposition: stackingprinciple}) to show that $\alpha$ and $\gamma$ are parallel. Indeed, the triangles $\Delta(\gamma(-r_n),p,\gamma(r_n))$ stack in $\R^{1,1}$, hence by orienting the side corresponding to the segment on $\gamma$ vertically (see Figure \ref{fig: stacking}), we may assume that for $n \to \infty$ the comparison triangles converge to two vertical lines in $\R^{1,1}$. 

\begin{figure}
\begin{center}
\begin{tikzpicture}
%line \gamma
\draw (0,-3.3) -- (0,3.3);

\begin{scriptsize}
%point p
\coordinate [circle, fill=black, inner sep=0.7pt, label=0: {$p$}] (p) at (1,0);
%label of \gamma
\coordinate [label=180: {$\gamma$}] (p0) at (0,0);
%label of n-vertices
\coordinate [circle, fill=black, inner sep=0.7pt, label=180: {$\gamma(r_n)$}] (p1) at (0,1.8);
\coordinate [circle, fill=black, inner sep=0.7pt, label=180: {$\gamma(-r_n)$}] (p2) at (0,-1.8);
%label of (n+1)-vertices
\coordinate [circle, fill=black, inner sep=0.7pt, label=180: {$\gamma(r_{n+1})$}] (p1) at (0,2.3);
\coordinate [circle, fill=black, inner sep=0.7pt, label=180: {$\gamma(-r_{n+1})$}] (p2) at (0,-2.3);
%label of \alpha segments
\coordinate [label=270: {$\alpha_n^+$}] (p0) at (0.3,1);
\coordinate [label=90: {$\alpha_n^-$}] (p0) at (0.3,-1);
\end{scriptsize}
%stacking triangles
\draw (0,-1.8) -- (1,0) -- (0,1.8);
\draw (0,-2.3) -- (1,0) -- (0,2.3);
\draw (0,-2.8) -- (1,0) -- (0,2.8);
%\draw [dashed] (0,-3.1) -- (1,0) -- (0,3.1);
\draw [dashed] (0,-3.2) -- (1,0) -- (0,3.2);
\end{tikzpicture}
\end{center}
\caption{Stacking comparison triangles.}
\label{fig: stacking}
\end{figure}
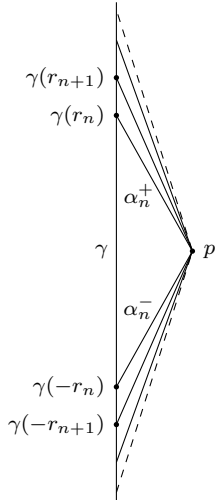

We now argue that the map sending $\alpha(\R)$ and $\gamma(\R)$ to the corresponding limit lines $\bar{\alpha}$ and $\bar{\gamma}$ is a parallel realisation. For any $t,s$, consider $\tau(\alpha(t),\gamma(s)) = \lim_n \tau(\alpha_n(t),\gamma(s))$. For $n$ so large that $-r_n < s < r_n$, $\gamma(s)$ is part of the triangle $\Delta(\gamma(-r_n),p,\gamma(r_n))$. From Corollary \ref{Corollary: sides_equal} we conclude that $\alpha_n(t) \ll \gamma(s)$ if and only if $\overline{\alpha}_n(t) \ll \overline{\gamma}(s)$, and $\tau(\alpha_n(t),\gamma(s)) = \overline{\tau}(\overline{\alpha}_n(t),\overline{\gamma}(s))$, where the bars denote the corresponding points on the comparison side. This shows that the realisation of $\alpha$ as $\bar{\alpha}$ and $\gamma$ as $\bar{\gamma}$ is a parallel realisation.
\end{proof}
\end{Lemma}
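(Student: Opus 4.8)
The plan is to build an explicit parallel realisation $f:\alpha(\R)\cup\gamma(\R)\to\R^{1,1}$ by taking a limit of the comparison triangles attached to the maximisers defining $\alpha$. Recall that $\alpha$ arises as a locally uniform limit of timelike maximisers $\alpha_n^+$ from $p:=\alpha(0)$ to $\gamma(r_n)$ and $\alpha_n^-$ from $p$ to $\gamma(-r_n)$, with $r_n\to\infty$. For each $n$ I would form a comparison triangle in $\R^{1,1}$ for $\Delta(\gamma(-r_n),p,\gamma(r_n))$ and draw the side coming from the $\gamma$-segment as a single straight vertical line; this is legitimate because $\gamma$ is a maximiser, so its comparison side is the straight segment $\bar\gamma(-r_n)\bar\gamma(r_n)$. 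The stacking principle (Proposition \ref{Proposition: stackingprinciple}) lets me choose these triangles consistently as $n$ grows, so that a single vertical line carries all comparison images $\bar\gamma(s)$ and a single comparison vertex $\bar p$ subtends every $\bar\gamma(s)$ under the constant comparison angle supplied by Proposition \ref{Proposition: angle=comparisonangle}.

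Next I would pass to the limit $n\to\infty$. Since $\gamma$ is complete, $\tau(p,\gamma(\pm r_n))\to\infty$, and the constancy of the comparison angle forces $\bar p$ to sit at a fixed $\R^{1,1}$-distance $c\geq 0$ from the vertical $\bar\gamma$-line while the comparison sides $\bar\alpha_n^\pm$ become asymptotically vertical. Thus the slanted sides from $\bar p$ to $\bar\gamma(\pm r_n)$ converge to the single vertical line $\bar\alpha=\{(t,c):t\in\R\}$ through $\bar p$, and I obtain two parallel timelike lines $\bar\gamma,\bar\alpha$ in $\R^{1,1}$. I then define $f$ by $\gamma(s)\mapsto\bar\gamma(s)$ and $\alpha(t)\mapsto\bar\alpha(t)$, using $\tau$-arclength parametrisations of both lines (Lemma \ref{Lemma: tauarclengthparametrizations}).

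It remains to check that $f$ is $\tau$- and $\leq$-preserving. Along each line this is automatic, since $\tau$-arclength parametrised timelike lines carry the same time separation as vertical lines in $\R^{1,1}$, so the content lies in the cross relations. Fixing $s,t$ and choosing $n$ so large that $-r_n<s<r_n$, the point $\gamma(s)$ lies on the side $\gamma(-r_n)\gamma(r_n)$ and $\alpha_n(t)$ lies on one of the slanted sides of $\Delta(\gamma(-r_n),p,\gamma(r_n))$. Corollary \ref{Corollary: sides_equal} then applies (one of the two chosen points lies on the $\gamma$-side) and yields $\alpha_n(t)\ll\gamma(s)\iff\bar\alpha_n(t)\ll\bar\gamma(s)$ together with $\tau(\alpha_n(t),\gamma(s))=\bar\tau(\bar\alpha_n(t),\bar\gamma(s))$, and symmetrically with the roles reversed. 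Letting $n\to\infty$ and using continuity of $\tau$, the convergence $\alpha_n\to\alpha$, and $\bar\alpha_n(t)\to\bar\alpha(t)$, I get $\tau(\alpha(t),\gamma(s))=\bar\tau(\bar\alpha(t),\bar\gamma(s))$ and the analogous $\leq$-statement, so $f$ is a parallel realisation.

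I expect the main obstacle to be the limiting argument in the comparison plane: one must show that the consistently stacked comparison triangles genuinely converge to a pair of parallel vertical lines, rather than degenerating or having $\bar p$ drift, and that the comparison points $\bar\alpha_n(t)$ converge to a well-defined $\bar\alpha(t)$. The constancy of the comparison angle (Proposition \ref{Proposition: angle=comparisonangle}) and the completeness of $\gamma$ are precisely what pin down this limiting geometry, so the crux is to leverage them to control the comparison picture uniformly in $n$.
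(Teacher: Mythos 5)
Your proposal is correct and follows essentially the same route as the paper's proof: stack the comparison triangles $\Delta(\gamma(-r_n),p,\gamma(r_n))$ via Proposition \ref{Proposition: stackingprinciple} with the $\gamma$-side vertical, let $n\to\infty$ to obtain two parallel vertical lines, and verify the parallel realisation on cross relations via Corollary \ref{Corollary: sides_equal} together with continuity of $\tau$. Your closing remarks on pinning down the limiting comparison geometry (via the constant comparison angle of Proposition \ref{Proposition: angle=comparisonangle} and completeness of $\gamma$) simply make explicit what the paper's brief treatment leaves implicit.
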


\begin{Lemma}[Shifting asymptotes] \label{Lemma: shifting_asymptotes}
Let $\alpha: \R \to X$ be a complete timelike asymptote to $\gamma$ through $\alpha(0)$. Then the asymptote to $\gamma$ through $\alpha(s)$ is $\alpha(\cdot-s)$ after parametrising both by $\tau$-arclength.
\end{Lemma}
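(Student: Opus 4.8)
The plan is to deduce the statement from the uniqueness of parallels established in Lemma~\ref{Lemma: parallellinesunique}, rather than re-running the asymptote construction by hand. Write $q:=\alpha(s)$. Since $\alpha$ is parallel to $\gamma$, the point $q$ lies in $I(\gamma)$: transporting via a parallel realisation $f$ of $\alpha$ and $\gamma$, the image $f(q)$ sits on a timelike line parallel to $f(\gamma(\R))$ in $\R^{1,1}$ and is therefore timelike related to suitable future and past points of $f(\gamma(\R))$; as $f$ preserves $\ll$, the same holds for $q$ and $\gamma$, so $q \in I(\gamma)$ and an asymptote to $\gamma$ through $q$ exists.

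First I would assemble this asymptote into a line. By the asymptote construction together with Proposition~\ref{Proposition: asymptoticlines}, the future- and past-directed asymptotes to $\gamma$ issuing from $q$ fit together to a complete timelike asymptotic line $\eta:\R \to X$ with $\eta(0)=q$. Applying Lemma~\ref{Lemma: asymptotestogammaareparalleltogamma} to $\eta$ shows that $\eta$ is parallel to $\gamma$; the same lemma (or the hypothesis) gives that $\alpha$ is parallel to $\gamma$.

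The key step is uniqueness. Both $\alpha$ and $\eta$ are then complete timelike lines parallel to $\gamma$, and both pass through the common point $q=\alpha(s)$. Lemma~\ref{Lemma: parallellinesunique} guarantees that, up to reparametrisation, there is at most one line parallel to $\gamma$ through $q$; hence $\alpha$ and $\eta$ have the same image. Parametrising both by $\tau$-arclength (legitimate by Lemma~\ref{Lemma: tauarclengthparametrizations}, using continuity of $\tau$ and $\tau(x,x)=0$), the two curves then differ only by a constant shift of the parameter; tracking the common point $q=\alpha(s)$ pins this shift down to $s$ and yields $\eta=\alpha(\cdot-s)$.

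I expect the main obstacle to be bookkeeping rather than analysis: one has to verify that the object ``the asymptote through $q$'' is genuinely a complete timelike asymptotic \emph{line} (so that Lemma~\ref{Lemma: asymptotestogammaareparalleltogamma} applies) and not merely a ray, and that $\alpha$ legitimately counts as a parallel line \emph{through} $q$, not only through $\alpha(0)$. Once both curves are displayed as parallels to $\gamma$ through $q$, the uniqueness lemma does all the work; as a free byproduct, this argument also shows that the asymptote through $\alpha(s)$ is independent of the subsequences chosen in its construction.
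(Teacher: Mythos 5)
Your proposal is correct and follows essentially the same route as the paper's proof: form the asymptotic line through $\alpha(s)$, observe via Lemma \ref{Lemma: asymptotestogammaareparalleltogamma} that it and $\alpha$ are both parallel to $\gamma$, invoke uniqueness of parallels (Lemma \ref{Lemma: parallellinesunique}), and use the common point $\alpha(s)$ to fix the parameter shift. Your extra check that $\alpha(s) \in I(\gamma)$ via a parallel realisation is a sensible bit of bookkeeping the paper leaves implicit (it is essentially Corollary \ref{Corollary: asymptotesstayinI(gamma)}, stated just afterwards).
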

\begin{proof}
Let $\tilde{\alpha}$ be the asymptotic line through $\alpha(s)$. Then the previous result shows that both $\gamma$ and $\alpha$ as well as $\gamma$ and $\tilde{\alpha}$ are parallel and they meet at $\alpha(s)=\tilde{\alpha}(0)$. Thus, as parallel lines are unique (see Lemma \ref{Lemma: parallellinesunique}), we have that (after synchronising) $\alpha=\tilde{\alpha}$. To get the correct parameters before shifting, we know $\alpha(s)=\tilde{\alpha}(0)$, which fixes the shift parameter.
\end{proof}

\begin{Corollary}[Asymptotes stay in $I(\gamma)$]
\label{Corollary: asymptotesstayinI(gamma)}
Asymptotic lines to $\gamma$ from points in $I(\gamma)$ stay in $I(\gamma)$.
\begin{proof}
Since asymptotic lines to $\gamma$ are parallel to $\gamma$ by Lemma \ref{Lemma: asymptotestogammaareparalleltogamma}, this readily follows.
\end{proof}
\end{Corollary}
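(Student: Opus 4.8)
The plan is to reduce the claim to elementary chronology in two-dimensional Minkowski space by transporting the problem through the parallel realisation supplied by Lemma \ref{Lemma: asymptotestogammaareparalleltogamma}. First I would fix a complete timelike asymptotic line $\alpha:\R\to X$ and apply Lemma \ref{Lemma: asymptotestogammaareparalleltogamma} to obtain that $\alpha$ and $\gamma$ are parallel, hence a parallel realisation $f:\alpha(\R)\cup\gamma(\R)\to\R^{1,1}$. Using the remark following Definition \ref{definition: parallellines}, I would normalise $f$ (by post-composing with a Minkowski isometry) so that $f(\gamma(\R))=\{(t,0):t\in\R\}$ and $f(\alpha(\R))=\{(t,c):t\in\R\}$ for some $c\geq 0$; in particular $f$ restricts to a bijection of $\gamma(\R)$ onto the entire line $\{(t,0)\}$.

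The key observation is then purely about $\R^{1,1}$: every point $(r,c)$ on the second vertical line lies in the chronological region swept out by the first, since $(r-c-1,0)\ll(r,c)\ll(r+c+1,0)$ with respect to $\bar{\tau}$. Indeed, the timelike relation $(s,x)\ll(t,y)$ means $t-s>|x-y|$, and here both time differences equal $c+1$ while both spatial differences equal $c$, so $c+1>c$ settles both inequalities. Thus $f(\alpha(r))$ is chronologically between two points of $f(\gamma(\R))$ for every $r$. I would then pull this back through $f$: because $f$ is $\tau$-preserving it also preserves the timelike relation (as $x\ll y\iff\tau(x,y)>0$), so choosing $s_1,s_2$ with $f(\gamma(s_1))=(r-c-1,0)$ and $f(\gamma(s_2))=(r+c+1,0)$ yields $\gamma(s_1)\ll\alpha(r)\ll\gamma(s_2)$. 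Hence $\alpha(r)\in I^+(\gamma)\cap I^-(\gamma)=I(\gamma)$ for all $r\in\R$.

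There is essentially no genuine obstacle here: the entire content is carried by the parallelity established in Lemma \ref{Lemma: asymptotestogammaareparalleltogamma}, and indeed the argument uses only that $\alpha$ is parallel to $\gamma$, not that $\alpha(0)$ was itself chosen in $I(\gamma)$. The only points requiring a little care are confirming that $f$ preserves $\ll$ (immediate from $\tau$-preservation together with $x\ll y\iff\tau(x,y)>0$) and that the surjectivity of $f$ onto the Minkowski line $\{(t,0)\}$ produces the witness parameters $s_1,s_2$ on $\gamma$. The degenerate case $c=0$ causes no difficulty, since a point of a timelike line always lies chronologically between two of its own points.
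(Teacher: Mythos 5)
Your proposal is correct and is essentially the paper's own argument: the paper's proof simply asserts that the claim ``readily follows'' from the parallelity of $\alpha$ and $\gamma$ established in Lemma \ref{Lemma: asymptotestogammaareparalleltogamma}, and your write-up fills in exactly the intended details (normalising the parallel realisation, sandwiching each $f(\alpha(r))$ chronologically between two points of the Minkowski line $\{(t,0)\}$, and pulling back via the fact that a $\tau$-preserving map preserves $\ll$ since $x\ll y\iff\tau(x,y)>0$). No gap remains; your observation that surjectivity of $f(\gamma(\R))$ onto the full affine line is built into Definition \ref{definition: parallellines} is precisely the point that makes the pullback work.
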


Now that we have established the parallelity of $\gamma$ and its asymptotes, we may reparametrise all of them by $\tau$-arclength and fix a shift on each asymptote so that they are synchronised to $\gamma$. Note that we fix a $\tau$-arclength parametrisation of $\gamma$, namely the one keeping $\gamma(0)$ the same.

\begin{definition}[Busemann parametrisation]
\label{definition: busemannparametrization}
Let $\alpha:\R \to X$ be a complete timelike asymptote to $\gamma$. Then we call the (unique) $\tau$-arclength parametrisation of $\alpha$ that synchronises it to $\gamma$ its \emph{Busemann parametrisation}.
\end{definition}

\begin{remark}[Busemann functions]
\label{remark: Busemannfunction}

Up to this point, we have avoided the use of Busemann functions in our treatment. Though it will not really be necessary in what follows, let us discuss them briefly here. Let $X$ be as in the splitting theorem and $\gamma$ the given timelike line in $\tau$-arclength parametrisation. Then we define the (\emph{future}) \emph{Busemann function} of $\gamma$ as $b^+:I(\gamma) \to \R \cup \{\pm \infty\}$ via $b^+(x):=\lim_{t \to \infty} (t - \tau(x,\gamma(t)))$. If $\alpha:\R \to X$ is any asymptotic line to $\gamma$ in $\tau$-arclength parametrisation and we choose the parallel realisation between $\alpha$ and $\gamma$ so that $\gamma(t)$ is mapped to $(t,0) \in \R^{1,1}$ (which we always do) and $\alpha(s)$ is mapped to $(s+S,c) \in \R^{1,1}$, then
\begin{align*}
    b^+(\alpha(s)) = \lim_{t \to \infty}(t - \tau(\alpha(s),\gamma(t)) = \lim_{t \to \infty}(t - \sqrt{(t-(s+S))^2 - c^2}) = s + S.
\end{align*}
Hence, $b^+$ indicates the shift in the time parameter along any $\tau$-arclength parametrised asymptote. In particular, $b^+$ is finite-valued on all of $I(\gamma)$ since any point lies on an asymptote, and in fact it is not hard to show that $b^+$ is continuous on $I(\gamma)$ in a similar fashion as \cite[Lem.\ 3.3]{beem1985toponogov}, but we will not need this. Note that if $\alpha$ is in Busemann parametrisation, then $b^+(\alpha(s)) = s$ for all $s \in \R$, hence the name.
\end{remark}

\begin{Lemma}[Parallelity is weakly transitive]
Let $X$ be a strongly causal, timelike geodesically connected \LpLS with continuous $\tau$, $\tau(x,x)=0$ for all $x$, and global non-negative timelike curvature. Let $\alpha,\beta,\gamma:\R\to X$ be timelike lines such that $\alpha,\beta$ and $\beta,\gamma$ are parallel. Assume that there is a point $p$ on $\gamma$ such that there is a parallel line to $\alpha$ through $p$. Then this parallel line is already $\gamma$.
\end{Lemma}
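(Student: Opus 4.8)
The plan is to reduce the assertion to the uniqueness of parallel lines, Lemma~\ref{Lemma: parallellinesunique}. Write $\delta$ for the given complete timelike line parallel to $\alpha$ through $p$, and parametrise $\alpha,\beta,\gamma,\delta$ by $\tau$-arclength (possible by Lemma~\ref{Lemma: tauarclengthparametrizations}), arranging $\delta(0)=\gamma(0)=p$. If I can show that $\gamma$ is itself parallel to $\alpha$, then $\gamma$ and $\delta$ are two lines parallel to $\alpha$ passing through the common point $p$, and Lemma~\ref{Lemma: parallellinesunique} forces $\gamma=\delta$. So everything comes down to a single instance of transitivity of parallelity. This is exactly the step where the hypotheses are genuinely needed: in general $\alpha\parallel\beta\parallel\gamma$ does \emph{not} propagate, since one has no control over the time separation $\tau(\alpha(\cdot),\gamma(\cdot))$, and it is precisely the existence of $\delta$ that supplies the otherwise missing datum.

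Concretely, I would first record the separation data that the hypotheses make available \emph{exactly}. From $\delta\parallel\alpha$ and $p=\delta(0)$ the parallel realisation gives $\tau(p,\alpha(u))=\sqrt{(u-u_0)^2-c_1^2}$, where $c_1$ is the distance of $\delta,\alpha$ and $u_0$ a shift; this is the key input carried by $\delta$, as it fixes the position of the whole line $\alpha$ relative to the point $p\in\gamma$. From $\gamma\parallel\beta$ and $\alpha\parallel\beta$ I likewise know $\tau(\gamma(t),\beta(v))$ and $\tau(\alpha(u),\beta(v))$ explicitly as Minkowski expressions. The goal is then to assemble the still-unknown separation $\tau(\gamma(t),\alpha(u))$ and verify, via the $c$-criterion of Lemma~\ref{lem-c-cond}, that it has the Minkowski form of a pair of parallel lines (the causal relations $\leq$ being controlled in the same lemma). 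One inequality is cheap: choosing $v$ with $\alpha(u)\ll\beta(v)\ll\gamma(t)$ and applying the reverse triangle inequality yields $\tau(\gamma(t),\alpha(u))\ge\tau(\alpha(u),\beta(v))+\tau(\beta(v),\gamma(t))$, and optimising over $v$ reproduces the straight-line value in $\R^{1,1}$ for three parallel lines with $\beta$ lying between $\alpha$ and $\gamma$.

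The hard part will be the reverse inequality, i.e.\ showing that this lower bound is attained, and this is where both the curvature bound and the completeness of the lines enter. The idea is to exploit that $\beta$ is a complete timelike line, so that the comparison configurations built over it are genuinely flat: by the stacking principle (Proposition~\ref{Proposition: stackingprinciple}) the triangles based on $\beta$ stack collinearly, while Proposition~\ref{Proposition: angle=comparisonangle} and Corollary~\ref{Corollary: sides_equal} give that comparison is sharp for triangles with a side on $\beta$. Using $\delta\parallel\alpha$ and $\gamma\parallel\beta$, the maximisers from $\alpha(u)$ to $\beta$ and from $\beta$ to $\gamma(t)$ meet on $\beta$ at the optimal parameter $v^\ast$ making equal hyperbolic angles on opposite time-sides, which forces the comparison triangle $\Delta(\alpha(u),\beta(v^\ast),\gamma(t))$ to be degenerate; hence $\tau(\gamma(t),\alpha(u))=\tau(\alpha(u),\beta(v^\ast))+\tau(\beta(v^\ast),\gamma(t))$, the parallel value. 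The datum $\tau(p,\alpha(\cdot))$ coming from $\delta$ is what selects the correct side for $\alpha$ and makes the resulting distance the right constant, so the $c$-criterion is met and $\gamma\parallel\alpha$ follows. Lemma~\ref{Lemma: parallellinesunique} then gives $\gamma=\delta$. The entire difficulty is concentrated in this flatness/upper-bound step; the remainder is bookkeeping of $\tau$-arclength shifts and an appeal to uniqueness.
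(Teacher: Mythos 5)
Your reduction to Lemma \ref{Lemma: parallellinesunique} is legitimate: if you could show $\gamma\parallel\alpha$, uniqueness of parallels through $p$ would indeed give $\gamma=\delta$. But the step carrying the entire burden --- your ``hard part'' --- asserts something false. You claim that for the optimal $v^\ast$ the comparison triangle $\Delta(\alpha(u),\beta(v^\ast),\gamma(t))$ degenerates, so that $\tau(\gamma(t),\alpha(u))=\tau(\alpha(u),\beta(v^\ast))+\tau(\beta(v^\ast),\gamma(t))$, i.e.\ that $\beta$ lies metrically between $\alpha$ and $\gamma$ and the distance of $\gamma,\alpha$ is $a+b$ (sum of the distances of $\alpha,\beta$ and $\beta,\gamma$). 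This fails already in the model cases the lemma is built for: take the product $\R\times S$ (e.g.\ $S=\R^2$, so $\R^{1,2}$) and three vertical lines over points $x_\alpha,x_\beta,x_\gamma\in S$ that are not collinear. All three lines are pairwise parallel, every point lies on a parallel to every line, and the lemma's conclusion holds --- yet $d_S(x_\alpha,x_\gamma)<d_S(x_\alpha,x_\beta)+d_S(x_\beta,x_\gamma)$ strictly, maximisers from $\alpha(u)$ to $\gamma(t)$ never meet $\beta$, and the comparison triangle $\Delta(\alpha(u),\beta(v),\gamma(t))$ is non-degenerate for \emph{every} $v$. The reverse triangle inequality through $\beta$ only yields the one-sided bound $c_{\gamma\alpha}\leq a+b$ in the notation of Lemma \ref{lem-c-cond}; equality is not forced by Proposition \ref{Proposition: stackingprinciple} or Corollary \ref{Corollary: sides_equal} (these control triangles with a side \emph{on} a line, not flatness of configurations spanning across $\beta$), and it is generally wrong. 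The correct distance of $\gamma,\alpha$ is the one dictated by $\delta$; your closing remark that $\delta$ ``selects the correct side and makes the resulting distance the right constant'' is precisely the missing argument, and nothing in your outline actually feeds $\delta$'s data into the degeneracy claim --- indeed it contradicts it whenever $c_1\neq a+b$.

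The paper avoids computing $\tau(\gamma(\cdot),\alpha(\cdot))$ altogether, and in particular never proves $\gamma\parallel\alpha$ as an intermediate step. It compares $\gamma$ directly with the hypothesised parallel $\tilde{\gamma}=\delta$ at their common point $p$: the three given parallelities furnish, via the $c$-criterion, the purely causal chain $\gamma(s)\ll\beta(s+b+\varepsilon)\ll\alpha(s+a+b+2\varepsilon)\ll\tilde{\gamma}(s+a+b+c+3\varepsilon)$, i.e.\ timelike relatedness of $\gamma(s)$ and $\tilde{\gamma}(t)$ with \emph{bounded} parameter gap $t-s$. By Proposition \ref{Proposition: angle=comparisonangle} (applied to each of the two complete lines through $p$) the comparison angle $\tilde{\ma}_p(\gamma(s),\tilde{\gamma}(t))$ is constant; the law of cosines, merely dropping the term $\tau(\gamma(s),\tilde{\gamma}(t))^2\geq 0$, bounds its $\cosh$ by a ratio tending to $1$ as $s\to+\infty$, so the constant angle is $0$, and the strong causality trick gives $\gamma=\tilde{\gamma}$. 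Note the economy: only one-sided causal information with controlled shifts is needed, never exact values of $\tau$ between the lines --- exactly the quantity your route requires and cannot obtain before the lemma is proved. If you want to salvage your plan, you would have to run an argument of this angle-collapse type anyway, at which point the detour through the $c$-criterion for the pair $(\gamma,\alpha)$ and through Lemma \ref{Lemma: parallellinesunique} becomes redundant.
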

\begin{proof}
We assume the parallel lines are even synchronised parallel in a suitable $\tau$-arclength parametrisation. Let $\tilde{\gamma}$ be the synchronised parallel line to $\alpha$ through $p$. Let $a$ be the distance between $\alpha,\beta$, $b$ be the distance between $\beta,\gamma$ and $c$ be the distance between $\alpha,\tilde{\gamma}$.
We have two lines through $p$: $\gamma$ and $\tilde{\gamma}$. We assume the parameters for $p$ are $p=\gamma(t_0)=\tilde{\gamma}(\tilde{t}_0)$. By Proposition \ref{Proposition: angle=comparisonangle}, we know that $\tilde{\ma}_p(\gamma(s),\tilde{\gamma}(t))$ is constant in $s$ and $t$ as long as $\gamma(s)\ll\tilde{\gamma}(t)$ (or conversely). We know that $\gamma(s)\ll \beta(s+b+\varepsilon)\ll\alpha(s+a+b+2\varepsilon)\ll\tilde{\gamma}(s+a+b+c+3\varepsilon)$ and set $t=s+a+b+c+3\varepsilon$. We now look at the law of cosines to estimate the comparison angle $\omega=\tilde{\ma}_p(\gamma(s),\tilde{\gamma}(t))$: $\cosh(\omega)=\frac{(s-t_0)^2+(t-\tilde{t}_0)^2-\tau(\gamma(s),\tilde{\gamma}(t))}{2(s-t_0)(t-\tilde{t}_0)}\leq\frac{2(t-\tilde{t}_0)^2}{2(s-t_0)^2}$ which converges to $0$ as $s\to+\infty$. In particular, $\omega=0$. We can now apply the strong causality trick to get $\gamma$ and $\tilde{\gamma}$ are just shifts of each other, so $\alpha$ and $\gamma$ are parallel.

By doing the same argument backwards as well, i.e.\ $\alpha(s-a-b-2\varepsilon)\ll\gamma(s)\ll \alpha(s+a+b+2\varepsilon)$, we get that the synchronised version only differs by a shift of at most $a+b$ (upon taking $\varepsilon \to 0)$. 
\end{proof}

\begin{Lemma}[Verticality in Minkowski space]\label{Lemma: verticalityInMinkowski}
Let $a=(0,0)\ll b$ be points in Minkowski space $\R^{1,1}$. Let the $t$-coordinate difference be $\Delta t=t(b)-t(a)$. Let $c_n$ be points with $b\ll c_n$, the $t$-coordinate $t(c_n)\to+\infty$, $c_n$ above the straight line through $a,b$ and $\tau(a,c_n)-\tau(b,c_n)\to \Delta t$ as $n\to +\infty$. Then $\frac{x(c_n)}{t(c_n)}\to 0$, or equivalently, $\frac{c_n}{\lVert c_n\rVert}\to \partial_t$.\footnote{So $c_n$ converges to the point $[\partial_t]$ on the limit sphere.}
\end{Lemma}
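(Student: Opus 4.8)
The plan is to work in affine coordinates on $\R^{1,1}$ with $a=(0,0)$ and $b=(t_b,x_b)$; timelikeness of $a\ll b$ forces $|x_b|<t_b$, so $m:=x_b/t_b\in(-1,1)$ and $\Delta t=t_b$. Writing $c_n=(T_n,X_n)$ and $\xi_n:=X_n/T_n$, the desired conclusion $c_n/\lVert c_n\rVert\to\partial_t$ is equivalent to $\xi_n\to 0$, since $c_n/\lVert c_n\rVert=(1,\xi_n)/\sqrt{1+\xi_n^2}$. As the $\xi_n$ lie in the compact interval $[-1,1]$, it suffices to show that every convergent subsequence has limit $0$, so I pass to a subsequence with $\xi_n\to\xi\in[-1,1]$ and aim to prove $\xi=0$. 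The computational backbone is the exact identity
\[
\tau(a,c_n)-\tau(b,c_n)=\frac{\tau(a,c_n)^2-\tau(b,c_n)^2}{\tau(a,c_n)+\tau(b,c_n)},\qquad \tau(a,c_n)^2-\tau(b,c_n)^2=2T_n(t_b-\xi_n x_b)-\tau(a,b)^2,
\]
together with the reverse triangle inequality $\tau(a,c_n)\ge\tau(a,b)+\tau(b,c_n)$ (valid since $a\ll b\ll c_n$ by push-up), which in particular gives $\tau(a,c_n)\ge\tau(a,b)>0$, and the hypothesis $T_n\to\infty$. I introduce the auxiliary function $g(\xi):=(t_b-\xi x_b)/\sqrt{1-\xi^2}$.

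First I would dispose of the null case $|\xi|=1$. Using $\tau(a,c_n)+\tau(b,c_n)\le 2\tau(a,c_n)=2T_n\sqrt{1-\xi_n^2}$ and positivity of the numerator for large $n$, the identity gives $\tau(a,c_n)-\tau(b,c_n)\ge g(\xi_n)-\tfrac{\tau(a,b)^2}{2\tau(a,c_n)}$; the subtracted term is bounded because $\tau(a,c_n)\ge\tau(a,b)>0$, while $g(\xi_n)\to g(\pm1)=+\infty$ (the limiting numerators $t_b\mp x_b$ are positive since $|x_b|<t_b$). Thus $\tau(a,c_n)-\tau(b,c_n)\to\infty$, contradicting $\tau(a,c_n)-\tau(b,c_n)\to t_b$; hence $|\xi|<1$. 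In this regime $\tau(a,c_n)=T_n\sqrt{1-\xi_n^2}\sim T_n\sqrt{1-\xi^2}\to\infty$, and from $\tau(b,c_n)^2=T_n^2(1-\xi_n^2)-2T_n(t_b-\xi_n x_b)+\tau(a,b)^2\sim T_n^2(1-\xi^2)$ we also get $\tau(b,c_n)\sim T_n\sqrt{1-\xi^2}$. Plugging these asymptotics into the identity yields $\tau(a,c_n)-\tau(b,c_n)\to g(\xi)$, so the hypothesis forces $g(\xi)=t_b$.

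It remains to solve $g(\xi)=t_b$ and to select the correct root, which is where I expect the only real subtlety. A direct differentiation gives $g'(\xi)=t_b(\xi-m)(1-\xi^2)^{-3/2}$, so $g$ strictly decreases on $(-1,m)$ and strictly increases on $(m,1)$ with minimum $g(m)=\tau(a,b)$; hence $g(\xi)=t_b$ has at most one root on each side of $m$, and solving explicitly gives the two roots $\xi=0$ and the spurious $\xi^\ast=2m/(1+m^2)$. Since $\xi^\ast-m=m(1-m^2)/(1+m^2)$ has the sign of $m$ while $0-m$ has the opposite sign, the roots $0$ and $\xi^\ast$ lie strictly on opposite sides of $m$ (they coincide at $0$ when $m=0$, where the root is unique and the argument is already finished). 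The spurious root $\xi^\ast$ is exactly the Lorentzian mirror image of $\partial_t$ across the line $\overline{ab}$, and ruling it out is the crux: here I invoke that every $c_n$ lies \emph{above} $\overline{ab}$. In the chosen coordinates this means $c_n$ sits on the same side of $\overline{ab}$ as the upward vertical ray $\{(t,0):t>0\}$ from $a$, i.e.\ $\mathrm{sign}(\xi_n-m)=-\mathrm{sign}(m)$ when $m\neq0$; one checks this side contains $\xi=0$ and excludes $\xi^\ast$. As $\xi=m$ is itself impossible (it gives $g(m)=\tau(a,b)<t_b$ when $x_b\neq0$), the limit $\xi$ lies strictly on the side containing $0$, forcing $\xi=0$. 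This completes the subsequence argument, so $\xi_n\to0$ for the full sequence and therefore $c_n/\lVert c_n\rVert\to\partial_t$.
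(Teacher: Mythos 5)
Your proof is correct, and it reaches the paper's conclusion by a recognisably different route even though the computational core is the same. The paper's proof is quantitative: for each $\varepsilon>0$ it produces $T,\delta$ such that $\tau(a,p)\geq T$ and $|\tau(a,p)-\tau(b,p)-(t_b-t_a)|<\delta$ force $\measuredangle_a(\partial_t,p)<\varepsilon$; it introduces the hyperbolic angles $\omega_1=\measuredangle_a(\partial_t,b)$ and $\omega_2=\measuredangle_a(p,b)$, writes $\tau(a,p)-\tau(b,p)=(t_b-t_a)F$ via the law of cosines, shows $F$ is bounded away from $1$ in either case $\omega_1>\omega_2+\varepsilon$ or $\omega_1<\omega_2-\varepsilon$, and finishes with angle additivity plus the hypothesis that $p$ and $\partial_t$ lie on the same side of the line through $a,b$ (which is exactly the reading of ``above'' that you adopt, so your interpretation is faithful). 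You instead compactify the Euclidean slope $\xi_n=x(c_n)/t(c_n)$, pass to convergent subsequences, and identify the limit of $\tau(a,c_n)-\tau(b,c_n)$ as the explicit function $g(\xi)=(t_b-\xi x_b)/\sqrt{1-\xi^2}$ — your identity for $\tau(a,c_n)^2-\tau(b,c_n)^2$ is the law of cosines in coordinates, and indeed $g(\xi)=\tau(a,b)\cosh\omega_2$, so your equation $g(\xi)=t_b=\tau(a,b)\cosh\omega_1$ is precisely the paper's limiting statement $\omega_2=\omega_1$, and your exclusion of the mirror root $\xi^\ast=2m/(1+m^2)$ via the side hypothesis is exactly where the paper invokes angle additivity. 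What each buys: the paper's argument yields a uniform $\varepsilon$--$\delta$ statement (stronger than the lemma asserts, though never used later), while yours is qualitative but self-contained and makes explicit the boundary cases that the paper's two-case estimate treats implicitly — the null directions $|\xi|=1$ (handled by your divergence bound $g(\xi_n)-\tau(a,b)^2/(2\tau(a,c_n))$), the tangency $\xi=m$ (ruled out since $g(m)=\tau(a,b)<t_b$ for $m\neq 0$), and the degenerate $m=0$ where the two roots coalesce and the side condition is not needed. All the individual steps check out: the monotonicity analysis $g'(\xi)=t_b(\xi-m)(1-\xi^2)^{-3/2}$, the root set $\{0,\xi^\ast\}$, the sign computation placing $0$ and $\xi^\ast$ strictly on opposite sides of $m$, and the subsequence principle upgrading to the full sequence.
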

\begin{proof}
We show that $\forall\varepsilon>0$ there are constants $T>0$ and $\delta>0$ such that for all $p$ with $\tau(a,p)\geq T$ and $|\tau(a,p)-\tau(b,p)-(t_b-t_a)|<\delta$, we have that the angle of $ap$ with the $t$-axis satisfies $\ma_a(\partial_t,p)<\varepsilon$.

We set $\omega_1=\ma_a(\partial_t,b)$ the angle between the vertical and $ab$, and $\omega_2=\ma_a(p,b)$ the angle between the vertical and $ap$. We will prove $|\omega_1-\omega_2|<\varepsilon$. By assumption, $p$ and $\partial_t$ lie on the same side of the straight line through $ab$, so by angle additivity in the plane we also get that $\ma_a(\partial_t,p)<\varepsilon$. 

We apply the law of cosines: 
\begin{equation*}
\tau(b,p)^2-\tau(a,p)^2=\tau(a,b)^2-2\tau(a,b)\tau(a,p)\cosh(\omega_2)
\end{equation*}
and use that $\cosh(\omega_1)\tau(a,b)=t_b-t_a$ to get:
\begin{equation*}
\tau(a,p)-\tau(b,p)=(t_b-t_a)\underbrace{\frac{(2\tau(a,p)\cosh(\omega_2)-1)}{\cosh(\omega_1)(\tau(b,p)+\tau(a,p))}}_{=:F}\,.
\end{equation*}
We now have to check that whenever $|\omega_1-\omega_2|\geq\varepsilon$, the factor $F$ is bounded away from $1$.

We have two cases to cover: First, we indirectly assume $\omega_1>\omega_2+\varepsilon$ (then $1-\frac{\cosh(\omega_2)}{\cosh(\omega_1)}>1-\cosh(\varepsilon)$). (Assuming $b$ lies to the right of the $t$-axis, this is the case where $p$ also lies on the right.) We will show $F<1-\delta$ (if $\tau(a,p)$ is large enough and $\tau(a,p),\tau(b,p)$ are close enough). This is equivalent to:
\begin{equation*}
(2\tau(a,p)\cosh(\omega_2)-1)<(1-\delta)\cosh(\omega_1)(\tau(b,p)+\tau(a,p))
\end{equation*}
Dividing by $\tau(a,p)$, this is obvious: $\frac{\tau(b,p)}{\tau(a,p)}\to 1$ as $\tau(a,p)\to+\infty$ and the difference is bounded. We can e.g.\ choose 
\begin{equation*}
\delta<\cosh(\varepsilon)-1
\end{equation*}
and $T$ large enough.

Now we indirectly assume $\omega_1<\omega_2-\varepsilon$ (then $\frac{\cosh(\omega_2)}{\cosh(\omega_1)}-1>\cosh(\varepsilon)-1$) (Assuming $b$ lies to the right of the $t$-axis, this is the case where $p$ lies on the left.) We will show $F>1+\delta$ (if $\tau(a,p)$ is large enough and $\tau(a,p),\tau(b,p)$ are close enough). We need to check:
\begin{equation*}
(2\tau(a,p)\cosh(\omega_2)-1)>(1+\delta)\cosh(\omega_1)(\tau(b,p)+\tau(a,p))
\end{equation*}
Dividing by $\tau(a,p)$, this is obvious: $\frac{\tau(b,p)}{\tau(a,p)}\to 1$ as $\tau(a,p)\to+\infty$ and the difference is bounded. We can e.g.\ choose 
\begin{equation*}
\delta<\cosh(\varepsilon)-1
\end{equation*}
and $T$ large enough.
\end{proof}

We now run into a subtlety: being parallel is in general only weakly transitive, but being synchronised parallel is probably not. Luckily, for $X$ as in the splitting theorem, the fact that we consider asymptotic lines to $\gamma$ simplifies the situation:

\begin{Lemma}[Two asymptotes are parallel] \label{Lemma: twoAsySyncParal}
Let $x,y\in I(\gamma)$ and $\alpha,\beta$ the asymptotes to $\gamma$ through $x$ resp.\ $y$ in Busemann parametrisation. Then $\alpha,\beta$ are synchronised parallel.
\end{Lemma}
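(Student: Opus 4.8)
The plan is to prove the two assertions separately: first that $\alpha$ and $\beta$ are parallel, and then that their Busemann parametrisations synchronise them. Throughout I use that $\alpha,\beta$ are complete timelike lines which, by Lemma~\ref{Lemma: asymptotestogammaareparalleltogamma}, are parallel to $\gamma$, and that in Busemann parametrisation they are \emph{synchronised} parallel to $\gamma$, say with distances $a,b\ge 0$. Thus $\tau(\alpha(s),\gamma(u))=\sqrt{(u-s)^2-a^2}$ and $\tau(\gamma(u),\beta(t))=\sqrt{(t-u)^2-b^2}$ whenever the radicands are nonnegative. Since parallelity is only \emph{weakly} transitive, the parallelity of $\alpha$ and $\beta$ does not follow formally from $\alpha\parallel\gamma$ and $\beta\parallel\gamma$; the extra input is that both are asymptotes.

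For parallelity, first note $\beta\cap I^+(\alpha)\ne\emptyset$ and $\beta\cap I^-(\alpha)\ne\emptyset$: choosing $u=s+a+1$ and $t=u+b+1$ gives $\alpha(s)\ll\gamma(u)\ll\beta(t)$, so by push-up (Lemma~\ref{Lemma: pushupandopennness}) $\alpha(s)\ll\beta(t)$, and symmetrically in the past. Hence there is a point $p=\beta(t_0)\in I(\alpha)$. Since $\alpha$ is itself a complete timelike line, the construction of asymptotic lines (Proposition~\ref{Proposition: asymptoticlines}) and their parallelity (Lemma~\ref{Lemma: asymptotestogammaareparalleltogamma}) apply verbatim with $\alpha$ as base line; so I may build an asymptotic line $\eta$ to $\alpha$ through $p$, and $\eta\parallel\alpha$. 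I now apply the preceding weak transitivity lemma with the roles $(\alpha,\gamma,\beta)$: since $\alpha\parallel\gamma$, $\gamma\parallel\beta$, and $\eta$ is a line through the point $p$ of $\beta$ parallel to $\alpha$, the lemma forces $\eta=\beta$, whence $\beta\parallel\alpha$.

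It remains to fix the synchronisation. By parallelity there is a realisation which, after normalising, has the form $f(\alpha(s))=(s,0)$ and $f(\beta(t))=(t+\delta,c)$ for some $c\ge 0$ and shift $\delta\in\R$ (both lines are $\tau$-arclength parametrised, so $f$ is affine with unit slope on each); being synchronised parallel is exactly the statement $\delta=0$. To pin down $\delta$ I use $\gamma$ as a causal bridge. For $\alpha(s)\ll\gamma(u)\ll\beta(t)$ the reverse triangle inequality gives
\[
\tau(\alpha(s),\beta(t))\ \ge\ \sqrt{(u-s)^2-a^2}+\sqrt{(t-u)^2-b^2},
\]
and the elementary maximum of the right-hand side over admissible $u$ (corresponding to $\gamma$ lying causally between $\alpha$ and $\beta$, realised on opposite sides) equals $\sqrt{(t-s)^2-(a+b)^2}$. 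On the other hand, parallelity gives the exact value $\tau(\alpha(s),\beta(t))=\sqrt{(t-s+\delta)^2-c^2}$. Letting $t-s\to\infty$ and comparing the two, the $(t-s)^2$ terms cancel and the surviving term linear in $t-s$ forces $\delta\ge 0$. Running the symmetric estimate for $\tau(\beta(t),\alpha(s))$ (now with $\beta(t)\ll\gamma(u)\ll\alpha(s)$) forces $\delta\le 0$. Hence $\delta=0$, i.e.\ $\alpha$ and $\beta$ are synchronised parallel.

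The main obstacle is the first step. Because parallelity is genuinely only weakly transitive, one cannot chain $\alpha\parallel\gamma\parallel\beta$ directly; the crux is to produce a parallel line to $\alpha$ through a point of $\beta$ so that the weak transitivity lemma becomes applicable, and this is exactly what the asymptote construction delivers once it is carried out with $\alpha$ rather than $\gamma$ as the base line. The synchronisation step is then a short quantitative argument, whose only delicate point is that $\delta$ must be isolated in the limit $t-s\to\infty$, where the leading quadratic terms cancel.
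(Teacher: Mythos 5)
Your proposal is correct, but it takes a genuinely different route from the paper's. The paper does not touch the weak-transitivity lemma here: instead it re-enters the asymptote construction, joining both $x$ and $y$ by maximisers to a \emph{common} far point $\gamma(T_n)$, forms the timelike triangles $\Delta(x,y,\gamma(T_n))$ and their comparison triangles in $\R^{1,1}$, shows via the Busemann-function asymptotics $\bar{\tau}(\bar{a},\bar{c}_n)-\bar{\tau}(\bar{b},\bar{c}_n)\to t_0-s_0$ and the verticality lemma (Lemma \ref{Lemma: verticalityInMinkowski}) that the comparison configurations converge to two vertical lines, and then, using Lemma \ref{Lemma: shifting_asymptotes} to re-base the estimate at arbitrary points of $\alpha$ and $\beta$, deduces that $c_{\alpha\beta}$ and $c_{\beta\alpha}$ are monotone in both directions, hence constant and equal, so that the $c$-criterion (Lemma \ref{lem-c-cond}) yields synchronised parallelity in one stroke. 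You instead split the statement: bare parallelity comes from re-basing the asymptote machinery at $\alpha$ (constructing an asymptotic line $\eta$ to $\alpha$ through a point $p\in\beta(\R)\subset I(\alpha)$) and feeding $\eta$ into the weak-transitivity lemma with the chain $\alpha\parallel\gamma\parallel\beta$ to force $\eta=\beta$; the synchronising shift $\delta$ is then eliminated by comparing the exact parallel value $\tau(\alpha(s),\beta(t))=\sqrt{((t-s)+\delta)^2-c^2}$ with the bridge bound $\tau(\alpha(s),\beta(t))\geq\sqrt{(t-s)^2-(a+b)^2}$, obtained from the reverse triangle inequality through $\gamma(u)$ at the optimal $u=s+(t-s)a/(a+b)$, and its time-reversed twin; the quadratic terms cancel and the linear term forces $\delta\geq 0$ resp.\ $\delta\leq 0$. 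This is in effect a completion of exactly the route the paper flags and avoids: the remark preceding the lemma observes that synchronised parallelity is probably not transitive, which is why the authors bypass transitivity altogether, whereas your quantitative $\delta$-estimate supplies the missing synchronisation. What each buys: the paper's argument produces the full $c$-criterion data directly and needs neither uniqueness of parallels, the weak-transitivity lemma, nor a second run of the asymptote construction; yours is technically lighter (no stacked comparison limits, no verticality lemma) at the cost of re-running the co-ray machinery for the base line $\alpha$. Two harmless imprecisions you should patch: the formulas $\tau(\alpha(s),\gamma(u))=\sqrt{(u-s)^2-a^2}$ etc.\ hold only in the causal regime $u-s\geq a$ (the radicand is also nonnegative for $u-s\leq -a$, where $\tau$ vanishes), and the ``verbatim'' re-basing at $\alpha$ should explicitly include not only Proposition \ref{Proposition: asymptoticlines} and Lemma \ref{Lemma: asymptotestogammaareparalleltogamma} but also the timelike co-ray condition (Proposition \ref{proposition: TCRCholdsonI(gamma)}) and the infinite $\tau$-length of asymptotes, all of whose proofs indeed use nothing about the base line beyond its being a complete timelike line in $X$ as in the splitting theorem.
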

\begin{proof}
We dive into the construction of asymptotes: We first assume $x\ll y$. We set $s_0=b^+(x)$ and $t_0=b^+(y)$, then there are maximisers $\alpha_n$ from $x$ to $\gamma(T_n)$ parametrised in $\tau$-arclength such that $\alpha_n(s_0)=x$  and $\beta_n$ from $y$ to $\gamma(T_n)$ parametrised in $\tau$-arclength such that $\beta_n(t_0)=y$, which converge (pointwise) to the upper parts of $\alpha$ and $\beta$ in Busemann parametrisation for $T_n \to \infty$, respectively.
Note that $\alpha_n(s_0+\tau(x,\gamma(T_n)))=\beta_n(t_0+\tau(y,\gamma(T_n)))=\gamma(T_n)$.

We try to prove the $c$-criterion (Lemma \ref{lem-c-cond}): Clearly, due to our assumptions on $X$, we do not need to consider the null functions. We look at $c_{\alpha\beta}(s_0,t_0)$ and $c_{\alpha\beta}(s',t')$ or $c_{\beta\alpha}(s',t')$ for $s_0\leq s'$, $t_0\leq t'$: We require $a:=x=\alpha(s_0)\ll b:=y=\beta(t_0)$. We get points converging to these parameters: $a'_n:=\alpha_n(s')\to a':=\alpha(s')$ and $b'_n:=\beta_n(t')\to b':=\beta(t')$ (note that $\alpha_n(s_0)=a$ and $\beta_n(t_0)=b$ anyway). We get a timelike triangle $\Delta_n=\Delta(a, b, c_n:= \gamma(T_n))$ containing the points $(a'_n,b'_n)$. We form a comparison situation for this in Minkowski space $\R^{1,1}$: $\bar{\Delta}_n=\Delta(\bar{a},\bar{b}, \bar{c}_n)$ with comparison points $\bar{a}'_n,\bar{b}'_n$. As $X$ has global curvature bounded below by $0$, we get that $\tau(a'_n,b'_n)\leq\bar{\tau}(\bar{a}'_n,\bar{b}'_n)$ and $\tau(b'_n,a'_n)\leq\bar{\tau}(\bar{b}'_n,\bar{a}'_n)$.

We would now like to let $T_n \to+\infty$, so we need control over what the comparison triangle $\bar{\Delta}_n$ converges to. For this, we select which way to realise $\bar{\Delta}_n$ in $\R^{1,1}$. We choose:
\begin{itemize}
\item $\bar{a}=(s_0,0)$ constant in $n$, i.e.\ at the right $t$-coordinate and with $x$-coordinate $0$,
\item $\bar{b}=(t_0,c_{\alpha\beta}(s_0,t_0))$ constant in $n$. Note this has the right $\tau$-distance to $\bar{a}$ by definition of $c_{\alpha\beta}$.
\item $\bar{c}_n$ above the straight line $\bar{a}$ and $\bar{b}$ lie on.
\end{itemize}
We claim that in the limit $T_n\to+\infty$, the line $\bar{a}\bar{c}_n$ becomes vertical (the line $\bar{b}\bar{c}_n$ also becomes vertical). For this, we look at what $\bar{\tau}(\bar{a},\bar{c}_n)-\bar{\tau}(\bar{b},\bar{c}_n)$ does as $T_n\to+\infty$: Using Landau small-oh notation, we get that $\bar{\tau}(\bar{a},\bar{c}_n)=T_n-b^+(a) + o(1)$ and $\bar{\tau}(\bar{b},\bar{c}_n)=T_n-b^+(b) + o(1)$ as $T_n\to+\infty$, so the difference is $b^+(b)-b^+(a) + o(1)$, which approaches the $t$-coordinate-difference $t_0-s_0$ of $\bar{a}$ and $\bar{b}$. We can now apply Lemma \ref{Lemma: verticalityInMinkowski} to get that $\frac{x(c_n)}{t(c_n)}\to 0$ and $t(c_n)\to+\infty$, thus these lines become vertical.

We have proven that the comparison points converge, so we have limit comparison points: $\bar{a}=(s_0,0)$ and $\bar{b}=(t_0,c_{\alpha\beta}(s_0,t_0))$ stay fixed anyway, $\bar{a}'_n\to\bar{a}'=(s',0)$, $\bar{b}'_n\to\bar{b}'=(t',c_{\alpha\beta}(s_0,t_0))$. We also set the comparison sides $\bar{\alpha}(s)=(s,0)$ and $\bar{\beta}(t)=(t,c_{\alpha\beta}(s_0,t_0))$. By continuity of $\tau$ and $\bar{\tau}$, our curvature assumption gives $\tau(a',b')\leq\bar{\tau}(\bar{a}',\bar{b}')$ in the limit (similarly with arguments flipped). Now we compare the definition of $c_{\alpha\beta}(s',t')$ resp.\ $c_{\beta\alpha}(s',t')$ with the $c$-functions for $\bar{\alpha}$ and $\bar{\beta}$ at the same parameters:
\begin{align*}
c_{\alpha\beta}(s',t')&=\sqrt{(t'-s')^2-\tau(a',b')^2},\\
c_{\bar{\alpha}\bar{\beta}}(s',t')&=\sqrt{(t'-s')^2-\bar{\tau}(\bar{a}',\bar{b}')^2}=x(\bar{b}')-x(\bar{a}') = c_{\alpha\beta}(s_0,t_0),
\end{align*}
whenever defined. The last equality holds because we know $\bar{\alpha}$ and $\bar{\beta}$ are synchronised parallel with distance $c_{\alpha\beta}(s_0,t_0)$. Note that $a' \leq b'$ implies $\bar{a}' \leq \bar{b}'$ by the curvature bound and a simple continuity argument, so whenever the first line is defined so is the second. Notice these equations only differ in the $\tau$ term, and we know $\tau(a',b')\leq\bar{\tau}(\bar{a}',\bar{b}')$, so we get $c_{\alpha\beta}(s',t')\geq c_{\alpha\beta}(s_0,t_0)$ whenever the former is defined. Similarly, if $b' \leq a'$ we get $c_{\beta\alpha}(s',t')\geq c_{\alpha\beta}(s_0,t_0)$ for all $s'\geq s_0$ and $t'\geq t_0$. We can also do this if $a\gg b$, giving $c_{\alpha\beta}(s',t')\geq c_{\beta\alpha}(s_0,t_0)$ and $c_{\beta\alpha}(s',t')\geq c_{\beta\alpha}(s_0,t_0)$. 

Doing the same construction towards the past, we similarly get $c_{\alpha\beta}(s',t')\geq c_{\alpha\beta}(s_0,t_0)$ and $c_{\beta\alpha}(s',t')\geq c_{\alpha\beta}(s_0,t_0)$ resp.\ $c_{\alpha\beta}(s',t')\geq c_{\beta\alpha}(s_0,t_0)$ and $c_{\beta\alpha}(s',t')\geq c_{\beta\alpha}(s_0,t_0)$ (depending on whether $a\ll b$ or $b\ll a$) for all $s'\leq s_0$ and $t'\leq t_0$.

Now we use Lemma \ref{Lemma: shifting_asymptotes} to get that the (Busemann parametrised) asymptote to $\gamma$ through $\alpha(s)$ is $\alpha$, and similarly the asymptote to $\gamma$ through $\beta(t)$ is $\beta$. In particular, we can use the above argument again for $\alpha(s)$ instead of $x$ and $\beta(t)$ instead of $y$. We know that $b^+(\alpha(s))=s$ and $b^+(\beta(t))=t$. The above (to the future, $a\ll b$) then gives:
$c_{\alpha\beta}(s',t')\geq c_{\alpha\beta}(s,t)$ for $s\leq s'$, $t\leq t'$ such that $a\ll b$ and $a'\ll b'$, extending continuously, $c_{\alpha\beta}$ is monotonously increasing where defined. On the other hand, the above (to the past, $a\ll b$) gives: 
$c_{\alpha\beta}(s',t')\geq c_{\alpha\beta}(s,t)$ for $s\geq s'$, $t\geq t'$ such that $a\ll b$ and $a'\ll b'$, extending continuously, $c_{\alpha\beta}$ is monotonously decreasing where defined. Via a two-step process, we see that $c_{\alpha\beta}$ is constant where defined. Similarly, we get that also $c_{\beta\alpha}$ is constant where defined, and that they have the same value.

Thus, the $c$-criterion (Lemma \ref{lem-c-cond}) yields that $\alpha$ and $\beta$ are synchronised parallel.
\end{proof}

This result establishes the transitivity of synchronised parallel lines: Any two asymptotes to $\gamma$ in Busemann parametrisation are synchronised parallel.

\section{Proof of the main result}\label{sec:proof}

Let us summarise what we have shown so far: Let $(X,d,\ll,\leq,\tau)$ be a connected, regularly localisable, globally hyperbolic Lorentzian length space satisfying timelike geodesic prolongation, with proper metric $d$ and global non-negative timelike curvature containing a complete timelike line $\gamma:\R \to X$. Then from each point in $I(\gamma) = I^+(\gamma) \cap I^-(\gamma)$ we can construct (unique) asymptotic rays to $\gamma$, all of which are timelike with infinite $\tau$-length. Future directed and past directed rays from a common point fit together to give a timelike line which is parallel to $\gamma$. We fix a $\tau$-arclength parametrisation of $\gamma$ fixing $\gamma(0)$, and always consider it to be mapped to the $t$-axis in $\R^{1,1}$ in any parallel realisation. Then, by way of the Busemann parametrisation, all asymptotic lines become synchronised parallel to $\gamma$ and to each other. This synchronisation selects a "spacelike" slice (namely $\{b^+ = 0\}$) in $X$ which will provide the metric part of the splitting.

In this section, we complete the proof of the splitting theorem in two steps: First, we show that $I(\gamma)$ splits, then we show that $I(\gamma)$ has the (TC) property, which implies $I(\gamma) = X$ due to Theorem \ref{Theorem: TCinextendibility}.

\begin{definition}[Spacelike slice]
We call the set of all $\alpha(0)$, where $\alpha$ is a Busemann parametrised timelike asymptotic line to $\gamma$, the \emph{spacelike slice} and denote it by $S$. For $\alpha(0),\beta(0) \in S$, we define $d_S(\alpha(0),\beta(0))$ to be the distance between $\alpha$ and $\beta$ in the sense of parallel lines.
\end{definition}

\begin{Lemma}
$(S,d_S)$ is a metric space.
\end{Lemma}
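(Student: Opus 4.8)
The plan is to verify the four metric axioms in turn, throughout exploiting that by Lemma~\ref{Lemma: twoAsySyncParal} any two Busemann-parametrised asymptotes to $\gamma$ are synchronised parallel, so that $d_S$ is exactly the (constant, by Lemma~\ref{lem-c-cond}) value of the associated $c$-functions.

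First I would settle well-definedness, positivity and symmetry. A point $x\in S$ determines its asymptote uniquely: we have $x=\alpha(0)$ for some Busemann-parametrised asymptote $\alpha$, and since asymptotes are parallel to $\gamma$ (Lemma~\ref{Lemma: asymptotestogammaareparalleltogamma}) while parallel lines through a point are unique (Lemma~\ref{Lemma: parallellinesunique}), this $\alpha$ is the only asymptote with $\alpha(0)=x$. Hence $d_S(x,y)$ is unambiguous, and it is $\ge 0$ directly from the definition of the distance of synchronised parallel lines (Definition~\ref{definition: syncparallellines}). Symmetry follows by post-composing a parallel realisation $f$ with $f(\alpha(t))=(t,0)$, $f(\beta(t))=(t,c)$ by the Minkowski isometry $(t,u)\mapsto(t,c-u)$, which interchanges the two lines and leaves $c$ fixed, so $d_S(\beta(0),\alpha(0))=c=d_S(\alpha(0),\beta(0))$. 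For identity of indiscernibles: if $x=y$ the asymptotes coincide, giving $d_S=0$; conversely if $d_S(x,y)=0$ then $\alpha,\beta$ are synchronised parallel of distance $0$, so there is a parallel realisation $f$ with $f(\alpha(0))=(0,0)=f(\beta(0))$, and since $f$ is injective by Lemma~\ref{Lemma: propertiesofparallelrealization} we conclude $x=\alpha(0)=\beta(0)=y$.

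The main obstacle, and the only step requiring genuine work, is the triangle inequality. Given $x,y,z\in S$ with Busemann-parametrised asymptotes $\alpha,\beta,\delta$, which are pairwise synchronised parallel by Lemma~\ref{Lemma: twoAsySyncParal}, write $a:=d_S(x,y)$, $b:=d_S(y,z)$, $c:=d_S(x,z)$; the case $a+b=0$ is trivial, so assume $a+b>0$. By the $c$-criterion these are the constant values of $c_{\alpha\beta},c_{\beta\delta},c_{\alpha\delta}$, and moreover $\alpha(s)\le\beta(t)\iff t-s\ge a$, and similarly for the other pairs. The idea is to transport the Minkowski reverse triangle inequality back to $X$: for $t>a+b$ set $u:=\tfrac{a}{a+b}\,t$, so that $u>a$ and $t-u>b$, whence $\alpha(0)\ll\beta(u)\ll\delta(t)$; this $u$ is precisely the value making the broken path $(0,0)\to(u,a)\to(t,a+b)$ straight in $\R^{1,1}$. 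The reverse triangle inequality in $X$ then yields
\begin{align*}
\tau(\alpha(0),\delta(t)) &\ge \tau(\alpha(0),\beta(u))+\tau(\beta(u),\delta(t)) = \sqrt{u^2-a^2}+\sqrt{(t-u)^2-b^2}\\
&= \frac{a}{a+b}\sqrt{t^2-(a+b)^2}+\frac{b}{a+b}\sqrt{t^2-(a+b)^2} = \sqrt{t^2-(a+b)^2}.
\end{align*}
In particular $\alpha(0)\ll\delta(t)$, so $c_{\alpha\delta}(0,t)$ is defined and equals $c$, giving
\begin{equation*}
c = \sqrt{t^2-\tau(\alpha(0),\delta(t))^2}\le\sqrt{t^2-\bigl(t^2-(a+b)^2\bigr)}=a+b,
\end{equation*}
which is the desired triangle inequality $d_S(x,z)\le d_S(x,y)+d_S(y,z)$. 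Assembling the four verifications shows that $(S,d_S)$ is a metric space.
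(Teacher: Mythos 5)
Your proof is correct, but your triangle inequality runs along a genuinely different track than the paper's. The paper's argument is purely order-theoretic: writing $d_1=d_S(p,q)$, $d_2=d_S(q,r)$, it invokes the null criteria $c_{\alpha+}^N$, $c_{\beta+}^N$ of Lemma~\ref{lem-c-cond} (whose infima are attained for synchronised parallel lines) to obtain $\alpha(0)\leq\beta(d_1)$ and $\beta(d_1)\leq\delta(d_1+d_2)$, and then transitivity of $\leq$ together with the $c_{\alpha+}^N(0)$ criterion for the pair $\alpha,\delta$ yields $d_S(p,r)\leq d_1+d_2$ --- two causal relations and no computation at all. You instead work with the timelike $c$-functions: for $t>a+b$ you pick the Minkowski-collinear intermediate parameter $u=\frac{a}{a+b}t$, apply the reverse triangle inequality of $\tau$ to get $\tau(\alpha(0),\delta(t))\geq\sqrt{t^2-(a+b)^2}$, and read off $c=c_{\alpha\delta}(0,t)\leq a+b$. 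Both routes rest on the same inputs (pairwise synchronised parallelity from Lemma~\ref{Lemma: twoAsySyncParal} and the constancy statements of Lemma~\ref{lem-c-cond}); the paper's is shorter and estimate-free, while yours is quantitative, works with strict timelike relations, and does not use that the infimum in $c_{\alpha+}^N$ is a minimum. Your explicit verifications of symmetry (via the reflection $(t,u)\mapsto(t,c-u)$) and of indiscernibility (via injectivity of parallel realisations, Lemma~\ref{Lemma: propertiesofparallelrealization}) are finer-grained than the paper's, which settles these points by uniqueness of parallel lines and of the Busemann parametrisation, but they are equivalent in substance.

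One small repair is needed: your reduction ``the case $a+b=0$ is trivial'' does not cover the degenerate case where exactly one of $a,b$ vanishes. If, say, $a=0$, then $u=0=a$ and the strict chain $\alpha(0)\ll\beta(u)$ fails. But $d_S(x,y)=0$ forces $\alpha=\beta$ (by your own indiscernibility argument), so $x=y$ and the inequality is trivial; hence you should reduce w.l.o.g.\ to $a,b>0$ rather than merely $a+b>0$. With that one-line fix the proof is complete.
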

\begin{proof}
As asymptotes to $\gamma$ are parallel to $\gamma$ and parallel lines are unique, $d_S$ is well-defined.

Let $p,q\in S$. It is obvious from the definition of the distance of two parallel lines that $d_S(p,q)\geq 0$. If $d_S(p,q)=0$, we get that the asymptotes through $p$ and $q$ are the same curve $\alpha$. As the Busemann parametrisation is unique, we get that $p=q$.

For the triangle inequality, let $p,q,r\in S$. We get asymptotes $\alpha$ through $p$, $\beta$ through $q$ and $\gamma$ through $r$, and by Lemma \ref{Lemma: twoAsySyncParal}, they are pairwise synchronised parallel. Let $d_1=d_S(p,q)$ and $d_2=d_S(q,r)$, then $\alpha(0)\leq\beta(d_1)$ by the $c_{\alpha+}^N(0)$ criterion (see Lemma \ref{lem-c-cond}) for $\alpha,\beta$ and $\beta(d_1)\leq\gamma(d_1+d_2)$ by the $c_{\beta+}^N(d_1)$ criterion for $\beta,\gamma$. Thus, we get that $\alpha(0)\leq\gamma(d_1+d_2)$, giving $d_S(p,r)\leq d_1+d_2$ by the $c_{\alpha+}^N(0)$ criterion for $\alpha,\gamma$.
\end{proof}

\begin{definition}[The splitting map]
We define $f:\R\times S\to I(\gamma) \subset X$ by $f(s,p)=\alpha_p(s)$, where $\alpha_p$ is the asymptote to $\gamma$ through $p$ in Busemann parametrisation, that is $\alpha_p(0) = p$.
\end{definition}

\begin{Proposition}[Local splitting]
\label{Theorem: localsplitting}
Let $X$ be a connected, regularly localisable, globally hyperbolic Lorentzian length space with proper metric $d$ and global non-negative timelike curvature satisfying timelike geodesic prolongation and containing a complete timelike line $\gamma:\R \to X$. Then $I(\gamma) \subset X$ is a causally convex open set that is itself a path-connected, regularly localisable, globally hyperbolic Lorentzian length space of global non-negative timelike curvature with the metric, relations and time separation induced from $X$. Moreover, the spacelike slice $S$ is a proper (hence complete), strictly intrinsic metric space, the Lorentzian product $\R \times S$ is a path-connected, regularly localisable, globally hyperbolic Lorentzian length space and the splitting map $f:\R\times S \to I(\gamma)$ is a $\tau$- and $\leq$-preserving homeomorphism.
\end{Proposition}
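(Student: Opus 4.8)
The plan is to verify the claimed properties in the order $I(\gamma)\rightsquigarrow f\rightsquigarrow S$, exploiting that all the hard geometric input (parallelity, synchronisation, the $c$-criterion) is already available. First I would record that $I(\gamma)=I^+(\gamma)\cap I^-(\gamma)$ is open by Lemma~\ref{Lemma: pushupandopennness} and causally convex: if $p,q\in I(\gamma)$ and $p\leq r\leq q$, then $\gamma(s)\ll p\leq r$ and $r\leq q\ll\gamma(t)$ for suitable $s,t$, so push-up gives $r\in I(\gamma)$. As an open, causally convex subset it inherits $\ll,\leq$ and the time separation from $X$ (with $\tau$ itself preserved, by Remark~\ref{remark: convex_extensions}); causal convexity forces $J_{I(\gamma)}(p,q)=J_X(p,q)$, so global hyperbolicity, causal path-connectedness and the length-space supremum property all descend, local causal closedness is routine, and regular localisability follows by choosing regular localising neighbourhoods inside the open set $I(\gamma)$ (recall $X$ is SR-localisable by Lemma~\ref{Lemma: locimpliesstronglylocforstronglycausal}). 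The comparison inequality of Definition~\ref{Definition: local tl curv bound} is inherited because every triangle in $I(\gamma)$ has its maximisers inside $I(\gamma)$ by convexity, so $I(\gamma)$ carries global non-negative timelike curvature. The map $f(s,p)=\alpha_p(s)$ is well defined since each $p\in S$ lies on a unique Busemann asymptote; it is surjective because every $x\in I(\gamma)$ lies on an asymptotic line (Proposition~\ref{Proposition: asymptoticlines}, Corollary~\ref{Corollary: asymptotesstayinI(gamma)}) and so equals $f(b^+(x),\alpha_x(0))$; and it is injective by uniqueness of the asymptote through a point (Lemmas~\ref{Lemma: shifting_asymptotes} and \ref{Lemma: parallellinesunique}) together with injectivity of $\tau$-arclength parametrisations.

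Next I would show $f$ is $\tau$- and $\leq$-preserving. Given $p,q\in S$, Lemma~\ref{Lemma: twoAsySyncParal} says the Busemann asymptotes $\alpha_p,\alpha_q$ are synchronised parallel, so there is a parallel realisation sending $\alpha_p(s)\mapsto(s,0)$, $\alpha_q(t)\mapsto(t,d_S(p,q))$ which is $\tau$- and $\leq$-preserving by definition. Comparing with Definition~\ref{def: lor product} yields exactly $\tau_X(f(s,p),f(t,q))=\sqrt{(t-s)^2-d_S(p,q)^2}=\tau_{\R\times S}((s,p),(t,q))$ and the matching of $\leq$; the diagonal case $p=q$ is immediate. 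Since $f$ preserves $\tau$ and $\leq$ it preserves $\ll$, hence carries timelike diamonds of $\R\times S$ bijectively onto those of $I(\gamma)$. Timelike diamonds form a basis on both sides — on $\R\times S$ by Proposition~\ref{prop: diamondbasis}, on $I(\gamma)$ by strong causality together with Lemma~\ref{Lem:strongly_strongly_causal} (each point lying in the interior of a timelike asymptote) — so $f$ maps basic open sets to basic open sets in both directions and is therefore a homeomorphism.

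With the homeomorphism in hand, properness and intrinsicness of $S$ follow cleanly. Because $f$ is a $\leq$-preserving homeomorphism onto the globally hyperbolic $I(\gamma)$, it sends causal diamonds of $\R\times S$ to compact causal diamonds of $I(\gamma)$, whence each $J_{\R\times S}(\cdot,\cdot)$ is the $f^{-1}$-image of a compact set and is compact; combined with Proposition~\ref{proposition: nontotalimprisonmentproducts} this makes $\R\times S$ globally hyperbolic, and Proposition~\ref{Proposition: productglobhypiffmetricproper} then yields that $S$ is proper, hence complete. For strict intrinsicness, fix $p,q\in S$ and set $c=d_S(p,q)$; then $f(0,p)$ and $f(c,q)$ are null-related in $I(\gamma)$, so global hyperbolicity (Proposition~\ref{Proposition: StrongcausalityGlobhypLLS}) provides a maximiser between them, necessarily null as $\tau=0$. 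Pulling it back by the $\tau$-preserving homeomorphism $f^{-1}$ gives a null continuous distance realiser in $\R\times S$ from $(0,p)$ to $(c,q)$, and Proposition~\ref{Proposition: distancerealizersinproducts} shows its $S$-projection is a distance realiser in $S$ joining $p$ to $q$ of length $c$; thus $S$ is strictly intrinsic. Finally, with $S$ proper and strictly intrinsic, the product results of Section~\ref{sec:products} (strong causality, non-total imprisonment, and the maximiser characterisation) upgrade $\R\times S$ to a path-connected, regularly localisable, globally hyperbolic Lorentzian length space, which completes the proof.

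The main obstacle I anticipate is the two-fold use of the homeomorphism $f$ to pass structural information \emph{back} from $I(\gamma)$ to the a priori poorly understood metric space $S$: properness must be extracted from compactness of causal diamonds rather than proved intrinsically, and strict intrinsicness must be manufactured by lifting a null maximiser and projecting it, since there is no direct notion of a ``spacelike geodesic'' in $S$ to work with. Establishing that $f$ is genuinely bicontinuous — not merely a $\tau$-preserving bijection — is the linchpin on which both arguments rest, and one must be careful that $f^{-1}$ sends continuous causal curves to continuous causal curves so that Proposition~\ref{Proposition: distancerealizersinproducts} is applicable.
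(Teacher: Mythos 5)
Your proposal is correct and follows the paper's proof skeleton almost step for step: causal convexity and inheritance of all structure by $I(\gamma)$, the $\tau$- and $\leq$-preservation of $f$ via Lemma~\ref{Lemma: twoAsySyncParal} and the synchronised parallel realisation, bijectivity from uniqueness of parallels, the homeomorphism property from $f$ mapping timelike diamonds to timelike diamonds on two strongly causal spaces, global hyperbolicity of $\R\times S$ from Proposition~\ref{proposition: nontotalimprisonmentproducts} plus compactness of diamonds pulled through $f$, and properness of $S$ from Proposition~\ref{Proposition: productglobhypiffmetricproper}. The one place you genuinely diverge is strict intrinsicness of $S$: you lift a \emph{null} maximiser between $f(0,p)$ and $f(c,q)$, $c=d_S(p,q)$, whereas the paper connects two \emph{timelike} related points on the asymptotes and projects the resulting realiser. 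Your variant is valid: the parallel realisation gives $\alpha_p(0)\leq\alpha_q(c)$ with $\tau=0$, Proposition~\ref{Proposition: StrongcausalityGlobhypLLS} supplies a (trivially maximising) causal curve, and in the product every pair of its points then satisfies $t$-difference $=d$-distance, so the projection is a minimiser of length exactly $c$; it even hands you the realising geodesic at the precise distance, at the cost of one extra observation (that the null case of Proposition~\ref{Proposition: distancerealizersinproducts} applies, with the projection non-constant since $c>0$ for $p\neq q$). One caution on your final step: the assertion that ``the product results of Section~\ref{sec:products}'' upgrade $\R\times S$ to a regularly localisable Lorentzian length space overstates what that section proves — it yields a pre-length space with strong causality, non-total imprisonment and the realiser characterisation, but \emph{not} localisability or the length property, since $f$ is only a homeomorphism and transports causal curves merely as continuous ones. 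The paper closes this by showing that timelike diamonds in $\R\times S$ are regular localising neighbourhoods, using the uniform $D$-length bound from the proof of Proposition~\ref{proposition: nontotalimprisonmentproducts}, $f(I(x,y))=I(f(x),f(y))$, and the fact that continuous maximisers in the product admit Lipschitz reparametrisations by Proposition~\ref{Proposition: distancerealizersinproducts}. You flag exactly this Lipschitz issue in your closing remark, so the right ingredient is named, but that transport argument should be spelled out rather than attributed to Section~\ref{sec:products} alone.
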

\begin{proof}
First, it is clear that $I(\gamma)$ is path-connected, causally convex in $X$ and has global non-negative timelike curvature. It is hence causally path-connected since $X$ is and it is trivially locally causally closed. Moreover, if $x \in I(\gamma)$ and $U$ is a regular localising neighbourhood of $x$ in $X$, then $U \cap I(\gamma)$ is a regular localising neighbourhood of $x$ in $I(\gamma)$, hence $I(\gamma)$ is regularly localisable. By causal convexity, the time separation between causally related points in $I(\gamma)$ is achieved as the supremum of lengths of causal curves running between them which have to stay inside $I(\gamma)$. The causal diamonds in $I(\gamma)$ are precisely those in $X$, since they must be contained in $I(\gamma)$. Finally, $I(\gamma)$ is non-totally imprisoning (it inherits this from $X$), thus we have shown all the claims on $I(\gamma)$.

Next, we argue that $f$ is $\tau$- and $\leq$-preserving: Let $p,q\in S$. Let $\alpha$ be the asymptote to $\gamma$ through $p$ and let $\beta$ be the asymptote to $\gamma$ through $q$. Then $\alpha$ and $\beta$ are synchronised parallel with distance $d_S(p,q)$, so there is a parallel realisation $\tilde{f}:\alpha(\R)\cup\beta(\R)\to\R^{1,1}$ defined by $\tilde{f}(\alpha(s))=(s,0)$ and $\tilde{f}(\beta(t))=(t,d_S(p,q))$. In particular, $\alpha(s)\leq_X \beta(t)\Leftrightarrow t-s\geq d_S(p,q)$ and if this is true, $\tau(\alpha(s),\beta(t))=\sqrt{(t-s)^2-d_S(p,q)^2}$. But that is just the definition of $(s,p)\leq_{\R\times S}(t,q)$ resp.\ $\tau_{\R\times S}((s,p),(t,q))$ in $\R\times S$. This is true for all $p,q\in S$ and $s,t\in \R$, so $f$ is $\tau$- and $\leq$-preserving. $f$ is injective as asymptotes to $\gamma$ are parallel to $\gamma$ and parallel lines are unique, and surjective since any point in $I(\gamma)$ lies on an asymptote.

From the discussion above, it is easy to see that $f$ maps timelike (and causal) diamonds in $I(\gamma)$ to timelike (and causal) diamonds in $\R \times S$. As both sides are strongly causal, this implies that $f$ is a continuous open bijection, hence a homeomorphism. Since $\R \times S$ is always non-totally imprisoning (cf.\ Proposition \ref{proposition: nontotalimprisonmentproducts}) and its causal diamonds are compact (as continuous images of compact causal diamonds in $I(\gamma)$), we conclude that $\R \times S$ is globally hyperbolic, hence $S$ is proper by Proposition \ref{Proposition: productglobhypiffmetricproper}. To see that $S$ is a strictly intrinsic space, fix $p,q \in S$ and connect any two timelike related points on the corresponding asymptotes by a distance realiser in $I(\gamma)$. The image of that distance realiser under $f$ is a continuous distance realiser in $\R \times S$, hence by Proposition \ref{Proposition: distancerealizersinproducts} the projection onto $S$ gives a distance minimiser in $S$ between $p$ and $q$.

Finally, we need to prove the remaining claimed properties of $\R \times S$. Path-connectedness is inherited from $I(\gamma)$ via $f$, and products are always globally causally closed (cf.\ Proposition \ref{Proposition: Lorproductscontintaucausclosed}). Note that $I(x,y)$ for $x,y \in I(\gamma)$ are regular localising neighbourhoods in $I(\gamma)$. Since $f(I(x,y)) = I(f(x),f(y))$ and the $d$-lengths of causal curves in timelike diamonds can always be uniformly bounded in products (cf.\ the proof of Proposition \ref{proposition: nontotalimprisonmentproducts}), timelike diamonds in $\R \times S$ are in fact (regular) localising neighbourhoods: For the local time separation, take the restriction of $\tau_{\R \times S}$, and note that maximisers in $I(\gamma)$ (or in any $I(x,y) \subset I(\gamma)$) map to continuous maximisers in $\R \times S$, which are always Lipschitz reparametrisable and are hence causal curves (cf.\ Proposition \ref{Proposition: distancerealizersinproducts}).
\end{proof}

\begin{Proposition}[Global splitting]
\label{Theorem: globalsplitting}
Let $(X,d,\ll,\leq,\tau)$ satisfy the assumptions in Proposition \ref{Theorem: localsplitting}. Then $I(\gamma) = X$, i.e.\ the splitting in Proposition \ref{Theorem: localsplitting} is global.
\end{Proposition}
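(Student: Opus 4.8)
The plan is to establish that $I(\gamma)$ satisfies the timelike completeness (TC) property, so that Theorem~\ref{Theorem: TCinextendibility} forces it to be inextendible as a regularly localisable Lorentzian length space; since $X$ is itself such a space containing $I(\gamma)$ as an open, causally convex subset, the hypothesis $I(\gamma)\neq X$ would then exhibit $X$ as a proper extension of $I(\gamma)$, contradicting inextendibility.

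First I would transport the problem to the product. By Proposition~\ref{Theorem: localsplitting}, the map $f:\R\times S\to I(\gamma)$ is a $\tau$- and $\leq$-preserving homeomorphism onto the globally hyperbolic, hence strongly causal, regularly localisable Lorentzian length space $I(\gamma)$, with $S$ proper and therefore complete. Such a map preserves timelike geodesics, their $\tau$-length, and continuous (in)extendibility, so the (TC) property holds for $I(\gamma)$ if and only if it holds for $\R\times S$, and I would verify it there. Arguing by contraposition, let $\sigma:[a,b)\to\R\times S$ be a timelike geodesic (future or past directed) with $L_\tau(\sigma)=:L<\infty$; I must show $\sigma$ extends continuously to $[a,b]$. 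Reparametrising by $\tau$-arclength (Lemma~\ref{Lemma: tauarclengthparametrizations}) gives a homeomorphism $\phi:[a,b)\to[0,L)$ so that $\tilde\sigma:=\sigma\circ\phi^{-1}:[0,L)\to\R\times S$ is $\tau$-arclength parametrised, and since $\R\times S$ is localisable, Corollary~\ref{Corollary: tauarclengthofmaxinproducts} yields $\tilde\sigma=(\alpha,\beta)$ with $\alpha(s)=cs+d$ affine and $\beta$ a constant-speed geodesic of speed $\sqrt{c^2-1}$ in $S$. On the bounded interval $[0,L)$ the affine map $\alpha$ converges as $s\to L$, while $\beta$ is Lipschitz, hence Cauchy, and converges in the complete space $S$; thus $\tilde\sigma$ extends continuously to $[0,L]$. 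As $\phi(t)\to L$ when $t\to b$, the curve $\sigma=\tilde\sigma\circ\phi$ converges as well, so $\sigma$ extends continuously to $[a,b]$. This proves (TC).

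To close the argument, Theorem~\ref{Theorem: TCinextendibility} makes $I(\gamma)$ inextendible as a regularly localisable Lorentzian length space. Suppose toward a contradiction that $I(\gamma)\subsetneqq X$. Then the inclusion $\iota:I(\gamma)\hookrightarrow X$ is an isometric embedding onto the open proper subset $I(\gamma)$ of the connected space $X$; because $I(\gamma)$ is causally convex (Proposition~\ref{Theorem: localsplitting}), $\iota$ preserves $\leq$, $\ll$ and $\tau$, in particular $\tau$-lengths of causal curves (cf.\ Remark~\ref{remark: convex_extensions}). Hence $X$, being a regularly localisable Lorentzian length space, is an extension of $I(\gamma)$ in the sense of Definition~\ref{definition: extensions}, which is impossible. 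Therefore $I(\gamma)=X$.

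I expect the main obstacle to be the verification of (TC): it rests on combining the product characterisation of timelike geodesics (Corollary~\ref{Corollary: tauarclengthofmaxinproducts}) with the completeness of $S$, and on carefully matching the notion of continuous extendibility across the $\tau$-arclength reparametrisation and across the isomorphism $f$. Once (TC) is in hand, the passage from the local to the global splitting is a direct application of the inextendibility theorem.
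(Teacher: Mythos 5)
Your proposal is correct and takes essentially the same route as the paper: both verify the (TC) property by pushing a finite-$\tau$-length timelike geodesic through the splitting map into $\R\times S$, applying Corollary \ref{Corollary: tauarclengthofmaxinproducts} to get an affine time component and a constant-speed (hence Lipschitz) spatial component that extends by completeness of $S$, and then conclude $I(\gamma)=X$ via Theorem \ref{Theorem: TCinextendibility}. The only difference is cosmetic: you make explicit the final contradiction argument (that $I(\gamma)\subsetneqq X$ would make $X$ an extension in the sense of Definition \ref{definition: extensions}, cf.\ Remark \ref{remark: convex_extensions}), which the paper leaves implicit.
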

\begin{proof}

Suppose $\alpha:[a,b) \to I(\gamma)$ is a future directed timelike geodesic with finite $\tau$-length. We reparametrise $\alpha$ by $\tau$-arclength and denote the reparametrised curve again by $\alpha$, it is then defined on $[0,L)$, where $L:=L_{\tau}(\alpha) < \infty$. Then $f^{-1}\circ\alpha$ is a continuous timelike geodesic in $\R \times S$, where $f$ is the splitting map. We decompose it into time- and space-component: $f^{-1}\circ\alpha=(\alpha^t,\alpha^x)$. If $\alpha^x$ is constantly $p$, we see that $\alpha=\alpha_p$, a contradiction since asymptotes have infinite $\tau$-length. Note that since the $\tau$-lengths of $f^{-1} \circ \alpha$ and $\alpha$ agree, $f^{-1} \circ \alpha$ also has finite $\tau$-length in $\R \times S$. By Corollary \ref{Corollary: tauarclengthofmaxinproducts}, $\alpha^x$ has constant speed and domain $[0,L)$, thus finite $d_S$-length.

$\alpha^t$ can certainly be extended to $L$. As $S$ is a complete metric space, we can extend $\alpha^x$ continuously to $L$. Mapping this extension back to $I(\gamma)$ via $f$, we get a continuous extension in $I(\gamma)$ of $\alpha$ to $b$ (in its original parametrisation). This shows that $I(\gamma)$ has the (TC) property, thus $I(\gamma) = X$ by Theorem \ref{Theorem: TCinextendibility}. This concludes the proof of the splitting in Theorem \ref{Theorem: Truesplitting}.
\end{proof}

Recall the notion of Cauchy sets and (Cauchy) time functions on Lorentzian pre-length spaces from \cite[Sec.\ 5.1]{burtscher2021time}: A Cauchy set is any subset that is met exactly once by doubly inextendible causal curves, and a Cauchy time function is a continuous function $t:X \to \R$ such that $x < y$ implies $t(x) < t(y)$ and the image under $t$ of any doubly inextendible causal curve is all of $\R$.

\begin{Corollary} Let $(X,d,\ll,\leq,\tau)$ satisfy the assumptions of Theorem \ref{Theorem: Truesplitting} and let $f:\R \times S \to X$ be the splitting. Then the sets $S_t:=f(\{t\} \times S)$ are Cauchy sets in $X$ that are all homeomorphic to $S$. Moreover, the map $pr_1 \circ f^{-1}$ is a Cauchy time function. Moreover, all Cauchy sets in $X$ are homeomorphic to $S$.
\begin{proof}
Let $\alpha:(a,b) \to X$ be any doubly inextendible causal curve in $X$ meeting any $S_t$ twice, then its image under $f^{-1}$ meets $\{t\} \times S$ twice, which cannot happen since $\{t\} \times S$ is acausal in $\R \times S$. Next we argue that any such $\alpha$ meets $S_t$: For simplicity let $t=0$, and suppose that $\alpha$ does not meet $S$, so w.l.o.g.\ we may assume that $\alpha \subset I^+(S)$ (this is because $X=I^-(S) \cup S \cup I^+(S)$ due to the splitting, and this union is disjoint). Let $t_0 \in (a,b)$, then $\alpha((a,t_0]) \subset J^-(\alpha(t_0)) \cap J^+(S)$ which is easily seen to be a compact set by considering the corresponding situation in $\R \times S$. But this is a contradiction, since $X$ is non-totally imprisoning. Hence $S$ (and any $S_t$) is a Cauchy set. They are homeomorphic to $S$ since $X$ has the topology of $\R \times S$.

We now argue that $pr_1 \circ f^{-1}$ is a Cauchy time function for $X$. It is clearly a time function, as it is continuous and the time separation on $\R \times S$ is more or less a reformulation of that. Now let $\alpha:(a,b) \to X$ be a doubly inextendible causal curve, we need to show that $pr_1 \circ f^{-1} \circ \alpha((a,b)) = \R$. Suppose not, so there is some time value $t_0$ that is not attained. W.l.o.g.\ suppose $t_0 \geq 0$, so the image is contained in $(-\infty,t_0]$. Similarly to before, this would imply that $\alpha|_{[t_0,b)}$ is contained in the compact set $J^+(\alpha(t_0)) \cap J^-(S_{t_0})$, a contradiction.

Finally, let $C$ be any Cauchy set in $X$. Then the projection $C \to S$ (via the splitting) is continuous. Its inverse is given by sending each $p \in S$ to the unique point on $\alpha_p$ meeting $C$. This is a continuous map since $C$ is achronal. This shows that $C$ is homeomorphic to $S$.
\end{proof}
\end{Corollary}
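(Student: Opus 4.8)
The plan is to transport every causal question about $X$ to the Lorentzian product $\R \times S$ through the $\tau$- and $\leq$-preserving homeomorphism $f$ from Proposition \ref{Theorem: localsplitting}, where all relations are completely explicit: $(s,p) \leq (t,q)$ iff $t - s \geq d_S(p,q)$ and $(s,p) \ll (t,q)$ iff $t-s > d_S(p,q)$. In this picture each slice $\{t\} \times S$ is acausal, the decomposition $\R \times S = I^-(\{t\}\times S) \cup (\{t\}\times S) \cup I^+(\{t\}\times S)$ is into disjoint pieces, and causal diamonds are compact since $S$ is proper. First I would record the only cost-free claim: because $f$ is a homeomorphism, each $S_t = f(\{t\} \times S)$ is homeomorphic to $S$, so only the Cauchy property and the time-function claims require argument.

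To show each $S_t$ is a Cauchy set, let $\alpha:(a,b) \to X$ be doubly inextendible causal. The ``at most once'' half is immediate: if $\alpha$ met $S_t$ twice, then $f^{-1}\circ\alpha$ would meet the acausal set $\{t\}\times S$ twice. For ``at least once'' I would argue by contradiction: if $\alpha$ avoided $S_t$, then by connectedness and the disjoint decomposition it would lie entirely in, say, $I^+(S_t)$; fixing a parameter $s_0$, the past part $\alpha((a,s_0])$ is contained in $J^-(\alpha(s_0)) \cap J^+(S_t)$, which I would verify is compact by computing the corresponding region in $\R \times S$ (a bounded time-interval times a closed $d_S$-ball, compact by properness). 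Since this traps a past-inextendible causal curve in a compact set, it contradicts Lemma \ref{Lemma: Nontotalimprcausalcurvesleavecompsets}.

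For $g := pr_1 \circ f^{-1}$ being a Cauchy time function, continuity is clear, and the implication $x < y \Rightarrow g(x) < g(y)$ follows because $f^{-1}(x) \leq f^{-1}(y)$ between distinct product points forces the $\R$-coordinate to increase strictly (equal $\R$-coordinates would give $d_S \le 0$, hence equal points). To see surjectivity of $g$ along a doubly inextendible causal $\alpha$, I would suppose a value $t_0$ is omitted; as $g\circ\alpha$ is strictly increasing its image is an interval, so up to time reversal $g\circ\alpha \le t_0$, and then a future half of $\alpha$ is trapped in the compact set $J^+(\alpha(s_0)) \cap J^-(S_{t_0})$, contradicting non-total imprisonment exactly as before.

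Finally, for the claim that every Cauchy set $C$ is homeomorphic to $S$, I would note that each vertical line $f(\R \times \{p\})$ is the complete timelike asymptote $\alpha_p$, hence doubly inextendible, so it meets $C$ exactly once; thus $f^{-1}(C)$ is the graph $\{(h(p),p) : p \in S\}$ of a function $h:S \to \R$, and the projection $C \to S$ is a continuous bijection. The hard part will be the continuity of the inverse $p \mapsto f(h(p),p)$: here I would use that a Cauchy set must be achronal (two timelike-related points of $C$ would lie on a common doubly inextendible causal curve meeting $C$ twice), and in the product this achronality reads precisely as $|h(p) - h(q)| \le d_S(p,q)$, so $h$ is $1$-Lipschitz and in particular continuous. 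I expect this extraction of a Lipschitz (hence continuous) graph structure from mere achronality to be the conceptual heart, while the recurring technical obstacle throughout is checking compactness of the relevant causal regions in order to invoke Lemma \ref{Lemma: Nontotalimprcausalcurvesleavecompsets}.
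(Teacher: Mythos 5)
Your proposal is correct and follows essentially the same route as the paper: acausality of the slices $\{t\}\times S$ for the ``at most once'' half, the disjoint decomposition plus compactness of causal regions in $\R\times S$ and non-total imprisonment for the ``at least once'' half and for surjectivity of the time function, and the graph/projection argument for arbitrary Cauchy sets. Your only additions are welcome precisifications of steps the paper leaves implicit, namely the derivation that a Cauchy set is achronal and the quantitative bound $|h(p)-h(q)|\le d_S(p,q)$ making $h$ $1$-Lipschitz, which is exactly what underlies the paper's one-line claim that the inverse is continuous ``since $C$ is achronal.''
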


Note that in general, Cauchy sets in globally hyperbolic Lorentzian pre-length spaces need not be homeomorphic, as \cite[Ex.\ 5.7,\ Ex.\ 5.8]{burtscher2021time} show.

\begin{Corollary}
Let $(X,d,\ll,\leq,\tau)$ satisfy the assumptions of Theorem \ref{Theorem: Truesplitting}. Then $(S,d_S)$ has Alexandrov curvature $\geq 0$.
\begin{proof}
Due to the splitting, the space $\R \times S$ has global nonnegative timelike curvature (cf.\ Remark \ref{remark: contvsLipschitztriangles}; the splitting maps Lipschitz triangles in $\R \times S$ to continuous triangles in $X$, but this does not lead to any issues), and then it follows from \cite[Thm.\ 5.7]{alexander2019generalized} that $S$ has Alexandrov curvature $\geq 0$.
\end{proof}
\end{Corollary}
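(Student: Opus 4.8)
The plan is to transport the timelike curvature bound from $X$ to the Lorentzian product $\R \times S$ along the splitting homeomorphism $f$, and then to invoke the known correspondence between timelike curvature bounds of a Lorentzian product and metric curvature bounds of its fibre.

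First I would verify that $\R \times S$ has global non-negative timelike curvature. By Propositions \ref{Theorem: localsplitting} and \ref{Theorem: globalsplitting}, $f:\R \times S \to I(\gamma) = X$ is a $\tau$- and $\leq$-preserving homeomorphism; since it preserves $\tau$ pointwise it also preserves $L_\tau$ and hence carries maximisers to maximisers, so it maps any timelike triangle in $\R \times S$ to a timelike triangle in $X$ with identical $\tau$-side-lengths. Such a triangle and its image therefore share the same comparison triangle in $\R^{1,1}$, and for points $p,q$ on it with comparison points $\bar p,\bar q$ one has $\tau_{\R\times S}(p,q) = \tau_X(f(p),f(q))$. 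Applying the global curvature bound of $X$ gives $\tau_X(f(p),f(q)) \leq \bar\tau(\bar p,\bar q)$, which is precisely the comparison inequality needed for $\R \times S$. The one delicate point is that, by Proposition \ref{Proposition: distancerealizersinproducts}, distance realisers in the product factor through a metric geodesic in $S$ together with an affine time-component, so their images under $f$ are a priori only \emph{continuous} timelike maximisers in $X$; to apply comparison to these triangles I would appeal to Remark \ref{remark: contvsLipschitztriangles}, which guarantees that in a globally hyperbolic Lorentzian length space of global non-negative timelike curvature the comparison inequality holds for continuous timelike triangles as well.

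Finally, since $\R \times S$ is a generalised cone with warping function $f \equiv 1$ (as observed after Proposition \ref{Proposition: Lorproductscontintaucausclosed}), I would conclude via the cone comparison result \cite[Thm.\ 5.7]{alexander2019generalized}, which converts the non-negative timelike curvature of $\R \times S$ into the assertion that $(S,d_S)$ is an Alexandrov space of curvature $\geq 0$. The substantive work lies entirely in the transfer step, and in particular in checking that the continuous-triangle subtlety causes no loss; once $\R \times S$ is known to satisfy the bound, the appeal to the generalised-cone theorem is immediate.
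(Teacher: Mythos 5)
Your proposal is correct and follows the paper's argument essentially verbatim: transfer the global curvature bound through the $\tau$- and $\leq$-preserving homeomorphism $f$, handle the fact that images of Lipschitz triangles from $\R \times S$ are only continuous triangles in $X$ via Remark \ref{remark: contvsLipschitztriangles}, and then invoke \cite[Thm.\ 5.7]{alexander2019generalized} for generalised cones with warping function $\equiv 1$. The paper compresses all of this into two lines, but the delicate point you isolate --- the continuous-versus-Lipschitz triangle issue --- is exactly the one the paper flags parenthetically, so your more detailed write-up is a faithful expansion of the same proof.
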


\section{Applications and Outlook}
\label{sec: applicationsandoutlook}

Let us now give several reformulations of our splitting result for spacetimes. We start with smooth spacetimes, where we get a weaker version of the smooth splitting result proven in \cite{beem1985toponogov}.

\begin{Corollary}[Smooth spacetimes]
\label{Corollary: synthsplitsmoothspacetimes}
Let $(M,g)$ be a smooth, globally hyperbolic spacetime with non-positive timelike sectional curvature containing a complete timelike line $\gamma:\R \to M$. Then the Lorentzian length space induced by $(M,g)$ splits in the sense of Theorem \ref{Theorem: Truesplitting}.
\begin{proof}
Fix any complete Riemannian metric $h$ on $M$ and let $d_h$ be the induced distance, then $(M,d_h,\ll,\leq,\tau)$ is a connected, regularly localisable (via convex neighbourhoods) Lorentzian length space and $d_h$ is a proper metric (note that the $\tau$-length of causal curves agrees with the usual notion of length in $(M,g)$ due to \cite[Prop.\ 2.32]{kunzinger2018lorentzian}). The fact that $(M,g)$ has global non-negative timelike curvature in the sense of Definition \ref{definition: globalcurvbounds} is a consequence of the Lorentzian Toponogov theorem (see \cite[p.\ 303]{harris1982triangle}). The remaining technical assumptions are easily seen to hold. Hence the Lorentzian length space induced by $(M,g)$ can be split according to Theorem \ref{Theorem: Truesplitting}.
\end{proof}
\end{Corollary}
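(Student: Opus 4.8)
The plan is to check that the Lorentzian length space canonically associated to $(M,g)$ meets every hypothesis of Theorem \ref{Theorem: Truesplitting}, so that the abstract splitting applies verbatim. Most of these hypotheses are structural facts about smooth spacetimes that are either well known or immediate; the only substantial point is to translate the pointwise \emph{sectional} curvature condition into the \emph{synthetic} global curvature bound of Definition \ref{definition: globalcurvbounds}.

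First I would fix a complete Riemannian background metric $h$ on $M$ (which exists on any smooth manifold) and take $d:=d_h$ to be the induced Riemannian distance. Since $(M,h)$ is complete and $M$ is locally compact, Hopf--Rinow gives that $d_h$ is proper. That $(M,d_h,\ll,\leq,\tau)$, with $\tau$ the Lorentzian time separation of $g$, is a connected Lorentzian length space is the standard construction of \cite{kunzinger2018lorentzian}; in particular the $\tau$-length of a causal curve coincides with its Lorentzian arclength. Regular localisability is witnessed by geodesically convex normal neighbourhoods, in which maximisers exist, are unique, and are timelike whenever their endpoints are timelike related. Global hyperbolicity and the existence of the complete timelike line $\gamma$ are assumed outright, and connectedness may be assumed (or follows from global hyperbolicity). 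Timelike geodesic prolongation holds because geodesics of $g$ are solutions of the geodesic ODE and hence extend to a maximal open interval; any maximising timelike segment is in particular a timelike geodesic segment, so it prolongs to an open domain while remaining timelike, as already noted after Definition \ref{definition: geodesicprolongation}.

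The crux is the curvature hypothesis, and I expect this to be the main obstacle. Here one must be careful with sign conventions: the condition $K(\Pi)\le 0$ on \emph{timelike} planes $\Pi$ is precisely what corresponds, after the Lorentzian sign reversal, to \emph{non-negative} timelike curvature in the synthetic sense (consistent with the smooth splitting Theorem \ref{lorsplit2}). To upgrade this from an infinitesimal statement to the global triangle comparison demanded by Definition \ref{definition: globalcurvbounds}, I would invoke the Lorentzian Toponogov comparison theorem of Harris \cite{harris1982triangle}: for globally hyperbolic spacetimes with non-positive timelike sectional curvature, every timelike geodesic triangle satisfies $\tau(p,q)\le\bar{\tau}(\bar{p},\bar{q})$ for corresponding points, which is exactly condition (iii) of Definition \ref{Definition: local tl curv bound} with $X=M$ serving as a global comparison neighbourhood. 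Once all hypotheses are verified, Theorem \ref{Theorem: Truesplitting} yields the splitting and the proof is complete.
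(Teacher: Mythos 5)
Your proposal is correct and follows essentially the same route as the paper's proof: introducing a complete Riemannian background metric to get a proper $d_h$, verifying regular localisability via convex neighbourhoods and timelike geodesic prolongation via the geodesic ODE, and crucially invoking Harris's Lorentzian Toponogov theorem to convert $K(\Pi)\le 0$ on timelike planes into the global synthetic non-negative timelike curvature bound of Definition \ref{definition: globalcurvbounds}. Your version merely spells out some steps (Hopf--Rinow, the prolongation argument) that the paper compresses into ``the remaining technical assumptions are easily seen to hold.''
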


\begin{remark}
In the setting of Corollary \ref{Corollary: synthsplitsmoothspacetimes}, once one has shown that the spacelike slice $S$ is a spacelike hypersurface in $M$ and the normal asymptotic distance $d_S$ is the Riemannian distance on $S$ (e.g.\ via an analysis of the gradient of the Busemann function as in \cite{beem1985toponogov}), the Hawking--King--McCarthy theorem \cite{hawking1976new}, which states that any $\tau$-preserving homeomorphism between strongly causal spacetimes is a smooth isometry, provides a somewhat alternative proof of the smooth splitting result proven in \cite{beem1985toponogov}.
\end{remark}

Next, we turn to $C^{1,1}$-spacetimes. First note that the Lorentzian pre-length space induced by any strongly causal Lipschitz spacetime is in fact a Lorentzian length space by \cite[Thm.\ 5.12]{kunzinger2018lorentzian}. For $C^{1,1}$-metrics, the notion of sectional curvature is not available any more, so we have to assume synthetic curvature bounds. However, most of the other technical assumptions are easily seen to be satisfied due to the existence of convex neighbourhoods. For details on low regularity spacetimes, see e.g.\ the recent review article \cite{steinbauer2022singularity} and the references therein.

\begin{Corollary}[$C^{1,1}$-spacetimes]
Let $(M,g)$ be a globally hyperbolic $C^{1,1}$-spacetime with global non-negative timelike curvature containing a complete timelike line $\gamma:\R \to M$. Then the Lorentzian length space induced by $(M,g)$ splits in the sense of Theorem \ref{Theorem: Truesplitting}.
\end{Corollary}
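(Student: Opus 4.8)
The plan is to mirror the proof of Corollary \ref{Corollary: synthsplitsmoothspacetimes}, verifying that every hypothesis of Theorem \ref{Theorem: Truesplitting} is satisfied. The crucial structural difference is that the curvature hypothesis, which in the smooth case was \emph{derived} from Harris' Lorentzian Toponogov theorem, can no longer be derived (sectional curvature is unavailable for merely $C^{1,1}$ metrics) and is therefore placed directly among the assumptions. First I would fix an arbitrary complete Riemannian background metric $h$ on $M$; by the Hopf--Rinow theorem the induced distance $d_h$ is proper. Since $C^{1,1}\subset C^{0,1}$ and global hyperbolicity forces strong causality, the induced structure $(M,d_h,\ll,\leq,\tau)$ is a Lorentzian length space by \cite[Thm.\ 5.12]{kunzinger2018lorentzian}, with the $\tau$-lengths of causal curves coinciding with the usual Lorentzian arclength. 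Connectedness of $M$, global hyperbolicity, the assumed global non-negative timelike curvature, and the existence of the complete timelike line $\gamma$ are all inherited verbatim from the hypotheses.

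The two assumptions that genuinely require the low-regularity machinery are regular localisability and timelike geodesic prolongation. For the former, I would invoke the existence of convex neighbourhoods for $C^{1,1}$ metrics (see, e.g., \cite{steinbauer2022singularity} and the references therein): within such a neighbourhood any two causally related points are joined by a unique maximising geodesic, which is timelike precisely when the points are chronologically related, so every point admits a neighbourhood base of regular localising neighbourhoods. For timelike geodesic prolongation, the key observation is that for $g\in C^{1,1}$ the Christoffel symbols are locally Lipschitz; hence the geodesic equation is an ODE with locally Lipschitz right-hand side, and Picard--Lindel\"of furnishes unique solutions that extend to maximal \emph{open} parameter intervals. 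Combined with the fact that locally maximising causal segments are, up to reparametrisation, exactly these geodesics in the $C^{1,1}$ setting, one concludes that every maximising timelike segment extends to a timelike geodesic on an open domain, which is precisely the prolongation property.

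With all hypotheses of Theorem \ref{Theorem: Truesplitting} in place, the conclusion follows by a direct application of that theorem. I expect the main (by now routine) obstacle to be the careful bookkeeping of the correspondence between \emph{synthetic} geodesics (locally $\tau$-maximising causal curves) and \emph{metric} geodesics (solutions of the geodesic ODE) in this reduced regularity: unlike in the smooth case this equivalence is not a triviality and must be grounded in the established $C^{1,1}$ causality theory. Once that correspondence and the existence of convex neighbourhoods are cited, however, every remaining verification is entirely parallel to the smooth case, so no essentially new difficulty arises.
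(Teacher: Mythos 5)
Your proposal is correct and follows essentially the same route as the paper, which states this corollary without a separate proof and instead relies on exactly the ingredients you assemble: \cite[Thm.\ 5.12]{kunzinger2018lorentzian} for the induced Lorentzian length space structure, the assumed synthetic curvature bound in place of Harris' theorem, and the $C^{1,1}$ convex-neighbourhood/geodesic theory for regular localisability and timelike geodesic prolongation. Your write-up merely makes explicit (via Picard--Lindel\"of and the local maximisation of $C^{1,1}$-geodesics) what the paper compresses into the remark that ``most of the other technical assumptions are easily seen to be satisfied due to the existence of convex neighbourhoods.''
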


Finally let us mention the case of $C^1$-spacetimes, which are substantially different. Notably, the Lorentzian length space induced by a globally hyperbolic $C^1$-spacetime need not satisfy the timelike geodesic prolongation property. This is because solutions of the $g$-geodesic equation need not maximise locally, nor does a maximising segment necessarily extend as a local maximiser beyond its endpoints. However, points in $C^1$-spacetimes still have neighbourhood bases consisting of globally hyperbolic, causally convex neighbourhoods, so regular localisability is satisfied. Assuming the necessary synthetic properties as well, we have the following (note that geodesics in the synthetic sense are locally maximising curves, which are necessarily $C^2$-solutions of the geodesic equation even if the metric is just $C^1$):

\begin{Corollary}[$C^1$-spacetimes]
Let $(M,g)$ be a globally hyperbolic $C^1$-spacetime with global non-negative timelike curvature containing a complete timelike line $\gamma:\R \to M$. Suppose that each maximising timelike geodesic segment extends to a locally maximising timelike geodesic on an open domain. Then the Lorentzian length space induced by $(M,g)$ splits in the sense of Theorem \ref{Theorem: Truesplitting}.
\end{Corollary}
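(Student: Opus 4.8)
The plan is to check that, under the stated hypotheses, the Lorentzian length space induced by $(M,g)$ satisfies every assumption of Theorem~\ref{Theorem: Truesplitting}, and then simply invoke that theorem. This proceeds entirely parallel to the smooth and $C^{1,1}$ corollaries above; the only genuinely new point is the timelike geodesic prolongation property, which is why it has been promoted to a hypothesis in this regularity.

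First I would fix a complete Riemannian metric $h$ on $M$ and let $d_h$ be the induced distance; by Hopf--Rinow $d_h$ is then a proper metric. Since a $C^1$-metric is in particular locally Lipschitz, and $(M,g)$ is strongly causal (being globally hyperbolic), the induced structure $(M,d_h,\ll,\leq,\tau)$ is a Lorentzian length space by \cite[Thm.\ 5.12]{kunzinger2018lorentzian}, with connectedness and global hyperbolicity inherited directly from $(M,g)$. For regular localisability I would use the fact, recalled in the discussion preceding the corollary, that points of a $C^1$-spacetime still possess neighbourhood bases of globally hyperbolic, causally convex neighbourhoods; these serve as regular localising neighbourhoods in the sense of Definition~\ref{definition: localizablity}, exactly as convex neighbourhoods do in the smooth case. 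Global non-negative timelike curvature in the sense of Definition~\ref{definition: globalcurvbounds} is assumed outright, so---in contrast to Corollary~\ref{Corollary: synthsplitsmoothspacetimes}, where it follows from the Lorentzian Toponogov theorem---there is nothing to prove there.

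The decisive step is verifying timelike geodesic prolongation (Definition~\ref{definition: geodesicprolongation}). Here I would exploit the observation, flagged just before the statement, that a synthetic geodesic is a locally maximising causal curve, and any such curve is a $C^2$-solution of the $g$-geodesic equation even though $g$ is only $C^1$. Thus the synthetic timelike geodesics of $(M,d_h,\ll,\leq,\tau)$ coincide with the locally maximising timelike geodesics of $(M,g)$, and the extra hypothesis of the corollary---that every maximising timelike geodesic segment extends to a locally maximising timelike geodesic on an open domain---is precisely the assertion that the induced Lorentzian length space enjoys the timelike geodesic prolongation property. With $\gamma$ supplying the required complete timelike line, all hypotheses of Theorem~\ref{Theorem: Truesplitting} are then in force and the splitting follows.

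The main obstacle is exactly this prolongation property, and it is the reason the statement must carry the additional assumption: in $C^1$-regularity the geodesic ODE no longer guarantees that its solutions are locally maximising, nor that a maximising segment prolongs as a maximiser, so one cannot derive prolongation from the smooth ODE theory as one implicitly does in the smooth and $C^{1,1}$ cases. Everything else reduces to the by-now standard translation between the classical causal structure of a strongly causal spacetime and its synthetic counterpart.
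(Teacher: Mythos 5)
Your proposal is correct and follows essentially the same route as the paper, which states this corollary without a separate proof precisely because the justification is the preceding discussion: the induced structure is a Lorentzian length space by \cite[Thm.\ 5.12]{kunzinger2018lorentzian}, regular localisability comes from the globally hyperbolic causally convex neighbourhood bases, the curvature bound and the line are assumed, and the extra hypothesis supplies exactly the timelike geodesic prolongation property (with the identification of synthetic geodesics as locally maximising, hence $C^2$, solutions of the geodesic equation). Your verification of each hypothesis of Theorem~\ref{Theorem: Truesplitting} matches the paper's intended argument point for point.
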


In this work, we have established a synthetic splitting result for Lorentzian length spaces using triangle comparison. Ideas for future projects include the generalisation to Lorentzian length spaces lower on the causal ladder (e.g.\ (TC) spaces instead of globally hyperbolic ones, an approach to this that seems to be adaptable to the synthetic situation is \cite{galloway1996regularity}), an analysis of the aforementioned splittings of low regularity spacetimes (regarding regularity of the splitting, level set, etc.), as well as an investigation into synthetic timelike Ricci curvature bounds introduced in \cite{cavalletti2020optimal} and expanded in \cite{braun2022r}. In the case of low regularity spacetimes, it is not yet clear how these different notions of curvature bounds relate to each other (some recent progress in this direction includes \cite{kunzinger2022synthetic,braun2022timelike}). For metric measure spaces, it is known that the $\mathrm{CD}$-condition is not sufficient and the full $\mathrm{RCD}$-condition is needed for a splitting result (see \cite{gigli2013splitting, gigli2014overview}). An analogue of the $\mathrm{RCD}$-condition is currently unavailable for Lorentzian length spaces and will probably have to be developed before a splitting theorem with synthetic timelike Ricci curvature bounds can be considered.\\

\noindent{\em Acknowledgements.} Argam Ohanyan and Felix Rott were supported by project P 33594 of the Austrian Science Fund FWF. Argam Ohanyan was also supported by the ÖAW-DOC scholarship of the Austrian Academy of Sciences. \ Didier Solis acknowledges the support of Conacyt under grant SNI 38368 and UADY under program FMAT-PTA2022, as well as the hospitality of the University of Vienna, where parts of this work were conducted. Tobias Beran acknowledges the support of University of Vienna. The authors would like to thank Matteo Calisti, Gregory Galloway, Melanie Graf, Michael Kunzinger, Clemens Sämann, Benedict Schinnerl and Roland Steinbauer for helpful discussions and valuable feedback. 

%\bibliography{Bibliography} 
\bibliographystyle{abbrv}
%\bibliographystyle{alpha}
%\end{document}

\end{document}